\numberwithin{equation}{section}
\newtheorem{thm}{Theorem}[section]
\newtheorem{lemma}[thm]{Lemma}
\newtheorem{prop}[thm]{Proposition}
\newtheorem{cor}[thm]{Corollary}
{\theorembodyfont{\rmfamily}

\newtheorem{rmk}[thm]{Remark}
}
\newcommand{\labeltext}[2]{%
  \@bsphack
  \csname phantomsection\endcsname % in case hyperref is used
  \def\@currentlabel{#1}{\label{#2}}%
  \@esphack
}
\newcommand{\qed}{\hfill \mbox{\raggedright \rule{.07in}{.1in}}}
\newenvironment{proof}{\vspace{1ex}\noindent{\bf
Proof}\hspace{0.5em}}{\hfill\qed\vspace{1ex}}
\newenvironment{pfof}[1]{\vspace{1ex}\noindent{\bf Proof of
#1}\hspace{0.5em}}{\hfill\qed\vspace{1ex}}
\def\subjclass#1{\par\medskip
\noindent\textbf{Mathematics Subject Classification (2010):} #1}
\def\keywords#1{\par\medskip
\noindent\textbf{Keywords.} #1}
\newcommand{\R}{{\mathbb R}}
\newcommand{\C}{{\mathbb C}}
\newcommand{\N}{{\mathbb N}}
\newcommand{\cA}{{\mathcal A}}
\newcommand{\cB}{{\mathcal B}}
\newcommand{\cN}{{\mathcal N}}
\newcommand\cM{{\mathcal M}}
\newcommand{\T}{{\mathbb T}}
\newcommand\cW{{\mathcal W}}
\newcommand\cY{{\mathcal Y}}
\newcommand\cP{{\mathcal P}}
\newcommand\Id{{\bf 1}}
\newcommand{\supp}{\operatorname{supp}}
\newcommand{\eps}{\epsilon}
\newcommand{\rf}{r}
\newcommand{\vertiii}[1]{{\left\vert\kern-0.25ex\left\vert\kern-0.25ex\left\vert #1
    \right\vert\kern-0.25ex\right\vert\kern-0.25ex\right\vert}}
\def\cM{\mathcal{M}}
\def\cP{\mathcal P}
\def\s{\mathbf{s}}
\def\u{\mathbf{u}}
\def\w{g}
\def\bgamma{\mathbf{\gamma}}
\def\aa{\chi}
\title{Pressure function and limit theorems for almost Anosov flows}
\begin{document}
\author{Henk Bruin 
\thanks{Faculty of Mathematics, University of Vienna, 
Oskar Morgensternplatz 1, 1090 Vienna, Austria, {\it henk.bruin@univie.ac.at}}
\and Dalia Terhesiu 
\thanks{Institute of Mathematics, University of Leiden,
Niels Bohrweg 1, 2333 CA Leiden, The Netherlands,
% \thanks{Department of Mathematics, University of Exeter, Streatham Campus,
% North Park Road, Exeter EX4 4QF, UK, 
{\it daliaterhesiu@gmail.com}}
\and Mike Todd
\thanks{Mathematical Institute, University of St Andrews
North Haugh, St Andrews KY16 9SS, Scotland, {\it m.todd@st-andrews.ac.uk}}
}

\maketitle

\begin{abstract}
We obtain limit theorems (Stable Laws and Central Limit Theorems, both standard and non-standard)
and thermodynamic properties for a class of 
non-uniformly hyperbolic flows: almost Anosov flows, constructed here.
The link between the pressure function and limit theorems 
% proofs of the limit theorems for these flows are applications of 
% corresponding theorems in
is studied in an abstract functional analytic framework, 
which may be applicable to other classes of non-uniformly hyperbolic flows.
\end{abstract}

\iffalse
\subjclass{
37C10,          %Vector fields, flows, ordinary differential equations 
37D20,          %Uniformly hyperbolic systems (expanding, Anosov, Axiom A, etc.) 
37D25,  	%Nonuniformly hyperbolic systems (Lyapunov exponents, Pesin theory, etc.)
37D35,  	%Thermodynamic formalism, variational principles, equilibrium states
60F05}          %Central limit and other weak theorems

\keywords{almost Anosov flows, limit laws, stable laws, Central Limit Theorem,
thermodynamic formalism, null recurrent, infinite ergodic theory, 
equilibrium states, non-uniform hyperbolicity}
\fi

\section{Introduction and summary of the main results}
\label{sec-intro}

This paper is a contribution to the theory of limit theorems and thermodynamic formalism in the context of 
non-uniformly hyperbolic flows on manifolds.  In particular we introduce a new class of `almost Anosov flows', 
a natural analogue of almost Anosov diffeomorphisms introduced in~\cite{HY95}, and prove limit theorems for 
natural observables $\psi$ with respect to the SRB measure, also giving the form of the associated pressure function. 
This example is presented in the context of a general framework, which may be applicable to a range of 
other non-uniformly hyperbolic flows. We recall that various statistical properties for several classes 
of Anosov flows are known~\cite{Dolgopyat98, Dolgopyat03, Liverani04}, but none of these results apply 
to the class of `almost Anosov flows' considered here.

Given a flow $(\Phi_t)_t$ with a real-valued potential $\phi$, we suppose that there is an equilibrium state $\mu_\phi$.  
A standard way of studying the behaviour of averages of a real-valued observable $\psi$ is to consider 
a Poincar\'e section $Y$ and study the induced first return map $F:Y\to Y$, 
the induced potentials $\bar\phi$ and $\bar\psi$, with the induced measure $\mu_{\bar\phi}$.  
In the discrete time case, in fact in the setting where all the dynamical systems are countable 
Markov shifts, \cite[Section 2]{Sar06} showed a one-to-one relation between limit laws 
for $\psi$ and the asymptotic form of the pressure function $\mathcal P(\phi+s\psi)$,
as $s\to 0$. This function experiences a phase transition at $s=0$, and its precise form determines the type of limit laws
(in particular, the index of the stable law), see \cite{MTor, Zwe07} and \cite[Theorem 7]{Sar06}. 
Furthermore, under certain conditions on $\psi$, \cite[Theorem 8]{Sar06} gave an asymptotically linear relation between the induced pressure $\mathcal P(\overline{\phi+s\psi})$ and $\mathcal P(\phi+s\psi)$.   
On the limit theorems side, \cite{MTor, Zwe07} and \cite[Theorem 7]{Sar06} show how one can go between results on the induced and on the original system: the first of these also applies in the flow setting.  
In these cases, the tail of the roof function/return time to the Poincar\'e section 
(both for the flow and the map) plays a major role in determining the form of the results.
Here we will follow this paradigm in the setting of a new class of flows.  The proofs  of the main theorems are facilitated by corresponding theorems in an abstract functional analytic framework. Applying this to the considered example requires  precise estimates on the tails of the roof function, which we prove for our main example.

Our central example is an  almost Anosov flow, which is a flow having a continuous flow-invariant splitting of the tangent bundle 
$T\cM = E^{\u} \oplus E^{\mathbf{c}} \oplus E^{\s}$
(where $E^{\mathbf{c}}_q$ is the one-dimension flow direction) such that we have exponential 
expansion/contraction in the directions
$E^{\u}_q, E^{\s}_q$, except for a finite number of periodic orbits  (in our case a single orbit).
We can think of these as perturbed Anosov flows, where the perturbation is local around these periodic orbits, making them neutral. A precise description is given in
Section~\ref{sec-Aaflow}. Almost Anosov diffeomorphisms have been introduced in~\cite{HY95}
and sufficiently precise estimates on the tail of the return function to a `good' set have been obtained in~\cite{BT17}. 
(These estimates are for both finite and infinite measure
preserving almost Anosov diffeomorphisms.)  We emphasise that the roof function is not constant on the stable direction, which is a main source of difficulty. When considering these examples we choose $\phi$ so that $\mu_\phi$ is the SRB measure.
We build 
on the construction in~\cite{BT17} to obtain
the asymptotic of the tail behaviour of the roof functions for almost Anosov flows (when viewed as suspension flows). The main technical results for these systems are Propositions~\ref{prop:w-integral}
and~\ref{prop:tailtau}.  These are then used to prove the main result Theorem~\ref{thm-conclAn}.

For a major part of the statements of the abstract theorems in this paper (in contexts not restricted to almost Anosov flows) 
we do not require any Markov structure for our system: it is only when we want to see our results in terms of the pressure function (making the connection between the leading eigenvalue 
of the  twisted transfer operator of the base map and pressure as in \cite[Theorem 4]{Sar99}) that this is needed.  Our setup requires good functional analytic properties of the  
Poincar{\'e} map in terms of abstract Banach spaces of distributions. Using the rather mild  abstract functional  assumptions described in Section~\ref{sec-abstr},
 in Sections~\ref{sec-lmtF} we obtain stable laws, standard and non-standard CLT; this is the content of Proposition~\ref{prop-limthF}.
In Section~\ref{sec-lmtflow} we recall~\cite[Theorem 7]{Sar06} and~\cite[Theorem 1.3]{MTor} to lift Proposition~\ref{prop-limthF} to the flow, which allow us to prove
Proposition~\ref{prop-limthflow}.

In Section~\ref{sec-pressure} we do exploit the assumption of the Markov structure to relate 
the definition of the pressure $\mathcal{P}(\bar\phi+s\bar\psi), s\geq 0$, 
with that of the family of eigenvalues of the family of twisted transfer operators of the  Poincar{\'e} map; the twist is in terms of the roof function of the suspension flow 
and the potential $\psi$.
Using this type of identification, in Theorem~\ref{thm-relpres} 
we relate the induced pressure $\mathcal{P}(\bar\phi+s\bar\psi)$ with the original pressure. 
%As clarified in this section this is the only step in the overall argument where we need to assume a Markov structure. 
Using the main result in Section~\ref{sec-pressure}, in Section~\ref{sec-concl} we summarise the results for the abstract framework in the concluding Theorem~\ref{thm-concl},  which gives
the equivalence between the asymptotic behaviour  of the pressure function $P(\phi+s\psi)$ and limit theorems. It is this summarising result that can be viewed as a 
version of Theorems 2--4 and Theorem 7 of \cite{Sar06} combined, for flows.

We note that limit theorems for the almost Anosov flows studied here could have been obtained via the very 
recent results of limit laws for invertible Young towers
as in \cite[Theorem 3.1]{MV19} together with the arguments of lifting limit laws from
the suspension to the flow in \cite{MTor, Zwe07} and \cite[Theorem 7]{Sar06}.
Notably \cite{MV19} applies to invertible Young towers and as such to several classes of billiard maps/flows.

\paragraph{Organisation of the paper:}
In Section~\ref{sec-Aaflow} we give the setup of the almost Anosov flow as ODE, and then
translate it to a suspension flow over a Poincar\'e section. Proposition~\ref{prop:tailtau} 
computes the tails of the associated roof function. Subsection~\ref{subsec-mainres} gives the main limit theorems
for the almost Anosov flow.
Section~\ref{sec-back} gives the preliminaries of equilibrium states for flows.
In Section~\ref{sec-abstr} we give the abstract background, including abstract hypotheses, of the transfer operator approach.
In this abstract setting, the main result Proposition~\ref{prop-limthF} is stated and proved in Section~\ref{sec-lmtF}.
Section~\ref{sec-lmtflow} states and proves the limit law for the flow,
and Section~\ref{sec-pressure} deals with the asymptotic shape of the pressure function.
In Section~\ref{sec-concl} we formulate and prove the limit laws for equilibrium states of the flow.
Finally, the appendix gives some technical proofs for Section~\ref{sec-lmtF}, and then concludes by checking all the
hypotheses of the abstract results.
In Appendix~\ref{sec-ver} we verify the abstract hypotheses of Proposition~\ref{prop-limthF} and Theorem~\ref{thm-concl} for the almost Anosov flows introduced in Section~\ref{sec-Aaflow}
which allows us to complete the proofs of Theorems~\ref{thm-conclAn}.

\paragraph{Notation:}
We use ``big O'' and $\ll$ notation interchangeably, writing $a_n=O(b_n)$ or $a_n\ll b_n$
if there is a constant $C>0$ such that $a_n\le Cb_n$ for all $n\ge1$.
We write $a_n \sim b_n$ if $\lim_n a_n/b_n = 1$.
Throughout we let $\to^d$ stand for convergence in distribution.
By $F$ being piecewise H\"older, etc.\ we mean that $F$ is H\"older on the elements $a \in \cA$, 
with a uniform exponent, but the H\"older norm of $F|_a$ is allowed to depend on $a$.

\paragraph{Acknowledgements:} 
HB gratefully acknowledges the support of FWF grant P31950-N45.
DT was partially supported by EPSRC grant EP/S019286/1.
The authors  would like to thank the Erwin Schr\"odinger Institute
where this paper was initiated during a ``Research in Teams'' project. 
The vigilant remarks of the referees are gratefully acknowledged.

\section{The setup for almost Anosov flows}\label{sec-Aaflow}

\subsection{Description via ODEs}
\label{subsec-ode}

Let $\cM$ be an odd-dimensional compact connected manifold without boundary.
An almost Anosov flow is a flow having continuous flow-invariant splitting of the tangent bundle 
$T\cM = E^{\u} \oplus E^{\mathbf{c}} \oplus E^{\s}$
(where $E^{\mathbf{c}}_q$ is the one-dimension flow direction) such that we have 
exponential expansion/contraction in the direction of
$E^{\u}_q, E^{\s}_q$, except for a finite number of periodic orbits (in our case a single orbit $\Gamma$).
After restricting to an irreducible component if necessary,
Anosov flows are automatically transitive, cf.\ \cite[5.10.3]{BS02}. 
As noted in the introduction, we can think of almost Anosov flows as perturbed Anosov flows, where the perturbation is local around $\Gamma$, 
making $\Gamma$ neutral.

Let $\Phi_t:\cM \times\R \to \cM$ be an almost Anosov flow on
a $3$-dimensional manifold. 
We assume that the flow in local Cartesian coordinates near the neutral periodic orbit $\Gamma := (0,0) \times \T$ is 
determined by a vector field $X:\cM \to T\cM$ defined as:
\begin{equation}\label{eq:polvf}
 \begin{pmatrix}
  \dot x \\ \dot y\\ \dot z
 \end{pmatrix}
 =  X \begin{pmatrix} x \\ y \\ z \end{pmatrix}  =
 \begin{pmatrix} x(a_0 x^2  + a_2 y^2) \\ -y(b_0 x^2 + b_2 y^2) \\ 1 + w(x,y) \end{pmatrix} 
\end{equation}
where\footnote{The notation and absence of mixed terms $a_1xy$ and $b_1xy$ goes back to \cite{Hu00},
who could not treat mixed terms. In \cite{BT17}, the mixed terms are absent too, in order to
make the explicit form of the first integrals possible. Importantly, \eqref{eq:polvf} ensure that the 
two vertical coordinate planes are local stable and unstable manifold of $\Gamma$. The only linear transformation 
that preserves this are trivial scalings in the coordinate directions. Such scalings
reduce $b_2/a_2$ and $a_0/b_0$ to single parameters, but we didn't do this to maintain the possibility
to compare proofs with \cite{BT17,Hu00} more easily.
It is possible to treat mixed terms to some extent, see \cite{Bruin}, but the additional required
technicalities are beyond the purpose of this paper.}
$a_0, a_2, b_0, b_2 \geq 0$ with $\Delta = a_2b_0 - a_0b_2 \neq 0$, and $w$ is a homogeneous function
with exponent $\rho \ge 0$ as leading term, cf.\ Remark~\ref{rem:psi}.
For the Limit Theorem~\ref{thm-conclAn}, we additionally require $a_2 > b_2$ 
(so that $\beta > 1$, the finite measure case).
We will use a Poincar\'e section $\Sigma$ which is $\{ z = 0\}$ in local coordinates near $\Gamma$, 
but which is thought to be defined across the whole manifold so that the Poincar\'e maps
is defined everywhere on $\Sigma$.  The Poincar\'e map of the Anosov flow (i.e., before the perturbation rendering $\Gamma$ neutral)
is Anosov itself, and hence has a Markov partition, for instance based on arcs in $W^s(p) \cup W^u(p)$ for $p = (0,0)$, but not 
 $W_{loc}^s(p) \cup W_{loc}^u(p)$ because we want $p$ in the interior of a partition element.
As the perturbation is local around $\Gamma$ and leaves local stable and unstable manifolds of $\Gamma$ unchanged,
the same Markov partition can be used for the almost Anosov flow.

Let us call the horizontal (i.e., $(x,y)$-component) of the flow
$\Phi_t^{hor}$.
This is the vector field of \cite[Equation (4)]{BT17}, with $\kappa = 2$ and the 
vertical component is added as a skew product.
Therefore we can take some crucial estimates from the estimates
of $\Phi_t^{hor}$ in \cite[Proposition 2.1]{BT17}.

The flow $\Phi_t$ has a periodic orbit $\Gamma = \{ p \} \times \T$ of period $1$
(which is neutral because $DX$ is zero on $\Gamma$),
and it has local stable/unstable manifolds 
$W^{\s}_{loc}(\Gamma) = \{ 0 \} \times (-\eps,\eps) \times  \T^1$ and
$W^{\u}_{loc}(\Gamma) = (-\eps,\eps) \times \{ 0 \} \times \T^1$. 
It is an equilibrium point of neutral saddle type if we only consider $\Phi^{hor}_t$.
The time-$1$ map $\hat f$ of $\Phi_t^{hor}$ is an almost Anosov map,
with Markov partition $\{ \hat P_i\}_{i \geq 0}$, where we assume that $p$ is an interior point of $\hat P_0$.

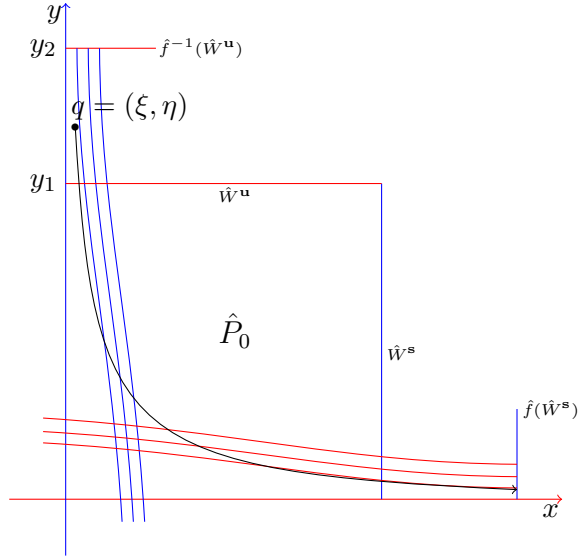
\begin{figure}[ht]
\begin{center}
\begin{tikzpicture}[scale=1.5]
\node at (4.3,-0.1) {\small $x$}; 
\node at (-0.1,4.3) {\small $y$}; \node at (-0.2,2.8) {\small $y_1$};\node at (-0.2,4) {\small $y_2$};
\draw[->, draw=red] (-0.5,0)--(4.4,0);
\draw[->, draw=blue] (0,-0.5)--(0,4.4);
\node at (1.5,1.5) {$\hat P_0$};
\node at (1.5,2.7) {\tiny $\hat W^{\u}$}; \node at (1.2,4) {\tiny $\hat f^{-1}(\hat W^{\u})$};
\node at (2.98,1.3) {\tiny $\hat  W^{\s}$}; \node at (4.3,0.8) {\tiny $\hat f(\hat W^{\s})$};
\draw[-, draw=red] (0,4)--(0.8,4); \draw[-, draw=blue] (4,0)--(4,0.8); 
\draw[-, draw=blue] (0.1,4) .. controls (0.11,2.5) and (0.4,1.5) .. (0.5,-0.2);
\draw[-, draw=blue] (0.2,4) .. controls (0.21,2.5) and (0.5,1.5) .. (0.6,-0.2);
\draw[-, draw=blue] (0.3,4) .. controls (0.31,2.5) and (0.6,1.5) .. (0.7,-0.2);
\draw[-, draw=blue] (2.8,2.8)--(2.8,-0.0);
\draw[-, draw=red] (4, 0.1) .. controls (2.5, 0.11) and (1.5, 0.4) .. (-0.2, 0.5);
\draw[-, draw=red] (4, 0.2) .. controls (2.5, 0.21) and (1.5, 0.5) .. (-0.2, 0.6);
\draw[-, draw=red] (4, 0.31) .. controls (2.5, 0.32) and (1.5, 0.61) .. (-0.2, 0.72);
\draw[-, draw=red] (2.8,2.8)--(-0.0,2.8);
\node at (0.57,3.48) {\small $q = (\xi,\eta)$};
\node at (0.085,3.3) {\tiny $\bullet$};
\draw[->, draw=black] (0.085,3.3) .. controls (0.25,0.5) and (0.5,0.25) .. (4,0.085);
\end{tikzpicture}
\caption{The first quadrant of the rectangle $\hat P_0$, with stable and unstable foliations of 
time-$1$ map $\hat f = \Phi_1^{hor}$
drawn vertically and horizontally, respectively. Also the integral curve of $q$ is drawn.}
\label{fig:leaves1}
\end{center}
\end{figure}

Given $q \in \hat f^{-1}(\hat P_0)$, define
\begin{equation}\label{eq:hattau}
 \hat\tau(q) := \min\{ t > 0 : \Phi_t^{hor}(q) \in \hat W^{\s}\},
\end{equation}
where $\hat  W^{\s}$ is the stable boundary leaf of $\hat  P_0$, see Figure~\ref{fig:leaves1}.
Let $\hat W^{\u}(y)$ denote the unstable leaf of $\hat f$ intersecting $(0,y) \in \hat  W^{\s}(p)$.
The function $\hat\tau$ is strictly monotone on $\hat W^{\u}(y)$.
For $y > 0$ and $T \geq 1$, let $\xi(y,T)$ denote the distance between $(0,y)$ and the (unique) point $q \in \hat W^{\u}(y)$ with 
$\hat\tau(q) = T$.
The crucial information from \cite[Proposition 2.1]{BT17} is 
\begin{equation}\label{eq:asymp0}
|\xi(y,T) - \xi(y,T+1)| = \beta \xi_0(y)^{-\beta} T^{-(\beta+1)} (1+o(1)) \quad \text{ as } T \to \infty,
\end{equation}
for $\beta = (a_2+b_2)/(2b_2)$ and specific values $\xi_0(y)$ given in \cite[Proof Proposition 2.1]{BT17}.
The parameters $a_2,b_2 \geq 0$ can be chosen freely, so \eqref{eq:asymp0} holds for $\beta \in (\frac12,\infty)$, 
but in this paper we choose $a_2 > b_2$, and therefore $\beta \in (1,\infty)$.
Also the ``small tail'' estimates of \eqref{eq:asymp0} are stronger than the ``big tail'' estimates needed
in our main theorems, but are required for particular results in the infinite measure setting of \cite{BT17}.

\begin{rmk}\label{rem:small_tails}
In fact,  for the most general vector fields treated in \cite[Equation (3)]{BT17} 
 (i.e., vector fields with higher order terms), we cannot do better than ``big tails'' estimates: 
 \begin{equation}\label{eq:asympt2}
 \xi(y,T) = \xi_0(y) T^{-\beta}(1+ O(T^{-\beta_*}+T^{-\frac12} \log T)),
 \end{equation}
 for $\beta_* = \min\{1, a_2/b_2, b_0/a_0\}$, see \cite[Lemma 2.3]{BT17}.
This is due to the perturbation arguments in \cite[Section 2.4]{BT17} that become necessary when
higher order terms are present and
a precise first integral from \cite[Lemma 2.2]{BT17} is not available.
 Since the horizontal part of vector field of \eqref{eq:polvf} here is the `ideal' vector field 
 from \cite[Equation (4)]{BT17}, our small tail estimate \eqref{eq:asymp0} becomes possible.
\end{rmk}

The next proposition gives an estimate of integrals along such curves.
This allows us to estimate the tail of the roof function (when viewing $\Phi_t$ as a suspension flow) 
in Proposition~\ref{prop:tailtau}
and also, the tail of induced potentials (see Remark~\ref{rem:psi}). 
In the first instance we use it to estimate the vertical component of the flow
$\Phi_t$ (compared to $t$).

\begin{prop}\label{prop:w-integral}
 Let $\theta:\R^2 \to \R$ be a homogeneous function with exponent $\rho \in \R$ such that
 $\theta(x,0) \not\equiv 0 \not\equiv \theta(0,y)$ for $x\neq 0 \neq y$.
Then there is a constant $C_\rho > 0$ (given explicitly in the proof) such that 
for every $q$ with $\hat\tau(q) = T$ as in \eqref{eq:hattau}, 
\begin{equation}\label{eq:w-integral}
\Theta(T) := \int_0^T \theta( \Phi_t(q,0))\, dt =
\begin{cases}
C_\rho T^{1-\frac{\rho}{2}} (1+o(1)) & \text{ if } \rho < 2, \\
C_\rho  \log(T) (1+o(1)) & \text{ if } \rho = 2, \\
C_\rho (1+o(1)) & \text{ if } \rho > 2.
\end{cases}
\end{equation}
\end{prop}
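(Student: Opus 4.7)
The plan is to reduce $\Theta(T)$ to elementary one-variable integrals by exploiting the near-saddle dynamics of $\Phi^{hor}_t$. Since $\theta$ depends only on $(x,y)$, I write $\Theta(T)=\int_0^T \theta(x(t),y(t))\,dt$ for $(x(t),y(t))=\Phi_t^{hor}(q)$. The first step is to split $[0,T]$ at a ``crossing time'' $\tau_0 = \tau_0(T,\eta)$ defined by $x(\tau_0)=y(\tau_0)$, so that $x\ll y$ on $[0,\tau_0]$ (the orbit hugs the stable manifold/$y$-axis) and $y\ll x$ on $[\tau_0,T]$ (it hugs the unstable manifold/$x$-axis).

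On $[0,\tau_0]$, the equation for $y$ is $\dot y = -b_2 y^3(1+O(x^2/y^2))$, and integrating the leading ODE $\dot y=-b_2 y^3$ with $y(0)=\eta$ gives the closed form $y(t)=\eta(1+2b_2\eta^2 t)^{-1/2}$, which behaves like $(2b_2 t)^{-1/2}$ once $t\gg \eta^{-2}$. Feeding this into $\dot x = a_2 y^2 x (1+O(x^2/y^2))$ gives $x(t)=\xi(1+2b_2\eta^2 t)^{a_2/(2b_2)}(1+o(1))$, and setting $x(\tau_0)=y(\tau_0)$ with $\xi\sim\xi_0(\eta)T^{-\beta}$ and $\beta=(a_2+b_2)/(2b_2)$ solves to $\tau_0 = \frac{a_0}{a_0+b_2}T(1+o(1))$; symmetrically $T-\tau_0=\frac{b_2}{a_0+b_2}T(1+o(1))$, and crucially the leading ratio is independent of $\eta$. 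The analogous analysis on $[\tau_0,T]$ yields $x(T-s)\sim (2a_0 s)^{-1/2}$.

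Next, using that $\theta$ is homogeneous of degree $\rho$, write $\theta(x,y)=y^\rho\,\theta(x/y,1)$; since $\theta(0,1)\ne 0$ by hypothesis, continuity gives $\theta(x(t),y(t))=\theta(0,1)\,y(t)^\rho(1+o(1))$ uniformly on $[0,\tau_0-K]$ for a large constant $K$. The integral $\int_0^{\tau_0}y(t)^\rho\,dt$ is then a standard power integral: inserting $y(t)\sim(2b_2 t)^{-1/2}$ and $\tau_0\asymp T$ yields the three regimes $T^{1-\rho/2}$, $\log T$, or a finite constant, according to $\rho<2$, $\rho=2$, or $\rho>2$. The second half $[\tau_0,T]$ contributes symmetrically with $\theta(1,0)$ in place of $\theta(0,1)$ and $a_0\leftrightarrow b_2$, and the two contributions add to give the explicit constant, e.g.\ for $\rho<2$,
\[
C_\rho = \frac{1}{1-\rho/2}\left[\theta(0,1)(2b_2)^{-\rho/2}\Bigl(\tfrac{a_0}{a_0+b_2}\Bigr)^{1-\rho/2} + \theta(1,0)(2a_0)^{-\rho/2}\Bigl(\tfrac{b_2}{a_0+b_2}\Bigr)^{1-\rho/2}\right],
\]
with analogous formulas in the other two cases (a $\log$-coefficient for $\rho=2$ and a convergent improper integral for $\rho>2$).

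The main obstacle will be controlling the ``transition'' window near $t=\tau_0$ where $x\asymp y$ and the axis-hugging approximations for $y(t),x(t)$ and for $\theta(x,y)$ by $\theta(0,1)y^\rho$ (resp.\ $\theta(1,0)x^\rho$) all break down simultaneously. I would handle this by showing that there is a constant $K$, independent of $T$ and $\eta$, such that on the window $[\tau_0-K,\tau_0+K]$ both $|x|$ and $|y|$ are bounded away from $0$ and $\infty$, so the integrand is $O(1)$ and the window has length $O(1)$, contributing $O(1)$ -- negligible against $T^{1-\rho/2}$ and $\log T$, and absorbed into the $o(1)$ for $\rho>2$. A secondary technical point is that the initial segment $t\in[0,O(\eta^{-2})]$ (before the asymptotic regime $y(t)\sim(2b_2 t)^{-1/2}$ kicks in) contributes only $O(1)$ as well, so its effect is again absorbed into the error term and does not affect the $\eta$-independent leading coefficient $C_\rho$.
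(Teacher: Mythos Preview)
Your strategy---splitting at the crossing time $\tau_0$ where $x=y$ and using the axis-hugging asymptotics $y(t)\sim(2b_2t)^{-1/2}$, $x(T-s)\sim(2a_0s)^{-1/2}$---identifies the right leading order in every case, and for $\rho\ge 2$ it actually works, since the integral is dominated by the regimes $M:=y/x\to\infty$ or $M\to 0$ where those asymptotics are valid. But for $\rho<2$ your handling of the transition window contains a genuine error that spoils the constant.

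Your claim that $|x|,|y|$ stay bounded away from $0$ on $[\tau_0-K,\tau_0+K]$ is false: the first integral gives $L(\delta,\delta)=L(\xi,\eta)$ for $\delta:=x(\tau_0)=y(\tau_0)$, hence $\delta^{u+v+2}\asymp\xi^u$ and so $\delta\asymp\xi^{1/(2\beta)}\asymp T^{-1/2}$. Worse, the window where $M\in[K^{-1},K]$ has length $\asymp T$, not $O(1)$: on that window $\dot M=-M(c_0+c_2M^2)x^2\asymp -T^{-1}$, so traversing an $O(1)$ range of $M$ takes time $\asymp T$. Its contribution to $\Theta(T)$ is therefore $\asymp T\cdot T^{-\rho/2}=T^{1-\rho/2}$, the \emph{same} order as your main term when $\rho<2$. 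Consequently your displayed $C_\rho$, which depends only on $\theta(0,1)$ and $\theta(1,0)$, cannot be correct: two homogeneous $\theta$'s that agree on the axes but differ for $M\asymp 1$ (e.g.\ add a multiple of $(xy)^{\rho/2}$) give different $\Theta(T)$ but identical values of your $C_\rho$. The paper fixes this by changing variables from $t$ to $M$, using the first integral to write $x^2=G(T)\,M^{-1/\beta_0}(c_0+c_2M^2)^{1/(2\beta_0)+1/(2\beta_2)-1}$ with $G(T)\sim G_0 T^{-1}$; this turns $\Theta(T)$ into $G(T)^{\rho/2-1}$ times a single convergent $M$-integral of $\theta(1,M)$ over $(0,\infty)$, capturing the transition region exactly rather than discarding it.
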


For the proof of Proposition~\ref{prop:w-integral} we recall some notation and the form of the first integral $L(x,y)$ from
\cite[Lemma 2.2]{BT17}, replacing $\kappa$ from that paper by $2$.
Recall that $\Delta := a_2b_0 - a_0b_2 \neq 0$.
Let $u,v \in \R$ be the solutions of the linear equation
\begin{equation}\label{eq:uv}
\begin{cases}
(u+2) a_0 = v b_0 \\
(v+2)b_2 = u a_2
\end{cases}
\quad \text{ that is:}\quad
\begin{cases}
u = \frac{2 b_2}{\Delta}(a_0+b_0), \\[2mm]
v = \frac{2 a_0}{\Delta} (a_2+b_2).
\end{cases}
\end{equation}
Note that $u,v$ and $\Delta$ all have the same sign. Define:
\begin{equation}\label{eq:gamma}
\beta_0 := \frac{a_0+b_0}{2 a_0} = \frac{u+v+2 }{2 v}, \quad 
\beta := \beta_2 := \frac{a_2+b_2}{2 b_2} = \frac{u+v+2 }{2 u}, \quad
\begin{array}{l} c_0 = a_0 + b_0 \\ c_2 = a_2+b_2 \end{array}.
\end{equation}
Note that $\frac{\beta_0}{\beta_2} = \frac{u}{v}$, $\frac{a_0 u}{b_2v} = \frac{c_0}{c_2}$ and $\beta_0, \beta_2 > \frac{1}{2}$ (or $=\frac{1}{2}$ if 
we allow $b_0=0$ or $a_2=0$ respectively).
The content of \cite[Lemma 2.2]{BT17} is that
\begin{equation}\label{eq:lf}
L(x,y) = 
\begin{cases}
  x^u y^v ( \frac{a_0}{v}\ x^2  + \frac{b_2}{u}\ y^2 ) & \text{ if } \Delta > 0;\\
  x^{-u} y^{-v} ( \frac{a_0}{v}\ x^2  + \frac{b_2}{u}\ y^2 )^{-1} & \text{ if } \Delta < 0,
\end{cases}
\end{equation}
is a first integral of (i.e., preserved by) $\Phi_t^{hor}$ (and therefore of $\Phi_t$).

For the proof of Proposition~\ref{prop:w-integral}, we follow the proof of \cite[Proposition 2.1]{BT17}.
In comparison, we have $\kappa$ from \cite[Proposition 2.1]{BT17} equal to $2$, 
our current integrand is more complicated, but we only need first order error terms.

\begin{pfof}{Proposition~\ref{prop:w-integral}}
Fix $\eta$ such that the local unstable leaf $\hat W^{\u}_{loc}(0,\eta)$ intersects 
$\overline{\hat f^{-1}(\hat P_0) \setminus \hat P_0}$.
Recall from the text below \eqref{eq:hattau} that, given $T$ large enough, 
there is a unique point $(\xi(\eta,T), \eta') \in \hat W^{\u}_{loc}(0,\eta)$ 
such that
$$
(\zeta'_0, \omega(\eta,T)) := \Phi_T^{hor}((\xi(\eta,T), \eta')) \in \hat W_{loc}^s(\zeta_0,0),
$$
where $\hat W_{loc}^s(\zeta_0,0)$ is the local stable leave forming the right boundary of $P_0$,
see Figure~\ref{fig:leaves3}.
Assuming $\hat W^{\u}_{loc}(0,\eta)$ and $\hat W_{loc}^s(\zeta_0,0)$ are straight horizontal and vertical lines
respectively, induces a negligible error in these vertical coordinates,
so in the rest of the proof, we write $\eta' = \eta$ and $\zeta_0' = \zeta_0$.

\begin{figure}[ht]
\begin{center}
\begin{tikzpicture}[scale=1.5]
\draw[->, draw=red] (-0.2,0)--(4,0);
\draw[->, draw=blue] (0,-0.2)--(0,4);
\node at (1.5,1.5) {$\hat P_0$};
\node at (3.37,1.3) {\tiny $\hat  W^{\s}(\zeta'_0,\omega(\eta,T))$}; 
\node at (2.8,-0.2)  {\small $\zeta_0$};
\draw[-, draw=red] (0,3.3)--(1.8,3.3); \node at (2,3.4) {\tiny $\hat W^{\u}_{loc}(0,\eta)$};
\draw[-, draw=blue] (2.8,2.8)--(2.8,-0.0);
\draw[-, draw=red] (2.8,2.8)--(-0.0,2.8);
\node at (-0.1,3.35) {\tiny $\eta$};
\node at (0.56,3.43) {\tiny $(\xi(\eta,T),\eta')$};
\node at (0.085,3.3) {\tiny $\bullet$}; \node at (2.8,0.085) {\tiny $\bullet$};
\draw[-, draw=black] (0.085,3.3) .. controls (0.25,0.5) and (0.5,0.25) .. (2.8,0.085);
\node at (3.3,0.095) {\tiny $(\zeta_0',\omega(\eta,T))$};
\end{tikzpicture}
\caption{The first quadrant of the rectangle $\hat P_0$, with the integral curve
$I(T)$ connecting $(\xi(\eta,T),\eta) \in \hat W^{\u}_{loc}(0,\eta))$ and 
$(\zeta_0',\omega(\eta,T)) \in \hat  W^{\s}(\zeta'_0,\omega(\eta,T))$.
}
\label{fig:leaves3}
\end{center}
\end{figure}
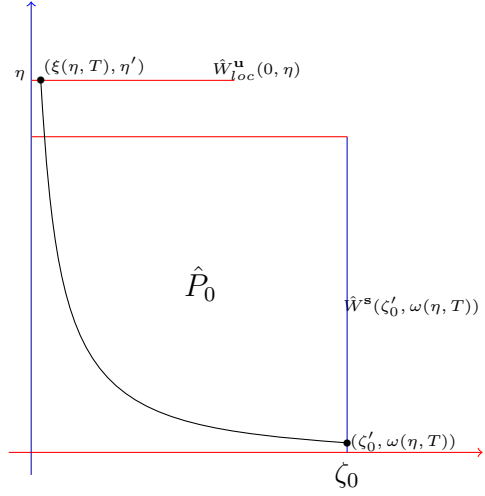 
For simplicity of notation, we will suppress the $\eta$ and $T$ in $\xi(\eta,T)$ and $\omega(\eta,T)$.
For $0 \leq t \leq T$, let $\Phi_t^{hor}(\xi,\eta) = (x(t), y(t))$,
so $(x(0)), y(0)) = (\xi,\eta)$ and $(x(T), y(T)) = (\zeta_0, \omega)$.
We need to compute
$$
\Theta(T) = \int_0^T\theta(x(t), y(t)) \, dt,
$$
where $\theta:\R^2 \to \R$ is homogeneous of exponent $\rho$.

{\bf New coordinates:}
We compute $\Theta(T)$ by introducing new coordinates $(L,M)$ where $L$ is the first integral from \eqref{eq:lf}
(and hence independent of $t$), and  $M(t) = y(t)/x(t)$, so $y(t) = M(t)x(t)$. 
For simplicity of notation, we suppress $t$ in $x,y$ and $M$.
Differentiating to  $\dot y = \dot M x + M \dot x(t)$, and inserting the values for $\dot x$ and $\dot y$ from \eqref{eq:polvf}, we get
\begin{equation}\label{eq:M0}
\dot M = -M( c_0 + c_2 M^2) x^2.
\end{equation}
We carry out the proof for $\Delta > 0$ (the case $\Delta < 0$ goes likewise), so 
the first integral is $L(x,y) =  x^u y^v ( \frac{a_0}{v}\, x^2  + \frac{b_2}{u}\, y^2)$ in \eqref{eq:lf}.
Since $(x,y)$ is in the level set 
$L(x,y) =  L(\xi,\eta) = \xi^u \eta^v(\frac{a_0}{v} \xi^2  + \frac{b_2}{u}\eta^2)$, 
we can solve for $x^2 $ in the expression
$$
\xi^u\eta^v\left(\frac{a_0}{v} \xi^2  + \frac{b_2}{u}\eta^2 \right) = x^u y^v \left(\frac{a_0}{v} x^2  + \frac{b_2}{u} y^2 \right) 
= x^{u+v+2 } M^v \left(\frac{a_0}{v} + \frac{b_2}{u} M^2\right).
$$
Use \eqref{eq:uv} and \eqref{eq:gamma} to obtain
$$
\frac{a_0}{v} + \frac{b_2}{u} M^2  = \frac{\Delta}{2 c_0c_2} ( c_0+c_2M^2 )
\quad \text{ and }\quad 
\frac{a_0 \xi^2}{v}  + \frac{b_2 \eta^2 }{u} = \frac{\Delta}{2 c_0c_2} ( c_0 \xi^2 +c_2 \eta^2 ).
$$
Note also from \eqref{eq:gamma} that $\frac{2v}{u+v+2} = \frac{1}{\beta_0}$,  $\frac{2u}{u+v+2} = \frac{1}{\beta_2}$
and $\frac{2}{u+v+2} = 1-\frac{1}{2\beta_0} - \frac{1}{2\beta_2}$.
This plus the previous two equations together gives
\begin{equation}\label{eq:x}
 x^2 = G(T) M^{-\frac{1}{\beta_0} } (c_0+c_2M^2)^{\frac{1}{2\beta_0}+\frac{1}{2\beta_2}-1},
 \end{equation}
 with 
 \begin{equation}\label{eq:G(T)}
 G(T) := G(\xi(\eta, T), \eta) = \xi(\eta, T)^{\frac{1}{\beta_2}} \eta^{\frac{1}{\beta_0}}
 (c_0 \xi(\eta,T)^2 + c_2 \eta^2)^{1-\frac{1}{2\beta_0} - \frac{1}{2\beta_2}}.
 \end{equation} 
We claim that $G(T) \sim G_0 T^{-1}$ as $T \to \infty$ for some $G_0 > 0$.

{\bf Proving the claim:} First note that
\begin{equation}\label{eq:M0I}
T = \int_0^T dt = \int_{M(0)}^{M(T)} \, \frac{dM}{\dot M} = -\int_{M(T)}^{M(0)} \, \frac{dM}{\dot M}.
\end{equation}
Recall that $M(0) = \eta/\xi$ and $M(T) = \omega/\zeta_0$.
Inserting this and \eqref{eq:M0} and \eqref{eq:x} into \eqref{eq:M0I} gives
\begin{equation}\label{eq:M4}
\int_{\omega/\zeta_0}^{\eta/\xi} \frac{dM}
{  M^{1-\frac{1}{\beta_0}}  \left(c_0+c_2 M^2  \right)^{ \frac{1}{2 \beta_0} + \frac{1}{2 \beta_2} } }
= G(T) T.
\end{equation}
As $T$ increases, the integral curve connecting $(\xi(\eta,T), \eta)$ to $(\zeta_0,\omega(\eta,T))$
tends to the union of the local stable and unstable manifolds of $(0,0)$,
whilst $M(T) = \omega(\eta,T)/\zeta_0 \to 0$ and $M(0) = \eta/\xi(\eta,T) \to \infty$.
From their definition, $\xi(\eta,T)$ and $\omega(\eta,T)$ are decreasing in $T$,
so their $T$-derivatives $\xi'(\eta,T), \omega'(\eta,T) \leq 0$. 

Since $c_0, c_2 > 0$ (otherwise $\Delta = 0$), the integrand of \eqref{eq:M4} is $O(M^{\frac{1}{\beta_0}-1})$ as $M \to 0$ and 
$O(M^{-\frac{1}{\beta_2}-1})$ as $M \to \infty$. Hence the integral is increasing and bounded in $T$.
But this means that $G(T) T$ is increasing in $T$ and bounded as well.
Since by \eqref{eq:G(T)}
$$
G(T) T  = \left(\xi(\eta,T) T^{\beta_2} \right)^{\frac{1}{\beta_2}} 
\eta^{\frac{1}{\beta_0}} (c_0 \xi(\eta,T)^2 + c_2 \eta^2 )^{1-\frac{1}{2 \beta_0} - \frac{1}{2 \beta_2}},
$$
we find by combining with \eqref{eq:M4}  that 
\begin{eqnarray*}
\xi_0(\eta) := \lim_{T\to\infty}  \xi(\eta,T) T^{\beta_2} 
&=& \lim_{T \to \infty} (G(T)T)^{\beta_2}
\eta^{-\frac{\beta_2}{\beta_0}} (c_0 \xi(\eta,T)^2 + c_2 \eta^2 )^{-\beta_2(1-\frac{1}{2 \beta_0} - \frac{1}{2 \beta_2})} \\
&=& c_2^{-\frac1u} \eta^{- \frac{a_2}{b_2} } \left( \int_0^\infty \frac{ dM }
{ M^{1-\frac{1}{\beta_0}}  \left( c_0 + c_2 M^2  \right)^{ \frac{1}{2 \beta_0} + \frac{1}{2 \beta_2} } }
\right)^{\beta_2},
\end{eqnarray*}
where we have used $-\beta_2(1-\frac{1}{2 \beta_0} - \frac{1}{2 \beta_2}) = -\frac{2 \beta_2}{u+v+2 } = -\frac{1}{u}$
for the exponent of $c_2$, and $-\frac{\beta_2}{\beta_0} - 2\beta_2(1-\frac{1}{2\beta_0}-\frac{1}{2\beta_2}) 
= 1-2\beta_2 = -\frac{a_2}{b_2}$ for the exponent of $\eta$.
This shows that
\begin{equation}\label{eq:G(T)2}
G(T) \sim c_2^{1-\frac{1}{2\beta_0}-\frac{1}{2\beta_2}} 
\xi_0(\eta)^{\frac{1}{\beta_2}} \eta^{2-\frac{1}{\beta_2}}\, T^{-1} =: G_0 T^{-1} \quad \text{ as } T \to \infty.
\end{equation}
 
{\bf Estimating the integral $\Theta(T)$:}
Recalling $M = y/x$ and using homogeneity of $\theta$, we have $\theta(x,y) = x^\rho \theta(1,M)$, and 
$\theta_0 := \theta(1,0)$ and $\theta_\infty := \theta(0,1) = \lim_{M \to \infty} M^{-\rho} \theta(1,M)$
are non-zero by assumption.
Inserting the above into the integral of \eqref{eq:M4}, and using \eqref{eq:x} to rewrite $x$, we obtain
\begin{equation}\label{eq:Theta-integral}
\Theta(T) = G(T)^{\frac{\rho}{2}-1} 
\int_{\omega/\zeta_0}^{\eta/\xi} 
\frac{ M^{-\frac{\rho}{2\beta_0}} (c_0+c_2M^2)^{-\frac{\rho}{2}(1-\frac{1}{2\beta_0}-\frac{1}{2\beta_0})} 
\theta(1,M) }
{  M^{1-\frac{1}{\beta_0}}  \left(c_0+c_2 M^2  \right)^{ \frac{1}{2\beta_0} + \frac{1}{2\beta_2} } }
\, dM.
\end{equation}
Set $\rho_0 := \frac{\rho}{2} - (1+\frac{\rho}{2}) (\frac{1}{2\beta_0} + \frac{1}{2\beta_2})$.
The leading terms in the integrand of \eqref{eq:Theta-integral} are
\begin{equation}\label{eq:lead12}
\begin{cases}
\theta_0 c_0^{\rho_0} M^{-1+(1-\frac{\rho}{2})\frac{1}{\beta_0}} & \text { as } M \to 0;\\
\theta_\infty c_2^{\rho_0} M^{-1-(1-\frac{\rho}{2})\frac{1}{\beta_2}} & \text { as } M \to \infty.
\end{cases}
\end{equation}

{\bf The case $\rho < 2$:}
By \eqref{eq:lead12}, we have for $\rho < 2$ that the exponent of $M$ is $> -1$ as $M \to 0$
and $< -1$ as $M \to \infty$.
This means that the integral in \eqref{eq:Theta-integral} converges to some constant 
$$
C^* = \int_0^\infty
\frac{ M^{-\frac{\rho}{2\beta_0}} (c_0+c_2M^2)^{-\frac{\rho}{2}(1-\frac{1}{2\beta_0}-\frac{1}{2\beta_0})} \theta(1,M) }
{  M^{1-\frac{1}{\beta_0}}  \left(c_0+c_2 M^2  \right)^{ \frac{1}{2\beta_0} + \frac{1}{2\beta_2} } }
\, dM
$$
as $T \to \infty$,
and $\Theta \sim C^*  G(T)^{\frac{\rho}{2}-1}  \sim C_\rho T^{1-\frac{\rho}{2}}$
for $C_\rho = \left( c_2^{1-\frac{1}{2\beta_0} - \frac{1}{2\beta_2}} \xi_0(\eta)^{\frac{1}{\beta_0}} \eta^{2-\frac{1}{\beta_2}} \right)^{\frac{\rho}{2}-1} C^*$
by \eqref{eq:G(T)}.
This finishes the proof for $\rho < 2$.

{\bf The case $\rho > 2$:} The value of $\Theta(T)$ based on the leading terms \eqref{eq:lead12}
of the integrand only is
$$
\Theta(T) \sim  \frac{G(T)^{\frac{\rho}{2}-1}}{\frac{\rho}{2}-1} \left(
\beta_0 \theta_0 c_0^{\rho_0} \left(\frac{\omega}{\zeta_0} \right)^{(1-\frac{\rho}{2})\frac{1}{\beta_0}} 
+
\beta_2 \theta_\infty c_2^{\rho_0} 
\left(\frac{\eta}{\xi} \right)^{-(1-\frac{\rho}{2})\frac{1}{\beta_2}}  \right).
$$
Insert $\xi = \xi_0(\eta) T^{-\beta_2} (1+o(1))$ and $\omega = \omega_0(\eta) T^{-\beta_0} (1+o(1))$ from \cite[Proposition 2.1]{BT17}:
$$
\Theta(T) \sim  \frac{ G(T)^{\frac{\rho}{2}-1}}{\frac{\rho}{2}-1} \left(
\beta_0 \theta_0 c_0^{\rho_0} 
\left(\frac{\zeta_0}{\omega_0(\eta)} \right)^{\frac{\rho/2-1}{\beta_0}} 
+
\beta_2 \theta_\infty c_2^{\rho_0} 
\left(\frac{\eta}{\xi_0(\eta)} \right)^{\frac{\rho/2-1}{\beta_2}} \right)
T^{\frac{\rho}{2}-1}.
$$
Next, by inserting the asymptotics of $G(T)$ from \eqref{eq:G(T)2}, the factor $T^{\frac{\rho}{2}-1}$ cancels:
$$
\Theta(T) \sim C_\rho := \frac{ G_0^{\frac{\rho}{2}-1}}{\frac{\rho}{2}-1} 
 \left(
\beta_0 \theta_0 c_0^{\rho_0} 
\left(\frac{\zeta_0}{\omega_0(\eta)} \right)^{\frac{\rho/2-1}{\beta_0}} 
+
\beta_2 \theta_\infty c_2^{\rho_0} 
\left(\frac{\eta}{\xi_0(\eta)} \right)^{\frac{\rho/2-1}{\beta_2}} 
\right).
$$

{\bf The case $\rho = 2$:} The factor $G(T)^{\frac{\rho}{2}-1}$ in \eqref{eq:Theta-integral} now
disappears and the leading terms \eqref{eq:lead12}
are $\theta_0 c_0^{\rho_0} M^{-1}$ and $\theta_\infty c_2^{\rho_0} M^{-1}$ respectively.
This gives
$$
\Theta(T) \sim \int_{\omega/\zeta_0}^{\eta/\xi} \frac{\theta_0 c_0^{\rho_0}+\theta_\infty c_2^{\rho_0}}{M} \, dM
=  \left( \theta_0 c_0^{\rho_0}+\theta_\infty c_2^{\rho_0} \right) 
\left(\log \frac{\eta}{\xi} - \log \frac{\omega}{\zeta_0}\right).
$$
Inserting again the values of $\xi$ and $\omega$ from \cite[Proposition 2.1]{BT17} gives
$$
\Theta(T) \sim  \left( \theta_0 c_0^{\rho_0}+\theta_\infty c_2^{\rho_0} \right) 
(\beta_0+\beta_2) \log T \quad \text{ as } T \to \infty.
$$
This completes the proof.
\end{pfof}

\subsection{Description via suspension flows, tail estimates of the roof function}
\label{subsec-suspAn}
The $3$-dimensional time-$1$ map $\Phi_1$ preserves no $2$-dimensional submanifold of $\cM$. 
Yet in order to model $\Phi_t$ as a suspension flow over a $2$-dimensional map, we need a genuine
Poincar\'e map.
For this we choose a section $\Sigma$ transversal to $\Gamma$ and containing a neighbourhood $U$ of $p$.
As an example, $\Sigma$ could be $\T^2 \times \{ 0 \}$, and the Poincar\'e map to $\T^2 \times \{ 0 \}$ 
could be (a local perturbation of) Arnol'd's cat map; in this case (and most cases)
$\cM$ is not homeomorphic to $\T^3$ because the homology is more complicated, see \cite{BF13, N76}.

Let $h:\Sigma \to \R^+$, $h(q) = \min\{ t > 0 : \Phi_t(q)  \in \Sigma\}$ be the first return time.
Assuming that $\sup_\Sigma |w(x,y)| < 1$, the
first return time $h$ is bounded and bounded away from zero, i.e.,
$0 < \inf_{\Sigma} h < \sup_{\Sigma} h$. There is no loss of generality in assuming that $\inf_{\Sigma}h \geq 1$.

The Poincar\'e map $f := \Phi_h: \Sigma \to \Sigma$ 
has a neutral saddle point $p$ at the origin. Its local stable/unstable manifolds
are $W^{\s}_{loc}(p) = \{ 0 \} \times (-\eps,\eps)$ and 
$W^{\u}_{loc}(p) = (-\eps,\eps) \times \{ 0 \}$.
Because the flow $\Phi_t$ is a perturbation of an Anosov flow, and $f$ is 
a Poincar\'e map, it has a finite Markov partition $\{P_i\}_{i \geq 0}$
and we can assume that $p$ is in the interior of $P_0$.
In the sequel, let $U$ be a neighbourhood of $p$ that is small enough that 
\eqref{eq:polvf} is valid on $U \times [0,1]$ but also that $f(U) \supset \hat P_0 \cup P_0$.

In order to regain the hyperbolicity lacking in $f$, let 
\begin{equation}\label{eq:firstreturn}
\rf(q) := \min\{ n \geq 1 : f^n(q) \in Y \}
\end{equation}
be the first return time to $Y := \Sigma \setminus P_0$.
Then the Poincar\'e map $F = f^{\rf} = \Phi_\tau$ of $\Phi_t$ to
$Y \times \{ 0 \}$ is hyperbolic (see \cite[Section 7]{Hu00}), where
\begin{equation*}%\label{eq:tauq}
\tau(q) = \min\{ t > 0 : \Phi_t((q,0)) \in Y \times \{ 0 \} \} = \sum_{j=0}^{\rf-1} h \circ f^j 
\end{equation*}
is the corresponding first return time. 

Consequently, the flow $\Phi_t:\cM \times \R \to \cM$ can be modelled as a suspension flow
on $Y^\tau = \left( \bigcup_{q \in Y} \{ q \} \times [0,\tau(q)) \right)/(q,\tau(q)) \sim (F(q),0)$.
Since the flow and section $Y \times \{ 0 \}$ are $C^1$ smooth,
$\tau$ is $C^1$ on each piece $\{\rf = k\}$.

\begin{lemma}\label{lem:tau}
In the notation of Proposition~\ref{prop:w-integral} with $\theta = w$ from the $z$-component in \eqref{eq:polvf}, we have
$\tau(q) = \hat\tau(q) + O(1)$, $\rf = \hat\tau(q)+\Theta(\hat\tau(q))+O(1)$
and $\Theta(\hat\tau(q)) = o(\hat\tau(q))$ as $q \to 0$.
\end{lemma}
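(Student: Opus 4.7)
The plan is to exploit the skew-product decomposition $\Phi_t(q,0) = (\Phi_t^{\mathrm{hor}}(q),\, t + \Theta_q(t))$ (with the $z$-coordinate mod $1$), where $\Theta_q(t) := \int_0^t w(\Phi_s^{\mathrm{hor}}(q,0))\,ds$. The key is that Poincar\'e crossings of $\Sigma = \{z=0\}$ are governed by the vertical coordinate $z(t) = t + \Theta_q(t)$, while the horizontal exit from $\hat P_0$ is a purely horizontal event measured by $\hat\tau$.

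For $\tau(q) = \hat\tau(q) + O(1)$: by the Markov structure of Section~\ref{subsec-ode}, inherited by the Poincar\'e map $f$ since the perturbation leaves the local stable/unstable manifolds of $\Gamma$ invariant (so $P_0$ and $\hat P_0$ share boundaries), the horizontal orbit $\Phi_t^{\mathrm{hor}}(q)$ is contained in $\hat P_0$ on $[0,\hat\tau(q)]$ and then exits through $\hat W^{\s}$ into $Y$. Thus $\tau(q) \geq \hat\tau(q)$, and since we may arrange $\sup|w| < \tfrac12$ so that $\dot z \in [\tfrac12,\tfrac32]$, the next time $z(t) \in \Z$ occurs at most $2$ units later. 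Hence $\tau(q) = \hat\tau(q) + O(1)$.

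For $\rf(q) = \hat\tau(q) + \Theta(\hat\tau(q)) + O(1)$: since $z$ is strictly increasing, $z(\tau(q))$ is an integer, and $z$ advances by exactly $1$ between consecutive Poincar\'e returns, so $\rf(q) = z(\tau(q)) = \tau(q) + \Theta_q(\tau(q))$ on the nose. Substituting $\tau(q) = \hat\tau(q)+O(1)$ and using $|\Theta_q(\tau(q)) - \Theta_q(\hat\tau(q))| \le \sup|w|\cdot O(1) = O(1)$ yields the claim. For the last assertion, Proposition~\ref{prop:w-integral} applied with $\theta = w$ gives $\Theta(T) = O(T^{1-\rho/2})$, $O(\log T)$, or $O(1)$ according as $\rho < 2$, $=2$, or $>2$; each of these is $o(T)$ (for $\rho > 0$), and $\hat\tau(q) \to \infty$ as $q \to 0$, giving $\Theta(\hat\tau(q)) = o(\hat\tau(q))$.

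The main obstacle is the first identification, that the horizontal residence time in $\hat P_0$ coincides with $\hat\tau(q)$ up to an $O(1)$ error. This rests on the compatibility of the two Markov partitions observed in Section~\ref{subsec-ode}: the $w$-perturbation does not disturb the local stable/unstable manifolds of $\Gamma$, so the exit sides of $\hat P_0$ and $P_0$ agree. The remaining bookkeeping reduces to writing $z(t) = t + \Theta_q(t)$ and applying Proposition~\ref{prop:w-integral}.
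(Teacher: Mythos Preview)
Your proof is correct and follows essentially the same approach as the paper's own argument: both use that the horizontal exit through $\hat W^{\s}$ fixes $\tau$ up to $O(1)$, and that the vertical displacement $t+\Theta_q(t)$ counts Poincar\'e crossings, hence $\rf$. Your write-up is in fact more explicit than the paper's (you spell out the skew-product coordinate $z(t)=t+\Theta_q(t)$ and the exact identity $\rf(q)=z(\tau(q))$, whereas the paper just says the vertical displacement ``gives the number of times the flow-line intersects $\Sigma$''). One small caveat: your strict inequality $\tau(q)\ge\hat\tau(q)$ leans on identifying $P_0$ with $\hat P_0$; the paper is agnostic here and only claims the difference is bounded of either sign, which is all that is needed.
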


\begin{proof}
By the definition of $\hat\tau$ we have $\Phi_{\hat\tau}^{hor}(q) \in \hat W^{\s}$. Therefore it takes a bounded amount of
 time (positive or negative) for $\Phi_{\hat\tau}(q,0)$ to hit $Y \times \{ 0 \}$, so $|\tau(q)-\hat\tau(q)| = O(1)$.
 
If in \eqref{eq:w-integral} we set $\theta = w$, then $\hat\tau(q) + \Theta(\hat\tau(q))$ indicates the vertical 
displacement under the flow $\Phi_t$. In particular, it gives the number of times the flow-line intersects 
$\Sigma$, and hence $\rf = \hat\tau(q) + \Theta(\hat\tau(q)) + O(1)$. 
\end{proof}

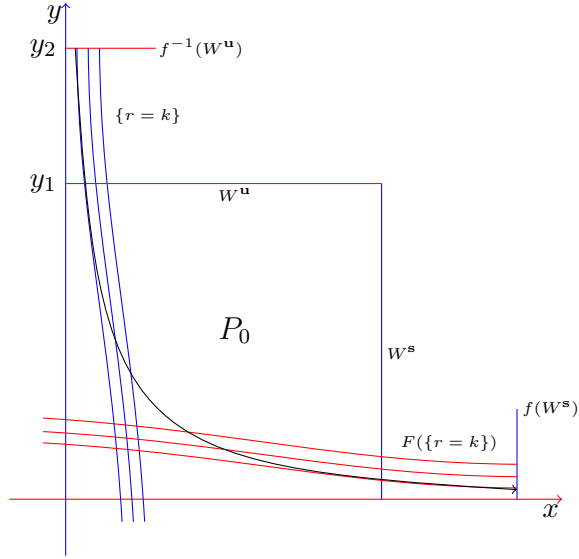
\begin{figure}[ht]
\begin{center}
\begin{tikzpicture}[scale=1.5]
\node at (4.3,-0.1) {\small $x$}; 
\node at (-0.1,4.3) {\small $y$}; \node at (-0.2,2.8) {\small $y_1$};\node at (-0.2,4) {\small $y_2$};
\draw[->, draw=red] (-0.5,0)--(4.4,0);
\draw[->, draw=blue] (0,-0.5)--(0,4.4);
\node at (0.73,3.4) {\tiny $\{ \rf = k\}$};
\node at (3.4,0.5) {\tiny $F(\{\rf = k\})$};
\node at (1.5,1.5) {$P_0$};
\node at (1.5,2.7) {\tiny $W^{\u}$}; \node at (1.2,4) {\tiny $f^{-1}(W^{\u})$};
\node at (2.98,1.3) {\tiny $W^{\s}$}; \node at (4.3,0.8) {\tiny $f(W^{\s})$};
\draw[-, draw=red] (0,4)--(0.8,4); \draw[-, draw=blue] (4,0)--(4,0.8); 
\draw[-, draw=blue] (0.1,4) .. controls (0.11,2.5) and (0.4,1.5) .. (0.5,-0.2);
\draw[-, draw=blue] (0.2,4) .. controls (0.21,2.5) and (0.5,1.5) .. (0.6,-0.2);
\draw[-, draw=blue] (0.3,4) .. controls (0.31,2.5) and (0.6,1.5) .. (0.7,-0.2);
\draw[-, draw=blue] (2.8,2.8)--(2.8,-0.0);
\draw[-, draw=red] (4, 0.1) .. controls (2.5, 0.11) and (1.5, 0.4) .. (-0.2, 0.5);
\draw[-, draw=red] (4, 0.2) .. controls (2.5, 0.21) and (1.5, 0.5) .. (-0.2, 0.6);
\draw[-, draw=red] (4, 0.31) .. controls (2.5, 0.32) and (1.5, 0.61) .. (-0.2, 0.72);
\draw[-, draw=red] (2.8,2.8)--(-0.0,2.8);
\draw[->, draw=black] (0.085,4) .. controls (0.3,0.5) and (0.5,0.3) .. (4,0.085);
\end{tikzpicture}
\caption{The first quadrant of the rectangle $P_0$, with stable and unstable foliations of 
Poincar\'e map $f = \Phi_h$
drawn vertically and horizontally, respectively. Also one of the integral curves is drawn.}
\label{fig:leaves2}
\end{center}
\end{figure} 

Let $\phi:Y^\tau \to \R$, $\phi(q) = \lim_{t\to 0} -\frac1t \log | D\Phi_t|_{W^{\u}}(q)|$ 
be the potential for the SRB measure of the flow.
Let $\mu_{\bar\phi}$ be the $F$-invariant equilibrium measure of the potential $\bar\phi:Y \to \R$,
$\bar\phi(q) = \int_0^{\tau(q)} \phi \circ \Phi_t(q) \, dt= -\log | D F|_{W^{\u}}(q)|$; so $\bar\phi$ is the
logarithm of the derivative in the unstable direction of the Poincar\'e map $F$.
This is at the same time the SRB-measure for $F$ and thus is absolutely continuous
conditioned to unstable leaves.

\begin{prop}\label{prop:tailtau}
Recall that $\beta = \frac{a_2+b_2}{2b_2} \in (\frac12, \infty)$.
There exists $C^* > 0$ such that
\begin{equation}\label{eq:asymp3}
\mu_{\bar \phi}(\{ \tau > t \}) = C^* t^{-\beta} (1+o(1))
\end{equation}
for the $F$-invariant SRB measure $\mu_{\bar\phi}$.
\end{prop}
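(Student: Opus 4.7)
The plan is to reduce the tail of $\tau$ to that of $\hat\tau$ via Lemma~\ref{lem:tau}, and then to compute $\mu_{\bar\phi}(\{\hat\tau > T\})$ by disintegrating $\mu_{\bar\phi}$ along the local unstable foliation and integrating the asymptotic from \eqref{eq:asymp0}.

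First, Lemma~\ref{lem:tau} gives $\tau = \hat\tau + O(1)$, so there is a constant $C>0$ with $\{\hat\tau > t+C\} \subset \{\tau > t\} \subset \{\hat\tau > t-C\}$. Hence any asymptotic of the form $A\, t^{-\beta}(1+o(1))$ for the $\hat\tau$-tail transfers verbatim, with the same leading constant $A$, to the $\tau$-tail; it therefore suffices to estimate $\mu_{\bar\phi}(\{\hat\tau>T\})$.

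Second, I observe that for large $T$ the set $\{\hat\tau > T\}$ lies in the slab $\hat f^{-1}(\hat P_0) \setminus \hat P_0 \subset Y$ and consists geometrically of a family of short unstable arcs accumulating onto $W^{\u}_{loc}(p)$. On each local unstable leaf $\hat W^{\u}(y)$ meeting the stable boundary $\hat W^{\s}$ at $(0,y)$, with $y$ in a compact subinterval $[y_1,y_2]$ bounded away from $0$, the slice $\{\hat\tau > T\} \cap \hat W^{\u}(y)$ is the arc between $(0,y)$ and the unique point with $\hat\tau = T$, and hence has arc-length $\xi(y,T)$. From \eqref{eq:asymp0} together with the explicit expression for $\xi_0(y)$ obtained inside the proof of Proposition~\ref{prop:w-integral}, one has $\xi(y,T) \sim \xi_0(y)\, T^{-\beta}$, and in fact this asymptotic is uniform in $y \in [y_1,y_2]$ because the constants in that computation depend continuously on $\eta$ on this compact range. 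Using the SRB property of $\mu_{\bar\phi}$ for $F$ (hyperbolic by \cite[Section~7]{Hu00}), I disintegrate
\begin{equation*}
\mu_{\bar\phi} \;=\; \int \bigl( \rho_y\, d\ell_y \bigr)\, d\nu(y),
\end{equation*}
where $\rho_y$ is the H\"older conditional density, uniformly bounded above and away from $0$ on compact pieces of the leaves, $\ell_y$ is arclength on $\hat W^{\u}(y)$, and $\nu$ is the transverse factor measure on the space of local unstable leaves in $\hat f^{-1}(\hat P_0)\setminus \hat P_0$.

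Third, combining these ingredients and invoking dominated convergence to exchange the limit as $T\to\infty$ with the transverse integration yields
\begin{equation*}
\mu_{\bar\phi}(\{\hat\tau > T\}) \;\sim\; T^{-\beta} \int_{y_1}^{y_2} \rho_y(0,y)\, \xi_0(y)\, d\nu(y) \;=:\; C^*\, T^{-\beta},
\end{equation*}
with $0 < C^* < \infty$ since $\rho_y(0,y)$ and $\xi_0(y)$ are continuous and strictly positive on the compact, away-from-zero range of $y$. The main obstacle in executing this plan is obtaining the uniformity in $y$ of the asymptotic $\xi(y,T) \sim \xi_0(y)\, T^{-\beta}$ with a quantitative error that is integrable against $\rho_y\, d\nu$; this is achieved by tracking the $\eta$-dependence of each constant in the first-integral computation inside the proof of Proposition~\ref{prop:w-integral}, exploiting the fact that the relevant integrals defining $\xi_0(\eta)$ depend analytically on $\eta$ for $\eta$ bounded away from $0$.
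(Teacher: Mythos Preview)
Your overall strategy coincides with the paper's: reduce the $\tau$–tail to the $\hat\tau$–tail, disintegrate $\mu_{\bar\phi}$ along an unstable foliation on the strip where $\rf\ge 2$, and feed in the leafwise asymptotic $\xi(y,T)\sim\xi_0(y)T^{-\beta}$ from \eqref{eq:asymp0}. The reduction via Lemma~\ref{lem:tau} and the dominated convergence step are both fine.

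There is, however, a genuine gap in your disintegration. You invoke the SRB property of $\mu_{\bar\phi}$ for $F$ and then write conditional densities $\rho_y$ on the leaves $\hat W^{\u}(y)$. But $\hat W^{\u}(y)$ are the unstable leaves of the time-one map $\hat f=\Phi_1^{hor}$, \emph{not} of the Poincar\'e map $f=\Phi_h$ (equivalently, of $F=f^{\rf}$). The paper explicitly points out that these two foliations do not coincide. The SRB property guarantees absolutely continuous conditionals only along the $F$-unstable leaves $W^{\u}(y)$; absolute continuity of conditionals along one $C^1$ foliation does not in general transfer to a second $C^1$ foliation when the transverse measure may be singular. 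So your assertion that $\mu_{\bar\phi}$ has H\"older densities $\rho_y$ along $\hat W^{\u}(y)$ is unjustified as written.

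The paper's fix is exactly tailored to this issue: it disintegrates along $W^{\u}(y)$ (where the SRB densities $h_0^{\u}$ live), and then compares $\mathrm{Leb}(W^{\u}(y)\cap\{\tau>t\})$ to $\mathrm{Leb}(\hat W^{\u}(y)\cap\{\tau>t\})$ via the angle $\bgamma(y)$ between the two leaves at $(0,y)$, picking up the factor $|\cos\bgamma(y)|$ in the constant $C^*$. If you rewrite your argument to disintegrate along $W^{\u}(y)$ and insert this angle correction, the rest of your outline goes through unchanged. (A minor slip: the short unstable arcs $\{\hat\tau>T\}\cap\hat W^{\u}(y)$ accumulate on $W^{\s}_{loc}(p)$, not on $W^{\u}_{loc}(p)$.)
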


\begin{proof}
The function $\tau$ is defined on $\Sigma \setminus P_0$ and $\tau \geq h_2 = h + h \circ f$ on
$Y_{\{\rf \geq 2\}} := f^{-1}(P_0) \setminus P_0$.
The set $Y_{\{\rf \geq 2\}}$ is a rectangle with boundaries consisting of two 
stable and two unstable leaves of the Poincar\'e map  $f$.
Let $W^{\u}(y)$ denote the unstable leaf of $f$ inside $Y_{\{\rf \geq 2\}}$
with $(0,y)$ as (left) boundary point.
Let $y_1 < y_2$ be such that $W^{\u}(y_1)$ and $W^{\u}(y_2)$ are the unstable
boundary leaves of $Y_{\{\rf \geq 2\}}$.

The unstable foliation of $\hat f = \Phi_1^{hor}$ does not entirely coincide with the unstable foliation of $f$.
Let $\hat W^{\u}(y)$ denote the unstable leaf of $\hat f$ 
with $(0,y)$ as (left) boundary point. Both $\hat W^{\u}(y)$ and $W^{\u}(y)$ are $C^1$ curves emanating from $(0,y)$; 
let $\bgamma(y)$ denote the angle between them. Then the lengths
\begin{eqnarray*}
\mbox{Leb}(W^{\u}(y) \cap \{ \tau > t\}) &=& |\cos \bgamma(y)|\ \mbox{Leb}(\hat W^{\u}(y) \cap \{ \tau > t\}) (1+o(1)) \\
&=& |\cos \bgamma(y)|\ \xi_0(y)\ t^{-\beta} (1+o(1))
\end{eqnarray*}
as $t\to\infty$, where the last equality and the notation $\xi_0(y)$ and  $\beta = (a_2+b_2)/(2b_2)$ come
from \eqref{eq:asymp0}.

We decompose the measure $\mu_{\bar\phi}$ on $Y_{\{\rf \geq 2\}}$ as 
$$
\int_{Y_{\{\rf \geq 2\}}} v\, d\mu_{\bar\phi} = 
\int_{y_1}^{y_2} \left( \int_{W^{\u}(y) } v\, d\mu^{\s}_{W^{\u}(y)} \right) d\nu^{\u}(y).
$$
The conditional measures $\mu_{W^{\u}(y)}$ on $W^{\u}(y)$
are equivalent to Lebesgue $m_{W^{\u}(y)}$ with density $h_0^{\u} = \frac{d\mu_{W^{\u}(y)}}{dm_{W^{\u}(y)}}$ tending to a 
constant $h_*(y)$ at the boundary point $(0,y)$.
Therefore, as $t \to \infty$,
\begin{eqnarray*}
 \mu_{\bar\phi}(\tau > t) &=& \int_{y_1}^{y_2} \mu_{W^{\u}(y)}(W^{\u}(y) \cap \{ \tau > t\} ) \, d\nu^{\u}(y) \\
  &=& \int_{y_1}^{y_2} h_0^{\u}\ m_{W^{\u}(y)}( W^{\u}(y) \cap \{ \tau > t\} ) \, d\nu^{\u}(y) \\
   &=& \int_{y_1}^{y_2} h_0^{\u} |\cos \bgamma(y)|\ m_{\hat W^{\u}(y)}(\hat W^{\u}(y) \cap \{ \tau > t\} ) (1+o(1)) \, d\nu^{\u}(y) \\
    &=& \int_{y_1}^{y_2} h_0^{\u} |\cos \bgamma(y)|\ \xi_0(y)\ t^{-\beta} (1+o(1)) \, d\nu^{\u}(y) 
    = C^* t^{-\beta}(1+o(1)),
\end{eqnarray*}
for $C^* = \int_{y_1}^{y_2} h_*(y) |\cos \bgamma(y)| \ \xi_0(y) \, d\nu^{\u}(y)$,
and using \eqref{eq:asymp0} in the third line.
This proves the result.
\end{proof}

\subsection{Main results for the Poincar{\'e} map $F$ and flow $\Phi_t$}
\label{subsec-mainres}

Throughout this section we assume the setup and notation of Subsection~\ref{subsec-suspAn}.
We emphasise that we are in the finite measure setting, so 
\begin{equation}\label{eq:tau*} 
 \tau^* := \int_Y \tau \, d\mu_{\bar\phi} < \infty.
\end{equation}
We recall that the natural potential associated to the SRB-measure for $F$ is
$\bar\phi = -\log | D F|_{W^{\u}}$, which is the induced version of $\phi = \lim_{t\to 0}\frac{-\log | D\Phi_t|_{W^{\u}}}{t}$.
Our main result 
can be viewed as a version of the results in~\cite{Sar06} for the flow $\Phi_t$; it gives gives the link 
between limit theorems for $(\Phi_t,\psi)$ for (unbounded from below) potentials $\psi$ on $\cM$
and pressure function $\mathcal P(\phi+s\psi), s\ge 0$.  
We let $\overline{\phi+s \psi}, s\ge 0$ be the family of  induced potentials  and denote the associate pressure 
function by  $\cP(\overline{\phi+s\psi})$.
Some background on equilibrium states and pressure function (along with their induced versions)
is recalled in Section~\ref{sec-back}.

\begin{thm}
\label{thm-conclAn} 
Suppose $\bar\psi(y) := \int_0^\tau \psi \circ \Phi_t \, dt =C'-\psi_0$ where $C'>0$ and $\psi_0$ is 
positive piecewise $C^1(Y)$.  
Let $\beta > 1$ be as in Proposition~\ref{prop:tailtau},
Furthermore, suppose that 
$\mu_{\bar\phi}(\psi_0 > t) \sim ct^{-\frac\beta\kappa}$ 
for some $c>0$ and $\kappa  \in (1/\beta, \beta)$.
Set $\psi^* =\int_Y \bar \psi\,d \mu_{\bar\phi}$ and let $\psi_T= \int_0^T \psi \circ \Phi_t \, dt$.
The following hold as $T\to\infty$, w.r.t.\, $\mu_{\bar\phi}$.

\begin{itemize}
\item[(a)] 

\begin{itemize}
\item[(i)] When $\beta<2$ and $\kappa\in (\beta/2,\beta)$,  set $b(T)$ such that $\frac{T}{(c b(T))^{\frac{\beta}{\kappa}}}\to 1$.
 
Then  $\frac{1}{b(T)}(\psi_T-\psi^*\cdot T)\to^d G_{\beta/\kappa}$,  where $G_{\beta/\kappa}$ is a stable law of index $\beta/\kappa$
and this is, further, equivalent to $\cP(\overline{\phi+s\psi})=\psi^*s+c s^{\beta/\kappa}(1+o(1))$, as $s\to 0$. 

\item [(ii)] When $\beta\leq 2$ but $\kappa=\beta/2$, set $b(T)\sim c^{-1/2} \sqrt{T\log T}$.

Then $\frac{1}{b(T)}(\psi_T-\psi^*\cdot T)\to^d \cN(0,1)$ and this is, further, equivalent to 
$\cP(\overline{\phi+s\psi})=\psi^* s+c s^2 \log(1/s)(1+o(1))$, as  $s\to 0$. 
\end{itemize}

\item[(b)] Suppose that $\beta>2$. Then there exists $\sigma\ne 0$ such that 
$\frac{1}{\sqrt T}(\psi_T- \psi^*\cdot T)\to^d \cN(0,\sigma^2)$ and this is, further, equivalent to $\cP(\overline{\phi+s\psi})=\psi^* s+\frac{\sigma^2}{2}s^2 (1+o(1))$ 
as  $s\to 0$. 
\end{itemize}
Moreover, $\cP(\phi+s\psi)=\frac{1}{\tau^*}\cP(\overline{\phi+s\psi}) (1+o(1))$.~\end{thm}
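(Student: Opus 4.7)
The plan is to deduce Theorem~\ref{thm-conclAn} from the abstract machinery developed in Sections~\ref{sec-abstr}--\ref{sec-concl}, which already packages together the equivalence between the asymptotic shape of the pressure function $\cP(\phi+s\psi)$ and the limit theorems for $\psi_T$. Concretely, the abstract Theorem~\ref{thm-concl} applied to the Poincar\'e map $F:Y\to Y$ and the induced potential $\bar\phi$ reduces everything to (i) checking the abstract functional analytic hypotheses of Section~\ref{sec-abstr} for $F$, and (ii) feeding in the correct tail asymptotics for the roof function $\tau$ and for the induced observable $\bar\psi$.

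First, I would collect the tail input. Proposition~\ref{prop:tailtau} gives $\mu_{\bar\phi}(\tau>t)\sim C^* t^{-\beta}$ with $\beta>1$, so we are indeed in the finite-measure regime \eqref{eq:tau*}. The hypothesis $\bar\psi=C'-\psi_0$ with $\mu_{\bar\phi}(\psi_0>t)\sim c\,t^{-\beta/\kappa}$ gives polynomial lower tails for $\bar\psi$ with index $\beta/\kappa$, which is precisely the tail index that the abstract Proposition~\ref{prop-limthF} converts into a stable law, a boundary-CLT, or a standard CLT according to whether $\beta/\kappa\in(1,2)$, $\beta/\kappa=2$, or $\beta/\kappa>2$. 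The three cases (a)(i), (a)(ii), (b) of the theorem correspond exactly to these three tail regimes (note that $\kappa<\beta/2$ or simply $\beta>2$ with $\kappa\in(1/\beta,\beta)$ land in the $\beta/\kappa>2$ regime after checking the cases).

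Next, I would apply the lifting result from Section~\ref{sec-lmtflow} (Proposition~\ref{prop-limthflow}, built on \cite[Theorem 7]{Sar06} and \cite[Theorem~1.3]{MTor}) to pass from the limit theorem for Birkhoff sums of $\bar\psi$ under $F$ to the corresponding limit theorem for $\psi_T=\int_0^T\psi\circ\Phi_t\,dt$ under the flow, with the correct normalization $b(T)$ determined by $\tau^*$ and $c$. The equivalence with the pressure asymptotics on the induced side, $\cP(\overline{\phi+s\psi})=\psi^* s+c s^{\beta/\kappa}(1+o(1))$ etc., is part of Theorem~\ref{thm-concl} (or Theorem~\ref{thm-relpres} combined with the abstract eigenvalue/pressure identification of Section~\ref{sec-pressure}). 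Finally, Theorem~\ref{thm-relpres} gives the asymptotically linear relation
\[
\cP(\phi+s\psi)=\tfrac{1}{\tau^*}\,\cP(\overline{\phi+s\psi})(1+o(1)) \quad \text{as } s\to 0,
\]
which is the last sentence of the statement.

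The one substantive piece of work is verifying the abstract hypotheses of Proposition~\ref{prop-limthF} and Theorem~\ref{thm-concl} for this particular $F$, which is carried out in Appendix~\ref{sec-ver}. I expect this to be the main obstacle: one must exhibit a Banach space of distributions on $Y$ on which the (twisted) transfer operator of $F$ has a spectral gap, with a leading eigenvalue identifiable with $\exp\cP(\overline{\phi+s\psi})$ in the sense of \cite[Theorem~4]{Sar99}. The hyperbolicity of $F$ follows from \cite[Section~7]{Hu00}, and the Markov partition inherited from the unperturbed Anosov flow is already in place; the delicate points are the piecewise $C^1$/H\"older regularity of $F$, $\bar\phi$ and $\bar\psi$ on the countable partition $\{\rf=k\}$, together with controlling the distortion given that the roof function $\tau$ (and hence $\bar\psi$) is \emph{not} constant on stable leaves, for which Proposition~\ref{prop:w-integral} and the decomposition of $\mu_{\bar\phi}$ along stable/unstable leaves used in the proof of Proposition~\ref{prop:tailtau} supply the needed quantitative control.
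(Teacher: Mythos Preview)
Your proposal is correct and matches the paper's own proof essentially verbatim: the paper deduces Theorem~\ref{thm-conclAn} directly by applying the abstract Theorem~\ref{thm-concl} to $(\Phi_t,\phi)$, with the tail input coming from Proposition~\ref{prop:tailtau} and all the functional-analytic hypotheses (including the coboundary condition needed for $\sigma\neq 0$ in part~(b)) deferred to Appendix~\ref{sec-ver}. Your identification of where the real work lies---verifying \ref{H1}--\ref{H8} for the specific Banach spaces adapted to $F$, and handling the fact that $\tau$ and $\bar\psi$ are not constant on stable leaves---is exactly the content of that appendix.
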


\begin{rmk}\label{rem:psi}  
If $\psi:\cM \to \R$ satisfies the following properties, then the condition on 
$\bar\psi$ in Theorem~\ref{thm-conclAn} holds. In local tubular coordinates
near $\{ p \} \times \T$, $\psi = g_0-g(W(x,y))$ where $g_0$ is H\"older function vanishing on a neighbourhood of $p$, and $W(x,y)$ is a linear combination of 
homogeneous functions depending only on $(x,y)$, such that the term $W_\rho(x,y)$ with lowest exponent $\rho$ satisfies
$W_\rho(0,1) \neq 0 \neq W_\rho(1,0)$, and $g:\R \to \R$ is analytic with $g(0) = 0$, $g'(0) \neq 0$. 
Then $\kappa = 1-\frac{\rho}{2}$ (see Proposition~\ref{prop:w-integral}), 
so assuming that this lowest
exponent $\rho \in (2(1-\beta), 2(\beta-1)/\beta)$, then $\kappa \in (\frac{1}{\beta},\beta)$.
\end{rmk}

\begin{pfof}{Theorems~\ref{thm-conclAn}}
This follow directly
from applying Theorem~\ref{thm-concl} $(\Phi_t,\phi)$, which uses a special 
case of Proposition~\ref{prop-limthF}.
Appendix~\ref{sec-ver} verifies all the abstract hypotheses of Proposition~\ref{prop-limthF} and Theorem~\ref{thm-concl} 
for the flow $\Phi_t$ with base map $F$ and roof function $\tau$ introduced in Section~\ref{sec-Aaflow}
and potential $\psi$ defined in the statement of Theorem~\ref{thm-conclAn}. The extra assumptions (including coboundary assumptions) 
that ensure $\sigma\ne 0$ in Proposition~\ref{prop-limthF}(ii) and Theorem~\ref{thm-concl} (b) are verified in 
Appendix~\ref{subsec-coboundary}.~\end{pfof}

\section{Background on equilibrium states for suspension flows}
\label{sec-back}

Here we outline the standard general theory of thermodynamic formalism for suspension flows. 
We start with a discrete time dynamical system $F:Y\to Y$.  Defining $\cM_F$ to be the set of $F$-invariant 
probability measures, for a potential $\psi:Y\to [-\infty, \infty]$ we define the pressure as
$$\cP(\psi):=\sup\left\{h(\mu)+\int\psi~d\mu:\ \mu\in \cM_F \text{ and }  -\int\psi~d\mu<\infty\right\}.$$
Given a roof function $\tau:Y\to \R^+$, let $Y^\tau=\{(y,z)\in Y\times\R: 0\le z\le \tau(y)\}/\sim$
where $(y,\tau(y))\sim(Fy,0)$.
Then the suspension flow $F_t:Y^\tau\to Y^\tau$ is given by
$F_t(y,z)=(y,z+t)$ computed modulo identifications.
 We will suppose throughout that $\inf\tau>0$, but note that the case $\inf\tau=0$ can also be handled, 
 see for example \cite{Sav98,IJT15}.  Barreira and Iommi \cite{BarIom06} define pressure as
\begin{equation}
\cP(\phi):=\inf\{u\in \R:\cP(\bar\phi-u\tau)\le 0\}\label{eq:flow pres}
\end{equation}
in the case that $(Y, F)$ is a countable Markov shift and the  potential $\phi:Y^\tau\to \R$ induces to  
$ \bar\phi:Y\to \R$, where 
 \begin{equation}\bar\phi(x):=\int_0^{\tau(x)}\phi(x, t)\, dt,
 \label{eq:ind pot}
 \end{equation}
has summable variations. This also makes sense for general suspension flows, 
so we will take \eqref{eq:flow pres} as our definition.
In \cite{AmbKak42}, there is a bijection between $F_t$-invariant probability measures $\mu$, and the  
corresponding $F$-invariant probability measures $\bar\mu$ given by the identification 
$$
\mu=\frac{\bar\mu\times m}{(\bar\mu\times m)(Y^\tau)},
$$
where $m$ is Lebesgue measure. That is, whenever there is such a $\mu$ there is such a $\bar\mu$, and vice versa. 
Moreover, Abramov's formula for flows \cite{Abr59b} gives the following characterisation of entropy:
\begin{equation}
\label{eq-Abr}
h(\mu)=\frac{h(\bar\mu)}{(\bar\mu\times m)(Y^\tau)},
\end{equation}
and clearly 
\begin{equation}\label{eq-projmeas}
\int\phi~d\mu=\frac{\int\bar\phi~d\bar\mu}{(\bar\mu\times m)(Y^\tau)} = \frac{\int\bar\phi~d\bar\mu}{\int \tau \, d\bar\mu}.
\end{equation}
One consequence of these formulas is that the pressure in \eqref{eq:flow pres} is independent of the choice of cross section $Y$.
This follows essentially 
from the fact that if we choose a subset of $Y$ and reinduce there, then 
Abramov's formula gives the same value for pressure (here we can use the discrete version of Abramov's formula \cite{Abr59a}).   
Note that we say `Abramov's formula' in both the continuous and discrete time cases as 
the formulas are analogous \cite{Abr59a, Abr59b}, and similarly for integrals, where the formula holds by the ergodic 
and ratio ergodic theorems.

We say that $\mu_\phi$ is an \emph{equilibrium state} for $\phi$ if $h(\mu_\phi)+\int\phi~d\mu_\phi=\mathcal P(\phi)$.  The same notion extends to the induced system $(Y, F, \bar\phi)$.  In that setting we may also have a \emph{conformal measure}  $m_{\bar\phi}$ for $\bar\phi$.  This means that  $m_{\bar\phi}(F(A))=\int_A e^{-\bar\phi}\, d m_{\bar\phi}$
for any measurable set $A$ on which $F$ is injective.
Later on, in order to link our limit theorem results with pressure, we will assume that $F:Y\to Y$ is Markov.  In that setting our assumptions on $\phi$ will be equivalent to assuming $\mathcal P(\phi)=0$, in which case we will have a equilibrium states and conformal measures $\mu_{\bar\phi}$ and $m_{\bar\phi}$.

\section{Abstract setup}
\label{sec-abstr}

We start with a flow $f_t:\cM \to \cM$, where $\cM$ is a manifold. Let $Y$ be a co-dimension $1$ section of $\cM$
and define $\tau:Y\to\R_{+}$ to be a roof function. We think of $\tau$ as a first return of $f_t$ to $Y$ and define $F:Y\to Y$ by $F=f_\tau$.
The flow $f_t$ is isomorphic with the suspension flow $F_t:Y^\tau\to Y^\tau$, 
 $Y^\tau=\{(y,z)\in Y\times\R: 0\le z\le \tau(z)\}/\sim$, as described in Section~\ref{sec-back}. Throughout, we assume that
$\tau$ is bounded from below.

Given the potential $\phi :\cM\to  \R$ and its induced version $ \bar\phi:Y\to \R$ defined in \eqref{eq:ind pot},  we assume that there is a conformal measure  $m_{\bar\phi}$
for $(F,\bar\phi)$. 
In the rest of this section we recall the abstract framework and hypotheses in~\cite{LT16} as relevant 
for studying limit theorems.

\subsection{Banach spaces and equilibrium measures for $(F,\bar\phi)$ and $(f_t,\phi)$. }
\label{subsec-Bsp}

Let $R_0:L^1(m_{\bar\phi})\to L^1(m_{\bar\phi})$ be the transfer operator for the first return 
map $F:Y\to Y$ w.r.t.\, conformal measure $m_{\bar\phi}$, defined by duality on $L^1(m_{\bar\phi})$ via the formula
$\int_Y R_0v\,w\,d m_{\bar\phi}= \int_Y v\,w\circ F\,d m_{\bar\phi}$ for every bounded measurable $w$.

We assume that
there exist two Banach spaces of distributions $\cB$, $\cB_w$ supported on $Y$ such that for some $\alpha,\alpha_1>0$

\begin{itemize}
\item[{\bf (H1)}\labeltext{{\bf (H1)}}{H1}] \begin{itemize}
\item[(i)] $C^\alpha\subset\cB\subset\cB_w\subset (C^{\alpha_1})'$, where 
$(C^{\alpha_1})'$ is the dual of $C^{\alpha_1} = C^{\alpha_1} (Y,\C)$.\footnote{We will use systematically a ``prime" to denote the topological dual.}
\item[(ii)]  $hg\in\cB$ for all $h\in\cB$, $g\in C^\alpha$.
\item[(iii)] The transfer operator $R_0$ associated with $F$ maps continuously from $C^\alpha$ to $\cB$ and
$R_0$ admits a continuous extension to an operator from $\cB$ to $\cB$, which we still call $R_0$.
\item[(iv)] The operator $R_0:\cB\to\cB$ has a simple eigenvalue at $1$
and the rest of the spectrum is contained in a disc of radius less than $1$.
\end{itemize}
\end{itemize}

A few comments on \ref{H1} are in order and here we just recall the similar ones in~\cite{LT16}.
We note that \ref{H1}(i) should be understood in terms of the following usual convention  (see, for instance,
\cite{GL06, DemersLiverani08}): there exist continuous injective linear maps $\pi_i$ such that $\pi_1(C^\alpha)\subset \cB$, $\pi_2(\cB)\subset \cB_w$ and $\pi_3(\cB_w)\subset (C^{\alpha_1})'$.
Throughout, we leave such maps implicit, but recall their meaning here. In particular, 
we assume that $\pi=\pi_3\circ\pi_2\circ \pi_1$ is the usual 
embedding, i.e., for all $h\in C^\alpha$ and  $g\in C^{\alpha_1}$
$$
\langle \pi(h),g\rangle_0=\int_Y h g\, d m_{\bar\phi}.
$$
Via the above identification, the conformal measure $m_{\bar\phi}$ can be identified with the constant function $1$ both in $(C^{\alpha_1})'$ and in $\cB$ 
(i.e., $\pi(1)=m_{\bar\phi}$). 
Also, by \ref{H1}(i), $\cB'\subset (C^\alpha)'$, hence the dual space can be naturally viewed as a space of distributions. Next, note that $\cB'\supset (C^{\alpha_1})''\supset C^{\alpha_1}\ni 1$, thus we have $m_{\bar\phi}\in\cB'$ as well.
Moreover, since $m_{\bar\phi}\in\cB$ and $\langle1,g\rangle_0=\langle g,1\rangle_0=\int g\, d\mu^{\s}_{\bar\phi}$, $m_{\bar\phi}$ can be viewed as the element $1$  of both spaces
$\cB$ and $(C^{\alpha_1})'$.

By \ref{H1}, the spectral projection $P_0$ associated with the eigenvalue $1$ is defined by
$P_0=\lim_{n\to\infty}R_0^n$. Note that by \ref{H1}(ii), for each $g\in C^\alpha$, $P_0 g\in\cB$ and
\[
\langle P_0g,1\rangle_0=m_{\bar\phi}(P_0 g)=\lim_{n\to\infty}m_{\bar\phi}(1\cdot R_0^n g)=m_{\bar\phi}(g)=\langle g,1\rangle_0.
\]
By  \ref{H1}(iv),  there exists a unique $\mu_{\bar\phi}\in\cB$ such that $R_0\mu_{\bar\phi}=\mu_{\bar\phi}$ 
and $\langle\mu_{\bar\phi},1\rangle =1$. Thus, $P_0g=\mu_{\bar\phi} \langle g,1\rangle_0$.
Also $R_0'm_{\bar\phi}=m_{\bar\phi}$ where $R_0'$ is dual operator acting on $\cB'$. Note that for any $g\in C^{\alpha_1}$,
\begin{align*}
|\langle \mu_{\bar\phi}, g\rangle_0|=|\langle P_01, g\rangle_0|=\left|\lim_{n\to\infty}R_0^n m_{\bar\phi}(g)\right|
= \lim_{n\to\infty}\left|m_{\bar\phi}(g\circ F^n)\right|\leq |g|_\infty.
\end{align*}
Hence $\mu_{\bar\phi}$  is a measure. For each $g\geq 0$, 
\[
\langle P_01, g\rangle_0=\lim_{n\to \infty} \langle R_0^n1, g\rangle=\lim_{n\to \infty} \langle 1, g\circ F^n\rangle_0\geq 0.
\]
It follows that $\mu_{\bar\phi}$ is a probability measure. 

Summarising the above, the eigenfunction associated with the eigenvalue $\lambda=1$
 is an invariant probability measure for $F$ given by $d\mu_{\bar\phi}=h_0 d m_{\bar\phi}$ where $P_01=h_0$. Using that  $\pi(1)=m_{\bar\phi}$ we also have  $P_01=\mu_{\bar\phi}$.
Under \ref{H1}, 
the standard theory summarised in Section~\ref{sec-back} applies $\cP(\bar\phi)=\log\lambda=0$. 
 So, under \ref{H1}, $\mu_{\bar\phi}$ is an equilibrium (probability) measure for $(F,\bar\phi)$. 
Further, via~\eqref{eq-projmeas}, 
\[
\mu_\phi=\frac{\mu_{\bar\phi}\times m}{(\mu_{\bar\phi}\times m)(Y^\tau)}
\]
gives an equilibrium (probability) measure for $(f_t, \phi)$. 

\subsection{ Transfer operator $R$ defined w.r.t. the equilibrium measure $\mu_{\bar\phi}$ }
\label{subsec-tropinv}
Given the equilibrium measure $\mu_{\bar\phi}\in\cB$, we 
let $R:L^1(\mu_{\bar\phi})\to L^1(\mu_{\bar\phi})$ be the transfer operator for the first return 
map $F:Y\to Y$ w.r.t.\, the invariant measure $\mu_{\bar\phi}$ given by
$\int_Y R v\,w\,d \mu_{\bar\phi}= \int_Y v\,w\circ F\,d \mu_{\bar\phi}$ for every bounded measurable $w$.

Under \ref{H1}(i)-(iv), we further assume
\begin{itemize}
\item[{\bf (H1)}(v)\labeltext{{\bf H1}(v)}{H1v}] The transfer operator $R$ maps continuously from $C^\alpha$ to $\cB$ and
$R$ admits a continuous extension to an operator from $\cB$ to $\cB$, which we still call $R$.
\end{itemize}

By \ref{H1}(iv) and \ref{H1v}, the operator $R:\cB\to\cB$ has a simple eigenvalue at $1$
and the rest of the spectrum is contained in a disc of radius less than $1$. 
While the spectra of $R_0$ and $R$ are the same,
the spectral projection $P=\lim_{n\to\infty}R^n$ acts differently on $\cB,\cB_w$. 
In particular, $P1=1$, while $P_01=\mu_{\bar\phi}$, $P_01=h_0$. 
Throughout the rest of this paper, for any $g\in C^\alpha$, we let
\[
\langle g, 1\rangle:=\langle g, P_0 1\rangle_0=\langle P_0 1, g\rangle_0=\int_Y g\, d \mu_{\bar\phi}
\]
and note that  $P_0g=h\langle g, 1\rangle_0$ and $Pg=\langle g, 1\rangle$.

\subsection{ Further assumptions on the transfer operator $R$}
\label{subsec-assumpop}
Given $R$ as defined in Subsection~\ref{subsec-tropinv}, for $u\ge 0$ and $\tau:Y\to\R_{+}$ we define the perturbed transfer operator
\[
\hat R(u) v := R(e^{-u\tau}v).
\]
By~\cite[Proposition 4.1]{BMT}, a general proposition on twisted transfer operators 
that holds independently of the specific properties of $F$, 
we can write for sufficiently small positive $u$,
\begin{equation}
\label{eq-RLapl}
\hat R(u) = r_0(u)\int_0^\infty M(t)\, e^{-ut}\, dt
\qquad \text{with} \qquad M(t) :=  R(\omega(t-\tau)),
\end{equation}
where $\omega:\R\to  [0, 1]$ is an integrable function with $\supp\omega \subset [-1,1]$ and $r_0$ is analytic 
such that $r_0(0)=1$.

\begin{rmk}\label{rem:r0}
In fact, $r_0(u)=1/\hat\omega(u)$ for $\hat\omega(u)=\int_{-1}^1 e^{-ut} \omega(t)\, dt$.
Therefore $\frac{d}{du}r_0(u)$ and $\frac{d}{du}r_0(u)$
are continuous functions, and for later use,
we set $\gamma_1 := \frac{d}{du} r_0(0)$, $\gamma_2 := \frac{d^2}{du^2} r_0(0)$.
\end{rmk}

Since it is short, for the reader's convenience we include the proof of~\eqref{eq-RLapl} 
(as in the proof of ~\cite[Proposition 4.1]{BMT}).

\begin{proof}
Let $\omega$ be an integrable function supported on $[-1, 1]$ 
such that $\int_{-1}^1 \omega(t)\, dt = 1$, and set $\hat\omega(u)=\int_{-1}^1 e^{-ut} \omega(t)\, dt$. Note that $\hat\omega(u)$ is analytic and $\hat\omega(0)=1$.
Since $\tau \geq h \ge 1$ (see the assumption on $h$ in Section~\ref{subsec-suspAn}) and $\supp\omega\subset [-1,1]$,
\[
\int_0^\infty \omega(t-\tau) e^{-ut}\, dt=e^{-u\tau}\int_{-\tau}^\infty \omega(t)\, e^{-ut}\, dt=e^{-u\tau}\hat\omega(u).
\]
Hence,
\[
\int_0^\infty R(\omega(t-\tau)v) e^{-ut}\, dt=R\left(\int_0^\infty \omega(t-\tau) v e^{-ut}\, dt\right)=\hat\omega(u)\hat R(u) v.
\]
Formula~\eqref{eq-RLapl} follows with $r_0(u)=1/\hat\omega(u)$, so $r_0(0) = 1$.
\end{proof}

For most results we require that
\begin{itemize}
\item[{\bf (H2)}\labeltext{{\bf (H2)}}{H2}]
There exists a function $\omega$ satisfying \eqref{eq-RLapl} and $C_\omega > 0$ such that

\begin{itemize}
\item[(i)] for any $t\in\R_{+}$ and for all $v\in C^\alpha$, $h\in\cB$, we have $\omega(t-\tau)h \in\cB_w$ and
\[
\langle \omega(t-\tau)vh, 1\rangle\le C_\omega  \|v\|_{C^\alpha}\|h\|_{\cB_w}\mu_{\bar\phi}(\omega(t-\tau));
\]
\item[(ii)] for all $T>0$, 
\[
\int_T^\infty \|M(t)\|_{\cB\to\cB_w}\, dt\le C_\omega \mu_{\bar\phi}(\tau>T).
\]
\end{itemize}
\end{itemize}

Further, we assume the usual Doeblin-Fortet inequality:
\begin{itemize}
\item[{\bf (H3)}\labeltext{{\bf (H3)}}{H3}] There exist $\sigma_0>1$ and $C_0, C_1>0$ such that for all $h\in\cB$,
for all $n\in\N$
and for all $u>0$,
\[
\|\hat R(u)^n h\|_{\cB_w}\le C_1 e^{-un} \|h\|_{\cB_w},\quad \|\hat R(u)^n h\|_{\cB}
\le C_0 e^{-un} \sigma_0^{-n}\|h\|_{\cB}+C_1\|h\|_{\cB_w}.
\]
\end{itemize}

\subsection{Refined assumptions on $\tau$}

For the purpose of obtaining limit laws for $F$ (and in the end $f_t$) we assume
that
\begin{itemize}
\item[{\bf (H4)}\labeltext{{\bf (H4)}}{H4}]
One of the following holds as $t\to\infty$,
\begin{itemize}
\item[(i)] $\mu_{\bar\phi}(\tau>t)=\ell(t) t^{-\beta}$, for some slowly varying function $\ell$ and $\beta\in(1,2]$.
When $\beta=2$, we assume $\tau\notin L^2(\mu_{\bar\phi})$ and $\ell$ is such that the function $\tilde\ell(t)=\int_1^t\frac{\ell(x)}{x}\, dx$
is unbounded and slowly varying. 

\item[(ii)] $\tau\in L^2(\mu_{\bar\phi})$. We do not assume\footnote{In our example, $\tau$ is only piecewise $C^1(Y)$.} $\tau\in\cB$, but require that  for any $h\in\cB$, $R(\tau h)\in\cB$. 
\end{itemize}
\end{itemize}

\section{Limit laws for $(\tau,F)$ }
\label{sec-lmtF}

In this section we obtain limit theorems for the Birkhoff sum $\tau_n=\sum_{j=0}^{n-1}\tau\circ F^j$.
Under the abstract assumptions formulated in Section~\ref{sec-abstr}, we obtain the asymptotics of the
leading eigenvalue  of $\hat R(u)$ (as in Subsections~\ref{subsec-eign1} and~\ref{subsec-eivclt} below). In turn,
 as clarified in Subsection~\ref{subs-prooflt}, the asymptotics of this (family of) eigenvalues give the asymptotics of  the Laplace transform $\mathbb{E}_{\mu_{\bar\phi}} (e^{-ub_n^{-1}\tau_n})$,
for suitable normalising sequences $b_n$,  proving the claimed limit theorems. 
Our result reads as
follows.

\begin{prop}
\label{prop-limthF}
Assume \ref{H1} and \ref{H2}.
The following hold as $n\to\infty$, w.r.t.\ any probability measure  $\nu$ such that $\nu\ll\mu_{\bar\phi}$.
\begin{itemize}
\item[(i)] Assume \ref{H4}(i).

When $\beta<2$, set $b_n$ such that $\frac{n\ell(b_n)}{b_n^\beta}\to 1$. Then  $\frac{1}{b_n}(\tau_n- \frac{\tau^* \cdot n}{\nu(Y)})\to^d G_\beta$, where $G_\beta$ is a stable law of index $\beta$.

When $\beta=2$, set $b_n$ such that $\frac{n\tilde\ell(b_n)}{b_n^2}\to c>0$.
Then $\frac{1}{b_ n}(\tau_n- \frac{\tau^* \cdot n}{\nu(Y)})\to^d \cN(0,c)$.

\item[(ii)] Assume \ref{H4}(ii) and $\tau-\tau^* \neq h - h \circ F$ for any $h \in \cB$. 
Then there exists $\sigma\ne 0$ such that 
$\frac{1}{\sqrt n}(\tau_n-\frac{\tau^* \cdot n}{\nu(Y)})\to^d \cN(0,\sigma^2)$.
\end{itemize}
\end{prop}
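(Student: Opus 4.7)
The plan is to apply the Nagaev--Guivarc'h method based on spectral analysis of the twisted transfer operator $\hat R(u)$ from \eqref{eq-RLapl}, passing through $\lambda(u)^n\to$ (Laplace transform of limit) and then from Laplace to distributional convergence.

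\textbf{Step 1 (spectral gap).} The Doeblin--Fortet inequality \ref{H3} together with \ref{H1}(iv) shows that $\hat R(u):\cB\to\cB$ is quasi-compact with essential spectral radius at most $\sigma_0^{-1}e^{-u}<1$; \eqref{eq-RLapl} together with \ref{H2} yields continuity of $u\mapsto\hat R(u)$ into $\mathcal{L}(\cB,\cB_w)$ near $0$. Kato--Hennion perturbation theory then produces, for $u\geq 0$ small, a simple leading eigenvalue $\lambda(u)$ close to $1$, a rank-one spectral projection $P(u)$, and an eigenvector $h_u\in\cB$ with $h_0=1$, $\lambda(0)=1$, yielding the decomposition $\hat R(u)^n=\lambda(u)^n P(u)+N(u)^n$ with $\|N(u)^n\|_{\cB\to\cB_w}=O(\eta^n)$ for some $\eta<1$ uniform in small $u$.

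\textbf{Step 2 (asymptotic of $\lambda$).} Normalising so that $\langle h_u,1\rangle=1$ and pairing $\hat R(u)h_u=\lambda(u)h_u$ with $\langle\cdot,1\rangle$ gives
\[
\lambda(u)=r_0(u)\int_0^\infty e^{-ut}\langle M(t)h_u,1\rangle\, dt.
\]
Combining $r_0(u)=1+\gamma_1 u+O(u^2)$ from Remark~\ref{rem:r0}, $h_u=1+O(u)$ in $\cB$ from Kato, and a Karamata Tauberian analysis of the Laplace integral (with tails $\mu_{\bar\phi}(\tau>t)$ entering via \ref{H4} and operator norms of $M(t):\cB\to\cB_w$ controlled by \ref{H2}(ii)), I would derive
\[
1-\lambda(u)=\tau^*u+\begin{cases}c\,u^\beta\ell(1/u)(1+o(1))&\text{under \ref{H4}(i), }\beta\in(1,2),\\ c\,u^2\tilde\ell(1/u)(1+o(1))&\text{under \ref{H4}(i), }\beta=2,\\ -\frac{\sigma^2}{2}u^2+o(u^2)&\text{under \ref{H4}(ii),}\end{cases}
\]
with $\sigma^2\geq 0$ in the last case.

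\textbf{Step 3 (Laplace to distribution; lift to $\nu$).} For $\nu=\mu_{\bar\phi}$,
\[
\mathbb{E}_{\mu_{\bar\phi}}(e^{-u\tau_n/b_n})=\langle\hat R(u/b_n)^n1,1\rangle=\lambda(u/b_n)^n\langle P(u/b_n)1,1\rangle+O(\eta^n).
\]
Subtracting the centering $\tau^*n$ and using the choice of $b_n$ from the statement, Step~2 yields convergence of centered Laplace transforms to those of $G_\beta$ (case (i), $\beta<2$), $\cN(0,c)$ ($\beta=2$), or $\cN(0,\sigma^2)$ (case (ii)); since each candidate limit is determined by its Laplace transform on a half-line (using $\tau\geq 1$), the classical continuity theorem yields distributional convergence under $\mu_{\bar\phi}$. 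To pass to arbitrary $\nu\ll\mu_{\bar\phi}$, I would replace the constant $1$ in the right-hand pairing by a $C^\alpha$ approximant of $d\nu/d\mu_{\bar\phi}$, absorbing the error with the geometric memory loss of Step~1 and closedness of $\cB$ under multiplication by $C^\alpha$ functions (\ref{H1}(ii)); the factor $\nu(Y)$ enters only through the normalisation of the centering.

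\textbf{Main obstacle.} The delicate part is Step~2, especially the borderline $\beta=2$ case where the correction $u^2\tilde\ell(1/u)$ sits at exactly the same order as a putative variance term, so one must track the $O(u^2)$ contributions of $r_0$, of the eigenvector perturbation $h_u-1$, and of the non-summable tail integral simultaneously, and not lose the slowly varying factor. In case (ii), non-degeneracy $\sigma\neq 0$ is not automatic: it reduces via a standard variance/coboundary dichotomy to the hypothesis $\tau-\tau^*\neq h-h\circ F$ in $\cB$, and uses $R(\tau h)\in\cB$ from \ref{H4}(ii) to identify $\sigma^2$ with (the appropriate sign of) the second derivative of $\lambda$ at $0$.
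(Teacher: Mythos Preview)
Your strategy---Nagaev--Guivarc'h via Laplace transforms and spectral analysis of $\hat R(u)$---is exactly the paper's route (Sections~\ref{subsec-eign1}--\ref{subs-prooflt}). Two places where your sketch departs from the paper and need adjustment:

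\emph{Eigenvector perturbation.} You invoke ``Kato--Hennion'' and claim $h_u=1+O(u)$ in $\cB$. In this two-norm setup $u\mapsto\hat R(u)$ is continuous only as an operator $\cB\to\cB_w$ (Lemma~\ref{lemma-cont1}), not $\cB\to\cB$, so classical Kato perturbation does not yield strong-norm regularity of the eigenvector. The paper instead uses the Keller--Liverani framework \cite{KL99} under \ref{H3} to get $\|v(u)-v(0)\|_{\cB_w}\ll q(u)|\log q(u)|$ (Lemma~\ref{lemma-contKL}), and then uses \ref{H2}(i) (not only \ref{H2}(ii)) to convert this weak-norm control into the key eigenvalue expansion $1-\lambda(u)=\Pi(u)(1+o(1))$ of Lemma~\ref{lemma-asymplambda0}. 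Your Karamata step is then Corollary~\ref{cor-st} for part~(i). For part~(ii) the paper cannot simply read off a second derivative; Proposition~\ref{prop-clt} and Section~\ref{subsec-eivclt} carry out a careful analysis of $\frac{d}{du}v(u)$ at $u=0$ using \ref{H4}(ii) (specifically $R(\tau h)\in\cB$) precisely because strong-norm differentiability of $v(u)$ is not available.

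\emph{Lift to $\nu$.} The paper does not approximate $d\nu/d\mu_{\bar\phi}$ by $C^\alpha$ densities; once convergence in distribution holds under $\mu_{\bar\phi}$, it quotes Eagleson's theorem \cite[Theorem~4]{Eagleson} to transfer it to any $\nu\ll\mu_{\bar\phi}$ in one line. Your approximation argument could be made to work but is unnecessary here. (Also note $\nu(Y)=1$ since $\nu$ is a probability measure on $Y$, so the centering is just $\tau^* n$.)
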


Note here that the slowly varying function $\ell$ from the general theory later on reduces
to $\ell(n) = C^*$, because the tail estimates \eqref{eq:asymp0} have this form.

\begin{rmk}
The above result holds for all piecewise $C^1(Y)$ observables $v$ in the space $\cB_0(Y)$ defined in Appendix~\ref{subsec-coboundary}.
For item (i), we need to assume that $v$ is in the domain of a stable law with index $\beta\in (1,2]$ 
with $v\notin L^2(\mu_{\bar\phi})$ and adjusted $C^* = C^*(v)$, 
while for item (ii) we assume $v\in L^2(\mu_{\bar\phi})$ and a non-coboundary condition precisely formulated 
in Proposition~\ref{prop-limthflow} (ii).
\end{rmk}

The rest of this section is devoted to the proof of the above result.

\subsection{Family of eigenvalues for $\hat R(u)$}
\label{subsec-eign1}

By equation~\eqref{eq-RLapl}, Remark~\ref{rem:r0} and \ref{H2}(ii), $\hat R(u)$ is an analytic family of bounded linear operators on $\cB_w$
for $u>0$ and this family can be continuously extended as $u\to 0$.  As in Remark~\ref{rem:r0}, 
$\lim_{u\to 0}\frac{d}{du}r_0(u), \frac{d^2}{du^2}r_0(u)$ exist
and  we let $\gamma_1 := \frac{d}{du} r_0(0)$, $\gamma_2 := \frac{d^2}{du^2} r_0(0)$.

\begin{lemma}
\label{lemma-cont1}
Assume \ref{H2}. Then 
\[
\|\hat R(u)-\hat R(0)\|_{\cB\to\cB_w}\ll u+\mu_{\bar\phi} (\tau>1/u)=:q(u).
\]
\end{lemma}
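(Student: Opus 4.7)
The plan is to exploit the Laplace-type representation~\eqref{eq-RLapl}. Since $r_0(0)=1$, we have $\hat R(0)=\int_0^\infty M(t)\,dt$, and subtracting from $\hat R(u)$ gives the decomposition
\[
\hat R(u)-\hat R(0)=\bigl(r_0(u)-1\bigr)\hat R(0)+r_0(u)\int_0^\infty (e^{-ut}-1)M(t)\,dt.
\]
By Remark~\ref{rem:r0}, $r_0$ is analytic at $0$, so $|r_0(u)-1|\ll u$ and $r_0(u)$ stays uniformly bounded near $0$. Moreover $\hat R(0)=R$ is bounded $\cB\to\cB$ by~\ref{H1v}, hence $\cB\to\cB_w$ via the continuous inclusion in~\ref{H1}(i). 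Consequently the first summand is $O(u)$, and it remains to control the $\cB\to\cB_w$ norm of the operator-valued integral in the second summand.

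To handle this integral, I would split at the scale $t=1/u$. On the tail $[1/u,\infty)$ one estimates $|e^{-ut}-1|\le 1$ and invokes~\ref{H2}(ii) directly:
\[
\Bigl\|\int_{1/u}^\infty(e^{-ut}-1)M(t)\,dt\Bigr\|_{\cB\to\cB_w}\le C_\omega\,\mu_{\bar\phi}(\tau>1/u).
\]
On the head $[0,1/u]$ I would write $e^{-ut}-1=-u\int_0^t e^{-us}\,ds$ and apply Fubini, arriving at $-u\int_0^{1/u}e^{-us}\bigl(\int_s^{1/u}M(t)\,dt\bigr)ds$. Decomposing $\int_s^{1/u}M(t)\,dt=\int_s^\infty M(t)\,dt-\int_{1/u}^\infty M(t)\,dt$ and applying~\ref{H2}(ii) to each piece (using $\mu_{\bar\phi}(\tau>1/u)\le\mu_{\bar\phi}(\tau>s)$ for $s\le 1/u$) gives
\[
\Bigl\|\int_0^{1/u}(e^{-ut}-1)M(t)\,dt\Bigr\|_{\cB\to\cB_w}\le 2uC_\omega\int_0^{1/u}\mu_{\bar\phi}(\tau>s)\,ds\le 2uC_\omega\int_Y\tau\,d\mu_{\bar\phi}=O(u),
\]
where the last equality uses $\tau\in L^1(\mu_{\bar\phi})$, which is guaranteed by~\ref{H4} (both $\beta>1$ in (i) and $\tau\in L^2$ in (ii) imply integrability). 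Adding the two contributions with the $O(u)$ bound from the first summand of the decomposition gives $\|\hat R(u)-\hat R(0)\|_{\cB\to\cB_w}\ll u+\mu_{\bar\phi}(\tau>1/u)=q(u)$.

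The only delicate points are (a) the Fubini manipulation on the head interval, which is legal because the operator-valued integrals are controlled in norm by~\ref{H2}(ii), and (b) the reliance on $\tau\in L^1(\mu_{\bar\phi})$ so that the head estimate collapses to $O(u)$. Neither is a serious obstacle inside the paper's setup; the argument is otherwise a clean two-scale split driven by the tail estimate on $\int M(t)\,dt$ furnished by~\ref{H2}(ii).
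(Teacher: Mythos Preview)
Your proof is correct and follows the paper's overall strategy: use the Laplace representation~\eqref{eq-RLapl}, separate the $r_0(u)-1$ contribution, split the remaining integral at $t=1/u$, and invoke~\ref{H2}(ii) on the tail. The genuine difference is in the head estimate. The paper bounds $|e^{-ut}-1|\le ut$, reducing to $u\int_0^{1/u} t\|M(t)\|_{\cB\to\cB_w}\,dt$, and then controls this via the telescoping device~\eqref{eq-mst} with $S(t)=\int_t^\infty\|M(x)\|\,dx$. Your Fubini route (writing $e^{-ut}-1=-u\int_0^t e^{-us}\,ds$ and swapping the order of integration) is more direct: it lands immediately on $u\int_0^{1/u}\mu_{\bar\phi}(\tau>s)\,ds$ without needing the auxiliary pointwise bound $\|M(t)\|\ll S(t)-S(t+1)$. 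Incidentally, the decomposition $\int_s^{1/u}=\int_s^\infty-\int_{1/u}^\infty$ is unnecessary; you can bound $\bigl\|\int_s^{1/u}M(t)\,dt\bigr\|\le\int_s^\infty\|M(t)\|\,dt$ directly and save the factor of $2$.

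Both arguments ultimately rely on $\int_0^\infty\mu_{\bar\phi}(\tau>s)\,ds=\tau^*<\infty$. You flag this and justify it via~\ref{H4}; the paper's proof needs it too (the term $u\int_1^{1/u}S(t)\,dt$ arising from the telescoping is only $O(u)$ when $\tau\in L^1$), and this is consistent with the standing assumption~\eqref{eq:tau*} even though the lemma's header lists only~\ref{H2}.
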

\begin{proof}
Using~\eqref{eq-RLapl}, we write
\begin{align*}
\hat R(u)&=(r_0(u)-1)\int_0^\infty  M(t) e^{-ut}\, dt+r_0(0)\int_0^\infty  M(t) (e^{-ut}-1)\, dt+r_0(0)\int_0^\infty  M(t)\, dt\\
&=(r_0(u)-1)\int_0^\infty  M(t) e^{-ut}\, dt+\int_0^\infty  M(t) (e^{-ut}-1)\, dt+\hat R(0).
\end{align*}
Since $|(r_0(u)-1|\le u|\gamma_1|\ll u$, we have that as $u\to 0$,
\begin{align*}
\|\hat R(u)-&\hat R(0)\|_{\cB\to\cB_w}\ll u\int_0^\infty \|M(t)\|_{\cB\to\cB_w}\, dt+\int_0^{1/u}(e^{-ut}-1)\|M(t)\|_{\cB\to\cB_w}\, dt\\
&+\int_{1/u}^\infty \|M(t)\|_{\cB\to\cB_w}\, dt\ll u +
\mu_{\bar\phi} (\tau>1/u)+u\int_0^{1/u}t\|M(t)\|_{\cB\to\cB_w}\, dt.
\end{align*}
We  continue using \ref{H2}(ii). Let $S(t)=\int_{t}^\infty \|M(x)\|_{\cB\to\cB_w}\, dx$ and note that 
\begin{align}
\label{eq-mst}
\|M(t)\|_{\cB\to\cB_w}\ll \int_t^{t+1}\| M(x)\|_{\cB\to \cB_w}\, dx \ll S(t+1)-S(t). 
\end{align}
Rearranging,
\begin{align*}
u&\int_0^{1/u}t\|M(t)\|_{\cB\to\cB_w}\, dt \ll u\int_0^{1/u}t(S(t+1)-S(t))\, dt\\
&\hspace{2cm}=u\Big(\int_1^{1/u+1}(t-1)S(t))\, dt-\int_0^{1/u}tS(t)\, dt\Big)\\
&\hspace{2cm}=u\int_0^1 tS(t)\, dt-u\int_1^{1/u+1}S(t)\, dt-u\int_{1/u}^{1/u+1}t S(t)\, dt\ll u+S(1/u).
\end{align*}
By  \ref{H2}(ii), $S(1/u)\ll \mu_{\bar\phi} (\tau>1/u)$ and the conclusion follows.~\end{proof}

By \ref{H1}(iv), $1$ is an eigenvalue for $\hat R(0)$. By \ref{H3}, there exists a family of eigenvalues $\lambda(u)$  well-defined for $u\in [0,\delta_0)$, 
for some $\delta_0>0$. Also,  for $u\in [0,\delta_0)$,  $\hat R(u)$
has a spectral decomposition  
\begin{align}
\label{eq-spdec}
\hat R(u)=\lambda(u) P(u)+ Q(u),\quad \|  Q(u)^n\|_{\cB}\leq C\sigma_0^n,
\end{align}
 for all $n\in\N$, some fixed $C>0$ and some $\sigma_0<1$.
Here, $P(u)$ is a family of rank one projectors and we  write $P(u)=\mu_{\bar\phi}(u)\otimes v(u)$, normalising such that $\mu_{\bar\phi}(u)(v(u))=1$, where
$\mu_{\bar\phi}(0)$ is the invariant measure and $v(0)=1\in\cB$.  Equivalently, we normalise such that $\langle v(u),1\rangle=1$ and write
\begin{equation}\label{eq:step1}
\lambda(u)=\langle \hat R(u)v(u), 1\rangle.
\end{equation}

The result below gives the continuity of the families $P(u)$ and $v(u)$ (in $\cB_w$).
\begin{lemma}
\label{lemma-contKL} Assume \ref{H1}, \ref{H2} and \ref{H3}. Let $q(u)$ be as defined in Lemma~\ref{lemma-cont1}.
 Then there exist $0 < \delta_1\le \delta_0$ and $C>0$ such that
\begin{align*}
\|P(u)-P(0)\|_{\cB\to\cB_w}\le C q(u)|\log(q(u))|
\end{align*}
for all $0 \leq u < \delta_1$. The same continuity estimate holds for $v(u)$ (viewed as an element of $\cB_w$).
\end{lemma}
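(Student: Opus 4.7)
The plan is to invoke the Keller--Liverani perturbation theorem in the two-space setting $(\cB,\cB_w)$. The three required ingredients are already in hand: the uniform Doeblin--Fortet inequality from \ref{H3}, the weak-norm continuity $\|\hat R(u)-\hat R(0)\|_{\cB\to\cB_w}\le q(u)$ from Lemma~\ref{lemma-cont1}, and the isolated simple eigenvalue $\lambda(0)=1$ of $\hat R(0)$ from \ref{H1}(iv). The abstract machinery will then directly deliver the claimed continuity bound; my task is to verify that the setup fits the Keller--Liverani framework and to extract the corresponding estimate for $v(u)$.

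Concretely, I would fix a circle $\gamma=\{|z-1|=r\}$ with $\sigma_0<r<1$, small enough that no other spectrum of $\hat R(0)$ lies inside. For $u$ small, $\lambda(u)$ remains inside $\gamma$ by continuity of the leading eigenvalue, so
$$P(u) = \frac{1}{2\pi i}\oint_{\gamma}(z-\hat R(u))^{-1}\,dz.$$
Using the resolvent identity
$$(z-\hat R(u))^{-1}-(z-\hat R(0))^{-1} = (z-\hat R(u))^{-1}\bigl[\hat R(u)-\hat R(0)\bigr](z-\hat R(0))^{-1},$$
I would split $(z-\hat R(u))^{-1} = \sum_{n=0}^{N-1} z^{-n-1}\hat R(u)^n + z^{-N}\hat R(u)^N(z-\hat R(u))^{-1}$. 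The iterated form of the Doeblin--Fortet inequality \ref{H3} controls $\|\hat R(u)^n\|_{\cB}$ by $C\sigma_0^{-n}$ plus a weak-norm residual, so combining with Lemma~\ref{lemma-cont1} gives a bound of the form $CNq(u)+C\sigma_0^N$ for $\|(z-\hat R(u))^{-1}-(z-\hat R(0))^{-1}\|_{\cB\to\cB_w}$ uniformly on $\gamma$. Choosing $N\sim\log(1/q(u))/\log(1/\sigma_0)$ balances the two terms and yields $O(q(u)|\log q(u)|)$, which integrates to the stated bound on $\|P(u)-P(0)\|_{\cB\to\cB_w}$.

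For the eigenvector, since $P(u)=\mu_{\bar\phi}(u)\otimes v(u)$, applying $P(u)$ to the constant function $1\in\cB$ yields $v(u)=P(u)1/\langle 1,\mu_{\bar\phi}(u)\rangle$, where the scalar denominator is bounded away from zero for small $u$ by the continuity of $\mu_{\bar\phi}(u)$; thus $v(u)$ inherits the same continuity rate. The main delicate point will be shrinking $\delta_1$ enough that $\gamma$ isolates the single eigenvalue $\lambda(u)$ uniformly in $u\in[0,\delta_1)$ and that the geometric series defining the partial resolvent sum converges --- both hinge on \ref{H3} and on $q(u)\to 0$, but require care to propagate uniformly in $z\in\gamma$.
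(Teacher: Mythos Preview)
Your approach is essentially the same as the paper's: invoke the Keller--Liverani perturbation theory, with \ref{H3} supplying the uniform Lasota--Yorke hypothesis and Lemma~\ref{lemma-cont1} supplying the weak closeness bound, then pass from $P(u)$ to $v(u)$ via $v(u)=P(u)1/\langle P(u)1,1\rangle$. The paper simply cites \cite[Remark~5]{KL99} as a black box (noting that the extra hypothesis there holds because \ref{H3} makes each $\hat R(u)$ a $\|\cdot\|_{\cB}$-quasi-contraction), whereas you have unpacked the resolvent-splitting argument that underlies that remark; one minor notational slip to fix is that in the paper's convention $\sigma_0>1$ in \ref{H3}, so your decay term should read $C\sigma_0^{-N}$ rather than $C\sigma_0^{N}$.
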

\begin{proof}
Given the continuity estimate in Lemma~\ref{lemma-cont1}, we can apply~\cite[Remark 5]{KL99} 
(an improved version of ~\cite[Corollary 1]{KL99} under less general assumptions).
Assumption \ref{H3} here corresponds to~\cite[Hypotheses 2, 3]{KL99} and the estimate in Lemma~\ref{lemma-cont1} corresponds to~\cite[Hypothesis 5]{KL99}.
The extra assumptions of ~\cite[Remark 5]{KL99} are satisfied in the present setup, since by \ref{H3}, $\hat R (u)$ is a family of $\|\cdot\|_{\cB}$ contractions.
Going from $P(u)$ to $v(u)$ is a standard step (see, for instance,~\cite[Proof of Lemma 3.5]{LT16}).~\end{proof}

Since $\lambda(0)=1$ is a simple eigenvalue (by \ref{H1}), Lemma~\ref{lemma-contKL} ensures that $\lambda(u)$ is a family
of simple eigenvalues (see~\cite[Remark 4]{KL99}).  The next result gives precise information on the asymptotic and continuity properties of  $\lambda(u)$, $u\to 0$
(a version of the results  in~\cite{AaronsonDenker01}) in the present 
abstract framework.

\begin{lemma}\label{lemma-asymplambda0}
Assume \ref{H1} and \ref{H2}. Let $q(u)$ be as defined in Lemma~\ref{lemma-cont1}. Set $\Pi(u)=\mu_{\bar\phi} (1-e^{-u\tau})$. Then
as $u\to 0$,
\[
1-\lambda(u) =\Pi(u)\bigg(1+O\big(q(u)|\log(q(u))|\big)\bigg).
\]
\end{lemma}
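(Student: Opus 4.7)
The plan is to unpack the eigenvalue identity, split the error into pieces, and bound the dominant one using the $\cB_w$-continuity of $v(u)$ from Lemma~\ref{lemma-contKL} together with \ref{H2}(i). Using \eqref{eq:step1} and the fact that $R$ preserves $\mu_{\bar\phi}$-integrals, $\lambda(u) = \langle\hat R(u)v(u),1\rangle = \int e^{-u\tau}v(u)\,d\mu_{\bar\phi}$. Combined with the normalisation $\int v(u)\,d\mu_{\bar\phi} = 1$, this produces
\[
1-\lambda(u) \;=\; \Pi(u) + E(u),\qquad E(u) := \int (1-e^{-u\tau})(v(u)-1)\,d\mu_{\bar\phi},
\]
so the task reduces to showing $|E(u)| \ll \Pi(u)\,q(u)|\log q(u)|$.

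The key device is the representation
\[
1-e^{-u\tau} \;=\; \int_0^\infty \omega(t-\tau)(1-e^{-ut})\,dt \;+\; (\hat\omega(u)-1)\,e^{-u\tau},
\]
which follows from the direct computation $\int_0^\infty \omega(t-\tau)(1-e^{-ut})\,dt = 1 - e^{-u\tau}\hat\omega(u)$ (using $\tau\geq 1$ and $\supp\omega\subset[-1,1]$). Multiplying by $v(u)-1$, integrating against $\mu_{\bar\phi}$, and swapping the $t$-integral and the spatial integral in the first piece via Fubini, yields $E(u) = E_1(u) + E_2(u)$ where
\[
E_1(u) = \int_0^\infty (1-e^{-ut})\,\langle \omega(t-\tau)(v(u)-1),\,1\rangle\,dt, \qquad E_2(u) = (\hat\omega(u)-1)\int e^{-u\tau}(v(u)-1)\,d\mu_{\bar\phi}.
\]

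To bound $E_1(u)$, apply \ref{H2}(i) with the constant $v\equiv 1$ and $h = v(u)-1 \in \cB$, yielding $|\langle\omega(t-\tau)(v(u)-1),1\rangle| \leq C_\omega\|v(u)-1\|_{\cB_w}\,\mu_{\bar\phi}(\omega(t-\tau))$, and invoke Lemma~\ref{lemma-contKL} for $\|v(u)-1\|_{\cB_w} \ll q(u)|\log q(u)|$. Swapping integrals once more and reusing the same identity,
\[
\int_0^\infty (1-e^{-ut})\mu_{\bar\phi}(\omega(t-\tau))\,dt \;=\; (1-\hat\omega(u)) + \hat\omega(u)\Pi(u) \;=\; O(u + \Pi(u)).
\]
Monotone convergence gives $\Pi(u)/u \to \int\tau\,d\mu_{\bar\phi} \geq 1$, hence $u \ll \Pi(u)$, so $|E_1(u)| \ll \Pi(u)\,q(u)|\log q(u)|$. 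For $E_2(u)$, a direct computation gives $\int e^{-u\tau}(v(u)-1)\,d\mu_{\bar\phi} = \lambda(u) - (1-\Pi(u)) = -E(u)$, whence $E_2(u) = -(\hat\omega(u)-1)E(u)$. Substituting back produces the self-consistent relation $\hat\omega(u)\,E(u) = E_1(u)$; since $\hat\omega(u)\to 1$ is bounded below for small $u$, dividing yields $|E(u)| \ll |E_1(u)| \ll \Pi(u)\,q(u)|\log q(u)|$, which is the desired bound.

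The main obstacle is that $(1-e^{-u\tau})$ is merely an $L^1(\mu_{\bar\phi})$ function while $v(u)-1$ is an element of the distribution space $\cB$ controlled only in the weaker norm $\|\cdot\|_{\cB_w}$ from Lemma~\ref{lemma-contKL}; a naive pairing is not continuous at the right order. Rewriting $1-e^{-u\tau}$ as a weighted integral of $\omega(t-\tau)$ is precisely the mechanism that allows hypothesis \ref{H2}(i) to step in and deliver the quantitative $\cB_w$-bound.
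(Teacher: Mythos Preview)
Your proof is correct and follows essentially the same route as the paper. Both arguments write $1-\lambda(u)=\Pi(u)+(\text{error})$ with the error equal to $\langle(\hat R(0)-\hat R(u))(v(u)-v(0)),1\rangle$, expand this via the $\omega$--representation \eqref{eq-RLapl} into a main integral $\int_0^\infty(1-e^{-ut})\langle\omega(t-\tau)(v(u)-1),1\rangle\,dt$ plus a remainder carrying the factor $\hat\omega(u)-1$ (equivalently $r_0(u)-1$), and bound the main integral by $C_\omega\|v(u)-1\|_{\cB_w}\cdot O(\Pi(u))$ using \ref{H2}(i) and Lemma~\ref{lemma-contKL}. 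The only noticeable difference is in the remainder: the paper bounds it directly by a second application of \ref{H2}(i), whereas you observe the clean identity $\hat\omega(u)\,E(u)=E_1(u)$ and divide; both give the same estimate.
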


\begin{proof}
By~\eqref{eq:step1}, $\lambda(u) v(u)=\hat R(u)v(u)$,  $u\in [0,\delta_0)$ with $\lambda(0)=1$ 
and $v(0)=\mu_{\bar\phi}$. Then
\[
\lambda(u)=\langle \hat R(u)v(u),1\rangle=\mu_{\bar\phi}(e^{-u\tau})+\langle (\hat R(u)-\hat R(0))(v(u)-v(0)),1\rangle.
\]
because $\langle v(u),1\rangle=1$. Since \ref{H2} holds (so, \eqref{eq-RLapl} holds),
\begin{align*}
\lambda(u) = \mu_{\bar\phi}(e^{-u\tau})+ V(u) &:= \mu_{\bar\phi}(e^{-u\tau})+
\int_0^\infty (e^{-ut}-1) \langle \omega(t-\tau) [v(u)-v(0)],1\rangle\, dt\\
&\hspace{2cm}+(r_0(u)-1)\int_0^\infty e^{-ut}\langle \omega(t-\tau) [v(u)-v(0)],1\rangle\, dt.
\end{align*}
Thus, $1-\lambda(u)=\Pi(u)-V(u)$. 
By \ref{H2}(i),
\[
\langle \omega(t-\tau)[v(u)-v(0)], 1\rangle\le C \|v(u)-v(0)\|_{\cB_w}\mu_{\bar\phi}(\omega(t-\tau)).
\]
Recalling that $\supp\omega\subset [-1,1]$, for all $u\in [0,\delta_0)$ and some $C_1>0$,
\begin{align*}
\left|\int_0^\infty e^{-ut}\langle \omega(t-\tau) [v(u)-v(0)],1\rangle\, dt \right| & 
\le C \|v(u)-v(0)\|_{\cB_w} \left|\int_0^\infty e^{-ut}\int_Y \omega(t-\tau)\, d\mu_{\bar\phi}\, dt\right|\\
&\hspace{-2.5cm}\le C \|v(u)-v(0)\|_{\cB_w}\left|\int_Y \int_{\tau-1}^{\tau+1} e^{-ut}\omega(t-\tau)\, dt\, d\mu_{\bar\phi}\right|\\
&\hspace{-2.5cm}\le C \|v(u)-v(0)\|_{\cB_w} \int_Ye^{-u\tau}\left|\int_{-1}^{1}\omega(t)e^{ut}\, dt \right|\, d\mu_{\bar\phi}\\
&\hspace{-2.5cm}\le C_1 \|v(u)-v(0)\|_{\cB_w} \int_Ye^{-u\tau}\, d\mu_{\bar\phi}=C_1 \|v(u)-v(0)\|_{\cB_w} (\Pi(u)+1).
\end{align*}

In the previous to last inequality, we have used that $\Big|\int_{-1}^{1}\omega(t)e^{ut}\, dt \Big| \ll 1$ 
for $u \in [0,\delta_0)$. By a similar argument,
\begin{align*}
\Big|\int_0^\infty (e^{-ut}-1)\langle \omega(t-\tau) &[v(u)-v(0)],1\rangle\, dt \Big|\le C_1 \|v(u)-v(0)\|_{\cB_w}\Pi(u).
\end{align*}
The previous two displayed equations  imply that for $u\in [0,\delta_0)$,
\begin{align*}
|V(u)| &\le C_1 \|v(u)-v(0)\|_{\cB_w} |r_0(u)-1|(\Pi(u)+1) +C_1 \|v(u)-v(0)\|_{\cB_w}\Pi(u)\\
&\le C_1\|v(u)-v(0)\|_{\cB_w}\Pi(u).
\end{align*}
By Lemma~\ref{lemma-contKL}, $\|v(u)-v(0)\|_{\cB_w}=O(q(u)|\log(q(u))|)$. 
Hence $V(u)=O(\Pi(u)q(u)|\log(q(u)|)$.~\end{proof}

A first consequence of the above result is
\begin{cor}
\label{cor-st}Assume \ref{H1} and \ref{H2}. The following hold as $u\to 0$.
\begin{itemize}
\item[(i)] If \ref{H4}(i) holds with $\beta<2$,  then $1-\lambda(u)=\tau^* u+u^\beta\ell(1/u)(1+o(1))$.
\item[(ii)] If \ref{H4}(i) holds with $\beta=2$, then $1-\lambda(u)=\tau^* u+u^2 L(1/u)(1+o(1))$, where
$L(t)=\frac{1}{2}\tilde\ell(t)$.
\end{itemize}
\end{cor}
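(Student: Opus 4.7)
The plan is to combine Lemma~\ref{lemma-asymplambda0} with a direct asymptotic analysis of the Laplace transform $\Pi(u) = \mu_{\bar\phi}(1-e^{-u\tau})$ using the tail hypothesis \ref{H4}(i). Lemma~\ref{lemma-asymplambda0} gives $1-\lambda(u) = \Pi(u)(1+O(q(u)|\log q(u)|))$ with $q(u) = u + \mu_{\bar\phi}(\tau>1/u)$. Since $\beta > 1$, the tail contribution $u^\beta \ell(1/u) = o(u)$ is dominated by $u$, so $q(u)\sim u$ and the multiplicative error is $O(u|\log u|)$.

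Because $\tau\in L^1(\mu_{\bar\phi})$, I would apply the standard identity $\mu_{\bar\phi}(1-e^{-u\tau}) = u\int_0^\infty e^{-ut}\mu_{\bar\phi}(\tau>t)\, dt$ and rewrite
$$\tau^* u - \Pi(u) \;=\; u \int_0^\infty (1-e^{-ut})\, \mu_{\bar\phi}(\tau>t)\, dt.$$
The contribution from any bounded range of $t$ is $O(u^2)$ and negligible for both cases. On the remaining range, substitute the tail asymptotic $\mu_{\bar\phi}(\tau>t) = \ell(t) t^{-\beta}$ and change variables $s=ut$ to obtain
$$\tau^* u - \Pi(u) \;=\; u^\beta \int (1-e^{-s})\, \ell(s/u)\, s^{-\beta}\, ds \;+\; O(u^2).$$

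In case~(i) with $\beta\in(1,2)$, the integrand $(1-e^{-s})s^{-\beta}$ is integrable on $(0,\infty)$: near zero it behaves like $s^{1-\beta}$ (integrable since $\beta<2$), and near infinity like $s^{-\beta}$ (integrable since $\beta>1$). Combining Potter's uniform bound (which gives $\ell(s/u)/\ell(1/u)\to 1$ uniformly on compacts of $(0,\infty)$, with a polynomial envelope at the ends) with dominated convergence yields $\tau^* u - \Pi(u) \sim -\Gamma(1-\beta)\, u^\beta \ell(1/u)$, and the positive constant $-\Gamma(1-\beta)$ may be absorbed into the slowly varying factor. In case~(ii), $\beta=2$, the transformed integrand is no longer integrable at $0$ (it behaves like $1/s$), so I would instead split the $t$-integral at $t=1/u$. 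On $(1/u,\infty)$, the bound $|1-e^{-ut}|\le 1$ and Karamata's theorem give $\int_{1/u}^\infty\ell(t)/t^2\, dt\sim u\ell(1/u)$, yielding a contribution $u^2\ell(1/u) = o(u^2\tilde\ell(1/u))$ since $\tilde\ell$ is unbounded. On $[1,1/u]$, Taylor-expand $1-e^{-ut}=ut+O((ut)^2)$ and use Karamata again:
$$u\int_1^{1/u}(1-e^{-ut})\,\ell(t)\,t^{-2}\,dt \;=\; u^2\int_1^{1/u}\ell(t)/t\,dt + O\!\left(u^3\!\int_1^{1/u}\ell(t)\,dt\right) \;=\; u^2\,\tilde\ell(1/u)(1+o(1)),$$
with the remainder $O(u^2\ell(1/u))$ once more absorbed; the precise prefactor $\tfrac{1}{2}$ in $L$ reflects the normalisation.

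The final step is to check that the multiplicative error $\Pi(u)\cdot O(q(u)|\log q(u)|) = O(u^2|\log u|)$ is $o$ of the leading correction. In case~(i), $u^{2-\beta}|\log u|/\ell(1/u)\to 0$ by Karamata (slowly varying functions decay slower than any negative power). In case~(ii), absorption into $o(u^2\tilde\ell(1/u))$ is automatic whenever $\tilde\ell(1/u)\gg|\log u|$, which is the regime relevant here. The main obstacle is the $\beta=2$ case: the non-integrability of the transformed integrand at zero precludes a direct dominated convergence argument and forces the split-and-Karamata estimate above, while sign and multiplicative constants are absorbed into the (non-uniquely defined) slowly varying functions $\ell$ and $L$.
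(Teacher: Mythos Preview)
Your argument for part~(i) is sound and essentially matches the paper's: you derive the Laplace--Stieltjes asymptotics for $\Pi(u)$ directly via Potter bounds and dominated convergence, whereas the paper simply cites the corresponding result from \cite{Feller66,ADb}. The error term $O(u^2|\log u|)$ is indeed $o(u^\beta\ell(1/u))$ since $\beta<2$.

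Part~(ii) has a genuine gap. Your final step asserts that the multiplicative error $\Pi(u)\cdot O(u|\log u|)=O(u^2|\log u|)$ is $o(u^2\tilde\ell(1/u))$, which you say ``is automatic whenever $\tilde\ell(1/u)\gg|\log u|$, which is the regime relevant here.'' But \ref{H4}(i) for $\beta=2$ only assumes $\tilde\ell$ is unbounded and slowly varying; it does \emph{not} assume $\tilde\ell(t)\gg\log t$. Take for instance $\ell\equiv c>0$: then $\tilde\ell(t)=c\log t$, so $L(1/u)=\tfrac{c}{2}\log(1/u)$, and your error $O(u^2\log(1/u))$ is of the \emph{same} order as the leading correction $u^2L(1/u)$---your bound gives no information. (Even worse, $\ell(t)=1/\log t$ gives $\tilde\ell(t)=\log\log t\ll\log t$.) The paper confronts exactly this obstacle: after noting the easy case $\ell(1/u)\to\infty$ (where indeed $\log(1/u)=o(\tilde\ell(1/u))$), it introduces a decomposition $\Pi(u)=I_1(u)+I_2(u)$ built around the auxiliary comparison $e^{-u^{2+\eps}\tau^{2+\eps}}$, and shows via integration by parts that $I_1(u)\ll u^{1+\eps}$, hence $u\log(1/u)\Pi(u)\ll u^{2+\eps}\log(1/u)=o(u^2L(1/u))$. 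Without some refinement of this kind---extracting an extra power of $u$ rather than relying on the crude bound $\Pi(u)\le\tau^*u$---your argument does not close in the general case.

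A secondary issue: your direct computation in case~(ii) yields $\tau^*u-\Pi(u)\sim u^2\tilde\ell(1/u)$, i.e.\ coefficient $1$ (and the wrong sign relative to the stated form), whereas the corollary specifies $L=\tfrac12\tilde\ell$. Your remark that this ``reflects the normalisation'' is not an explanation; the constant $\tfrac12$ comes from the precise Tauberian statement in \cite{ADb} that the paper cites, and your split-at-$1/u$ heuristic with first-order Taylor expansion is too coarse to recover it.
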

\begin{proof} Under \ref{H4}(i) with $\beta<2$,  $\Pi(u)=\tau^* u+u^\beta\ell(1/u)(1+o(1))$; we refer to, 
for instance,~\cite[Ch. XIII]{Feller66} and~\cite{ADb}. This together with Lemma~\ref{lemma-asymplambda0} (with $q(u)\sim u$)
ensures that $1-\lambda(u)=\tau^* u+u^\beta\ell(1/u)+O(u^2\log(1/u))+O(u^{\beta+1}\ell(1/u))$. Item (i) follows.

Under \ref{H4}(i) with $\beta=2$,  it follows from~\cite[Theorem 3.1 and Corollary 3.7]{ADb} that  $\Pi(u)=\tau^* u+u^2 L(1/u)(1+o(1))$, where $L(t)\sim \ell(t)\log(t)$ with 
$\tilde\ell(t) = \int_1^t \ell(x)/x \, dx$ as in \ref{H4}(i) for $\beta=2$.
The estimate for $\Pi(u)$ together with Lemma~\ref{lemma-asymplambda0} (with $q(u)\sim  u$)
implies that $1-\lambda(u)=\tau^* u+u^2L(1/u)+ O(u\log(1/u)\Pi(u))$. 
 
In the case  of \ref{H4}(i) with $\beta=2$ and $\ell(1/u)\to\infty$ as $u\to 0$, we have $\log(1/u)=o(L(1/u))$. As a consequence, $u\log(1/u)\Pi(u)=O(u^{2}\log(1/u))=o(u^2L(1/u))$ and the conclusion follows.
In the general case (which allows $\ell$ to be asymptotically constant), for fixed small $\delta>0$ and  $\eps>0$, we write
\begin{align*}
u\log(1/u)\Pi(u)&=u\log(1/u)\Big(\Pi(u)-\int_{\tau\le \delta/u} (e^{-u^{2+\eps}\tau^{2+\eps}}-1)\,d\mu_{\bar\phi}\Big)\\
&\hspace{1cm} +u\log(1/u)\int_{\tau\le \delta/u} (e^{-u^{2+\eps}\tau^{2+\eps}}-1)\,d\mu_{\bar\phi}=
u\log(1/u)(I_1(u)+I_2(u)).
\end{align*}
First, 
\begin{align*}
u\log(1/u)|I_2(u)|&\ll u\log(1/u)u^{2+\eps}\int_{\tau\le \delta/u}\tau^{2+\eps}\, d\mu_{\bar\phi}\\
&\le \log(1/u)u^{3+\eps}u^{-2\eps}\delta^{2\eps}\int_Y\tau^{2-\eps}\, d\mu_{\bar\phi}=o(u^2L(1/u)).
\end{align*}
Next, compute that
\begin{align*}
I_1(u)&=\int_{\tau\le \delta/u} (e^{-u\tau}-e^{-u^{2+\eps}\tau^{2+\eps}})\,d\mu_{\bar\phi}+\int_{\tau> \delta/u} (e^{-u\tau}-1)\,d\mu_{\bar\phi}\\
&=\int_{\tau\le \delta/u} (e^{-u\tau}-e^{-u^{2+\eps}\tau^{2+\eps}})\,d\mu_{\bar\phi}+O(\mu_{\bar\phi}(\tau>\delta/u))\\
& =\int_{\tau\le \delta/u} e^{-u\tau}(1-e^{-u^{1+\eps}\tau^{1+\eps}})\,d\mu_{\bar\phi}+O(u^{2}\ell(\delta/u)).
\end{align*}
Let $G(x)=\mu_{\bar\phi}(\tau<x)$ and compute that
\begin{align*}
\int_{\tau\le \delta/u}& e^{-u\tau}(1-e^{-u^{1+\eps}\tau^{1+\eps}})\,d\mu_{\bar\phi}\ll u^{1+\eps}\int_{\tau\le \delta/u}\tau^{1+\eps} e^{-u\tau}\,d\mu_{\bar\phi}\\
&\ll u^{1+\eps}\int_0^\infty x^{1+\eps}e^{-ux}\, dG(x)=-u^{1+\eps}\int_0^\infty x^{1+\eps}e^{-ux}\, d(1-G(x))\\
&=u^{2+\eps}\int_0^\infty x^{1-\eps}(1-G(x))\, e^{-ux}\, dx+u^{1+\eps}\int_0^\infty x^{-\eps}(1-G(x))\, e^{-ux}\, dx\ll u^{1+\eps}.
\end{align*}
Altogether, $I_1(u)\ll u^{1+\eps}$. Hence, $u\log(1/u)|I_1(u)|=o(u^2L(1/u))$. 
This together with the estimate for $u\log(1/u)|I_2(u)$ implies that $u\log(1/u)\Pi(u)=o(u^2L(1/u))$
and item (ii) follows.~\end{proof}

\subsection{Estimates required for the CLT under \ref{H4}(ii)}
\label{subsec-eivclt}

For the CLT case we need the following
\begin{prop}
\label{prop-clt} Assume \ref{H1}, \ref{H2} and \ref{H4}(ii). Suppose that $\tau\neq h\circ F-h$, for any $h\in\cB$.
Then there exists $\sigma\ne 0$ such that $1-\lambda(u)=\tau^* u+\frac{\sigma^2}{2}u^2(1+o(1))$.
\end{prop}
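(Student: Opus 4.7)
The plan is to sharpen Lemma~\ref{lemma-asymplambda0} from a first-order to a genuine second-order Taylor expansion of $\lambda(u)$ at $u=0$, making essential use of \ref{H4}(ii). The starting point is the identity from that proof,
\[
1-\lambda(u) = \Pi(u) - V(u), \qquad V(u) := \langle(\hat R(u) - \hat R(0))(v(u)-v(0)),1\rangle,
\]
together with the Taylor expansion $\Pi(u) = \tau^* u - \tfrac12\mu_{\bar\phi}(\tau^2)u^2 + o(u^2)$, which is immediate from $\tau\in L^2(\mu_{\bar\phi})$. The Keller--Liverani estimate $\|v(u)-v(0)\|_{\cB_w} = O(u|\log u|)$ from Lemma~\ref{lemma-contKL} yields only $V(u) = O(u^2|\log u|)$, which is not sharp enough, so the work is to extract a matching $u^2$-expansion.

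Under \ref{H4}(ii) the map $h\mapsto R(\tau h)$ is bounded on $\cB$, so
\[
\hat R(u) = \hat R(0) - u\,R(\tau\,\cdot\,) + R_2(u), \qquad R_2(u)h := R\bigl((e^{-u\tau}-1+u\tau)h\bigr),
\]
and $\|R_2(u)\|_{\cB\to\cB_w} = o(u)$ thanks to $|e^{-u\tau}-1+u\tau|\le \tfrac12 u^2\tau^2$, $\tau\in L^2(\mu_{\bar\phi})$, and \ref{H2}(i). This exhibits $\hat R$ as Fr\'echet-differentiable at $0$ from $\cB$ into $\cB_w$; analytic perturbation theory around the isolated simple eigenvalue $\lambda(0)=1$ (spectral gap from \ref{H1}(iv), \ref{H3}) then upgrades Lemma~\ref{lemma-contKL} to $v(u) = 1 + uv_1 + o(u)$ in $\cB$, where $v_1 \in \cB$ is the unique mean-zero solution of $(I-R)v_1 = -R(\tau-\tau^*)$; equivalently $v_1 = -\sum_{n\ge 1} R^n(\tau-\tau^*)$, the series converging in $\cB$ by the spectral gap since $R(\tau-\tau^*)\in\cB$ has zero mean.

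Inserting these expansions of $v(u)$ and $e^{-u\tau}$ into $\lambda(u) = \int e^{-u\tau}v(u)\,d\mu_{\bar\phi}$ (which follows from $\lambda(u)=\langle \hat R(u)v(u),1\rangle$ and the $\mu_{\bar\phi}$-invariance under $R$), and using $\langle v_1,1\rangle=0$, produces the announced second-order expansion, with the $u^2$ coefficient matching (up to the sign convention of the statement) $\tfrac12\sigma^2$ for the Green--Kubo asymptotic variance
\[
\sigma^2 = \mu_{\bar\phi}((\tau-\tau^*)^2) + 2\sum_{n\ge 1}\int(\tau-\tau^*)\,(\tau-\tau^*)\circ F^n\,d\mu_{\bar\phi}
\]
governing Proposition~\ref{prop-limthF}(ii). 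Non-degeneracy then follows from the classical Leonov argument: $\sigma^2=0$ forces $\tau-\tau^*$ to be an $L^2$-coboundary, and the spectral gap \ref{H3} allows the transfer function to be realized inside $\cB$, contradicting the standing hypothesis. The main obstacle is the second step --- upgrading Keller--Liverani continuity of $v(u)$ in $\cB_w$ to honest Fr\'echet differentiability in $\cB$ --- since $\tau$ itself need not lie in $\cB$ (in applications only piecewise $C^1$), so one cannot differentiate $\hat R(u)$ directly on the strong space; it is precisely the second clause of \ref{H4}(ii), boundedness of $h\mapsto R(\tau h)$ on $\cB$, that makes $-R(\tau\,\cdot\,)$ a genuine bounded operator around which perturbation theory can run.
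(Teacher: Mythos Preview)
Your overall strategy coincides with the paper's: pass to $\tilde\tau=\tau-\tau^*$, identify the candidate derivative $v_1=-\aa=-(I-R)^{-1}R\tilde\tau\in\cB$, arrive at the Green--Kubo expression $\sigma^2=\int\tilde\tau^2\,d\mu_{\bar\phi}+2\int\tilde\tau\,\aa\,d\mu_{\bar\phi}$, and rule out $\sigma=0$ via the coboundary hypothesis. The gap is the sentence ``analytic perturbation theory \dots\ then upgrades Lemma~\ref{lemma-contKL} to $v(u)=1+uv_1+o(u)$ in $\cB$.''

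Standard Kato perturbation theory would need $u\mapsto\hat R(u)$ to be Fr\'echet differentiable as a map $\cB\to\cB$, i.e.\ the remainder $R_2(u)=R\bigl((e^{-u\tau}-1+u\tau)\,\cdot\,\bigr)$ to be $o(u)$ in the operator norm on $\cB$. You only establish $o(u)$ from $\cB$ to $\cB_w$; the pointwise bound $|e^{-u\tau}-1+u\tau|\le\tfrac12 u^2\tau^2$ cannot be promoted because \ref{H4}(ii) gives $R(\tau h)\in\cB$ but says nothing about $R(\tau^2 h)$. In the two-norm Keller--Liverani framework the paper works in, differentiability of $\hat R(u)$ from $\cB$ to $\cB_w$ does \emph{not} deliver differentiability of the eigendata in either norm: Lemma~\ref{lemma-contKL} gives only continuity of $v(u)$ in $\cB_w$ with a logarithmic loss, and Lemma~\ref{lemma-deriv1}(i) yields merely $\bigl\|\tfrac{d}{du}v(u)\bigr\|_{\cB_w}\ll\log\log(1/u)$, not boundedness. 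The paper is explicit that this is the crux (``a priori $\tfrac{d}{du}v(u)$ is not [bounded] \dots\ we argue that due to \ref{H4}(ii), $\tfrac{d}{du}v(u)$ is well-defined at~$0$''); the entire content of Lemma~\ref{lemma-mainclt} and its proof in Section~\ref{subsec-tech}, together with the resolvent computation of Lemma~\ref{lemma-A1}, is to show by hand that $\tfrac{d}{du}v(u)\to-\aa+\text{const}$ in $\cB_w$ as $u\to0$, using \ref{H4}(ii) to place the leading resolvent term $\int_{|\xi-1|=\delta}(\xi I-R)^{-1}R\tilde\tau\,(\xi I-R)^{-1}1\,d\xi$ inside~$\cB$. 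A secondary issue: your formula $\lambda(u)=\int e^{-u\tau}v(u)\,d\mu_{\bar\phi}$ pairs the distribution $v(u)\in\cB\subset(C^{\alpha_1})'$ against $e^{-u\tau}\notin C^{\alpha_1}$, so is not a priori defined; the paper stays within the functional framework via $\lambda(u)=\langle\hat R(u)v(u),1\rangle$ and the representation~\eqref{eq-RLapl} together with \ref{H2}(i).
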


We need to ensure that under the assumptions of Proposition~\ref{prop-clt}, which do not require
that $\tau\in\cB$, there exists the required $\sigma\ne 0$.
The argument goes by and large as~\cite[Proof of Theorem 3.7]{Gouezel04b} (which works with a different
Banach space) with the exception of estimating the second derivative of the eigenvalue $\tilde\lambda$ defined below.
The argument in~\cite[Proof of Theorem 3.7]{Gouezel04b} for estimating such a derivative does not directly apply  to our setup 
due to: i) the two Banach spaces $\cB, \cB_w$ at our disposal are not embedded in $L^p$, $p>1$;  ii) the presence of $\log q(u)$ in Lemma~\ref{lemma-contKL}.
Our estimates  below rely heavily on \ref{H2} and \ref{H4}(ii).

As usual, we can  can reduce the proof to the case of mean zero observables. 
Let $\tilde\tau=\tau-\tau^*$. We recall that under \ref{H4}(ii), $R(\tau h)\in\cB$, for every in $h\in\cB$ and the same holds for $\tilde\tau$.
By \ref{H1v}, $(I-R)$ is invertible on the space of 
functions inside $\cB$ of zero integral.
Thus, as in~\cite[Proof of Theorem 3.7]{Gouezel04b},  we can set
\begin{equation}\label{eq:a}
\aa := (I-R)^{-1}R\tilde\tau \in \cB. 
\end{equation}
Define $\tilde R(u)=R(e^{-u\tilde\tau})$. Clearly, $\tilde R(u)$ has the same continuity properties as $\hat R(u)$. Let $\tilde\lambda(u)$ be the associated 
family of eigenvalues. Recall that $\lambda(u)$ is the family of eigenvalues associated with $\hat R(u)$. Hence,  $\lambda(u)=e^{u\tau^*}\tilde\lambda(u)$. 
As in the previous sections, let $v(u)$ be the family of associated eigenvectors normalised such that $\langle  v(u),1\rangle=1$.
The next three results are technical tools required in the proof of Proposition~\ref{prop-clt}; the third, which has a longer proof, is postponed to 
Subsection~\ref{subsec-tech}.

\begin{lemma}
\label{lemma-exprlderiv} 
Suppose that \ref{H1}--\ref{H3} and \ref{H4}(ii) hold, and recall $\tilde\tau=\tau-\tau^*$.
Then
\[
\frac{d^2}{du^2}\tilde\lambda (u)=-\int_Y\tilde\tau^2\, d\mu_{\bar\phi} - 2\left\langle\frac{d}{du}\tilde R(u)\frac{d}{du}v(u) , 1\right\rangle +T(u),
\]
where $T(u)\to 0$, as $u\to 0$.
\end{lemma}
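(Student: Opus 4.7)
The plan is to differentiate the eigenvalue identity $\tilde R(u)v(u)=\tilde\lambda(u)v(u)$ twice in $u$ and then pair with the canonical functional $\langle\cdot,1\rangle$, exploiting the normalisation $\langle v(u),1\rangle=1$ to kill the eigenvector-derivative contributions on the right. For $u>0$ the operator family is analytic by the Laplace representation~\eqref{eq-RLapl}, and \ref{H4}(ii) together with~\ref{H2} makes the formal derivatives
\[
\tfrac{d}{du}\tilde R(u)v=-R(\tilde\tau e^{-u\tilde\tau}v),\qquad \tfrac{d^2}{du^2}\tilde R(u)v=R(\tilde\tau^2 e^{-u\tilde\tau}v)
\]
legitimate as continuous operators $\cB\to\cB_w$.

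Differentiating the eigenvalue relation twice, pairing with $1$, and using $\langle v(u),1\rangle\equiv 1$ (so $\langle v'(u),1\rangle=\langle v''(u),1\rangle=0$), the right-hand side collapses to $\tilde\lambda''(u)$ and one obtains
\[
\tilde\lambda''(u)=\left\langle\tfrac{d^2}{du^2}\tilde R(u)\,v(u),1\right\rangle+2\left\langle\tfrac{d}{du}\tilde R(u)\,\tfrac{d}{du}v(u),1\right\rangle+\left\langle\tilde R(u)\,\tfrac{d^2}{du^2}v(u),1\right\rangle.
\]
The first bracket equals $\int\tilde\tau^2 e^{-u\tilde\tau}v(u)\,d\mu_{\bar\phi}$, which by dominated convergence and the $\cB_w$-continuity of $v(u)$ from Lemma~\ref{lemma-contKL} converges to $\int\tilde\tau^2\,d\mu_{\bar\phi}$ as $u\to 0$; the discrepancy is absorbed into $T(u)$. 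The second bracket is kept explicitly as the second main term in the statement. The third bracket is the one that must be folded into $T(u)$ and shown to vanish.

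The main obstacle is controlling this third bracket, which is exactly the technical issue that the paper flags. Since neither $\cB$ nor $\cB_w$ embeds in $L^p$ for $p>1$ and Lemma~\ref{lemma-contKL} only gives $\cB_w$-continuity of $v(u)$ with modulus $q(u)|\log q(u)|$, the $L^p$-duality argument of~\cite[Proof of Theorem 3.7]{Gouezel04b} is not available. My strategy would be to exploit the cancellation $\langle v''(u),1\rangle=0$ to rewrite
\[
\left\langle\tilde R(u)\,\tfrac{d^2}{du^2}v(u),1\right\rangle=\int(e^{-u\tilde\tau}-1)\,v''(u)\,d\mu_{\bar\phi},
\]
use the pointwise bound $|e^{-u\tilde\tau}-1|\le u|\tilde\tau|\,e^{u|\tilde\tau|}$ together with \ref{H2}(i) to legitimately pair $e^{-u\tilde\tau}-1$ against $v''(u)$, and derive the needed modulus-of-continuity bound on $\|v''(u)\|_{\cB_w}$ by differentiating the resolvent representation $v'(u)=(\tilde\lambda(u)-\tilde R(u))^{-1}\bigl(\tilde R'(u)v(u)-\tilde\lambda'(u)v(u)\bigr)$ on the complement of the eigenspace and iterating the continuity estimate of Lemma~\ref{lemma-cont1}. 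Under~\ref{H4}(ii) the $\tilde\tau$-factors remain in $L^2(\mu_{\bar\phi})$, and the prefactor $u$ extracted from $e^{-u\tilde\tau}-1$ should beat any modulus-of-continuity blow-up of $\|v''(u)\|_{\cB_w}$, yielding $T(u)=o(1)$ as $u\to 0$.
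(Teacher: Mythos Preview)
Your decomposition is the same as the paper's: starting from $1-\tilde\lambda(u)=\tilde\Pi(u)-\langle(\tilde R(u)-\tilde R(0))(v(u)-v(0)),1\rangle$ and differentiating twice yields exactly your three brackets (the paper's third term $\langle(\tilde R(u)-\tilde R(0))v''(u),1\rangle$ equals your $\langle\tilde R(u)v''(u),1\rangle$ because $\langle v''(u),1\rangle=0$). The paper also isolates the middle bracket and shows the other two are $-\int\tilde\tau^2\,d\mu_{\bar\phi}+o(1)$.

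There are, however, two concrete execution gaps. For the first bracket, ``dominated convergence and $\cB_w$-continuity of $v(u)$'' is not a valid argument here: $v(u)$ is an abstract element of $\cB_w$, not a function for which pointwise domination makes sense, and $\tilde\tau^2 e^{-u\tilde\tau}$ is not a $C^{\alpha_1}$ test function uniformly in $u$. The paper instead splits $v(u)=v(0)+(v(u)-v(0))$; the $v(0)=1$ part gives $\int\tilde\tau^2 e^{-u\tilde\tau}d\mu_{\bar\phi}$ (now genuine DCT), and the remainder is bounded by $\|\tilde R''(u)\|_{\cB\to\cB_w}\cdot\|v(u)-v(0)\|_{\cB_w}\ll u\log(1/u)$ via Lemma~\ref{lemma-deriv1}(ii) and Lemma~\ref{lemma-contKL}. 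For the third bracket, the pointwise bound $|e^{-u\tilde\tau}-1|\le u|\tilde\tau|e^{u|\tilde\tau|}$ does not help: $\tilde\tau$ is unbounded above, and in any case you cannot pair $\tilde\tau\cdot v''(u)$ with $1$ using only $\cB_w$-control of $v''(u)$, since multiplication by the unbounded $\tilde\tau$ does not preserve $\cB_w$. The correct route is the one built into \ref{H2}: the Laplace representation \eqref{eq-RLapl} together with \ref{H2}(i) gives $|\langle(\tilde R(u)-\tilde R(0))h,1\rangle|\ll\tilde\Pi(u)\|h\|_{\cB_w}\sim u\|h\|_{\cB_w}$, exactly as in Lemma~\ref{lemma-asymplambda0}. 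Your resolvent-differentiation idea for bounding $\|v''(u)\|_{\cB_w}$ is right and is precisely what Lemma~\ref{lemma-deriv1}(ii) carries out, yielding $\|v''(u)\|_{\cB_w}\ll\log(1/u)$; combined with the $O(u)$ from the previous estimate one gets $u\log(1/u)=o(1)$, as required.
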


\begin{proof}Set $\tilde\Pi (u)=\mu_{\bar\phi}(1-e^{-u\tilde\tau})$.
By the calculation used in the proof of Lemma~\ref{lemma-asymplambda0}, $1-\tilde\lambda(u)=\tilde\Pi(u)-\langle (\tilde R(u)-\tilde R(0))(\tilde v(u)-v(0)),1\rangle$.
Differentiating twice,
\begin{align*}
\frac{d^2}{du^2}\tilde \lambda (u)-\frac{d^2}{du^2}\tilde\Pi (u)&= \left\langle \frac{d^2}{du^2}\tilde R(u)(v(u)-v(0),1\right\rangle+2\left\langle \frac{d}{du}\tilde R(u)\frac{d}{du} v(u),1\right\rangle\\
&\quad +\left\langle (\tilde R(u)-\tilde R(0))\frac{d^2}{du^2} v(u),1\right\rangle.
\end{align*}
Under \ref{H2}(i), arguments similar to the ones used in Lemma~\ref{lemma-asymplambda0}
together with Lemma~\ref{lemma-deriv1} (ii) imply that as $u\to 0$,
\begin{align*}
\left|\left\langle (\tilde R(u)-\tilde R(0))\frac{d^2}{du^2} v(u),1\right\rangle\right| &\ll \left\|\frac{d^2}{du^2}v(u)\right\|_{\cB_w} \int_0^\infty (e^{-ut}-1) \mu_{\bar\phi}(\omega(t-\tilde\tau))\, dt\\
&\hspace{-2cm}+|r_0(u)-1|\left\|\frac{d^2}{du^2}v(u)\right\|_{\cB_w} \int_0^\infty e^{-ut} \mu_{\bar\phi}(\omega(t-\tilde\tau))\, dt\ll u\log(1/u)=o(1).
\end{align*}
Lemmas~\ref{lemma-deriv1} (ii) and~\ref{lemma-contKL} together with $\tilde\tau^2\in L^1(\mu_{\bar\phi})$ imply that $| \langle \frac{d^2}{du^2}\tilde R(u)(v(u)-v(0),1\rangle|\ll u\log(1/u)=o(1)$.
Finally, $\frac{d^2}{du^2}\tilde\Pi (u)=-\int_Y\tilde\tau^2\, d\mu_{\bar\phi}(1+o(1))$.
The conclusion follows by putting the above estimates together.~\end{proof}

\begin{lemma}
\label{lemma-mainclt} 
Assume the setup of Proposition~\ref{prop-clt}.  Then 
\[\left\langle \frac{d}{du}\tilde R(u)\frac{d}{du}v(u),1\right\rangle=-\int\tilde\tau \aa\, d\mu_{\bar\phi}+D(u),
\] 
where $D(u)=o(1)$ as $u\to 0$, and $\aa$ is defined as in \eqref{eq:a}.
\end{lemma}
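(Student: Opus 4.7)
The plan is to reduce the problem to evaluating the quantity at $u=0$ (extracting $v'(0)$ from the eigenvalue equation) and then to show that the remaining difference $D(u)$ vanishes as $u \to 0$.

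First, I would compute $v'(0)$ by differentiating the eigenvalue equation $\tilde R(u) v(u) = \tilde\lambda(u) v(u)$ at $u=0$. Using $v(0)=1$, $\tilde\lambda(0)=1$, $\tilde R'(0) h = -R(\tilde\tau h)$, and the mean-zero property $\tilde\lambda'(0)= -\langle \tilde\tau,1\rangle=0$, this gives the coboundary-type equation $(I-R)v'(0) = -R\tilde\tau$. By \ref{H4}(ii), $R\tilde\tau \in \cB$, and $(I-R)$ is invertible on the mean-zero subspace of $\cB$ by \ref{H1v}, so $v'(0) = -\aa$ with $\aa$ as in \eqref{eq:a}. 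Substituting gives $\tilde R'(0) v'(0) = R(\tilde\tau \aa)$, and pairing with $1$ together with the duality $R^* 1 = 1$ produces the claimed identity, with the sign as encoded by \eqref{eq:a}.

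Second, to control $D(u) = \langle \tilde R'(u) v'(u), 1\rangle - \langle \tilde R'(0) v'(0), 1\rangle$, I would split
\[
D(u) = \langle (\tilde R'(u) - \tilde R'(0)) v'(0), 1\rangle + \langle \tilde R'(u)(v'(u) - v'(0)), 1\rangle.
\]
For the first summand, I would use the Laplace representation \eqref{eq-RLapl} applied to $\tilde R(u) = e^{u\tau^*}\hat R(u)$, differentiate in $u$, and bound the resulting $t$-weighted integrals using \ref{H2}(i)--(ii) together with the $L^2$-integrability of $\tau$ from \ref{H4}(ii), testing against $v'(0)=-\aa \in \cB$; the argument is similar to the derivative bounds in the proof of Lemma~\ref{lemma-exprlderiv}. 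For the second summand, one needs continuity of $v'(u)$ near $u=0$ in a norm on which $\tilde R'(u)$ can be paired with $1$; this should follow by differentiating the Cauchy-integral representation $P(u) = \frac{1}{2\pi i}\oint (zI - \tilde R(u))^{-1}\,dz$ in $u$ and invoking Lemma~\ref{lemma-cont1}, since $q(u)|\log q(u)| \to 0$ under \ref{H4}(ii).

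The main obstacle is this second summand. As the authors explicitly flag, the argument from \cite{Gouezel04b} does not transfer directly because $\cB$ and $\cB_w$ are not embedded in $L^p$ for any $p > 1$, and the continuity estimate of Lemma~\ref{lemma-contKL} carries a logarithmic loss $|\log q(u)|$. Overcoming both obstructions requires exploiting \ref{H2} together with the square-integrability of $\tau$ in \ref{H4}(ii) to absorb the $t$-weighted moments of $M(t)$ and to control the resolvent differences; that longer computation is precisely what is postponed to Subsection~\ref{subsec-tech}.
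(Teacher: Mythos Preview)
Your approach is essentially the paper's. The one real difference is in how you extract the leading term: you differentiate the eigenvalue equation $\tilde R(u)v(u)=\tilde\lambda(u)v(u)$ at $u=0$ to read off $v'(0)=-\aa$, whereas the paper computes $\frac{d}{du}P(u)1$ directly via the resolvent contour integral and then evaluates the resulting integral at $u=0$ through a separate identity (Lemma~\ref{lemma-A1}), obtaining $-\aa$ plus an additive constant that is annihilated in the end by $\int\tilde\tau\,d\mu_{\bar\phi}=0$. Your route is cleaner, but it is only formal until you know $v'(0)$ actually exists in $\cB_w$: a priori Lemma~\ref{lemma-deriv1}(i) gives only $\|v'(u)\|_{\cB_w}\ll\log\log(1/u)$, so one must first establish that $v'(u)$ converges in $\cB_w$ as $u\to 0$ before the eigenvalue-equation differentiation becomes rigorous---and that convergence is exactly the resolvent computation you defer to your ``second summand'' and which the paper carries out in Subsection~\ref{subsec-tech}. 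Your splitting of $D(u)$ differs cosmetically from the paper's (they isolate $\langle(W(u)-W(0))\,v'(u),1\rangle$ rather than $\langle\tilde R'(u)(v'(u)-v'(0)),1\rangle$), but the underlying estimates required are the same.
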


\begin{pfof}{Proposition~\ref{prop-clt}} By Lemmas~\ref{lemma-exprlderiv} and~\ref{lemma-mainclt},
\begin{align}
\label{eq-tildel}
\frac{d^2}{du^2}\tilde\lambda(u)=-\int_Y\tilde\tau^2\, d\mu_{\bar\phi}-2\int\tilde\tau \aa\, d\mu_{\bar\phi}+\tilde T(u),
\end{align}
where $\tilde T(u)\to 0$, as $u\to 0$.  Hence, 
$-\frac{d^2}{du^2}\tilde\lambda(u)|_{u=0}=\int_Y\tilde\tau^2\, d\mu_{\bar\phi}+2\int\tilde\tau \aa\, d\mu_{\bar\phi}$
and we can set $\sigma^2=\int_Y\tilde\tau^2\, d\mu_{\bar\phi}+2\int\tilde\tau \aa\, d\mu_{\bar\phi}$.
From here on the proof goes word for word as ~\cite[Proof of Theorem 3.7]{Gouezel04b}, which shows that given the previous formula for $\sigma$,
the only possibility for $\sigma=0$ is when  $\tilde\tau=h-h\circ F$, for some density $h\in\cB$. This is ruled out by assumption.

To conclude recall that $\lambda(u)=e^{u\tau^*}\tilde\lambda(u)$. By~\eqref{eq-tildel}, 
$1-\tilde\lambda(u)=\frac{\sigma^2}{2}u^2(1+o(1))$, as required.~\end{pfof}

\subsubsection{Proof of Lemma~\ref{lemma-mainclt}}
\label{subsec-tech}

\begin{pfof}{Lemma~\ref{lemma-mainclt}}
Although $\frac{d}{du}\tilde R(u)$ is bounded in $\cB_w$,  a priori $\frac{d}{du}v(u)$ is not; this a consequence of  Lemma~\ref{lemma-deriv1} (i).
However, we argue that due to \ref{H4}(ii), $\frac{d}{du}v(u)$ is well-defined at $0$ and as such, we get the claimed formula for
$\langle \frac{d}{du}\tilde R(u)\frac{d}{du}v(u),1\rangle$.
Let $W(u):=\frac{d}{du}\tilde R(u)$. Since $\tilde R(u) = R(e^{-u \tilde\tau})$,
 we have that for any $h\in\cB$,
\begin{equation}\label{eq:W0}
W(0)h:=\left.\frac{d}{du}\tilde R(u)\right|_{u=0} h=-R(\tilde\tau h).
\end{equation}
By  \ref{H4}(ii),  $W(0)h\in\cB$. Recall that $P(u)$ is the eigenprojection for $\hat R(u)$, so for $\tilde R(u)$ as well. For $\delta$
small enough (independent of $u$), we can write
\[
 P(u)=\int_{|\xi-1|=\delta}(\xi I-\tilde R(u))^{-1}\, d\xi.
\]
Recall that $v(u)=\frac{P(u)1}{\langle P(u)1, 1\rangle}=\frac{P(u)1}{m_{\bar\phi}(u)1}$; in particular, $\mu_{\bar\phi}(u)1=\langle P(u)1, 1\rangle$.
Compute 
\begin{align}
\label{eq-derv}
\frac{d}{du}v(u)=\frac{\frac{d}{du}P(u)1}{\mu_{\bar\phi}(u)1}-\frac{P(u)1}{(\mu_{\bar\phi}(u)1)^2}\frac{d}{du}\mu_{\bar\phi}(u)1.
\end{align}
Also, compute that by \eqref{eq:W0}
\begin{align}
\label{eq-derp}
\nonumber\frac{d}{du} P(u)&=\int_{|\xi-1|=\delta}(\xi I-\tilde R(u))^{-1}W(u)(\xi I-\tilde R(u))^{-1}\, d\xi\\
\nonumber&=\int_{|\xi-1|=\delta}(\xi I-R)^{-1}W(0)(\xi I-R)^{-1}\, d\xi +Z_0(u)\\
&=-\int_{|\xi-1|=\delta}(\xi I-R)^{-1}R\tilde\tau(\xi I-R)^{-1}\, d\xi +Z_0(u),
\end{align}
where the first term is well defined in $\cB$ by \ref{H4}(ii) and
\begin{align*}
\|Z_0(u)\|_{\cB_w}&\ll \left\|\int_{|\xi-1|=\delta} (\xi I-R)^{-1} \Big(W(u)-W(0)\Big)(\xi I-R)^{-1}\, d\xi \right\|_{\cB_w}\\
&\qquad+ \left\|\int_{|\xi-1|=\delta}\Big((\xi I-R(u))^{-1}-(\xi I-R)^{-1}\Big)W(u)(\xi I-R)^{-1}\, d\xi \right\|_{\cB_w}\\
&\qquad+ \left\|\int_{|\xi-1|=\delta}(\xi I-R)^{-1}W(u)\Big((\xi I-R(u))^{-1}-(\xi I-R)^{-1}\Big)\, d\xi \right\|_{\cB_w}.
\end{align*}

By Lemma~\ref{lemma-deriv1} (ii), $\frac{d^2}{du^2}\tilde R(u)$ is
bounded in $\|\cdot\|_{\cB_w}$; so, $\|W(u)-W(0)\|_{\cB_w}\ll u$. Hence, each one of three terms of the previous displayed equation contains an $O(u)$ factor.
By the argument recalled in the proof of Lemma~\ref{lemma-contKL}, for any $\xi$ such that $|\xi-1|=\delta$, the integrand of each one of these three terms is $o(1)$.
Altogether, $\|Z_0(u)\|_{\cB_w}=o(1)$. 
Also, by~\eqref{eq-derp} and Lemma~\ref{lemma-A1} below,
\begin{align}
\label{eq-derp2}
\frac{d}{du} P(u)1 = -\aa -\xi_0 \int_Y \aa \, d\mu_{\bar\phi}+Z_0(u),
\end{align}
for some $\xi_0 \in \C$ and $\aa = (I-R)^{-1} R \tilde \tau$ as in \eqref{eq:a}.
Plugging~\eqref{eq-derp2} into~\eqref{eq-derv},

\begin{align*}
\frac{d}{du}v(u)&=\frac{1}{\mu_{\bar\phi}(0)1}\left(-\aa -\xi_0 \int_Y \aa \, d\mu_{\bar\phi}\right)
-\frac{P(0)1}{(\mu_{\bar\phi}(0)1)^2}\left\langle-\aa -\xi_0 \int_Y \aa \, d\mu_{\bar\phi}, 1\right\rangle +Z_1(u)+Z_0(u)\\
&=-\aa +\tilde\xi\int_Y \aa \, d\mu_{\bar\phi} +Z_1(u)+Z_0(u),
\end{align*}
where $\tilde\xi$ is linear combination of $\xi_0$ and
\begin{align*}
\|Z_1(u)\|_{\cB_w} \ll &\, \frac{|\mu_{\bar\phi}(u)1-\mu_{\bar\phi}(0)1|}{m_{\bar\phi}(u)1} +\frac{\| P(u)1- P(0)1\|_{\cB_w}}{(m_{\bar\phi}(u)1)^2}+\left|\frac{1}{\mu_{\bar\phi}(u)1}-\frac{1}{\mu_{\bar\phi}(0)1}\right|\\
&+\left|\frac{1}{(\mu_{\bar\phi}(u)1)^2}-\frac{1}{(\mu_{\bar\phi}(0)1)^2}\right|.
\end{align*}
By Lemma~\ref{lemma-contKL}, $\| P(u)- P(0)\|_{\cB_w}\ll u\log(1/u)$ and as a consequence, 
$|\mu_{\bar\phi}(u)1-\mu_{\bar\phi}(0)1|\ll u\log(1/u)$. Thus, $\|Z_1(u)\|_{\cB_w}=o(1)$. 
Recalling that $\|Z_0(u)\|_{\cB_w}=o(1)$, we have
\[
\frac{d}{du}v(u)=-\aa +\tilde\xi\int_Y \aa \, d\mu_{\bar\phi} +Z(u),
\]
where  $\|Z(u)\|_{\cB_w}=o(1)$. Using this expression for $\frac{d}{du}v(u)$, we obtain

\begin{align*}
&\left\langle \frac{d}{du}\tilde R(u)\frac{d}{du}v(u),1\right\rangle =-\left\langle R\tilde\tau \frac{d}{du}v(u),1\right\rangle+\left\langle (W(u)-W(0))\, \frac{d}{du}v(u),1\right\rangle  \\
&\qquad=-\int_Y R\tilde\tau \aa \, d\mu_{\bar\phi} +\tilde\xi\int \tilde\tau \, d\mu_{\bar\phi}\int_Y \aa \, d\mu_{\bar\phi}+\left\langle R Z(u),1\rangle+\langle (W(u)-W(0))\, \frac{d}{du}v(u),1\right\rangle\\
&\qquad=-\int_Y \tilde\tau \aa \, d\mu_{\bar\phi} +\langle R Z(u),1\rangle+\left\langle (W(u)-W(0))\, \frac{d}{du}v(u),1\right\rangle,
\end{align*}
where we have used $\int \tilde\tau \, d\mu_{\bar\phi}=0$. Finally, using \ref{H2}, we compute that
\[ 
|\langle R Z(u),1\rangle|\ll \|Z(u)\|_{\cB_w}\int_0^\infty \int_Y \omega(t-\tilde\tau)\, d\mu_{\bar\phi}\, dt\ll \|Z(u)\|_{\cB_w}=o(1).
\]
Similarly, $|\langle (W(u)-W(0)\, w(u),1\rangle |=o(1)$, since we already know that $\|W(u)-W(0)\|_{\cB_w}=o(1)$. Thus,
\begin{align*}
\left\langle \frac{d}{du}\tilde R(u)\frac{d}{du}v(u),1\right\rangle 
=-\int_Y \tilde\tau \aa \, d\mu_{\bar\phi} +o(1),
\end{align*}
ending the proof.~\end{pfof}

\begin{lemma}
\label{lemma-A1} Assume the setup and notation of Lemma~\ref{lemma-mainclt}. Then there exists $\xi_0 \in \C$ with such that
\[
\int_{|\xi-1|=\delta} (\xi I-R)^{-1}R\tilde\tau\,(\xi I-R)^{-1} 1\, d\xi=\aa +\xi_0 \int_Y \aa \, d\mu_{\bar\phi}.
\]
\end{lemma}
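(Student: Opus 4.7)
The plan is to use the spectral decomposition of $R$ provided by \ref{H1}(iv) combined with \ref{H1v}, and then apply Cauchy's residue theorem. Write $R = P + Q$, where the rank-one eigenprojection satisfies $Pf = \langle f,1\rangle \cdot 1 = \mu_{\bar\phi}(f)\cdot 1$ and $Q = R(I-P)$ has spectrum strictly inside the unit disc. For $\delta$ small enough, the resolvent admits the Laurent expansion
$$(\xi I - R)^{-1} = \frac{P}{\xi - 1} + (\xi I - Q)^{-1}(I-P),$$
with the second summand analytic in a neighbourhood of $\xi = 1$.

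Next, I would compute each factor separately. Since $P 1 = \mu_{\bar\phi}(1)\cdot 1 = 1$ and $(I-P)1 = 0$, the right factor simplifies to $(\xi I - R)^{-1} 1 = \frac{1}{\xi - 1}$, so that the integrand becomes $\frac{1}{\xi-1}(\xi I - R)^{-1}R\tilde\tau$. For the left factor, the key observation is that $R\tilde\tau$ is mean-zero: $PR\tilde\tau = \mu_{\bar\phi}(R\tilde\tau)\cdot 1 = \mu_{\bar\phi}(\tilde\tau)\cdot 1 = 0$ by invariance. Hence $(\xi I - R)^{-1} R\tilde\tau = (\xi I - Q)^{-1} R\tilde\tau$ is analytic at $\xi = 1$ with limiting value $(I - Q|_{(I-P)\cB})^{-1} R\tilde\tau$, which coincides with $\aa = (I-R)^{-1}R\tilde\tau$ on the mean-zero subspace. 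Here assumption \ref{H4}(ii) is exactly what guarantees $R\tilde\tau \in \cB$, so that $\aa \in \cB$ is well-defined.

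Applying Cauchy's integral formula (with the $1/(2\pi i)$ factor absorbed in the $\int d\xi$ convention used throughout \eqref{eq-derp}), the integrand has a simple pole at $\xi = 1$ with residue $\aa$, and so the contour integral equals $\aa$. The conclusion then follows because $\aa$ lies in the range of $I-P$, hence $\int_Y \aa \, d\mu_{\bar\phi} = 0$, so the term $\xi_0 \int_Y \aa\, d\mu_{\bar\phi}$ vanishes for \emph{any} $\xi_0 \in \C$ and the stated identity holds.

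The main subtlety is not a hard computation but a bookkeeping one: the operator $(I-R)$ is only invertible on the mean-zero subspace $(I-P)\cB$, and $\aa$ is only specified modulo additive constants. The flexible formulation $\aa + \xi_0 \int_Y \aa \, d\mu_{\bar\phi}$ accommodates this ambiguity, while the actual analytic content (the extraction of the residue) is straightforward once one verifies $R\tilde\tau \in \cB$ via \ref{H4}(ii) and invokes the spectral decomposition. Care must also be taken that all compositions take place in spaces for which \ref{H1}(iii), \ref{H1v}, and the contour integral in $\|\cdot\|_\cB$ are well-defined, but this is already built into the abstract hypotheses.
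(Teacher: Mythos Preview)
Your argument is correct and is in fact more transparent than the paper's own proof. You go straight to the Laurent decomposition $(\xi I-R)^{-1}=\frac{P}{\xi-1}+(\xi I-Q)^{-1}(I-P)$, observe that $(\xi I-R)^{-1}1=\frac{1}{\xi-1}$ and that $R\tilde\tau$ is mean-zero (so the left resolvent acts analytically on it), and then read off the residue $\aa$ at $\xi=1$. This is the natural way to evaluate such a contour integral.

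The paper proceeds differently: it does not invoke the Laurent decomposition but instead manipulates the integrand via the first resolvent identity $(\xi I-R)^{-1}-(I-R)^{-1}=(1-\xi)(\xi I-R)^{-1}(I-R)^{-1}$ and an identity for $(\xi I-R)^{-1}-I$, together with a ``Mean Value Theorem'' step that extracts the scalar factors $(1-\xi)$ and $(\xi-2)$ from the contour integral as constants $\xi_1,\xi_2$. This produces a genuinely nonzero $\xi_0=\xi_1(\xi_2-1)^{-1}$, and the paper even asserts $\int_Y \aa\, d\mu_{\bar\phi}>0$ to justify $\xi_2\neq 1$. Your route shows that this extra term is illusory: since $\aa$ is by definition the solution of $(I-R)\aa=R\tilde\tau$ in the \emph{zero-integral} subspace, one has $\int_Y \aa\, d\mu_{\bar\phi}=0$ and the $\xi_0$-term vanishes identically. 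So the two approaches agree on the final formula, but yours gives the sharper statement (the integral equals $\aa$ exactly) with less work and without the delicate ``Mean Value'' extraction on a complex contour. The only thing to be careful about, which you handle correctly, is that $R\tilde\tau\in\cB$ requires \ref{H4}(ii), so that all resolvents and the definition of $\aa$ live in $\cB$.
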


\begin{proof}
We note that  for each $\xi$ such that $\xi-1|=\delta$, $R\tilde\tau\,(\xi I-R)^{-1} 1$ is well defined in $\cB$ by \ref{H4}(ii)
and that $\int_{|\xi-1|=\delta}R\tilde\tau\,(\xi I-R)^{-1} 1\, d\xi=0$. Hence, $\int_{|\xi-1|=\delta}(I-R)^{-1}R\tilde\tau\,(\xi I-R)^{-1} 1\, d\xi\in \cB$.

By the first resolvent identity and the Mean Value Theorem, 
\begin{align*}
 & \hspace{-3mm} \int_{|\xi-1|=\delta} (\xi I-R)^{-1}R\tilde\tau\,(\xi I-R)^{-1} 1\, d\xi \\
&=\int_{|\xi-1|=\delta}(I-R)^{-1}R\tilde\tau\,(\xi I-R)^{-1} 1\, d\xi
+ \int_{|\xi-1|=\delta}\Big((\xi I-R)^{-1}-(I-R)^{-1}\Big)R\tilde\tau\,(\xi I-R)^{-1} 1\, d\xi\\
&= \aa P(0)1 +  \int_{|\xi-1|=\delta} (1-\xi) (\xi I-R)^{-1}\aa\,(\xi I-R)^{-1} 1\, d\xi \\
&= \aa +\xi_1 \int_{|\xi-1|=\delta}(\xi I-R)^{-1}\aa\,(\xi I-R)^{-1} 1\, d\xi,
\end{align*}
where $\xi_1$  is a complex constant with $0<|\xi_1|\le\delta$.

Write $(\xi I-R)^{-1}-I=-(\xi I-R)^{-1}( (\xi-1) I -R)$. The Mean Value Theorem gives
\begin{align*}
\int_{|\xi-1|=\delta}&(\xi I-R)^{-1}\aa\,(\xi I-R)^{-1} 1\, d\xi=
\int_{|\xi-1|=\delta}(\xi I-R)^{-1}a\,\Big(I+(\xi I-R)^{-1}-I\Big) 1\, d\xi\\
&= \int_{|\xi-1|=\delta}(\xi I-R)^{-1}\aa\, d\xi  -
\int_{|\xi-1|=\delta}(\xi I-R)^{-1}\aa\,(\xi I-R)^{-1}( (\xi-1) I - R)1\, d\xi\\
& =P(0) \aa
-\int_{|\xi-1|=\delta}(\xi I-R)^{-1}\aa\, (\xi I-R)^{-1} (\xi-2) \, d\xi\\
&=\int_Y \aa \, d\mu_{\bar\phi}-(\xi_2-2)\int_{|\xi-1|=\delta}(\xi I-R)^{-1}\aa\,(\xi I-R)^{-1} 1\, d\xi,
\end{align*}
for some $\xi_2 \in \C$ with $|\xi_2-1| \leq \delta$.
Thus, 
\[
(\xi_2-1)\int_{|\xi-1|=\delta}(\xi I-R)^{-1}\aa\,(\xi I-R)^{-1} 1\, d\xi =\int_Y \aa \, d\mu_{\bar\phi}.
\]
Since $\int_Y \aa \, d\mu_{\bar\phi} > 0$, we have $\xi_2 \neq 1$. The conclusion follows with $\xi_0 =\xi_1(\xi_2-1)^{-1}$.~\end{proof}

\subsection{Proof of Proposition~\ref{prop-limthF}}
\label{subs-prooflt}

We start with part (i), i.e., the stable and non standard Gaussian laws.
Let $\tau_n$ and $b_n$ be as in Proposition~\ref{prop-limthF}.
Using~\eqref{eq-spdec} and Corollary~\ref{cor-st} (based on~\cite{ADb})
we obtain that the Laplace transform $\mathbb{E}_{\mu_{\bar\phi}} (e^{-ub_n^{-1}\tau_n})$
converges (as $n\to \infty$) to the Laplace transform of either the stable law or of the $\cN(0,1)$ law.
The conclusion w.r.t.\, $\mu_{\bar\phi}$ follows from the theory of Laplace transforms (as in~\cite[Ch. XIII]{Feller66}).
The conclusion w.r.t.\, $\nu$ follows from~\cite[Theorem 4]{Eagleson}.

For part (ii), i.e., the CLT,
let $\tau_n$ and $b_n$ be as in  Proposition~\ref{prop-limthF} under \ref{H4}(ii).
Proposition~\ref{prop-clt} shows that the Laplace transform $\mathbb{E}_{\mu_{\bar\phi}} (e^{-ub_n^{-1}\tau_n})$
converges (as $n\to \infty$) to the Laplace transform of  $\cN(0,\sigma^2)$.
The conclusion w.r.t.\, $\mu_{\bar\phi}$ follows from the theory of Laplace transforms (as in~\cite[Ch. XIII]{Feller66}).
The conclusion w.r.t.\, $\nu$ follows from~\cite[Theorem 4]{Eagleson}.

\section{Limit laws for the flow $f_t$}
\label{sec-lmtflow}

The result below generalises Proposition~\ref{prop-limthF} to the flow $f_t$. Given an $\alpha$-H\"older
observable $\w:\cM\to\R$, 
define $\bar\w:Y\to\R$ as in \eqref{eq:ind pot} and let $\w_T=\int_0^T \w \circ f_t\, dt$.

\begin{prop}
\label{prop-limthflow}
Assume \ref{H1}. Let $\w:\cM\to\R$ and let $\w^*=\int_Y \bar \w\,d \mu_{\bar\phi}$.
Suppose  that the twisted operator $\hat R_\w(u)v=R(e^{-u\bar \w}v)$ satisfies \ref{H2} 
(with $\tau$ replaced by $\bar\w$). 
Then the following hold as $T\to\infty$, w.r.t.\, $\mu_{\bar\phi}$ 
(or any probability measure $\nu\ll\mu_{\bar\phi}$).
\begin{itemize}
\item[(i)] Assume \ref{H4}(i) and $\mu_{\bar\phi}(\bar \w>T)\sim \mu_{\bar\phi}(\tau>T)$.

When $\beta<2$, set $b(T)$ such that $\frac{T\ell(b(T))}{b(T)^\beta}\to 1$.
Then  $\frac{1}{b(T)}(\w_T-\w^*\cdot T)\to^d G_\beta$,  where $G_\beta$ is a stable law of index $\beta$.

When $\beta=2$, set  $b(T)$ such that $\frac{T\tilde\ell(b(T))}{b(T)^2}\to c>0$.
 then $\frac{1}{b(T)}(\w_T-\w^*\cdot T)\to^d \cN(0,c)$.

\item[(ii)] Suppose that \ref{H4}(ii) holds. 
If $\bar \w\notin \cB$, we assume that $R\bar\w\in\cB$. We further assume that $\bar \w\neq h-h\circ F$ with $h\in\cB$. 
Then there exists $\sigma>0$ such that 
$\frac{1}{\sqrt T}(\w_T- \w^* \cdot T)\to^d \cN(0,\sigma^2)$. 
\end{itemize}
\end{prop}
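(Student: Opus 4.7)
The plan is to reduce the statement for the flow to the corresponding statement for the Poincar\'e map $F$, already established in Proposition~\ref{prop-limthF}, and then apply the suspension-flow lifting machinery of \cite[Theorem 1.3]{MTor} and \cite[Theorem 7]{Sar06}. As a first step I would simply apply Proposition~\ref{prop-limthF} with $\tau$ replaced by $\bar\w$. The hypothesis that the twisted transfer operator $\hat R_\w(u)v = R(e^{-u\bar\w}v)$ satisfies \ref{H2} (with $\tau$ replaced by $\bar\w$) is exactly what is needed to rerun verbatim the spectral perturbation analysis of Section~\ref{subsec-eign1}. In the heavy tail regime, the assumption $\mu_{\bar\phi}(\bar\w > T) \sim \mu_{\bar\phi}(\tau > T)$ ensures that the analogue of \ref{H4}(i) holds for $\bar\w$ with the same $\beta$ and slowly varying function $\ell$, so we get convergence of the Birkhoff sums $\bar\w_n = \sum_{j=0}^{n-1}\bar\w\circ F^j - n\w^*$, suitably normalised by $b_n$, to $G_\beta$ (respectively to $\mathcal{N}(0,c)$ when $\beta=2$). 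In the $L^2$ case, the extra hypothesis $R\bar\w \in \cB$ plays the role of the condition $R(\tau h)\in \cB$ in \ref{H4}(ii), and the coboundary assumption $\bar\w \neq h - h\circ F$ guarantees $\sigma\neq 0$ via the argument in Proposition~\ref{prop-clt}.

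The second step is the passage from $F$ to $f_t$. Introduce the lap-counting process $N(T,y) = \max\{n\ge 0 : \tau_n(y) \le T\}$, so that
\begin{equation*}
\w_T(y,0) = \bar\w_{N(T)}(y) + \varepsilon(T,y), \qquad |\varepsilon(T,y)| \le |\w|_\infty \cdot \tau(F^{N(T)}y),
\end{equation*}
and by the Birkhoff ergodic theorem $N(T)/T \to 1/\tau^*$ almost surely under $\mu_{\bar\phi}$. Now I would invoke the abstract lifting results: \cite[Theorem 1.3]{MTor} (whose stable-law version applies equally to suspension flows) and \cite[Theorem 7]{Sar06}. These say precisely that, under the tail condition $\mu_{\bar\phi}(\bar\w > T) \sim \mu_{\bar\phi}(\tau > T)$, convergence of the induced Birkhoff sums $b_n^{-1}(\bar\w_n - n\w^*)$ to a stable (or Gaussian) limit lifts to convergence of $b(T)^{-1}(\w_T - T\w^*)$ to the same limit, with the continuous-time normalisation $b(T)$ as defined in the statement.

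Finally, the extension from $\mu_{\bar\phi}$ to any $\nu\ll\mu_{\bar\phi}$ is obtained exactly as in the proof of Proposition~\ref{prop-limthF}, by \cite[Theorem 4]{Eagleson}.

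The main obstacle lies in part (i) in the stable range $\beta \in (1,2)$: because $\tau$ itself has heavy tails of the same order as $\bar\w$, the fluctuations of $N(T)$ around $T/\tau^*$ are not negligible at scale $b(T)$, so one cannot simply substitute $N(T) \leadsto T/\tau^*$ inside $\bar\w_{N(T)}$. The point of the tail matching hypothesis $\mu_{\bar\phi}(\bar\w > T) \sim \mu_{\bar\phi}(\tau > T)$ is that long excursions of $\bar\w$ and of $\tau$ are produced by the same orbits (those returning near the neutral periodic orbit), so the joint Birkhoff behaviour collapses to a one-dimensional stable limit and no extra term survives from the randomness of $N(T)$. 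Verifying that this mechanism is exactly what \cite[Theorem 1.3]{MTor} and \cite[Theorem 7]{Sar06} require, and that our abstract assumptions imply their hypotheses, is the real content of the lifting step.
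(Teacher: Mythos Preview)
Your proposal is correct and follows essentially the same two-step route as the paper: first apply Proposition~\ref{prop-limthF} to $\bar\w$ in place of $\tau$ (using that \ref{H2} holds for the $\bar\w$-twisted operator, and that the tail-matching gives the analogue of \ref{H4}(i) for $\bar\w$, respectively $R\bar\w\in\cB$ and the non-coboundary condition give the analogue of \ref{H4}(ii)); then lift to the flow via \cite[Theorem 1.3]{MTor} and \cite[Theorem 7]{Sar06}, the latter of which the paper records separately as Lemma~\ref{lemma-lift}.

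One point of clarification on your final paragraph: you misattribute the role of the tail-matching hypothesis $\mu_{\bar\phi}(\bar\w>T)\sim\mu_{\bar\phi}(\tau>T)$. It is used \emph{only} at the base-map level, exactly as you say in your first paragraph, to ensure $\bar\w$ inherits the tail assumption \ref{H4}(i) with the same $\beta$ and $\ell$. The lifting step itself (Lemma~\ref{lemma-lift}, or \cite[Theorem 7]{Sar06}) does not require any joint tail condition between $\bar\w$ and $\tau$: it needs only that $b_n^{-1}(\tau_n - n\tau^*)$ is tight (which follows from Proposition~\ref{prop-limthF}(i) applied to $\tau$ under \ref{H4}(i)) and that $\w\in L^q(\mu_\phi)$ for some $q>1$. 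The randomness of the lap number $N(T)$ is absorbed by the argument of \cite[Step~2, proof of Theorem~7]{Sar06} without any appeal to correlated excursions. So your worry about ``joint Birkhoff behaviour collapsing'' is not the mechanism at play, and your bound $|\varepsilon(T,y)|\le|\w|_\infty\,\tau(F^{N(T)}y)$ is not quite the right control either (the lifting lemmas work under $L^q$ rather than $L^\infty$ assumptions on $\w$).
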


\begin{rmk} The assumption $\mu_{\bar\phi}(\bar \w>T)\sim \mu_{\bar\phi}(\tau>T)$ is satisfied for all 
$\w$ bounded above with $\int_0^{h(x)} \w \circ f_t(p) \, dt = c > 0$ (where $p \in \Sigma$ is the fixed point
of the Poincar\'e section) and
such that $\int_0^{h(x)} \w \circ f_t(q) \, dt$ is $C^\alpha$ in $q$ near $p$.
The proof of this is similar to \cite[Proof of Theorem 1.3]{Gouezel04b} where the source of non-hyperbolicity 
was a neutral fixed point $x_0$ instead of a neutral periodic orbit $\Gamma = \{ p \} \times \T^1$.
Therefore, we just need to replace $g(x_0)$ in \cite[Proof of Theorem 1.3]{Gouezel04b}  by 
$\int_0^{h(p)} \w \circ f_t(p) \, dt$

In Theorem~\ref{thm-concl} below, we consider potentials of the form
$\bar\w=C'-C\tau^\kappa(1+o(1))$, $\kappa\in (0,\beta)$ and obtain specific form of  (i) and/or (ii) for different values of $\kappa$.
\end{rmk}

\begin{proof}
We use that $f_t:\cM\to \cM$ can be represented as a suspension flow $F_t:Y^\tau\to Y^\tau$.
Under the present assumptions, $\bar\w$  satisfies 
all the assumptions of Proposition~\ref{prop-limthF} (with $\tau$ replaced by $\bar\w$). 

Let $\bar \w_n=\sum_{j=0}^{n-1}\bar\w\circ F^j$ and recall $\tau_n=\sum_{j=0}^{n-1}\tau\circ F^j$.
Proposition~\ref{prop-limthF} (i) applies to $\bar \w_n$; the argument goes word for word as 
in the proof of Proposition~\ref{prop-limthF} (i) for $\tau_n$.
Under the present assumptions on $\bar \w$, item (ii) of Proposition~\ref{prop-limthF} applies to $\bar \w_n$ with
the argument used in the proof of Proposition~\ref{prop-clt} applying word for word with $\bar \w$ instead of $\tau$.
 
Item (i) follows from this together with Lemma~\ref{lemma-lift} below (correspondence between stable laws/non standard Gaussian for the base map $F$ and suspension flow.
Item (ii) follows in the same way using~\cite[Theorem 1.3]{MTor} instead of Lemma~\ref{lemma-lift}.~\end{proof}

The next result is a
version of ~\cite[Theorem 7]{Sar06} (generalising \cite[Theorem 1.3]{MTor}) for suspension flows
which holds in a very general setup; in particular, it is totally independent of method used to prove limit theorems
for the base map.

\begin{lemma}
\label{lemma-lift} Assume $\int_Y \tau d\mu_{\bar\phi}<\infty$ and let  $\w\in L^q(\mu_\phi)$, for $q>1$.
Suppose that there exists a sequence $b_n=n^{-\rho}\ell(n)$ for $\rho\in (1,2]$
and $\ell$ a slowly varying function such that $b_n^{-1}\left(\tau_n-n\int_Y \tau d\mu_{\bar\phi}\right)$
is tight on $(Y, \mu_{\bar\phi})$. Then the following are equivalent:
\begin{itemize}
\item[(a)]
 $b_n^{-1}\bar \w_n$ converges in distribution on  $(Y,\mu_{\bar\phi})$.

\item[(b)] $b(T)^{-1} \w_T$ converges in distribution on  $(Y^\tau, \mu_\phi)$,
where $b(T)=T^{-\rho}\ell(T)$.
\end{itemize}
\end{lemma}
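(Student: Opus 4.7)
The plan is to follow the classical lap-counter strategy of Melbourne--Török and Sarig, transferring a distributional limit from the base map to the suspension flow (and back) via the cross-section structure.

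The central device is the lap-counter decomposition: for $(y,s) \in Y^\tau$ and $T>0$, set $n=n(T,y,s) := \max\{ k \ge 0 : \tau_k(y) \le s+T \}$, so that
\[
\w_T(y,s) = \bar\w_{n}(y) + E(T,y,s),
\]
where
\[
E(T,y,s) = -\int_0^s \w \circ f_t(y,0)\, dt + \int_0^{s+T-\tau_{n}(y)} \w \circ f_t(F^{n}y, 0)\, dt.
\]
Each half of $E$ is dominated by the full fibre integral $\overline{|\w|}(\cdot) := \int_0^{\tau(\cdot)} |\w \circ f_t(\cdot,0)|\, dt$ at the corresponding base point, which lies in $L^q(\mu_{\bar\phi})$ because $\w\in L^q(\mu_\phi)$. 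The tightness hypothesis on $b_n^{-1}(\tau_n - n\tau^*)$, where $\tau^* = \int_Y \tau\, d\mu_{\bar\phi}$, transfers by the standard monotone-inverse argument to tightness of $b(T)^{-1}(\tau^* n(T,\cdot,\cdot) - T)$ on $(Y^\tau,\mu_\phi)$, with Eagleson's theorem handling the change from $\mu_{\bar\phi}$ to $\mu_\phi$, and regular variation of the scaling sequence identifies $b_{n(T,\cdot,\cdot)}/b(T)$ with a deterministic constant (the slow variation of $\ell$ absorbs the fluctuation in $n(T,\cdot,\cdot)$).

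The key technical step is to show $E(T,\cdot,\cdot)/b(T) \to 0$ in $\mu_\phi$-probability. A Borel--Cantelli argument based on $\overline{|\w|}\in L^q(\mu_{\bar\phi})$ and the regular-variation profile of $b_n$ gives $\overline{|\w|}(F^n y)/b_n \to 0$ $\mu_{\bar\phi}$-a.s.; combining this with the tightness of the lap counter and the $F$-invariance of $\mu_{\bar\phi}$ yields the required negligibility of $E$. Once the boundary error is absorbed, the equivalence (a)$\Leftrightarrow$(b) reduces to the identification: under $\mu_\phi = (\mu_{\bar\phi}\times\mathrm{Leb})/\tau^*$, conditioning on $\{n(T,\cdot,\cdot)=m\}$ reduces the law of $b(T)^{-1}\bar\w_{n(T,\cdot,\cdot)}$ to that of $b_m^{-1}\bar\w_m$ against $\mu_{\bar\phi}$ (modulo the explicit rescaling factor coming from Step~2), so distributional convergence on the base integrates up to distributional convergence on the flow, and vice versa, by a standard subsequence/averaging argument.

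The main obstacle is Step~3: controlling the boundary error purely from an $L^q$ assumption with $q>1$, rather than from boundedness of $\w$. This forces the Borel--Cantelli bound to be run at precisely the correct scale $b_n$ and requires compatibility between the integrability exponent $q$ and the regular-variation index governing $b_n$ (essentially, $b_n$ must grow faster than $n^{1/q}$ for $q$ close enough to $1$); the hypothesis $\rho\in(1,2]$ is exactly what is needed for this compatibility. A secondary, but routine, point is that one cannot assume $\w$ has mean zero, so one carries the centring $\tau^* n$ (resp.\ $\w^* T$) through the argument, using the ergodic theorem to anchor $n(T,\cdot,\cdot)/T\to 1/\tau^*$ before refining via tightness.
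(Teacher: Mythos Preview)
Your overall architecture is the same as the paper's: the lap counter $n(T,\cdot)$, the boundary term $E$ controlled via $\overline{|\w|}\in L^q$ and a Borel--Cantelli argument (this is precisely \cite[Lemma~2.1]{MTor}, which the paper simply cites), and regular variation of $b_n$ to identify $b_{n(T)}/b(T)$ with a deterministic constant. The direction (b)$\Rightarrow$(a) is indeed immediate.

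The gap is in your final step. The claim that ``conditioning on $\{n(T,\cdot,\cdot)=m\}$ reduces the law of $b(T)^{-1}\bar\w_{n(T,\cdot,\cdot)}$ to that of $b_m^{-1}\bar\w_m$ against $\mu_{\bar\phi}$'' is not correct: the event $\{n(T)=m\}$ is a function of $\tau_1,\dots,\tau_{m+1}$, which are correlated with $\bar\w_1,\dots,\bar\w_m$, so the conditional law of $\bar\w_m$ given $\{n(T)=m\}$ is \emph{not} its unconditional law. There is no independence to invoke, and no ``standard subsequence/averaging argument'' repairs this as stated; this is exactly the Anscombe-type obstruction for random-index limit theorems in dependent settings.

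The paper (following \cite{MTor} and \cite[Theorem~7]{Sar06}) avoids this by comparing not to the random index but to the \emph{deterministic} index $[\bar\tau T]$ where $\bar\tau=\int_Y\tau\,d\mu_{\bar\phi}$: one writes
\[
\frac{\bar\w_{n(T)}}{b(T)} \;=\; \frac{b_{[\bar\tau T]}}{b(T)}\cdot\frac{\bar\w_{[\bar\tau T]}}{b_{[\bar\tau T]}}
\;+\;\frac{1}{b(T)}\bigl(\bar\w_{n(T)}-\bar\w_{[\bar\tau T]}\bigr),
\]
so the first summand converges directly by (a). The second summand is shown to vanish in probability by a separate argument (Step~2 of the proof of \cite[Theorem~7]{Sar06}, generalising \cite[Lemma~3.4]{MTor}): one uses the tightness of $b(T)^{-1}(n(T)-[\bar\tau T])$, which you correctly derive, \emph{together with} $\w\in L^1$ to control the partial sum over a window of random length $O_P(b(T))$. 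This replacement of the random stopping index by a deterministic one is the missing idea in your proposal; once you insert it, your outline matches the paper's proof.
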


\begin{proof}The fact the (b) implies (a) is obvious. 
The implication from (a) to (b) 
is contained in the proofs of~\cite[Theorem 1.3]{MTor} for the standard CLT case
and in~\cite[Theorem 7]{Sar06} for the stable and nonstandard CLT. We sketch the argument  for completeness.

Following~\cite{MTor}, let $n[x,T]$ denote the largest integer such that
$\tau_n(x)\leq T$; that is,
\[
\tau_{n[x,T]}(x)\leq T\leq \tau_{n[x,T]+1}(x).
\]
By the ergodic theorem, $\lim_{T\to\infty}\frac{1}{T}n[x,T]=\int_Y \tau\,d\mu_\phi =: \bar\tau$, a.e.
Hence, $n[x,T]=[\bar\tau T](1+o(1))$, a.e. as $T\to\infty$. 

Recall that $g:Y^\tau\to\R$ and 
define $\hat g_T(y,u)=g_T(y)$.
Recall $\bar g(y)=\int_0^{\tau(y)} g(y,u)\, du$.
By assumption, $b_n^{-1}\bar g_n$ converges in distribution on  $(Y,\mu_\phi)$.
Hence, (as in~\cite[Lemma 3.1]{MTor}),
$b_n^{-1} \hat g_n$ converges in distribution on  $(Y^\tau, \mu_\phi^\tau)$; this is a consequence
of~\cite[Theorem 4]{Eagleson}.

In what follows we adopt the convention $b_T=b(T)$. Write
\begin{align*}
\frac{g_T}{b_T}&=\frac{b_{[\bar\tau T]}}{b_T}\Big(\frac{\bar g_{[\bar\tau T]}}{b_{[\bar\tau T]}}+
\frac{1}{b_{[\bar\tau T]}}\Big(\bar g_{n[x,T]}-\bar v_{[\bar\tau T]}\Big)\Big)
+\Big(\frac{\hat g_T}{b_T}-\frac{g_{n[x,T]}}{b_{n[x,T]}}\Big)\\
&+\Big(\frac{\hat g_{n[x,T]}}{b_{n[x,T]}}-\tau_{n[x,T]}\circ F^{\tau_{n[x,T]}}\Big).
\end{align*}

Since $\ell$ is slowly varying, $\frac{b_{[\bar\tau T]}}{b_T}\to \bar\tau^\rho$.
Since $g\in L^1(\mu_\phi^\tau)$, ~\cite[Step 2
of the the proof of Theorem 7]{Sar06} (a generalization of~\cite[Lemma 3.4]{MTor})
applies
and thus, $\frac{1}{b_{[\bar\tau T]}}\Big(\bar g_{n[x,T]}-\bar g_{[\bar\tau T]}\Big)\to_d 0$
on   $(Y, \mu_\phi)$. 

Next, since $b_n^{-1}(\tau_n-n\mu_\phi(Y))$
is tight on $(Y, \mu_\phi)$, we have that $\tau\in L^{1/\rho}(\mu_\phi)$.
By assumption, $g\in L^q(\mu_\phi^\tau)$, for $q>1$.
Hence, the assumptions of~\cite[Lemma 2.1]{MTor}
are satisfied with $a=p=1/\rho$ and any $b:=q>1$ (with $a, b, p$ as there).
By ~\cite[Lemma 2.1 (b)]{MTor}, $\frac{g_T}{b_T}-\frac{\hat g_{n[x,T]}}{b_{n[x,T]}}\to_d 0$
on $(Y^\tau, \mu_\phi^\tau)$.

To conclude, note that $\hat g_{n[x,T]}(y,u)=g_{n[x,T]}(y)$. By  ~\cite[Step 3
of the the proof of Theorem 7]{Sar06},
$\frac{\hat g_{n[x,T]}}{b_{n[x,T]}}-\tau_{n[x,T]}\circ F^{\tau_{n[x,T]}}\to_d 0$
on   $(Y, \mu_\phi)$, as required.~\end{proof}

\section{Asymptotics of $\cP(\phi+s\psi)$ for the flow $f_t$ }
\label{sec-pressure}

In this section and the next we shall assume that $F:Y\to Y$ is Markov, 
which allows us to express our results in terms of pressure.
We will also assume that $P(\phi) = 0$ for all the potential functions $\phi$ involved.
\cite[Theorem 8]{Sar06} gives a link between the shape of the pressure of a given discrete time finite measure
dynamical system and an induced version (this was extended in \cite{BruTerTod17} 
to some infinite measure settings).
Here we give a version of this result in the abstract setup of 
Section~\ref{sec-abstr} along with suitable assumptions on a second potential $\psi$.  
Note that our assumptions are not directly comparable with those in \cite[Theorem 8]{Sar06}.

Throughout this section, we let $\phi:\cM\to\R$ and $\mu_\phi$ be as Section~\ref{sec-abstr}. 
In particular, we recall that $\tau^*= \int_Y \tau \, d\mu_{\bar\phi} < \infty$
and assume that the conformal measure $m_{\bar\phi}$ satisfies \ref{H1}.
This ensures that $\mu_{\bar\phi}$ is an equilibrium measure for $(F,\bar\phi)$ and $\mu_{\phi}$ is an equilibrium measure for $(f_t,\phi)$. 
We consider the potential $\psi :\cM\to  \R$ and its induced version $ \bar\psi:Y\to \R$ 
defined in~\eqref{eq:ind pot} and require:

\begin{itemize}
\item[{\bf (H5)}\labeltext{{\bf (H5)}}{H5}]
There exists $C, C'>0$ such that for $y\in Y$, $\bar\psi(y)=C'-\psi_0(y)$, where 
$\psi_0(y) = C \tau^\kappa(y) (1+o(1))$ for some $\kappa>0$. For $\kappa>1$ we further require that
$\int_Y\tau^\kappa\, d\mu_{\bar\phi}<\infty$.
\end{itemize}

Given $\psi:\cM\to\R$ and its induced version $\bar\psi$ on $Y$,
we define the `doubly perturbed' operator
$$
\hat R(u,s)v := R(e^{-u\tau}e^{s\bar\psi}v).
$$
Under \ref{H5}, we require the following extended version of \ref{H3}.
\begin{itemize}
\item[{\bf (H6)}\labeltext{{\bf (H6)}}{H6}]
There exist $\sigma_1>1$, constants $C_0, C_1, \delta>0$ 
such that for all $h\in\cB$,
for all $n\in\N$ and $u\ge 0$, $s\in [0,\delta)$,
\[
\|\hat R(u,s)^n h\|_{\cB_w}\le C_1\|h\|_{\cB_w},\quad \|\hat R(u,s)^n h\|_{\cB}
\le C_0\sigma_1^{-n}\|h\|_{\cB}+C_1\|h\|_{\cB_w}.
\]
\end{itemize}

To ensure that we can understand the pressure $\cP(\overline{\phi+s\psi})$ in terms of the eigenvalues of $\hat R(u,s)$, under \ref{H1} 
(in particular, for $\alpha$ as in \ref{H1}(i)) and \ref{H5} we require
that

\begin{itemize}
\item[{\bf (H7)}\labeltext{{\bf (H7)}}{H7}]
There exist $\gamma\in (0,1]$ (with $\gamma<1$ when $\kappa>1$)
and $C_2, C_3, \delta>0$ such that for all $u\ge 0$, $s\in (0, \delta)$, $h \in \cB$ and every element $a$ 
of the (Markov) partition $\cA$ we have:
\begin{equation}
\label{eq-contpsi0}
\|(e^{-u\tau+s\bar\psi}-1)1_a h\|_{\cB_w} \le \left(
C_2 s^\gamma\sup_a\psi_0^\gamma + C_3 u^\gamma \sup_a \tau^\gamma\right) \| h \|_{\cB_w}.
\end{equation}
\end{itemize}
In~\eqref{eq-contpsi0}, multiplication by $1_a$ means that we restrict $\tau,\bar\psi$ to $a$. The main result of this section reads as follows.

\begin{thm} 
\label{thm-relpres}
Assume 
\ref{H2} and suppose that $\psi:\cM\to\R$ satisfies \ref{H5} with  $C' > C \int \tau^\kappa \, d\mu_{\bar\phi}$.
Assume \ref{H6} and \ref{H7}. Then
$$
\cP(\phi+s\psi)=  \frac{1}{\tau^*}\cP(\overline{\phi+s\psi})(1+o(1))\text{ as } s\to 0^+.
$$
\label{thm:Sar thm8}
\end{thm}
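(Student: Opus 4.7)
I would express both pressures in terms of the leading eigenvalue $\lambda(u,s)$ of the doubly twisted operator $\hat R(u,s) v = R(e^{-u\tau+s\bar\psi}v)$. Since $F$ is Markov and $\cP(\bar\phi)=0$ corresponds to the leading eigenvalue $\lambda(0,0)=1$ of $R$ on $\cB$, the standard spectral identification (cf.\ \cite[Theorem 4]{Sar99}) gives
\[
Q_s := \cP(\overline{\phi+s\psi}) = \log\lambda(0,s).
\]
By definition \eqref{eq:flow pres}, $P_s := \cP(\phi+s\psi)$ is then characterised as the unique nonnegative solution of $\lambda(P_s, s) = 1$. The theorem reduces to showing $\tau^* P_s / Q_s \to 1$ as $s\to 0^+$.

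The core analytic step is a two-parameter version of Lemmas~\ref{lemma-cont1} and~\ref{lemma-contKL}. Using \ref{H2} to control the $u$-direction (exactly as in Lemma~\ref{lemma-cont1}) and \ref{H7} to control the $s$-direction partition-wise, I would establish
\[
\|\hat R(u,s) - \hat R(0,0)\|_{\cB\to\cB_w} = O\bigl(q(u) + s^\gamma\bigr),
\]
with $q(u) = u + \mu_{\bar\phi}(\tau > 1/u)$. Combined with the uniform spectral gap in \ref{H6}, the Keller--Liverani theorem (\cite[Remark 5]{KL99}) yields continuous spectral data $(\lambda(u,s), v(u,s))$ in a neighbourhood of $(0,0)$, and the calculation of Lemma~\ref{lemma-asymplambda0} extends to give
\[
1-\lambda(u,s) = \mu_{\bar\phi}\bigl(1 - e^{-u\tau+s\bar\psi}\bigr)\bigl(1+o(1)\bigr)
\quad\text{as }(u,s)\to(0,0).
\]

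By \ref{H5} and the assumption $C' > C\int \tau^\kappa\,d\mu_{\bar\phi}$, the constant $\psi^* := \int_Y \bar\psi\,d\mu_{\bar\phi}$ is strictly positive and finite (noting $\tau, \bar\psi \in L^1(\mu_{\bar\phi})$). Dominated convergence gives the first-order expansion
\[
\mu_{\bar\phi}\bigl(1 - e^{-u\tau+s\bar\psi}\bigr) = \tau^* u - \psi^* s + o(u+s).
\]
Inserting $u = P_s$ into $\lambda(P_s, s) = 1$ forces $\tau^* P_s - \psi^* s + o(P_s + s) = 0$, so $P_s = (\psi^*/\tau^*)\, s\,(1+o(1))$. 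Setting $u=0$ yields $Q_s = \log\lambda(0,s) = \psi^* s\,(1+o(1))$. Taking the ratio, $\tau^* P_s/Q_s \to 1$, which is the claim.

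The main obstacle is the two-parameter perturbation analysis: one must verify that the Keller--Liverani framework applies uniformly in $(u,s)$ jointly, and not just along one-parameter slices, despite the fact that $\bar\psi$ is unbounded below (through $\psi_0 \sim C\tau^\kappa$). Hypothesis \ref{H7} is tailor-made for this obstacle, providing the requisite H\"older-type control of the multiplication by $e^{-u\tau+s\bar\psi}-1$ in the $\cB\to\cB_w$ operator norm; combined with the uniform quasi-compactness of \ref{H6}, it is precisely what is needed to carry the one-parameter arguments of Section~\ref{sec-lmtF} through to the two-parameter setting. Once this is in place, everything else is first-order Taylor expansion.
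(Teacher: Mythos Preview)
Your approach is correct and in fact somewhat more direct than the paper's. Both rest on the same two ingredients: the two-parameter spectral continuity of $\hat R(u,s)$ (the paper's Lemma~\ref{lemma-contdouble}, via \ref{H6}--\ref{H7} and Keller--Liverani) and the identification $\cP(\overline{\phi+s\psi-u})=\log\lambda(u,s)$. The paper then differentiates in $u$, obtaining $\frac{d}{du}\log\lambda(u,s)=-\tau^*(1+o(1))$ (Lemma~\ref{lemma-derivpress}, which needs the derivative bounds on $D(u,s)$ in Lemma~\ref{lemma-asymplambda2}), and integrates from $0$ to $u_0=P_s$; along the way it must separately argue $P_s\gg s$ (via convexity) and $\cP(\overline{\phi+s\psi-P_s})=0$ (Lemma~\ref{lemma-posrec}). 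You bypass all of this by solving the implicit equation $\lambda(P_s,s)=1$ directly from the first-order expansion, which is cleaner and avoids the derivative estimates entirely.

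Two points deserve more care. First, the multiplicative form $1-\lambda(u,s)=\mu_{\bar\phi}(1-e^{-u\tau+s\bar\psi})(1+o(1))$ is not literally correct, since the right-hand side vanishes along the line $\tau^*u=\psi^*s$; what you actually need and what the argument of Lemma~\ref{lemma-asymplambda0} gives (once extended as you indicate) is the additive form $1-\lambda(u,s)=\mu_{\bar\phi}(1-e^{-u\tau+s\bar\psi})+o(u+s)$, and this suffices for your implicit-function step. Second, the identification $\cP(\overline{\phi+s\psi-u})=\log\lambda(u,s)$ is not immediate from \cite[Theorem~4]{Sar99}, because $g=s\bar\psi-u\tau$ need not have summable variations. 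The paper handles this via a truncation argument (Lemma~\ref{lemma-trunc}): one flattens $g$ outside $\{\tau\le N\}$ to obtain potentials $g_N^\pm$ with summable variations, applies Sarig's theorem to those, and then shows $\lambda_N^\pm(u,s)\to\lambda(u,s)$ using \ref{H7}. You will need this step (or an equivalent) to close the argument; once it is in place, continuity of $\lambda$ in $u$ also gives you $\lambda(P_s,s)=1$ directly, replacing the variational Lemma~\ref{lemma-posrec}.
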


\subsection{Technical tools, family of eigenvalues of $\hat R(u,s)$}
\label{subsec-techt}

Note  that $\hat R(u,0) v=\hat R(u) v$ and recall from Subsection~\ref{subsec-eign1} that under \ref{H1}--\ref{H3}, 
the family of eigenvalues $\lambda(u)$ is well-defined on $[0,\delta_0)$
with $\lambda(0)=1$. Recall that \ref{H6} and \ref{H7} hold for some $\delta>0$. Throughout the rest of this work we let $\delta_1=\min\{\delta, \delta_0\}$.
\begin{lemma}
\label{lemma-contdouble}
Assume \ref{H1}, \ref{H2}, \ref{H5} and \ref{H7}. 
Then for all $s\in (0,\delta_1)$, there exists $c>0$ such that
\[
\|\hat R(u,s)-\hat R(u,0)\|_{\cB\to \cB_w}\leq c s^\gamma.
\]
\end{lemma}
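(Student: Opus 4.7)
The proof hinges on the algebraic identity
\[
(\hat R(u,s)-\hat R(u,0))v \;=\; R\bigl(e^{-u\tau}(e^{s\bar\psi}-1)v\bigr) \;=\; \hat R(u,0)\bigl((e^{s\bar\psi}-1)v\bigr),
\]
obtained by factoring $e^{-u\tau+s\bar\psi}-e^{-u\tau}=e^{-u\tau}(e^{s\bar\psi}-1)$ and recognising that $\hat R(u,0)w=R(e^{-u\tau}w)$. By \ref{H6} applied with $n=1$ and $s=0$, the operator $\hat R(u,0):\cB_w\to\cB_w$ has norm at most $C_1$, uniformly in $u\ge 0$, so the problem reduces to showing
\[
\|(e^{s\bar\psi}-1)v\|_{\cB_w}\;\le\; c'\,s^\gamma\,\|v\|_\cB.
\]

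For this latter estimate, I specialise \ref{H7} to $u=0$: on every Markov element $a\in\cA$,
\[
\|(e^{s\bar\psi}-1)\mathbf{1}_a v\|_{\cB_w}\;\le\;C_2\,s^\gamma\,\sup_a\psi_0^\gamma\,\|v\|_{\cB_w}\;\le\;C_2\,s^\gamma\,\sup_a\psi_0^\gamma\,\|v\|_\cB,
\]
where the second inequality uses the embedding $\cB\hookrightarrow\cB_w$ from \ref{H1}(i). One then assembles these per-element bounds into a global bound by decomposing $(e^{s\bar\psi}-1)v=\sum_{a\in\cA}(e^{s\bar\psi}-1)\mathbf{1}_a v$ and summing against the natural $\ell^1$-type structure of $\cB_w$ over the Markov partition. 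Finiteness of the resulting series follows from \ref{H5}, which identifies $\sup_a\psi_0\asymp C\sup_a\tau^\kappa$, combined with the tail estimate for $\tau$ (from \ref{H4}, or in the concrete flow setting from Proposition~\ref{prop:tailtau}). The clause ``$\gamma<1$ when $\kappa>1$'' built into \ref{H7} is precisely what keeps $\kappa\gamma$ strictly below the critical tail exponent $\beta$, so the series converges uniformly in $u$ and $s$, giving the desired constant $c=C_1c'$.

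The identity and the reduction via \ref{H6} are routine. The delicate step — and the reason \ref{H7} is formulated per Markov element with a sharpened bound on $\gamma$ when $\kappa>1$ — is the assembly of the local bounds into a global $\cB_w$-estimate: this requires genuine compatibility between the norm on $\cB_w$ and the Markov decomposition of $F$, and the precise numerical interplay between $\kappa$, $\gamma$ and the tail exponent of $\tau$. Once this compatibility is in place (as verified for the almost Anosov example in Appendix~\ref{sec-ver}), the conclusion is immediate.
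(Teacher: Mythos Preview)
Your factorisation $(\hat R(u,s)-\hat R(u,0))v=\hat R(u,0)\bigl((e^{s\bar\psi}-1)v\bigr)$ is correct, but the subsequent argument breaks down at the ``assembly'' step, and this is a genuine gap rather than a detail.

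First, you invoke \ref{H6} and \ref{H4}, neither of which is among the hypotheses of the lemma; only \ref{H1}, \ref{H2}, \ref{H5}, \ref{H7} are assumed. More seriously, even granting a uniform bound $\|\hat R(u,0)\|_{\cB_w\to\cB_w}\le C_1$, you then need $\|(e^{s\bar\psi}-1)v\|_{\cB_w}\ll s^\gamma\|v\|_\cB$. The weak norm is a \emph{supremum} over admissible leaves, so the partition decomposition gives $\|(e^{s\bar\psi}-1)v\|_{\cB_w}=\sup_a\|(e^{s\bar\psi}-1)1_av\|_{\cB_w}$, and by \ref{H7} this is bounded by $C_2 s^\gamma\bigl(\sup_a\sup_a\psi_0^\gamma\bigr)\|v\|_{\cB_w}$. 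But $\sup_a\psi_0\asymp\sup_a\tau^\kappa$ is unbounded over the countable partition $\cA$, so this supremum is infinite. There is no ``$\ell^1$-type structure'' on $\cB_w$ to rescue this: the norm is $\ell^\infty$-like, and if you instead sum $\sum_a\|(e^{s\bar\psi}-1)1_av\|_{\cB_w}$ you get $\sum_k k^{\kappa\gamma}=\infty$.

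The point is that a global operator bound on $\hat R(u,0)$ discards exactly the information you need: the decay of $R$ when restricted to the high-$\tau$ partition elements. The paper does not separate these; instead it uses the integral representation \eqref{eq-RLapl} from \ref{H2} to write
\[
\hat R(u,s)-\hat R(u,0)=r_0(u)\int_0^\infty R\bigl(\omega(t-\tau)(e^{s\bar\psi}-1)\bigr)e^{-ut}\,dt,
\]
so that $\omega(t-\tau)$ localises to $\{\tau\sim t\}$ where $\sup\psi_0^\gamma\asymp t^{\kappa\gamma}$, and then bounds $\|R(\omega(t-\tau)(e^{s\bar\psi}-1))\|_{\cB\to\cB_w}\ll s^\gamma t^{\kappa\gamma}\|M(t)\|_{\cB\to\cB_w}$. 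The integral $\int_0^\infty t^{\kappa\gamma}\|M(t)\|_{\cB\to\cB_w}\,dt$ is then shown to be finite using \ref{H2}(ii) and the integrability $\int\tau^\kappa\,d\mu_{\bar\phi}<\infty$ from \ref{H5}; the restriction $\gamma<1$ for $\kappa>1$ provides the room $\kappa\gamma<\kappa-\eps$ needed here. In short, the decay of $\|M(t)\|_{\cB\to\cB_w}$ must be kept coupled to the growth of $t^{\kappa\gamma}$; your approach decouples them and therefore cannot close.
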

\begin{proof} Using~\eqref{eq-RLapl}, write
\begin{align}
\label{eq-rus}
\hat R(u,s)-\hat R(u,0)= r_0(u)\int_0^\infty R(\omega(t-\tau) (e^{s\bar\psi}-1)) e^{-ut}\, dt.
\end{align}

By \ref{H7}, $(e^{s\bar\psi}-1)1_a\in\cB$ for any element $a$ in the (Markov) partition $\cA$.
Without loss of generality we assume that $\supp \omega(t-\tau)$ is a subset of a 
finite union $\cup_{a \in A_t} a$ of elements in $\cA$. Thus, $\tau$ and $t$ are of the same order of magnitude
for $a \in A_t$ and $\|(e^{s\bar\psi}-1) 1_{\supp\, \omega(t-\tau)}\|_{\cB_w} \ll 
\max_{a \in A_t}\|(e^{s\bar\psi}-1) 1_a\|_{\cB_w}$. 

Note that 
\begin{align*}
\| R(\omega(t-\tau) (e^{s\bar\psi}-1))v\|_{\cB_w}\le C\| R(\omega(t-\tau))\|_{\cB\to\cB_w} 
\max_{a \in A_t}\|(e^{s\bar\psi}-1) 1_a\|_{\cB_w}.
\end{align*}
By \ref{H5} and \ref{H7} (with $u=0$),
\begin{align*}
\|(e^{s\bar\psi}-1) 1_{\supp\, \omega(t-\tau)}\|_{\cB_w} \ll \|(e^{s\bar\psi}-1) 1_a\|_{\cB_w} 
\ll s^\gamma \sup_{a \in A_t}\psi_0^\gamma \ll s^\gamma t^{\kappa\gamma}.
\end{align*}
Recall that $\gamma<1$ if $\kappa>1$. Putting the above together and recalling $R(\omega(t-\tau))=M(t)$, 
we obtain that for any $\eps\in (0,1-\gamma)$,
\begin{align*}
\|\hat R(u,s)-\hat R(u, 0)\|_{\cB\to\cB_w}
\ll s^\gamma\int_0^\infty t^{\kappa\gamma}  \| M(t)\|_{\cB\to\cB_w}\, dt
\ll s^\gamma\int_0^\infty t^{\kappa -\eps}  \| M(t)\|_{\cB\to\cB_w}\, dt.
\end{align*}

Proceeding as in the proof of Lemma~\ref{lemma-cont1}, in particular using~\eqref{eq-mst},
we have $\int_0^\infty t^{\kappa-\eps} \| M(t)\|_{\cB\to\cB_w}\, dt\ll \int_0^\infty t^{\kappa-\eps} (S(t+1)-S(t))\, dt$
for $S(t)=\int_t^\infty \| M(x)\|_{\cB\to\cB_w}\, dx$.
This gives
\begin{align*}
\int_0^\infty t^{\kappa-\eps} \| M(t)\|_{\cB\to\cB_w}\, dt
&\ll \int_0^\infty t^{\kappa-\eps} S(t)\, dt - \int_0^\infty t^{\kappa-\eps} S(t+1)\, dt \\
&= \int_0^\infty  t^{\kappa-\eps} S(t)\, dt - \int_1^\infty  t^{\kappa-\eps} (1-\frac1t)^{\kappa-\eps} S(t)\, dt \\
&\ll \int_0^1 t^{\kappa-\eps} S(t)\, dt + (\kappa-\eps)\int_1^\infty t^{\kappa-1-\eps} S(t)\, dt
\ll \int_1^\infty t^{\kappa-1-\eps} S(t)\, dt.
\end{align*}
By assumption, $\tau\in L^{\kappa}(\mu_{\bar\phi})$, so 
$t^\kappa\mu_{\bar\phi}(\tau>t)\le \int_{\tau\ge t}\tau^\kappa \, d\mu_{\bar\phi} 
\le  \int_Y\tau^\kappa\, d\mu_{\bar\phi}<\infty$.
By \ref{H2}(ii), $S(t)\ll \mu_{\bar\phi}(\tau>t)$.
Thus,
\begin{align}
\label{eq-tmt}
\int_0^\infty t^{\kappa-\eps} \| M(t)\|_{\cB\to\cB_w}\, dt
\ll \int_1^\infty t^{\kappa-1-\eps}\mu_{\bar\phi}(\tau>t) \, dt
\le \int_Y \tau^\kappa \, d\mu_{\bar \phi}  \int_1^\infty t^{-(1+\eps)} \, dt<\infty,
\end{align}
which ends the proof.~\end{proof}

Lemmas~\ref{lemma-cont1} and~\ref{lemma-contdouble} ensure that $(u,s)\to \hat R(u,s)$ is analytic in $u$ 
and continuous in $s$, for all  $s\in [0,\delta_0)$, when viewed as an operator from $\cB$ to $\cB_w$ with
\[
\|\hat R(u,s)-\hat R(0, 0)\|_{\cB\to\cB_w}\ll s^\gamma+q(u),
\]
where $q(u)$ is as defined in Lemma~\ref{lemma-cont1}.

Recall that $\lambda(u)$ is well-defined for all $u\in [0,\delta_1)$ with $\delta_1=\min\{\delta, \delta_0\}$.
By \ref{H6}, there exists a  family of eigenvalues $\lambda(u,s)$ well-defined for $u,s\in [0,\delta_1)$  with $\lambda(0,0)=1$. 
Throughout, let $v(u,s)$ be a family of eigenfunctions associated with $\lambda(u,s)$.  
The next result  gives the  continuity properties  of $v(u,s)$.
\begin{lemma}
\label{lemma-contdoublev}
Assume \ref{H1}, \ref{H2},  \ref{H5} and \ref{H6}. 
Then there exists $\delta_2\le \delta_1$ such that for all  $u,s\in [0,\delta_2)$, there exists $c>0$ such that
\[
\|v(u,s)-v(u,0)\|_{\cB_w}\leq c s^\gamma\log(1/s).
\]
\end{lemma}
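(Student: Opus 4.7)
The plan is to mirror the proof of Lemma~\ref{lemma-contKL} but now perturbing in the $s$ variable with $u$ treated as a fixed parameter. First I would fix $u \in [0,\delta_1)$ and view $s \mapsto \hat R(u,s)$ as a perturbation of $\hat R(u,0) = \hat R(u)$. By \ref{H6}, the Doeblin--Fortet inequality holds uniformly for all $u \geq 0$ and $s \in [0,\delta)$, which provides the required \cite[Hypotheses 2, 3]{KL99} uniformly in the parameters. Meanwhile, Lemma~\ref{lemma-contdouble} gives
\[
\|\hat R(u,s)-\hat R(u,0)\|_{\cB\to\cB_w}\ll s^\gamma,
\]
which plays the role of \cite[Hypothesis 5]{KL99} (with the small quantity $\eta = cs^\gamma$ replacing the $q(u)$ of Lemma~\ref{lemma-cont1}).

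Next, I would invoke \cite[Remark 5]{KL99} (exactly as was done in Lemma~\ref{lemma-contKL}) to obtain a $\delta_2 \leq \delta_1$ and a constant $C > 0$ such that for all $u \in [0,\delta_2)$ and $s \in [0,\delta_2)$, the spectral projection $P(u,s)$ associated with the leading eigenvalue $\lambda(u,s)$ of $\hat R(u,s)$ satisfies
\[
\|P(u,s)-P(u,0)\|_{\cB\to\cB_w}\le C\, s^\gamma \,\bigl|\log(s^\gamma)\bigr| \le C'\, s^\gamma \log(1/s).
\]
Uniformity in $u$ is guaranteed because the constants in the KL perturbation statement depend only on the uniform Doeblin--Fortet bounds from \ref{H6} and on $\|\hat R(u,s)\|_{\cB_w}$, both of which are controlled uniformly in $u,s \in [0,\delta_2)$.

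Finally, I would pass from $P(u,s)$ to $v(u,s)$ exactly as in the last line of the proof of Lemma~\ref{lemma-contKL}, using the normalisation $v(u,s) = P(u,s)1/\langle P(u,s)1, 1 \rangle$. Writing
\[
v(u,s)-v(u,0) = \frac{(P(u,s)-P(u,0))1}{\langle P(u,s)1,1\rangle} - \frac{P(u,0)1}{\langle P(u,s)1,1\rangle \langle P(u,0)1,1\rangle}\,\bigl\langle (P(u,s)-P(u,0))1,1\bigr\rangle,
\]
and using \ref{H1}(i) to bound $|\langle \cdot, 1 \rangle|$ by $\|\cdot\|_{\cB_w}$, combined with the fact that $\langle P(u,0)1, 1\rangle$ stays bounded away from zero for $u$ small (it equals $1$ at $u=0$ and varies continuously), yields
\[
\|v(u,s)-v(u,0)\|_{\cB_w}\ll \|P(u,s)-P(u,0)\|_{\cB\to\cB_w}\ll s^\gamma \log(1/s),
\]
as required.

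The main obstacle, which I flag but don't expect to be serious, is verifying uniformity in $u$ of the KL constants and of the lower bound on $\langle P(u,0)1,1\rangle$; both follow because \ref{H6} gives $u$-uniform spectral constants and because the family $P(u,0)$ is continuous at $u=0$ by Lemma~\ref{lemma-contKL}, so shrinking $\delta_2$ if necessary gives the estimate.
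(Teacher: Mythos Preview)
Your proposal is correct and follows essentially the same approach as the paper: the paper's proof is a one-line reference to Lemma~\ref{lemma-contdouble} (for the continuity estimate $\|\hat R(u,s)-\hat R(u,0)\|_{\cB\to\cB_w}\ll s^\gamma$), \ref{H6} (for the uniform Doeblin--Fortet inequality), and the Keller--Liverani argument already recalled in Lemma~\ref{lemma-contKL}. Your version spells out these steps in more detail, including the explicit passage from $P(u,s)$ to $v(u,s)$ and the uniformity-in-$u$ issue, but the route is identical. (Note that Lemma~\ref{lemma-contdouble} as stated in the paper also assumes \ref{H7}, which is not listed among the hypotheses of Lemma~\ref{lemma-contdoublev}; this appears to be a minor omission in the paper's statement rather than a problem with your argument.)
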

\begin{proof} This follows from Lemma~\ref{lemma-contdouble} and \ref{H6} (which holds for $u\ge 0$ and $s\in [0,\delta_1)$)  together with the argument recalled in the proof of 
Lemma~\ref{lemma-contKL}.~\end{proof}

We recall $\Pi(u)=\mu_{\bar\phi} (1-e^{-u\tau})$and set $\Pi_0(s):=\mu_{\bar\phi} (e^{s\bar\psi}-1)$. The result below gives the 
asymptotic  behaviour and continuity properties of $\lambda(u,s)$.

\begin{lemma}\label{lemma-asymplambda2}Assume \ref{H1}, \ref{H2}, \ref{H5}, \ref{H6} (for some range of $\kappa$) and \ref{H7}. 
Then 
as $u, s\to 0$,
\[
1-\lambda(u, s) =\Pi(u)+\Pi_0(s)+D(u,s),
\]  where $|\frac{d}{du}D(u,s)|\ll (\log(1/u))^{\kappa\gamma+2}(s^\gamma+q(u))+\mu_{\bar\phi}(\tau>\log(1/u))$ 
with  $q(u)$ as defined in Lemma~\ref{lemma-cont1}.
\end{lemma}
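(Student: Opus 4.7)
The plan is to parallel the proof of Lemma~\ref{lemma-asymplambda0}, now adapting it to the two-parameter family $\hat R(u,s)$. First, I would start from the eigenvalue identity $\lambda(u,s)=\langle \hat R(u,s) v(u,s), 1\rangle$ with the normalisation $\langle v(u,s),1\rangle=1$. Using duality and the fact that $\hat R(0,0)=R$ preserves $\mu_{\bar\phi}$-integrals, split
\[
\lambda(u,s) = \mu_{\bar\phi}(e^{-u\tau}e^{s\bar\psi}) + \langle(\hat R(u,s)-\hat R(0,0))(v(u,s)-1),1\rangle,
\]
and then expand $e^{-u\tau}e^{s\bar\psi}=1+(e^{-u\tau}-1)+(e^{s\bar\psi}-1)+(e^{-u\tau}-1)(e^{s\bar\psi}-1)$ to pull out $\Pi(u)$ and $\Pi_0(s)$, collecting the cross product $E(u,s):=\mu_{\bar\phi}((e^{-u\tau}-1)(e^{s\bar\psi}-1))$ and the residual $W(u,s):=\langle(\hat R(u,s)-\hat R(0,0))(v(u,s)-1),1\rangle$ into $D(u,s)$.

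Next, I would estimate $\frac{d}{du}E(u,s)$ and $\frac{d}{du}W(u,s)$ separately. For the mixed term, differentiating gives $\frac{d}{du}E(u,s)=-\mu_{\bar\phi}(\tau e^{-u\tau}(e^{s\bar\psi}-1))$. Invoking \ref{H5} together with the pointwise interpolation $|e^{s\bar\psi}-1|\le c\,s^\gamma|\bar\psi|^\gamma\le c'\,s^\gamma\tau^{\kappa\gamma}$ (valid where $s\bar\psi$ stays bounded), and then splitting at $\tau=\log(1/u)$, gives a contribution of order $s^\gamma(\log(1/u))^{1+\kappa\gamma}$ on the first range (using $e^{-u\tau}\le1$ there) plus a tail of order $\mu_{\bar\phi}(\tau>\log(1/u))$ on the complement.

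For the residual, I would use the Laplace-transform representation \eqref{eq-RLapl} with modified kernel $M_s(t):=R(\omega(t-\tau)e^{s\bar\psi}\cdot)$ to differentiate $\hat R(u,s)-\hat R(0,0)$ in $u$, and rely on \ref{H2}(ii) together with the \ref{H7} bound on partition elements to control the resulting integral, just as in Lemma~\ref{lemma-contdouble} and~\eqref{eq-tmt}. The factor $v(u,s)-1$ is handled through Lemmas~\ref{lemma-contKL} and~\ref{lemma-contdoublev}, which already supply an $s^\gamma\log(1/s)+q(u)|\log q(u)|$ bound in $\cB_w$. The $u$-derivative of $v(u,s)$ is extracted by a resolvent identity in the spirit of~\eqref{eq-derp}, using \ref{H6} for the uniform spectral gap along the two-parameter family; crucially, $\frac{d}{du}v(u,s)$ need only be controlled in $\cB_w$, so the argument does not require $\tau\in\cB$.

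The main obstacle, I expect, will be the precise bookkeeping of the logarithmic prefactor $(\log(1/u))^{\kappa\gamma+2}$. Each of Lemmas~\ref{lemma-contKL} and~\ref{lemma-contdoublev} loses a $|\log(\cdot)|$ factor, the truncation of $\tau$ at $\log(1/u)$ contributes $(\log(1/u))^{1+\kappa\gamma}$, and the \ref{H7} H\"older interpolation enters through $\tau^{\kappa\gamma}$; balancing these simultaneously so that the exponent in $\log(1/u)$ lands exactly at $\kappa\gamma+2$ (rather than a larger number forcing stronger hypotheses on tails or spectral gap) is the delicate part. A secondary subtlety, as in Lemma~\ref{lemma-mainclt}, is that the formula for $\frac{d}{du}W(u,s)$ must be arranged so that all terms are estimated through the $\cB\to\cB_w$ continuity of $\hat R(u,s)-\hat R(0,0)$ rather than through any $\cB$-norm of $\frac{d}{du}v(u,s)$.
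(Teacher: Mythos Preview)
Your proposal is correct and follows essentially the same approach as the paper: the identical decomposition $1-\lambda(u,s)=\Pi(u)+\Pi_0(s)-E(u,s)-W(u,s)$ (the paper calls your $E$ and $W$ respectively $W$ and $V$), the same truncation at $\tau=\log(1/u)$ for the cross term, and the same use of Lemmas~\ref{lemma-contKL}, \ref{lemma-contdoublev} and~\ref{lemma-deriv1}(i) together with \ref{H2}(i) and \ref{H7} to control the derivative of the residual. Your exponent $1+\kappa\gamma$ on the cross term is in fact sharper than the paper's $\kappa\gamma+2$; the paper simply absorbs the difference into the stated bound.
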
 

\begin{proof}
Proceeding as in the proof of Lemma~\ref{lemma-asymplambda0}, write $\lambda(u, s) v(u, s)=\hat R(u, s)v(u, s)$ 
for all $u, s\in [0,\delta_1)$ with $\lambda(0, 0)=1$ and $v(0, 0)=\mu_{\bar\phi}$. 
Normalising such that $\langle v(u, s),1\rangle=1$,
\begin{align*}
1 -\lambda(u,s) &= \Pi(u)+\Pi_0(s)-W(u,s)-V(u,s) \quad \text{ for } \\
W &= \int_Y (e^{-u\tau}-1)(e^{s\bar\psi}-1)\, d\mu_{\bar\phi} \ \text{ and }\
V(u,s) = \langle (\hat R(u, s)-\hat R(0, 0))(v(u, s)-v(0, 0)),1\rangle.
\end{align*}
We first deal with $V(u,s)$. Let $Q(u,s):=\langle (\hat R(u, 0)-\hat R(0, 0))(v(u, s)-v(0, 0)),1\rangle$. Using~\eqref{eq-rus},
\begin{align*}
V(u,s)= r_0(u)
\int_0^\infty e^{-ut} \langle (e^{s\bar\psi}-1)\omega(t-\tau) [v(u)-v(0)],1\rangle\, dt+Q(u,s).
\end{align*}
By the argument used in the proof of Lemma~\ref{lemma-asymplambda0}, 
\[
|Q(u,s)|\ll  \|v(u,s)-v(0, 0)\|_{\cB_w} \Pi(u).
\]
By \ref{H2}(i),
\begin{align*}
\langle(e^{s\bar\psi}-1) \omega(t-\tau)[v(u)-v(0)], 1\rangle&\ll  \|(e^{s\bar\psi}-1)(v(u,s )-v(0, 0)\|_{\cB_w}\mu_{\bar\phi}(\omega(t-\tau)),
\end{align*}
which together with \ref{H7} (with $u=0$) gives
\begin{align*}
\langle(e^{s\bar\psi}-1) \omega(t-\tau)[v(u)-v(0)], 1\rangle\ll s^\gamma t^{\kappa\gamma} \|v(u,s )-v(0, 0)\|_{\cB_w}\mu_{\bar\phi}(\omega(t-\tau)).
\end{align*}
Proceeding as in the proof of Lemma~\ref{lemma-asymplambda0} and using that $\int_Y \tau^\kappa\, d\mu_{\bar\phi}<\infty$, 
\begin{align*}
\Big|\int_0^\infty e^{-ut}&\langle (e^{s\bar\psi}-1)\omega(t-\tau) [v(u,s)-v(0, 0)],1\rangle\, dt \Big|\\
&\ll s^\gamma  \|v(u,s)-v(0,0)\|_{\cB_w} \int_Y \tau^{\kappa\gamma}  e^{-u\tau}\, d\mu_{\bar\phi}\ll s^\gamma  \|v(u,s)-v(0, 0)\|_{\cB_w}.
\end{align*}
By a similar argument,
\begin{align*}
\left|\int_0^\infty (e^{-ut}-1)\langle (e^{s\bar\psi}-1) \omega(t-\tau) [v(u, s)-v(0, 0)],1\rangle\, dt \right|\ll s^\gamma \|v(u,s)-v(0, 0)\|_{\cB_w}.
\end{align*}
By  Lemmas~\ref{lemma-contKL} and~\ref{lemma-contdoublev}, $\|v(u,s)-v(0, 0)\|_{\cB_w}\ll s^\gamma\log(1/s)+ q(u)|\log q(u)|$.
This together with the previous two displayed equations gives 
$|V(u,s)|\ll s^\gamma(s^\gamma\log(1/s)+ q(u)|\log q(u)|)$. 

We continue with $|\frac{d}{du}V(u,s)|$. Compute that
\begin{align*}
\frac{d}{du}V(u,s)=\left\langle \frac{d}{du}\hat R(u, s)(v(u, s)-v(0, 0)),1\right\rangle+\left\langle \left(\hat R(u, s)-\hat R(0, 0)\right)\frac{d}{du}v(u, s),1\right\rangle.
\end{align*}
By \ref{H5}, \ref{H7} and the argument used above in estimating $V(u,s)$, we get 
\begin{align*}
\left|\left\langle \frac{d}{du}\hat R(u, s)(v(u, s)-v(0, 0)),1\right\rangle\right| & \ll\left|\left\langle \frac{d}{du}\hat R(u, 0)(v(u, s)-v(0, 0)),1\right\rangle\right|\\
&\ll\|v(u,s)-v(0, 0)\|_{\cB_w}\Big|\int_0^\infty  t\omega(t-\tau) e^{-ut}\,dt \Big|\\
&\ll \|v(u,s)-v(0, 0)\|_{\cB_w}\ll \log(1/u)(s+q(u)).
\end{align*}
By \ref{H5}, \ref{H7} and Lemma~\ref{lemma-deriv1} (i), $\|\frac{d}{du}v(u, s)\|_{\cB_w}\ll \|\frac{d}{du}v(u, 0)\|_{\cB_w}\ll \log(1/u)$. Thus,
\[
\left|\left\langle (\hat R(u, s)-\hat R(0, 0))\frac{d}{du}v(u, s),1\right\rangle\right|\ll (s^\gamma+q(u))\log(1/u)
\]
and $|\frac{d}{du}V(u,s)|\ll (s^\gamma+q(u))\log(1/u)$. We briefly estimate $W(u,s)$. Compute that
\begin{align*}
\left|\frac{d}{du}W(u,s)\right| &\ll\int_{\{\tau<\log(1/u)\}}\tau e^{-\tau}|1-e^{s\bar\psi}|\, d\mu_{\bar\phi}+\int_{\{\tau\ge\log(1/u)\}}\tau \, d\mu_{\bar\phi}\\
&\ll s^\gamma\int_{\{\tau<\log(1/u)\}}\tau^{\kappa\gamma+1}\, d\mu_{\bar\phi}+\mu_{\bar\phi}(\tau\ge \log(1/u))\\
&\ll s^\gamma(\log(1/u))^{\kappa\gamma+2}+\mu_{\bar\phi}(\tau\ge\log(1/u)).
\end{align*}
The conclusion follows with $D(u,s)=-(W(u,s)+V(u,s))$.~\end{proof}

For a further technical result exploiting \ref{H7} we introduce the following notation.  
For $u, s\ge 0$, set $g=s\bar\psi-u\tau$, so $\hat R(u,s)=R(e^g)$.
For $N>1$ and $x \in [a]$, define the `lower' and `upper' flattened versions of $g$ as 
\begin{equation*}
g_N^-(x)=\begin{cases} g(x) & \text{ if } \tau(y)\le N \text{ for all } y\in a, \\
\inf_{y\in [a]}g(y) & \text{ otherwise},
\end{cases}
\end{equation*}
and 
\begin{equation*}
g_N^+(x)=\begin{cases} g(x) & \text{ if } \tau(y)\le N \text{ for all } y\in a, \\
\sup_{y\in [a]}g(y) & \text{ otherwise}.
\end{cases}
\end{equation*}
Since $\psi$ is bounded above by \ref{H5},  monotonicity of the pressure function
gives
\begin{equation}\label{eq:Pb}
\cP(\bar\phi+g_N^-)\le \cP(\bar\phi+g)
\le \cP(\bar\phi+g_N^+) \le \cP(\bar\phi) + s \, \sup \bar\psi < \infty
\end{equation}
for all $u,s \geq 0$.
For fixed $u,s\in [0,\delta_0)$, set $\hat R_N^\pm(u,s)=R(e^{g_N^\pm})$.  
By \ref{H6}, the associated eigenvalues $\lambda_N^\pm(u,s)$ exist 
for all $N$ and $u,s\in [0,\delta_0)$.

\begin{lemma}\label{lemma-trunc}
Assume \ref{H2}, \ref{H5}, \ref{H6} and \ref{H7}. Then for fixed $u,s\in [0,\delta_0)$, 
\[
\lim_{N\to\infty}|\lambda(u,s)-\lambda_N^\pm(u,s)|\to 0.
\]
\end{lemma}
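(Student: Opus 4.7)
The plan is to apply the Keller--Liverani style perturbation argument, now with $\hat R(u,s)$ fixed and $\hat R_N^\pm(u,s)$ playing the role of the perturbed operators as $N\to\infty$. Under \ref{H6}, $\hat R(u,s)$ has a simple leading eigenvalue $\lambda(u,s)$ with a spectral gap. If I can establish
\begin{itemize}
\item[(a)] $\|\hat R_N^\pm(u,s) - \hat R(u,s)\|_{\cB\to\cB_w}\to 0$ as $N\to\infty$, and
\item[(b)] a Doeblin--Fortet inequality for $\hat R_N^\pm(u,s)$ uniform in $N$,
\end{itemize}
then \cite[Remark 5]{KL99}, invoked exactly as in Lemma~\ref{lemma-contKL}, yields $\lambda_N^\pm(u,s)\to\lambda(u,s)$, which is the desired conclusion.

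For (a), the difference $e^g-e^{g_N^\pm}$ is identically zero on partition elements $a$ with $\sup_a\tau\le N$, and pointwise bounded by $2e^{s\sup\bar\psi}$ on the union $E_N$ of the remaining elements. Exploiting $\int_{-1}^1\omega=1$ and $\tau\ge 1$, I would insert the partition of unity $1 = \int_0^\infty \omega(t-\tau)\,dt$ to write
\[
(\hat R(u,s)-\hat R_N^\pm(u,s))v = \int_0^\infty R\bigl(\omega(t-\tau)(e^g-e^{g_N^\pm})v\bigr)\,dt.
\]
Since $\tau$ has bounded variation on each partition element (as in the setting of $F$ from Section~\ref{sec-Aaflow}), on $E_N$ we have $\tau$ of order at least $N-O(1)$, so the integrand vanishes for $t\le N-O(1)$. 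Combining the pointwise bound on $e^g-e^{g_N^\pm}$ with \ref{H2}(i) (to pull the multiplier out in $\cB_w$-norm) and then \ref{H2}(ii), I obtain
\[
\|\hat R(u,s)-\hat R_N^\pm(u,s)\|_{\cB\to\cB_w}\ll e^{s\sup\bar\psi}\int_{N-O(1)}^\infty\|M(t)\|_{\cB\to\cB_w}\,dt \ll \mu_{\bar\phi}(\tau>N-O(1)),
\]
which tends to $0$ as $N\to\infty$.

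The main obstacle is (b): verifying the uniform Doeblin--Fortet inequality for the truncated operators. The saving grace is that $g_N^\pm$ is bounded above by $s\sup\bar\psi$ uniformly in $N$ (since $g\le s\sup\bar\psi$), and agrees with $g$ on $\{\tau\le N\}$. Thus $\hat R_N^\pm(u,s)v = R(e^{g_N^\pm}v)$ differs from $\hat R(u,s)$ only through a multiplicative factor bounded uniformly in $N$, so the Lasota--Yorke bounds that give \ref{H6} for $\hat R(u,s)$ transfer to $\hat R_N^\pm(u,s)$ with constants independent of $N$; the multiplicative stability required is exactly what \ref{H1}(ii) provides. With (a) and (b) established, Keller--Liverani concludes the proof.
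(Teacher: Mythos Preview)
Your overall strategy—Keller--Liverani perturbation with $\hat R_N^\pm(u,s)$ as perturbations of $\hat R(u,s)$—is exactly the route the paper takes, and the two ingredients (a) and (b) are the right ones.

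There is, however, a genuine gap in your argument for (a). You rely on the pointwise bound $|e^g-e^{g_N^\pm}|\le 2e^{s\sup\bar\psi}$ and then invoke \ref{H2}(i) ``to pull the multiplier out in $\cB_w$-norm''. But \ref{H2}(i) is not a multiplier estimate on $\cB_w$: it only controls the pairing $\langle\omega(t-\tau)vh,1\rangle$, and it requires $v\in C^\alpha$ with the full $\|v\|_{C^\alpha}$, not merely $|v|_\infty$. In this abstract framework $\cB_w\subset(C^{\alpha_1})'$ is a space of distributions, and multiplication by an $L^\infty$ function need not act boundedly there. The hypothesis you never use, \ref{H7}, is precisely what fills this gap: it gives $\|(e^g-e^{g_N^\pm})1_a h\|_{\cB_w}\ll(\sup_a|g-g_N^\pm|)^\gamma\|h\|_{\cB_w}$ on each partition element $a$. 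With $\supp\,\omega(t-\tau)$ contained in finitely many such $a$'s on which $\tau\asymp t$, this yields
\[
\|\hat R(u,s)-\hat R_N^\pm(u,s)\|_{\cB\to\cB_w}\ll\int_N^\infty t^{\gamma\kappa}\|M(t)\|_{\cB\to\cB_w}\,dt\le N^{-(1-\gamma-\eps)\kappa}\int_0^\infty t^{\kappa-\eps}\|M(t)\|_{\cB\to\cB_w}\,dt,
\]
which tends to $0$ by \eqref{eq-tmt}. This is how the paper argues, and it is where \ref{H7} is essential.

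For (b), your justification via \ref{H1}(ii) has the same defect: the ratio $e^{g_N^\pm-g}$ is piecewise constant on the tail elements, not $C^\alpha$, so \ref{H1}(ii) does not apply. The paper simply takes \ref{H6} as providing the uniform Doeblin--Fortet bounds for $\hat R_N^\pm(u,s)$ directly (the sentence ``By \ref{H6}, the associated eigenvalues $\lambda_N^\pm(u,s)$ exist'' just before the lemma). This is defensible because in practice the Lasota--Yorke estimates behind \ref{H6} depend only on the upper bound $e^{g_N^\pm}\le e^{s\sup\bar\psi}$ and on piecewise regularity, both of which $g_N^\pm$ inherits; but your appeal to \ref{H1}(ii) does not capture this.
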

The reason to introduce $g^\pm_N$ is that,
unlike $g$, the potentials $g_N^\pm$ have summable variation, and therefore,
$P(\bar\phi+g_N^\pm)=\log \lambda_N^\pm(u, s)$ by \cite[Lemma 6]{Sar99}.   Here \cite[Theorem 3]{Sar99} is used to equate Gurevich pressure in
\cite[Lemma 6]{Sar99} and variational pressure in this paper, and then \cite[Theorem 4]{Sar99}
together with the existence of the eigenfunctions $v^\pm_N(u,s)$ and eigenmeasures $m^\pm_N(u,s)$ 
of $R^\pm_N(u,s)$ and its dual, to conclude that $\bar\phi+g^\pm_N$ are positively recurrent.
From this, together with \eqref{eq:Pb} and Lemma~\ref{lemma-trunc}, we conclude
\begin{align}\label{eq-presseigv}
\cP(\overline{\phi+s\psi-u}) = \log \lambda(u, s).
\end{align}

We note that this connection between the eigenvalues of a perturbed transfer operator and the pressure can be seen in similar contexts in \cite{Sar06}.

\begin{pfof}{Lemma~\ref{lemma-trunc}}
Assume $u\ne 0$ and/or $s\ne 0$.
Proceeding similarly to the argument of Lemma~\ref{lemma-contdouble}, we write
\begin{align}
\label{eq-Npl}
\hat R(u,s)-\hat R_N^\pm(u,s)= \int_0^\infty R(\omega(t-\tau) (e^g -e^{\tilde g_N}))\, dt.
\end{align}
Without loss of generality we assume that $\supp \omega(t-\tau)$ is a subset of a 
finite union $\cup_{b\in B_t} b$ 
of elements $b \in \cA$. 
Note that for any $v\in C^\alpha(Y)$,
\begin{align*}
\| R(\omega(t-\tau) (e^g -e^{\pm g_N}))v\|_{\cB\to\cB_w}\le C\| R(\omega(t-\tau))\|_{\cB\to\cB_w}\max_{b\in B_t}\|(e^g -e^{\tilde g_N}) 
1_b v\|_{\cB_w}.
\end{align*}
We only have to consider those $b\in B_t$ with $b \cap \{\tau>N\}\ne \emptyset$, 
otherwise $(e^g -e^{g^\pm_N}) 1_b=0$.
This together with \ref{H7} gives
\begin{align*}
\max_{b\in B_t}\|(e^g -& e^{\pm g_N}) 1_{b}v\|_{\cB_w}
\ll \max_{b\in B_t}\sup_b|g -\pm g_N|^\gamma\ll t^{\gamma\kappa} 1_{\{t>N\}}.
\end{align*}
Recall $\gamma<1$ for $\kappa>1$. By \ref{H2} and the previous displayed equation,  for any $\eps \in (0,1-\gamma)$, 
\begin{align}
\label{eq-rus3}
\nonumber \| \hat R(u,s)-\hat R_N^\pm(u,s)\|_{\cB\to\cB_w}
&\ll  \int_N^\infty t^{\gamma\kappa}\| M(t)\|_{\cB\to\cB_w}~dt 
=\int_N^\infty t^{-(1-\gamma-\eps)\kappa} t^{\kappa-\eps} \| M(t)\|_{\cB\to\cB_w}~dt\\
&\le N^{-(1-\gamma-\eps)\kappa}\int_N^\infty t^{\kappa-\eps} \| M(t)\|_{\cB\to\cB_w}~dt.
\end{align}
By equation~\eqref{eq-tmt}, $\int_0^\infty t^{\kappa-\eps} \| M(t)\|_{\cB\to\cB_w}\, dt<\infty$.
Using~\eqref{eq-rus3} and~\eqref{eq-tmt},
\begin{align*}
\| \hat R(u,s)-\hat R_N^\pm(u,s)\|_{\cB\to\cB_w} \ll N^{-(1-\gamma-\eps)\kappa}.
\end{align*}
This together with the argument recalled in the proof of Lemma~\ref{lemma-contKL} gives 
\[
\|v(u,s)-v_N^{\pm}(u,s) \|_{\cB\to\cB_w}\ll N^{-(1-\gamma-\eps)\kappa}\log N.
\]
where for fixed $u, s\in [0,\delta_0)$, $v(u,s)$ and $v_N^\pm(u,s)$  are the eigenfunctions associated 
with $\lambda(u,s)$ and  $\lambda_N^\pm (u,s)$, respectively. Thus,
\begin{equation*}
\| \hat R(u,s)-\hat R_N^\pm(u,s) \|_{\cB\to\cB_w}\to 0, \quad \|v(u,s)-v_N^{\pm}(u,s) \|_{\cB_w} \to 0\mbox{ as } N\to\infty.
\end{equation*}
The conclusion follows from  these estimates together with the argument used in Lemma~\ref{lemma-asymplambda0} and the 
first part of the proof of Lemma~\ref{lemma-asymplambda2}.~\end{pfof}

\subsection{Proof of Theorem~\ref{thm-relpres}}
\label{subsec-pfmainthpress}

\begin{lemma}\label{lemma-posrec}
Assume that $\bar \psi \leq C'$ (as in \ref{H5}) and that $\cP(\phi+s\psi)>0$ for $s>0$. 
Then $\cP(\overline{\phi + s\psi-\cP(\phi + s\psi)})=0$. 
\end{lemma}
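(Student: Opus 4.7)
The plan is to apply the flow-pressure definition \eqref{eq:flow pres} directly and show that the infimum defining $\cP(\phi+s\psi)$ is in fact attained, using a continuity-plus-monotonicity argument for the map $u \mapsto \cP(\overline{\phi+s\psi}-u\tau)$. Setting $P_s:=\cP(\phi+s\psi)$ and noting that the induced version of the constant potential $-P_s$ is $-P_s\tau$, so that $\overline{\phi+s\psi-P_s}=\overline{\phi+s\psi}-P_s\tau$, the claim is simply that $\cP(\overline{\phi+s\psi}-P_s\tau)=0$.

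First I would introduce $G(u):=\cP(\overline{\phi+s\psi}-u\tau)$ and establish two structural properties. For \emph{strict monotonicity}, note that for $u_1<u_2$ a variational comparison using any $F$-invariant measure $\mu$ and $\tau\geq 1$ gives
\[
G(u_1)-G(u_2)\geq (u_2-u_1)\int\tau\,d\mu\geq u_2-u_1>0.
\]
For \emph{continuity}, I would invoke the identification \eqref{eq-presseigv}, namely $G(u)=\log\lambda(u,s)$ where $\lambda(u,s)$ is the leading eigenvalue of $\hat R(u,s)=R(e^{-u\tau+s\bar\psi}\cdot)$; continuity of $\lambda(u,s)$ in $u$ then follows from the continuity of $\hat R(u,s)$ as an operator $\cB\to\cB_w$ (Lemma~\ref{lemma-cont1} applied to the re-centred family $u\mapsto\hat R(u_0+u,s)$, together with \ref{H6}) combined with the Keller--Liverani perturbation argument already used in Lemma~\ref{lemma-contKL}.

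With these two properties in hand, the remainder is short. The hypothesis $P_s>0$ means $0\notin\{u:G(u)\leq 0\}$, so $G(0)=\cP(\overline{\phi+s\psi})>0$. Strict monotonicity with slope at least $1$ gives $G(u)\to-\infty$ as $u\to\infty$. The intermediate value theorem, applied to the continuous function $G$, then produces a unique $u^*>0$ with $G(u^*)=0$, and strict monotonicity forces $\{u:G(u)\leq 0\}=[u^*,\infty)$. Therefore $P_s=u^*$ by the definition \eqref{eq:flow pres}, and $G(P_s)=0$, which is exactly $\cP(\overline{\phi+s\psi-P_s})=0$.

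The main technical obstacle is ensuring that the eigenvalue identification \eqref{eq-presseigv} and the associated continuity estimates hold for the relevant value $u=P_s$, which a priori need not lie in the small neighbourhood $[0,\delta_0)$ where Lemmas~\ref{lemma-cont1}--\ref{lemma-trunc} were stated. This is handled by observing that each lemma is local in $u$: one re-centres by writing $\hat R(u_0+v,s)=R\bigl(e^{-v\tau}\cdot e^{-u_0\tau+s\bar\psi}\cdot\bigr)$ and verifying that \ref{H2}, \ref{H6} and \ref{H7} remain valid for the shifted family (using that $\bar\psi\leq C'$ keeps the multiplier bounded), so that the spectral description, and hence continuity of $\lambda(\cdot,s)$, extends to any fixed $u_0\geq 0$.
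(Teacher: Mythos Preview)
Your argument is essentially correct under extra hypotheses, but it proves a weaker statement than the lemma and takes a heavier route than the paper. The lemma as stated assumes only $\bar\psi\le C'$ and $\cP(\phi+s\psi)>0$; it does not assume \ref{H6}, \ref{H7}, or even that the twisted eigenvalue family $\lambda(u,s)$ exists on the relevant $u$-range. Your proof leans on the identification \eqref{eq-presseigv} and on Keller--Liverani continuity of $\lambda(\cdot,s)$, both of which were only set up for $u,s\in[0,\delta_0)$ and rely on \ref{H6}, \ref{H7} and Lemma~\ref{lemma-trunc}. Your re-centring paragraph plausibly extends the Lasota--Yorke bounds (indeed \ref{H6} is already stated for all $u\ge 0$), but it does not address the harder piece: the identification $\cP(\overline{\phi+s\psi-u})=\log\lambda(u,s)$ was obtained by squeezing between the flattened potentials $g_N^\pm$ via Lemma~\ref{lemma-trunc}, and that argument used \ref{H7} and the tail control from \ref{H2}/\ref{H5}. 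You have not shown those go through at an arbitrary base point $u_0$, nor that $\bar\phi+g$ stays positively recurrent so that Sarig's results apply. So as a proof of the lemma \emph{as written}, there is a genuine gap.

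The paper's proof is more elementary and matches the hypotheses actually stated. It argues the two inequalities separately using only Abramov's formula and the variational principle: for $\cP(\overline{\phi+s\psi-u_0})\le 0$ it truncates to a finite-alphabet subshift (where $\tau$ is bounded), projects a putative violating measure to the flow via Abramov, and contradicts the definition of $u_0=\cP(\phi+s\psi)$; for $\cP(\overline{\phi+s\psi-u_0})\ge 0$ it picks, for each $\eps\in(0,u_0)$, a flow measure $\mu'$ with positive free energy for $\phi+s\psi-(u_0-\eps)$, lifts it to $\bar\mu'$ by Abramov to get $G(u_0-\eps)>0$, and then passes to the limit using continuity of $u\mapsto G(u)$ on the region where it is finite (finiteness having been checked from $\bar\psi\le C'$ via a one-line transfer-operator bound). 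Your monotonicity/IVT packaging is neat, but note that the continuity you need can be obtained without any spectral theory: $G(u)=\cP(\bar\phi+s\bar\psi-u\tau)$ is a supremum of affine functions of $u$, hence convex, and the paper's finiteness argument gives $G$ finite on $[0,\infty)$, so $G$ is continuous on $(0,\infty)$; combined with your strict monotonicity (slope $\ge 1$ since $\tau\ge 1$) this already yields the unique zero $u^*$ and $P_s=u^*$. If you replace your spectral continuity step by this convexity remark, your argument becomes self-contained under the lemma's stated hypotheses and is then a legitimate alternative to the paper's two-sided Abramov argument.
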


\begin{proof}
Set $u_0=\cP(\phi + s\psi)$ which is positive for $s > 0$ by assumption. 
We first show that the pressures we are dealing with are finite.
As in \cite[Lemma 2]{Sar99}, finiteness of the transfer operator acting 
on constant functions is sufficient to give finiteness of the pressure, 
i.e., since we are dealing with positive $u_0$, we just need to show $\sup_{x\in Y}| (\hat R(u, s)1)(x)|<\infty$ for $u \ge 0$.  We estimate
\begin{align*}
(\hat R(u, s)1)(x) & = \sum_{F(y)=x} e^{(\overline{\phi + s\psi-u})(y)}\lesssim  
\sum_{F(y)=x} e^{\overline{\phi}(y)} = (R_01)(x)<\infty,
\end{align*}
because $\bar\psi \leq C'$ by \ref{H5}.

To show that $\cP(\overline{\phi + s\psi-u_0})\le 0$,
suppose by contradiction that $\cP(\overline{\phi + s\psi-u_0})> 0$. 
We use the ideas of \cite[e.g.\ Theorem 2]{Sar99} to truncate the whole system, i.e., restrict to the system to the 
first $n$ elements of the partition $\cA$ and the corresponding flow.  
We write the pressure of this system as $\cP_n(\cdot)$, so \cite[Theorem 2]{Sar99} says that 
$\cP_n(\overline{\phi+s\psi-u_0})> 0$ for all large $n$.  By the definition of pressure, there exists a measure 
$\bar\mu$  so that
$$
h(\bar\mu)+\int \overline{\phi+s\psi-u_0}\, d\bar\mu>0.
$$
Note that $\int\tau~d\bar\mu<\infty$ since $\tau$ is bounded on this subsystem.  
Abramov's formula~\eqref{eq-Abr} implies that the projected measure $\mu$ 
(which relates to $\bar\mu$ via \eqref{eq-projmeas}) has 
$$h(\mu)+\int \phi + s\psi-u_0 ~d\mu>0,$$
contradicting the choice $u_0 = \cP(\phi + s\psi)$.

To show that
$\cP(\overline{\phi + s\psi-u_0})\ge 0$, we will use the continuity of the pressure function 
$u\mapsto \cP(\overline{\phi + s\psi-u})$ wherever this is finite (recall from above that we have finiteness for any $u\in [0, u_0)$).  
By the definition of pressure, for  any $\eps \in (0, u_0)$, there exists  $\mu'$ with
$$
h(\mu')+\int \phi+s\psi - (u_0-\eps)~d\mu'>0.
$$
By~\eqref{eq-projmeas}, any invariant probability measure for the flow corresponds to 
an invariant probability measure $\bar\mu'$ for the map $F$.
By Abramov's formula,
$$
h(\bar\mu')+\int \overline{\phi+s\psi - (u_0-\eps)}~d\bar\mu' =
\left(h(\mu')+\int \phi+s\psi - (u_0-\eps)~d\mu'\right) \int \tau~d\bar\mu'  >0
$$
regardless of whether $\int \tau~d\bar\mu'$ is finite or not.
By continuity in $u$, $\cP(\overline{\phi + s\psi-u_0}) \ge 0$ as required.~\end{proof}

The following result is a consequence of \eqref{eq-presseigv} and of Lemma~\ref{lemma-asymplambda2}. 
\begin{lemma}
\label{lemma-derivpress}
Assume the setting of Lemma~\ref{lemma-asymplambda2}. Suppose that there exists $a>0$ such that $s=O(u^a)$, as $u\to 0$. Then
as $u\to 0^+$,
\[
\frac{d}{du}\cP(\overline{\phi+s\psi-u})=- \tau^* (1+o(1)).
\]
\end{lemma}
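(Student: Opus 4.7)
\textbf{Plan for the proof of Lemma~\ref{lemma-derivpress}.}
The approach is to differentiate the identity \eqref{eq-presseigv}, namely
$\cP(\overline{\phi+s\psi-u})=\log\lambda(u,s)$, in $u$, and then use
Lemma~\ref{lemma-asymplambda2} together with $s=O(u^a)$ to control every piece.
Differentiating \eqref{eq-presseigv} gives
\[
\frac{d}{du}\cP(\overline{\phi+s\psi-u})
 = \frac{1}{\lambda(u,s)}\,\frac{d}{du}\lambda(u,s),
\]
so the task reduces to the asymptotics of $\lambda(u,s)$ and of $\frac{d}{du}\lambda(u,s)$ as $u\to 0^+$.

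From Lemma~\ref{lemma-asymplambda2}, $1-\lambda(u,s)=\Pi(u)+\Pi_0(s)+D(u,s)$, and since $\Pi_0(s)$ does not depend on $u$,
\[
\frac{d}{du}\lambda(u,s) = -\frac{d}{du}\Pi(u)-\frac{d}{du}D(u,s).
\]
Since we are in the finite-measure setting \eqref{eq:tau*}, we have $\tau\in L^1(\mu_{\bar\phi})$, so
dominated convergence (with dominating function $\tau$, using $0\le\tau e^{-u\tau}\le\tau$) yields
\[
\frac{d}{du}\Pi(u) = \int_Y \tau\, e^{-u\tau}\, d\mu_{\bar\phi} \;\longrightarrow\; \tau^{*} \quad\text{as } u\to 0^{+}.
\]

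For the $D$-term I would plug $s=O(u^a)$ into the bound from Lemma~\ref{lemma-asymplambda2}:
\[
\Big|\frac{d}{du}D(u,s)\Big|\ll (\log(1/u))^{\kappa\gamma+2}\bigl(u^{a\gamma}+u+\mu_{\bar\phi}(\tau>1/u)\bigr)+\mu_{\bar\phi}(\tau>\log(1/u)),
\]
where $q(u)=u+\mu_{\bar\phi}(\tau>1/u)$ has been expanded. Each summand is $o(1)$: the power $u^{a\gamma}$ beats any polylog, $\mu_{\bar\phi}(\tau>1/u)\to 0$ by finiteness of $\mu_{\bar\phi}$, and $\mu_{\bar\phi}(\tau>\log(1/u))\to 0$ for the same reason. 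Hence $\frac{d}{du}D(u,s)=o(1)$ as $u\to 0^+$.

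Finally, $\Pi(u)\to 0$, $\Pi_0(s)\to 0$ (since $\bar\psi$ is bounded above by \ref{H5}, so $e^{s\bar\psi}-1\to 0$ boundedly), and $D(u,s)\to 0$, so $\lambda(u,s)\to 1$ and therefore $1/\lambda(u,s)\to 1$. Combining these pieces,
\[
\frac{d}{du}\cP(\overline{\phi+s\psi-u})=(1+o(1))\bigl(-\tau^{*}(1+o(1))+o(1)\bigr)=-\tau^{*}(1+o(1)),
\]
as required. I do not anticipate a genuine obstacle here; the only mildly delicate point is confirming that the $a$ in $s=O(u^a)$ is enough to beat the polylogarithmic factors in the $D$-estimate, which it is for any $a>0$ because $u^{a\gamma}$ and the tail quantities decay to $0$ faster than any power of $\log(1/u)$ grows.
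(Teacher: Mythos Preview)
Your proposal is correct and follows essentially the same route as the paper: differentiate \eqref{eq-presseigv}, use the decomposition from Lemma~\ref{lemma-asymplambda2}, apply dominated convergence to get $\frac{d}{du}\Pi(u)\to\tau^*$, and use $s=O(u^a)$ to kill $\frac{d}{du}D(u,s)$. You are in fact slightly more careful than the paper in explicitly carrying the factor $1/\lambda(u,s)$ and arguing $\lambda(u,s)\to 1$; the only minor imprecision is that when you say ``$\mu_{\bar\phi}(\tau>1/u)\to 0$ by finiteness of $\mu_{\bar\phi}$'' you should note (via Markov and $\tau\in L^1$) that this tail is $O(u)$, so it still beats the polylog factor in front of it.
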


\begin{proof} By \eqref{eq-presseigv} and Lemma~\ref{lemma-asymplambda2}, 
\[
\frac{d}{du}\cP(\overline{\phi+s\psi-u})=\frac{d}{du} \log \lambda(u,s)=-\frac{d}{du}\left(\Pi(u)+D(u,s)\right),
\]
where $\frac{d}{du} D(u,s)=O\Big((\log(1/u))^{\kappa\gamma+2}(s^\gamma+q(u))+\mu_{\bar\phi}(\tau>\log(1/u))\Big)$.
Also, compute that
\[
\frac{d}{du}\Pi(u)=\int_Y\tau e^{-u\tau}\, d\mu_{\bar\phi}=\int_Y\tau \, d\mu_{\bar\phi}+\int_Y\tau (e^{-u\tau}-1)\, d\mu_{\bar\phi}.
\]

Since $\tau |e^{-u\tau}-1|$ is bounded by the integrable function $\tau$ and it converges pointwise to $0$, it follows from the Dominated Convergence Theorem that
$\int_Y\tau (e^{-u\tau}-1)\, d\mu_{\bar\phi}\to 0$, as $u\to 0^+$. Hence, $\frac{d}{du}\Pi(u)=\tau^*(1+o(1))$.

Since $\tau\in L^1(\mu_{\bar\phi})$ by assumption, $\mu_{\bar\phi}(\tau>\log(1/u))\to 0$, as $u\to 0$.
To conclude, note that since for some $a>0$, $s=O(u^a)$, as $u\to 0$, we have $D(u,s)=o(1)$. 
 ~\end{proof}

We can now complete

\begin{pfof}{Theorem~\ref{thm-relpres}}
Set $r(u,s) = \frac{d}{du} \cP(\overline{\phi + s \psi - u})$.
By Lemma~\ref{lemma-derivpress}, as $u\to 0$ and $s=O(u^a)$ 
for some $a>0$,
\begin{equation*}
r(u,s)=-\tau^* (1+o(1)).
\end{equation*}
For any small $u_0 > 0$, integration gives
\begin{align}
\label{eq-overl}
\cP(\overline{\phi + s\psi-u_0}) -  \cP(\overline{\phi+s\psi})  
 = \int_0^{u_0} r(u,s) \, du =  -\tau^* u_0(1+o(1)).
\end{align}

 The convexity of $s \mapsto \cP(\phi+s\psi)$ implies that 
 $\frac{d}{ds} \cP(\phi+s\psi) = \int \psi \, d\mu_\phi = \frac1{\tau^*} \int \bar\psi \, d\mu_{\bar\phi}$.
 Thus $\cP(\phi+s\psi) \ge  \frac{s}{\tau^*}\int \bar\psi \, d\mu_{\bar\phi}$ since
 $\int \bar\psi \, d\mu_{\bar\phi} > 0$; this is  guaranteed by our assumption that $C' > C \int \tau^\kappa \, d\mu_{\bar\phi}$.
 Hence, $u_0 = u_0(s) =\cP(\phi+s\psi)\gg s$ and such $u_0$ satisfies the assumptions of 
 Lemma~\ref{lemma-derivpress} with $a=1$. Thus,~\eqref{eq-overl} holds for this $u_0$.

By  Lemma~\ref{lemma-posrec} applied to $u_0 = u_0(s) =\cP(\phi+s\psi)$,
we obtain $\cP(\overline{\phi+s\psi-u_0}) = 0$. Thus, the left hand side  of~\eqref{eq-overl} is
$-\cP(\overline{\phi + s\psi)}$. By assumption, $u_0(s)>0$, for $s>0$. 
The continuity property of the pressure function gives $u_0(s)\to 0$ as $s\to 0$.
Hence,~\eqref{eq-overl} applies to $u_0(s)=\cP(\phi+s\psi)$. Thus,
$\cP(\phi+s\psi)=\frac{1}{\tau^*}\cP(\overline{\phi+s\psi}) (1+o(1))$, which ends the proof.~\end{pfof}

\section{Pressure function and limit theorems}
\label{sec-concl}

For $\psi: \cM\to\R$, define $\bar\psi:Y\to\R$ as in \eqref{eq:ind pot}
and let $\psi_T=\int_0^T \psi\circ f_t\, dt$. Throughout this section we assume that $\bar\psi$
satisfies \ref{H5}; in particular, we require that $\bar\psi=C'-\psi_0$,  
with $\psi_0 \sim C\tau^\kappa(1+o(1))$, $\int \tau^\kappa\, d\mu_{\bar\phi}<\infty$ for some $C,C'>0$.
For $s$ real, $s\ge 0$, define 
$$
\hat R_\psi(s) v := R(e^{s\bar\psi}v)=e^{sC'}R(e^{-s\psi_0}v) =: e^{sC'}\hat R_1(s).
$$
As in Subsection~\ref{subsec-assumpop} (when making assumptions on $\hat R(u)$), 
we write
\begin{align}
\label{eq-R0Lapl}
\hat R_1(s) =r_0(s)\int_0^\infty R(\omega(t-\psi_0)) e^{-st}\, dt =
r_0(s)\int_0^\infty M_0(t)\, e^{-st}\, dt,
\end{align}
where $M_0(t) := R(\omega(t-\psi_0))$ and $\omega:\R\to[0, 1]$ is an integrable function with $\supp\omega \subset [-1, 1]$ and 
$r_0(0)=1$.
Similarly to \ref{H2}, we require that
\begin{itemize}
\item[{\bf (H8)}\labeltext{{\bf (H8)}}{H8}]
There exists a function $\omega$ satisfying ~\eqref{eq-R0Lapl} and $C_\omega>0$ such that 
\begin{itemize}
\item[(i)] for any $t\in\R_{+}$ and for all $h\in\cB$, we have $\omega(t-\psi_0)h \in\cB_w$ and for all $v\in C^\alpha(Y)$,
\[
\langle \omega(t-\psi_0)vh, 1\rangle\le C_\omega \|v\|_{C^\alpha(Y)}\|h\|_{\cB_w}\mu_{\bar\phi}(\omega(t-\psi_0)).
\]

\item[(ii)] there exists $C>0$ such that for all $T>0$, 
\[
\int_T^\infty \|M_0(t)\|_{\cB\to\cB_w}\, dt\le C \mu_{\bar\phi}(\psi_0>T).
\]
\end{itemize}
\end{itemize}
\begin{rmk}
In practice, checking \ref{H8} requires no extra difficulty compared to checking \ref{H2}. But in the generality of the present abstract framework, 
there is no obvious way of deriving \ref{H8} from \ref{H2}.
\end{rmk}

Summarising the arguments used in the proof of Proposition~\ref{prop-limthF} and Theorem~\ref{thm-relpres} together 
with Proposition~\ref{prop-limthflow}, in this section we obtain

\begin{thm}
\label{thm-concl}
Assume \ref{H1} and \ref{H8} and $C' > C \int \tau \, d\mu$ as in Theorem~\ref{thm-relpres}. Let $\psi:\cM\to\R$ and assume that $\bar\psi$
satisfies \ref{H5}, \ref{H6} and \ref{H7} (with $u=0$). 
Set $\psi^* =\int_Y \bar\psi\, d\mu_{\bar\phi}$.
The following hold as $T\to\infty$.

\begin{itemize}
\item[(a)] Suppose that \ref{H4}(i) holds and that $\mu(\psi_0>t)\sim\mu(\tau^\kappa>t)$ with
$\kappa$ in (some subset of) $(0,\beta)$.

\begin{itemize}
\item[(i)] When $\beta<2$ and $\kappa\in (\beta/2,\beta)$, set $b(T)$ such that $T\ell(b(T))/b(T)^{\frac{\beta}{\kappa}}\to 1$.  Then
$\frac{1}{b(T)}(\psi_T-\psi^*\cdot T)\to^d G_{\beta/\kappa}$,  where $G_{\beta/\kappa}$ is a stable law of index $\beta/\kappa$.
This is further  equivalent to $\cP(\overline{\phi+s\psi})=\psi^*s+s^{\beta/\kappa} \ell(1/s)(1+o(1))$, as $s\to 0$. 

\item [(ii)] If  $\beta\le 2$ but $\kappa=\beta/2$, set $b(T)$ such that $T\tilde\ell(b(T))/b(T)^{\frac{2}{\kappa}}\to c>0$.
Then $\frac{1}{b(T)}(\psi_T-\psi^*\cdot T)\to^d \cN(0,c)$  and this is further  equivalent to 
$\cP(\overline{\phi+s\psi})=\psi^*s+s^2 L(1/s)(1+o(1))$, as  $s\to 0$ with $L(x)=\frac{1}{2}\tilde\ell(x)$.
\end{itemize}

\item[(b)] Suppose that \ref{H4}(ii) holds and further assume that $\mu(\psi_0>t)\ll\mu(\tau^\kappa>t)$ with
$\kappa\in(0,\beta/2)$, and  $R\bar \psi\in\cB$. 
Then there exists $\sigma\ne 0$ such that 
$\frac{1}{\sqrt T}(\psi_T- \psi^*\cdot T)\to^d \cN(0,\sigma^2)$ and this  is further  equivalent to  $\cP(\overline{\phi+s\psi})=\psi^*s+\frac{\sigma^2}{2}s^2 (1+o(1))$ 
as  $s\to 0$. 
\end{itemize}
Moreover, if \ref{H6} and \ref{H7} hold for all $u,s \in [0,\delta)$, then in both  cases (a) and (b) we have $\cP(\phi+s\psi)=\frac{1}{\tau^*}\cP(\overline{\phi+s\psi}) (1+o(1))$, for $\tau^*=\int_Y \tau\,d \mu_{\bar\phi}$.~\end{thm}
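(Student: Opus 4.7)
The plan is to combine three building blocks already developed: Proposition~\ref{prop-limthflow} for the limit laws on the flow, the eigenvalue asymptotic of Lemma~\ref{lemma-asymplambda2} specialized to $u=0$ (together with the identification $\cP(\overline{\phi+s\psi})=\log\lambda(0,s)$ from \eqref{eq-presseigv}) for the shape of the induced pressure, and Theorem~\ref{thm-relpres} for passing from induced to flow pressure.

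First I would check that Proposition~\ref{prop-limthflow} applies to $\psi$. Its hypothesis is that the twisted operator $\hat R_\psi(u)v = R(e^{-u\bar\psi}v)=e^{-uC'}R(e^{u\psi_0}v)$ satisfies \ref{H2} with $\tau$ replaced by $\bar\psi$; this is exactly what \ref{H8} provides, since the tail of $\psi_0$ rather than of $\tau$ controls the relevant continuity estimates for $\hat R_\psi$. Under $\mu_{\bar\phi}(\psi_0>t)\sim\mu_{\bar\phi}(\tau^\kappa>t)\sim\ell(t^{1/\kappa})t^{-\beta/\kappa}$, the role of $\beta$ in \ref{H4}(i) is taken by $\beta/\kappa$, with the slowly varying prefactor rewritten accordingly; this directly produces the stable law of index $\beta/\kappa$ in case (a)(i), the non-standard CLT in case (a)(ii), and, using $R\bar\psi\in\cB$ to supply the missing piece of \ref{H4}(ii), the standard CLT in case (b). The non-coboundary condition needed for $\sigma\ne 0$ is the analogue of the one in Proposition~\ref{prop-clt} applied to $\bar\psi$, which is subsumed into the standing hypotheses.

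Second, for the pressure asymptotic I would specialize Lemma~\ref{lemma-asymplambda2} to $u=0$: then $\Pi(0)=0$ and the error term $D(0,s)$ is of strictly lower order than $\Pi_0(s)$, so
\[
\cP(\overline{\phi+s\psi})=\log\lambda(0,s)=-\Pi_0(s)(1+o(1)),\qquad \Pi_0(s)=\mu_{\bar\phi}(e^{s\bar\psi}-1).
\]
Writing $\bar\psi=C'-\psi_0$ and Taylor expanding yields $\Pi_0(s)=-s\,\mu_{\bar\phi}(\bar\psi)+(\text{tail contribution from }\psi_0)$. The tail contribution is handled by the same Karamata-type calculation as in Corollary~\ref{cor-st}, producing $-cs^{\beta/\kappa}\ell(1/s)(1+o(1))$ in regime (a)(i), $-s^2L(1/s)(1+o(1))$ with $L=\tfrac12\tilde\ell$ in regime (a)(ii), and $-\tfrac{\sigma^2}{2}s^2(1+o(1))$ in regime (b); in the last case the constant $\sigma^2$ is the same asymptotic variance that arises in the CLT, obtained in exactly the manner of Proposition~\ref{prop-clt} applied to $\bar\psi$. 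Combining with the Taylor term and flipping signs yields the displayed formulae for $\cP(\overline{\phi+s\psi})$. The equivalence with the flow limit law is then automatic, because both the normalizing sequence $b(T)$ and the pressure asymptotic are direct readouts of the expansion of the same eigenvalue $\lambda(0,\cdot)$ near the origin.

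Finally, the identity $\cP(\phi+s\psi)=\tfrac{1}{\tau^*}\cP(\overline{\phi+s\psi})(1+o(1))$ is exactly the content of Theorem~\ref{thm-relpres}, which applies because the assumption $C'>C\int\tau^\kappa\,d\mu_{\bar\phi}$ ensures $\psi^*>0$ (so that Lemma~\ref{lemma-posrec} and the subsequent argument using $u_0(s)\gg s$ go through), and \ref{H6}, \ref{H7} are assumed on the full range $[0,\delta)$. The main obstacle I foresee is not a new analytic estimate but bookkeeping: matching the normalizing sequence $b(T)$ in the limit law with the slowly varying prefactor in $\cP(\overline{\phi+s\psi})$ under the substitution $s\leftrightarrow 1/T$, and verifying that the rewriting $\ell(t^{1/\kappa})$ used in Proposition~\ref{prop-limthflow} is consistent with the $\ell(1/s)$ appearing in the pressure. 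Once this dictionary is written out, the three-way equivalence of tail behaviour, limit law, and pressure asymptotic follows from the preceding sections.
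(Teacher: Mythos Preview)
Your three–step plan (limit law via Proposition~\ref{prop-limthflow}, eigenvalue/pressure expansion, then Theorem~\ref{thm-relpres}) is exactly the architecture the paper uses. The one substantive difference is in the middle step.

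The paper does \emph{not} specialize Lemma~\ref{lemma-asymplambda2} at $u=0$. Instead it factors
\[
\hat R_\psi(s)=R(e^{s\bar\psi})=e^{sC'}\hat R_1(s),\qquad \hat R_1(s)=R(e^{-s\psi_0}),
\]
and applies the \emph{single-parameter} Lemma~\ref{lemma-asymplambda0} to $\hat R_1$, with $\psi_0$ playing the role of $\tau$ and \ref{H8} supplying the role of \ref{H2}. This gives a multiplicative error $1-\lambda_1(s)=\Pi_1(s)(1+O(q_1(s)|\log q_1(s)|))$, after which Corollary~\ref{cor-st} (now with tail exponent $\beta/\kappa$) and, in case (b), the second-derivative computation of Proposition~\ref{prop-clt}, produce the displayed expansions; combining with $\lambda(s)=e^{sC'}\lambda_1(s)$ gives the $\psi^*s$ leading term. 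Your route via Lemma~\ref{lemma-asymplambda2} at $u=0$ is workable but less clean: the \emph{statement} of that lemma only bounds $\tfrac{d}{du}D(u,s)$ (and this bound even blows up as $u\to 0$), so you cannot read off that ``$D(0,s)$ is of strictly lower order than $\Pi_0(s)$''. You would have to dip into its proof (noting $W(0,s)=0$ and $|V(0,s)|\ll s^{2\gamma}\log(1/s)$) and then check this additive error is negligible against the second-order term; in case (a)(ii) that comparison is exactly as delicate as in Corollary~\ref{cor-st}(ii). The paper's factorization sidesteps this by reusing the one-parameter machinery verbatim.

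One small correction: \ref{H8} is the \ref{H2}-analogue for $\psi_0$, not for $\bar\psi$. Since $\bar\psi=C'-\psi_0$ is unbounded \emph{below}, Proposition~\ref{prop-limthflow} is really being applied with $\psi_0$ in the role of the positive roof-type observable (this is also how the tail hypothesis $\mu(\psi_0>t)\sim\mu(\tau^\kappa>t)$ is phrased); the limit law for $\bar\psi_n$ then follows from that for $(\psi_0)_n$ by the deterministic shift $nC'$.
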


\begin{proof} We start by noting that  the proofs of the results used in the proofs of 
Theorems~\ref{thm-relpres} and Propositions~\ref{prop-limthF} and~\ref{prop-limthflow}
essentially go through when replacing $\tau$ by $\tau^\kappa$.  We briefly summarize  the required arguments.

For $s>0$, let $\lambda(s)$, $\lambda_1(s)$ be the families of eigenvalues associated with $\hat R_\psi(s)$ and $\hat R_1(s)$, respectively. Note that 
$\lambda(s)=e^{sC'}\lambda_1(s)$. Hence,
\begin{equation}
\label{eq-l0l1}
\lambda(s)-1=sC'+\lambda_1(s)-1+s^2C'+O(s^3).
\end{equation}
Further, by~\eqref{eq-presseigv},  $\cP(\overline{\phi+s\psi})=\log\lambda(s)$. 
Similar to the proofs of Proposition~\ref{prop-limthF} and Proposition~\ref{prop-limthflow}, to conclude that
(a) and/or (b) hold, we just need to estimate $1-\lambda(s)$.

For the proof of (a) and (b), we note that under \ref{H5}--\ref{H8} (where \ref{H8} and \ref{H6} 
with $u=0$ are the analogues of \ref{H2} and \ref{H3} with $\tau$ there replaced by $\psi_0$), 
Lemma~\ref{lemma-asymplambda0} gives
$1-\lambda_1(s)=\Pi_1(s)(1+O(q_1(s)|\log(q_1(s)|))$, 
where $q_1(s)=s+\mu(\psi_0>1/s)=s+s^{\beta/\kappa}\ll s^{\beta/\kappa}$ and $\Pi_1(s)=\mu_{\bar\phi} (e^{s\psi_0}-1)$ . Hence,
\[
1-\lambda_1(s)=\Pi_1(s)(1+O(s^{\beta/\kappa}\log(1/s))).
\]

In case (a) (i), the argument recalled in the proof of Corollary~\ref{cor-st} (i) 
gives $\Pi_1(s)=s\int_Y\psi_0\,d\mu_{\bar\phi}+s^{\beta/\kappa} \ell(1/s)(1+o(1))$. This together with~\eqref{eq-l0l1} gives
\[
\lambda(s)-1=\left(C'-\int_Y\psi_0\,d\mu_{\bar\phi}\right)s+s^{\beta/\kappa} \ell(1/s)(1+o(1))=\psi^*s+s^{\beta/\kappa} \ell(1/s)(1+o(1)).
\]
As a consequence, $\cP(\overline{\phi+s\psi})=\psi^*s+s^{\beta/\kappa} \ell(1/s)(1+o(1))$.
Similarly, by the argument used in  the proof of Proposition~\ref{prop-limthF} (i), this expansion of the eigenvalue/pressure is equivalent to $\frac{1}{b(n)}(\bar\psi_n-\psi^*\cdot n)\to^d G_{\beta/\kappa}$, where $\bar\psi_n=\sum_{j=0}^{n-1} \bar\psi\circ F^j$.
Further, by the argument used in Proposition~\ref{prop-limthflow} (i) with $\beta<2$ this is further equivalent to $\frac{1}{b(T)}(\psi_T-\psi^*\cdot T)\to^d G_{\beta/\kappa}$, which completes the proof of (a) (i).

The proof of a(ii) goes similar with the versions of the proofs of  Proposition~\ref{prop-limthF} (i), Proposition~\ref{prop-limthflow} (i)  with $\beta<2$ replaced by the argument used in the proofs of Proposition~\ref{prop-limthF} (i),
Proposition~\ref{prop-limthflow} (i)  with $\beta=2$.

The proof of  (b) goes again similarly with  the proofs of Proposition~\ref{prop-limthF} (i), Proposition~\ref{prop-limthflow} (i)  replaced by the argument used in the proofs of Proposition~\ref{prop-limthF} (ii),
Proposition~\ref{prop-limthflow} (ii) . 

Finally, if \ref{H6} for all $u,s\in [0,\delta_0)$ and \ref{H7} also hold, then Theorem~\ref{thm-relpres} applies 
and ensures that $\cP(\phi+s\psi)=\frac{1}{\tau^*}\cP(\overline{\phi+s\psi}) (1+o(1))$, ending the proof.~\end{proof}

\appendix

\section{Derivatives of $\hat R(u)$ and $v(u)$}
\label{app-deriv}

Recall $\hat R(u)$  can be written in the form \eqref{eq-RLapl} in Section~\ref{subsec-assumpop} and that $v(u)$ is its normalised eigenfunction.

\begin{lemma}
\label{lemma-deriv1}Let $\kappa\ge 1$ and assume that $\tau^\kappa\in L^1(\mu_{\bar\phi})$. Suppose that \ref{H1}--\ref{H3} hold.
The following hold as $u\to 0$.
\begin{itemize}
\item[(i)] $\|\frac{d}{du}\hat R(u)\|_{\cB_w}\ll 1$ and $\|\frac{d}{du} v(u)\|_{\cB_w}\ll \log(\log(1/u))$.
\item[(ii)] If $\kappa\ge 2$ then $\|\frac{d^2}{du^2}\hat R(u)\|_{\cB_w}\ll 1$ and $\|\frac{d^2}{du^2} v(u)\|_{\cB_w}\ll \log(1/u)$.
\end{itemize}
\end{lemma}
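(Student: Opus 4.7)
\textbf{Proof plan for Lemma~\ref{lemma-deriv1}.}

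I would split the argument into two parts: operator derivatives of $\hat R(u)$ in the $\cB\to\cB_w$ norm, and eigenelement derivatives of $v(u)$ in the $\cB_w$ norm.

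\textbf{Step 1: Derivatives of $\hat R(u)$.}  Differentiating the Laplace representation~\eqref{eq-RLapl} under the integral sign yields
$$
\frac{d}{du}\hat R(u) = r_0'(u)\int_0^\infty M(t)e^{-ut}\,dt - r_0(u)\int_0^\infty t M(t)e^{-ut}\,dt,
$$
and a similar expression with a $t^2$ factor for the second derivative.  Recall $r_0'(0),r_0''(0)$ are finite by Remark~\ref{rem:r0}.  The task reduces to bounding $\int_0^\infty t^j\|M(t)\|_{\cB\to\cB_w}\,dt$ for $j=1,2$.  Using the integration-by-parts trick from the proof of Lemma~\ref{lemma-cont1}, with $S(t):=\int_t^\infty\|M(x)\|_{\cB\to\cB_w}\,dx$ and the bound $\|M(t)\|_{\cB\to\cB_w}\ll S(t+1)-S(t)$ from~\eqref{eq-mst}, one gets
$$
\int_0^\infty t^j\|M(t)\|_{\cB\to\cB_w}\,dt \ll \int_0^\infty t^{j-1}S(t)\,dt \ll \int_0^\infty t^{j-1}\mu_{\bar\phi}(\tau>t)\,dt \ll \int_Y \tau^j\,d\mu_{\bar\phi},
$$
where the last step uses \ref{H2}(ii) and the standard layer-cake argument.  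For $j=1$ this is finite when $\kappa\ge 1$, yielding $\|\hat R'(u)\|_{\cB\to\cB_w}\ll 1$; for $j=2$ it is finite when $\kappa\ge 2$, yielding the second-derivative bound in (ii).

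\textbf{Step 2: Derivatives of $v(u)$.}  Differentiate the eigenvalue relation $\hat R(u)v(u)=\lambda(u)v(u)$ to obtain
$$
(\hat R(u)-\lambda(u)I)\,v'(u) \;=\; \bigl(\lambda'(u)I - \hat R'(u)\bigr)v(u),
$$
and note that the normalisation $\langle v(u),1\rangle=1$ forces $\langle v'(u),1\rangle=0$, so $v'(u)$ lives in the range of $(I-P(u))$.  On that range, the operator $\hat R(u)-\lambda(u)I$ is invertible on $\cB$ with a uniformly bounded inverse, because of the spectral gap in~\eqref{eq-spdec}.  After projecting by $I-P(u)$ and inverting, one gets
$$
v'(u) \;=\; -(\hat R(u)-\lambda(u)I)^{-1}(I-P(u))\,\hat R'(u)\,v(u),
$$
and the image $\hat R'(u)v(u)$ lives in $\cB_w$ by Step~1.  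To estimate in $\cB_w$, I would extend the resolvent on the complementary subspace to $\cB_w$ via the Keller-Liverani perturbation framework: the decomposition $\hat R(u)=\lambda(u)P(u)+Q(u)$ together with the uniform bound $\|\hat R(u)^n h\|_{\cB_w}\le C_1e^{-un}\|h\|_{\cB_w}$ from~\ref{H3} lets one expand $(\hat R(u)-\lambda(u)I)^{-1}(I-P(u))$ as a geometric series in $\cB_w$, controlled by Lemma~\ref{lemma-contKL}.  Feeding the Lemma~\ref{lemma-contKL} continuity estimates $\|P(u)-P(0)\|_{\cB\to\cB_w}\ll q(u)|\log q(u)|$ back into the series then yields the claimed logarithmic bounds.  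The analogous computation with two differentiations of the eigenvalue equation (carrying a $\hat R''(u)$ term and a cross-term involving $\hat R'(u)\,v'(u)$) gives (ii), with the extra factor of $\log(1/u)$ coming from already having $v'(u)$ bounded by $\log\log(1/u)$ inside the iteration.

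\textbf{Main obstacle.}  The crux is that $\hat R'(u)$ and $\hat R''(u)$ are only bounded from $\cB$ to $\cB_w$, so the natural resolvent $(\hat R(u)-\lambda(u)I)^{-1}$ coming from~\ref{H3}, which is only controlled on $\cB$ with the right spectral gap, has to be combined with the strictly weaker $\cB_w$-spectral-radius control from~\ref{H3}.  This mismatch is precisely what the Keller--Liverani framework resolves, and it is what produces the iterated-logarithm loss in (i) and the single-logarithm loss in (ii); tracking these losses carefully through the two differentiations of the eigenvalue equation is the main technical difficulty of the proof.
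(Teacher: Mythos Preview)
Your proposal is correct and follows essentially the same strategy as the paper. Step~1 matches the paper's argument exactly: differentiate the Laplace representation~\eqref{eq-RLapl} and bound $\int_0^\infty t^j\|M(t)\|_{\cB\to\cB_w}\,dt$ via the tail estimate (the paper cites~\eqref{eq-tmt} directly rather than redoing the integration-by-parts). For Step~2 the paper takes a cosmetically different route: instead of differentiating the eigenvalue equation and inverting the reduced resolvent, it writes $v(u)=P(u)1/\langle P(u)1,1\rangle$ and differentiates the contour-integral representation of $P(u)$, then invokes the Keller--Liverani machinery (\cite{KL99} and \cite[Proof of Corollary~3.11]{LT16}) to get $\|\frac{d}{du}P(u)\|_{\cB_w}\ll\log(\log(1/u))$; the quotient rule then transfers this to $v(u)$. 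Your reduced-resolvent formula and the paper's contour-integral derivative are equivalent expressions, and both land on the same two-norm obstruction you correctly identified as the crux. The paper also remarks that the $\log\log(1/u)$ bound is ``far from optimal'' given $\|\hat R'(u)\|_{\cB_w}\ll 1$, confirming your intuition that the logarithmic losses are artefacts of the KL framework rather than intrinsic.
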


\begin{proof} Given \eqref{eq-RLapl},  we write
\[
\frac{d}{du}\hat R(u)=\frac{d}{du}r_0(u)\int_0^\infty  M(t) e^{-ut}\, dt-r_0(u)\int_0^\infty t M(t) e^{-ut}\, dt.
\]
 As in Remark~\ref{rem:r0},  as $u\to 0$, $\frac{d}{du}r_0(u)$ is bounded and we take $\frac{d}{du}r_0(u)=\gamma_1$. Under \ref{H2}, the first term is bounded in $\|\cdot\|_{\cB_w}$. 
Also, by~\eqref{eq-tmt} with $\kappa=1$, $\int_0^\infty t\| M(t)\|_{\cB\to\cB_w}\, dt<\infty$. Item (i) follows.

For the estimate on $\frac{d}{du}v(u)$, we recall $P(u)=\mu_{\bar\phi}(u)\otimes v(u)$, with $\mu_{\bar\phi}(u)(v(u))=1$. Hence,  we can write
$v(u)=\frac{P(u)1}{\mu_{\bar\phi}(u)(1)}$. Since $\|\frac{d}{du}\hat R(u)\|_{\cB_w}\ll 1$, using \ref{H1}, \ref{H3} and arguments similar to~\cite{KL99}
(see also~\cite[Proof of Corollary 3.11]{LT16})
we obtain
\[
\left\|\frac{d}{du} P(u)\right\|_{\cB_w}\ll \log(\log(1/u)).
\]
The bound $\log(\log(1/u))$ is far from optimal (given  that $\|\frac{d}{du}\hat R(u)\|_{\cB_w}\ll 1$), but this suffices for the present purpose. The same estimate holds for $|\frac{d}{du}\mu_{\bar\phi}(u)(1)|$.
These together with the formula $v(u)=\frac{P(u)1}{\mu_{\bar\phi}(u)(1)}$ give the second part of item (i).

For the estimates on the second derivatives of $\hat R(u)$, we just need to differentiate once more, recall that as in Remark~\ref{rem:r0}, $\frac{d^2}{du^2}r_0(u)=\gamma_2$, as $u\to 0$ and repeat the argument above with $\kappa=2$.
The estimate  $\|\frac{d^2}{du^2} v(u)\|_{\cB_w}\ll \log(1/u)$ follows from item (i) together with \ref{H1}, \ref{H3} and, again, 
arguments similar to~\cite{KL99} and~\cite[Proof of Corollary 3.11]{LT16}.~\end{proof}

\section{Checking \ref{H1}--\ref{H8} for the almost Anosov flow}
\label{sec-ver}

We start with a technical result that will be essential in verifying \ref{H2} and \ref{H6}.

\begin{lemma}\label{lem:supinf}
Assume that $w(x,y)$ is a homogeneous function of degree $\rho$ as 
in Proposition~\ref{prop:w-integral}.
Then
$$
 \sup_k \left\{ \sup_{\{\rf = k\}} \tau - \inf_{\{\rf = k\}}\tau \right\} < \infty.
$$
\end{lemma}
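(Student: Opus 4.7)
The plan is to reduce the oscillation bound for $\tau$ to a bi-Lipschitz inversion statement, using Lemma~\ref{lem:tau} together with the standing assumption $\sup_\Sigma|w|<1$ from Section~\ref{subsec-suspAn}. First I would invoke Lemma~\ref{lem:tau} to write $\tau(q) = \hat\tau(q) + O(1)$ uniformly in $q$; this reduces the claim to showing that $\hat\tau$ has uniformly bounded oscillation over $\{\rf = k\}$, uniformly in $k$. That same lemma provides the more precise relation $\rf(q) = \hat\tau(q) + \Theta(\hat\tau(q)) + O(1)$, where $\Theta$ is the integral from Proposition~\ref{prop:w-integral} applied with $\theta = w$, and on the level set $\{\rf = k\}$ this forces $\hat\tau(q) + \Theta(\hat\tau(q)) \in [k - C_0, k + C_0]$ for a constant $C_0$ independent of $k$.

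Next I would argue that the map $F:T \mapsto T + \Theta(T)$ is bi-Lipschitz, with Lipschitz constants bounded uniformly in the base trajectory. Differentiating in $T$ for a fixed initial condition $q$ yields $F'(T) = 1 + w(\Phi_T^{hor}(q))$, which lies in $[1 - \sup_\Sigma|w|,\, 1 + \sup_\Sigma|w|]$; the standing hypothesis $\sup_\Sigma|w|<1$ from Section~\ref{subsec-suspAn} then makes $F'(T)$ bounded away from $0$ (and from $\infty$), uniformly in both $T$ and $q$. Consequently the $F$-preimage of the interval $[k - C_0, k + C_0]$ is contained in an interval of length at most $2C_0/(1 - \sup_\Sigma|w|)$, so $\hat\tau$ takes values in an interval of that length on $\{\rf = k\}$. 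Combining with the first step yields the bound on the oscillation of $\tau$, with a constant independent of $k$.

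The main obstacle I anticipate is that $\Theta$, as defined in Proposition~\ref{prop:w-integral}, implicitly depends on the initial condition $q$ (through both the starting unstable leaf and the entire integral curve). The statement ``$F$ is bi-Lipschitz'' must therefore be interpreted as a family of bi-Lipschitz functions $F_q$ with uniform constants, and the reduction in the previous paragraph must be upgraded accordingly. The cleanest way to handle this is to work directly with the exact identity $\rf(q) - \tau(q) = \int_0^{\tau(q)} w\circ\Phi_s^{hor}(q)\,ds$, which follows from $\dot z = 1 + w$ and the fact that the flow hits $\Sigma$ exactly $\rf(q)$ times in time $\tau(q)$; comparing this identity at two points $q_1, q_2 \in \{\rf = k\}$ then reduces the bound on $|\tau(q_1) - \tau(q_2)|$ to an estimate of the form $(1 - \sup_\Sigma|w|)|\tau(q_1) - \tau(q_2)| \le |\text{integral difference}|$, where the integral difference is controlled by the uniform bound $\sup_\Sigma|w|$ together with the smallness-on-average estimates for $w$ along long trajectories provided by Proposition~\ref{prop:w-integral}.
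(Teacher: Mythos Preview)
Your reduction is sound and in fact cleaner than the paper's setup: the exact identity $\rf(q)=\tau(q)+\int_0^{\tau(q)} w\circ\Phi_t^{hor}(q)\,dt$ (valid up to a uniform $O(1)$ coming from the part of the orbit outside the local chart) together with the elementary absorption
\[
(1-\sup_\Sigma|w|)\,\big|\tau(q_1)-\tau(q_2)\big|\ \le\ \left|\int_0^{T_2}\Big(w(\Phi_t^{hor}(q_2))-w(\Phi_t^{hor}(q_1))\Big)\,dt\right|+O(1),
\qquad T_2=\min\{\tau(q_1),\tau(q_2)\},
\]
does reduce the lemma to bounding the last integral by $O(1)$. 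This is precisely the crux, and it is where your proposal stops short.

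The gap is your claim that this integral difference is controlled by Proposition~\ref{prop:w-integral}. That proposition gives, for a \emph{single} trajectory, $\Theta(T)=C_\rho T^{1-\rho/2}(1+o(1))$ (for $\rho<2$), with an unspecified $o(1)$. Subtracting two such asymptotics yields at best $o(T^{1-\rho/2})$, not $O(1)$; no ``smallness-on-average'' statement for one orbit can compare two different orbits. The points $q_1,q_2\in\{\rf=k\}$ may lie on different stable heights $\eta_1\neq\eta_2$, so their integral curves are genuinely distinct, and a direct comparison is required. This is exactly what the paper does: it uses the first integral $L$ from \eqref{eq:lf} to show that two orbits with $\hat\tau$ in $[n,n+1]$ separate by at most $\eps(T_1)\ll T^{-3/2}$ at the diagonal time $T_1$, and then bounds
\[
\int_0^{T_1}\big|w(q'(t))-w(q(t))\big|\,dt\ \ll\ \sup_{[0,T_1]}\eps(t)\cdot\int_0^{T_1}|\nabla w(q(t))|\,dt,
\]
applying Proposition~\ref{prop:w-integral} to $|\nabla w|$ (homogeneous of degree $\rho-1$) rather than to $w$ itself. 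Your proposal contains no mechanism to compare $\Phi_t^{hor}(q_1)$ with $\Phi_t^{hor}(q_2)$, and without it the argument cannot close.
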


\begin{proof}
Recall that $\hat\tau(x,y) = \min\{ t > 0 : \Phi_t^{hor}(x,y) \in \hat W^{\s}\}$ and that $|\tau - \hat\tau| = O(1)$.
From Lemma~\ref{lem:tau} we know that $\rf = \hat\tau + \Theta(\hat\tau) + O(1)$.
We will first show that if $\hat\tau$ varies between, say, $n$ and $n+1$, then the corresponding $\rf$
varies by an amount of $O(1)$ as well.

Indeed, take $(x,y)$ and $(x', y')$ arbitrary such that $n \leq \hat\tau(x,y) , \hat\tau(x',y') \leq n+1$.
Define $T = \hat\tau(x,y)$ and abbreviate $q(t) = (x(t), y(t)) = \Phi_t^{hor}(x,y)$ for $0 \leq t \leq T$.
There is $t_0 \in \R$ with $|t_0| \leq 1$ such that the $y$-component $\Phi_{-t_0}^{hor}(\hat x,\hat y)$ is $y$.
Also write $q'(t) = (x'(t),y'(t)) = \Phi_{t-t_0}^{hor}(x',y')$, and the Euclidean distance $\eps(t) = |q'(t)-q(t)|$.

Clearly $| w(q'(t)) - w(q(t))| \ll | \nabla w(q(t))| \eps(t)$, so
$\eps := \eps(0) \ll T^{-(1+\beta)}$ by \eqref{eq:asymp0}. 
However, since $\hat q(0)-q(0)$ is roughly in the unstable direction,
$\eps(t)$ will increase to size $O(1)$ as $t$ increases to $\hat\tau(q')$.
This is too large, but we can set $T_1 := \min\{ t > 0 : x(t) = y(t)\}$, and then estimate $\eps(T_1)$ as follows.
Take $T'_1 = \min\{ t > 0 : x'(t) = y'(t)\}$, then $|T'_1 - T_1| = O(1)$ and we can write 
$q(T_1) = (\delta,\delta)$, $q'(T'_1) = (\delta',\delta')$ with $\eps' := \delta'-\delta$.
Furthermore, since $L$ from \eqref{eq:lf} is constant on integral curves, we have
\begin{equation}\label{eq:L1}
L(x,y) = x^uy^v \left(\frac{a_0}{v} x^2+\frac{b_2}{u} y^2\right) = \delta^{u+v+2}
\left(\frac{a_0}{v}+\frac{b_2}{u}\right),
\end{equation}
and
$$
L(q'(0)) = L(x+\eps, y) = (x+\eps)^uy^v \left(\frac{a_0}{v} (x+\eps)^2+ \frac{b_2}{u} y^2\right) = 
(\delta+\eps')^{u+v+2} \left(\frac{a_0}{v}+\frac{b_2}{u}\right).
$$
Taking the difference of both expressions, we obtain
$$
\frac{\eps}{x}\, u\, L(x,y) \left(1 + \frac{2a_0}{u a_0 x^2 + v b_2 y^2} + O\left(\frac{\eps}{x}\right) \right) =
\frac{\eps'}{\delta} \, (u+v+2)\, L(x,y)\left(1 + O\left(\frac{\eps'}{\delta}\right)\right).
$$
Divide by $(u+v+2)\, L(x,y)/\delta$ and ignore the quadratic error terms.
Then we have 
$$
\eps' \sim \frac{u}{u+v+2} \left(1+\frac{2a_0}{v b_2y^2} \right)\frac{ \delta\ \eps}{x}.
$$
But $\delta$ can be estimated in terms of $x$ using \eqref{eq:L1}, and since $\frac{u}{u+v+2} = \frac{1}{2\beta}$
and $\frac{a_0 u}{b_2v} = \frac{c_0}{c_2}$, 
we obtain
$$
\eps' \sim \frac{\eps}{2\beta} x^{\frac{1}{2\beta}-1} y^{1-\frac{1}{2\beta}} 
\left( \left(1+\frac{c_0}{c_2}\right)\left(1+\frac{c_0}{c_2 y^2}\right)\right)^{-\frac{1}{u+v+2}} \qquad \text{ as } n \to \infty.
$$
Rewriting $x$ and $\eps$ in terms of $T$ using \eqref{eq:asymp0}, we obtain
$\eps' \ll T^{-\frac32}$.
Next, because the vector-valued function
$\nabla w$ is homogeneous with exponent $\rho-1$,
Proposition~\ref{prop:w-integral} gives for $\rho < 2$: 
\begin{eqnarray*}
\int_0^{T_1} | w(q'(t))-w(q(t)) | \, dt &\ll &\sup_{t \in [0,T_1]} \eps(t) \int_0^{T_1} | \nabla w(q(t)) | \, dt + O(1) \\
&\ll& \eps'\ T^{1-\frac{\rho-1}{2}} + O(1) = T^{-\frac{\rho}{2}} + O(1).
\end{eqnarray*}
The integral $\int_{T_1}^T \| w(q'(t))-w(q(t)) \| \, dt$ is dealt with in the same way, but now integrating backwards 
from $\Phi_{\hat\tau(x,y)}^{hor}(x,y)$ and $\Phi_{\hat\tau(x',y')}^{hor}(x',y')$.
Therefore $\int_0^T w(q'(t))-w(q(t)) \, dt$  varies at most $O(1)$ on the region
$\{ (x,y) : n \leq \hat\tau(x,y) \leq n+1\}$.

Since also $\int_0^T w(q(t)) \, dt = o(T)$, we can find $N \in \N$ independent of $n$ such that
$\tau = \int_0^T 1+w(q(t)) \, dt + O(1)$ varies by at least $1$ (but at most by $O(N)$) over $\{ n \leq \hat\tau \leq n+N\}$,
and therefore $\rf$ varies by at least $1$ on this region.
It follows that for each $k$, $\{ \rf = k\} \subset \{ n \leq \hat\tau \leq n+N\}$ for some $n=n(k)$ and 
$\sup_{\{\rf = k\}} \tau - \inf_{\{\rf = k \}} \tau$ is bounded, uniformly in $k$.

The proof for $\rho \geq 2$ goes likewise.
\end{proof}

\subsection{Verifying \ref{H1}: recalling previously used Banach spaces}

\paragraph{Charts and distances:}

Throughout, $\cW^{\s}$ denotes the set of {\em admissible leaves}, which consists of 
maximal stable leaves in elements of the partition 
$\cY = \{ Y_j \}_j = \{ P_i\}_i \vee \{ \{ \rf = k\} \}_{k \geq 2}$.
It is convenient to arrange the enumeration of $\cY$ such that
$Y_j = \{ \rf = j\}$ for $j \geq 2$, and use indices $j \leq 1$ for the remaining (parts of) $P_i$.
To define distances between leaves, as in \cite[Section 3.1]{BT17}, we use charts 
$\chi_j:[0,L_{\u}(Y_j)] \times [0,1] \to Y_j$, where $L_{\u}(Y_j)$ 
is the length of the (largest) unstable leaf in $Y_j$. 
For any leaf $W \subset Y_j$, we have the parametrisation
\begin{equation}\label{eq-chart}
\chi_j^{-1}(W)=\{(g_{Y_j,W}(\rho)), \rho \in [0,1]\},
\end{equation}
so $g_{Y_j}$ is a parametrisation of $W$ in the chart.
Suppressing $Y_j$ and using $g$ and $\tilde g$ for close-by stable leaves
$W \subset Y_k$ and $\tilde W\in\cW^{\s} \in Y_\ell$ we define their by
$$
d(W,\tilde W)= 
 \begin{cases}
 \sup_{x \in [0,1]}|g(x)-\tilde g(x)| & \text{ if } k = \ell;\\
 \sup_{x \in [0,1]}|g(x)-L_{\u}(Y_k)-\tilde g(x)| + \sum_{j=k+1}^{l-1} L_{\u}(Y_j) & \text{ if } 2 \leq k < \ell;\\
 \infty & \text{ otherwise,}
 \end{cases}
$$
where $g$ and $\tilde g$ are parametrisations of $W$ and $\tilde W$ in the appropriate chart.
Here an empty sum $\sum_{j=k+1}^k$ is $0$ by convention.

From here on, in the notation $\int_W \cdot~d\mu^{\s}$, the measure $\mu^{\s}$ refers to the SRB measure $\mu_{\bar\phi}$
conditioned on the stable leaf $W$: similarly for $m^{\s}$ and Lebesgue measure. Hence the norms defined below are w.r.t.\ the invariant measure,
not the conformal measure as in \cite{BT17}. It is possible to do this due to Lemma~\ref{lem:Rint},
specifically \eqref{eq:Ku}.
\vspace{-1ex}

\paragraph{Definition of the norms:}

Given $h\in C^1(Y,\C)$, define the \emph{weak norm} by
\begin{equation}\label{eq-weaknorm}
\|h\|_{\cB_w}:=\sup_{W\in\cW^{\s}}\;\sup_{|\varphi|_{ C^1(W)}\leq 1 }\int_W h\varphi\, d\mu^{\s}.
\end{equation}

Given $q\in [0,1)$ we define the \emph{strong stable norm} by

\begin{equation}\label{eq-strongnormst}
\|h\|_{\s}:=\sup_{W\in\cW^{\s}}\; \sup_{|\varphi|_{ C^q(W)}\leq 1 }\int_W h\varphi\, d\mu^{\s}.
\end{equation}

Finally, we define the \emph{strong unstable norm} by
\begin{equation}\label{eq-strongnormunst}
\|h\|_{\u}:=\sup_\ell \sup_{W,\tilde W\in\cW^{\s} \cap Y_\ell} \
\sup_{\stackrel{|\varphi|_{C^1(W)}, |\tilde\varphi|_{C^1(\tilde W)} \leq 1}{d(\varphi,\tilde\varphi) \leq d(W,\tilde W)}}
\frac{1}{d(W,\tilde W)}\left|\int_{W} h\varphi\, d\mu^{\s} -\int_{\tilde W} h\tilde\varphi\, d\mu^{\s}\right|,
\end{equation} 
where 
$d(\varphi,\tilde\varphi) = | \varphi \circ \chi_\ell( g(\eta), \eta) - \tilde\varphi \circ \chi_\ell(\tilde g(\eta), \eta)|_{C^1([0,1])}$.

The \emph{strong norm} is defined by $\|h\|_{\cB}=\|h\|_{\s}+\|h\|_{\u}$.

\paragraph{Definition of the Banach spaces:}
We define $\cB$ to be the completion of $C^1$ in the strong norm and $\cB_w$ to be the completion in the weak norm.
As clarified in \cite[Lemma 3.2, Lemma 3.3, Proposition 3.2]{BT17}, \ref{H1} holds with $\alpha = \alpha_1 = 1$.

The spaces $\cB$ and $\cB_w$ defined above are simplified versions of functional spaces defined
in~\cite{DemersLiverani08}, adapted to the setting of $(F, Y)$.
The main difference in the present setting is the simpler definition of admissible leaves and 
the absence of a control on short leaves. This is possible due to the Markov structure of 
the diffeomorphism.

As this is the same Banach space as in \cite{BT17}, most parts of \ref{H1} can be taken from there. 
For \ref{H1v}, which is concerned with the transfer operator w.r.t.\ $\mu_{\bar\phi}$,
we first need a lemma.

\begin{lemma}\label{lem:Rint}
 Let $\bgamma = \bgamma(q)$ be the angle between the stable and unstable leaf at $q$ and 
 $h_0^{\u} = \frac{d\mu^{\u}}{dm^{\u}}$ be the density of $\mu_{\bar\phi}$ conditioned 
 on unstable leaves.
 Then
 \begin{equation}\label{eq:Rint}
   \int_{W^{\s}} R v \cdot \varphi \, d\mu^{\s} = \sum_j \int_{W_j}  v 
 \cdot \varphi \circ F \cdot K^{\u}_{W_j} \, d\mu^{\s},
 \end{equation}
 where the sum is over all preimage leaves $W_j = F^{-1}(W^{\s}) \cap \{ \rf = j\}$ and
\begin{equation}\label{eq:Ku}
 K^{\u}_{W_j}  =  \frac{J_{\mu_{\bar\phi}}^{\s} F}{J_{\mu_{\bar\phi}}F} =
 \frac{1}{J_{m_{\bar\phi}}^{\u} F} \frac{(h_0^{\u} \sin \bgamma) \circ F}{h_0^{\u} \sin \bgamma}
\end{equation}
is piecewise $C^1$ on unstable leaves. 
\end{lemma}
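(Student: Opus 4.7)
The identity \eqref{eq:Rint} is the leafwise disintegration of the global duality $\int Rv \cdot w \, d\mu_{\bar\phi} = \int v \cdot (w\circ F)\, d\mu_{\bar\phi}$. My plan is to derive it by (a) using the invariance of the stable foliation under $F$ to enumerate preimage leaves of $W^{\s}$, (b) disintegrating $\mu_{\bar\phi}$ along stable leaves and performing a change of variables on each preimage, and (c) identifying the resulting leafwise Jacobian as $K^{\u}_{W_j}$ and verifying its regularity.

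First I would observe that because $F$ preserves the stable foliation and $\cY = \{ P_i\} \vee \{\{\rf=j\}\}_{j \geq 2}$ is a Markov partition, each preimage leaf $W_j = F^{-1}(W^{\s}) \cap \{\rf=j\}$ is a full stable leaf on which $F$ is a $C^1$ diffeomorphism onto $W^{\s}$. I would then test \eqref{eq:Rint} by inserting $w(q) = \varphi(q)\,\chi_\varepsilon(q)$ into the global duality, where $\chi_\varepsilon$ is a smooth bump in the direction transverse to $W^{\s}$ supported in a tube of width $\varepsilon$, and use the local product disintegration $d\mu_{\bar\phi} = d\mu^{\s}\, d\nu^{\u}$. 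As $\varepsilon \to 0$, the left side becomes $\int_{W^{\s}} Rv \cdot \varphi \, d\mu^{\s}$ (up to normalisation), while the right side splits into contributions over the partition and reduces to the $W_j$-integrals after the leafwise change of variables $F\colon W_j \to W^{\s}$.

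The Jacobian picked up by this change of variables is by construction the ratio $J^{\s}_{\mu_{\bar\phi}}F / J_{\mu_{\bar\phi}}F$: the numerator compensates the stretching of $F$ along the stable leaf w.r.t.\ the conditional $\mu^{\s}$, and dividing by the full $\mu_{\bar\phi}$-Jacobian removes both stable and unstable components, leaving only the reciprocal of the unstable Lebesgue Jacobian corrected by the SRB-density ratio. To obtain the explicit form \eqref{eq:Ku}, I would use the local product representation $d\mu_{\bar\phi} = h_0^{\u}\, \sin \bgamma \, dm^{\u}\, dm^{\s}$, in which $\sin \bgamma$ converts the non-orthogonal product $dm^{\u}\,dm^{\s}$ into the surface area on $\Sigma$. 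Computing $J_{\mu_{\bar\phi}}F$ and $J^{\s}_{\mu_{\bar\phi}}F$ from this representation and taking the quotient makes the stable stretching and the transverse density cancel, leaving $(J^{\u}_{m_{\bar\phi}} F)^{-1} (h_0^{\u} \sin \bgamma)\circ F / (h_0^{\u} \sin \bgamma)$, as claimed.

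Finally, for the piecewise $C^1$ regularity on unstable leaves: restricted to each $\{\rf = j\}$ the map $F = f^{\rf}$ is $C^1$, hence $J^{\u}_{m_{\bar\phi}}F$ is $C^1$ on unstable leaves. The SRB density $h_0^{\u}$ is $C^1$ along unstable leaves by the usual Sinai--Bowen--Ruelle argument (applicable on $Y = \Sigma \setminus P_0$, where the Poincar\'e map $F$ is uniformly hyperbolic with bounded distortion), and the angle $\bgamma$ between the $C^1$ stable and unstable foliations is $C^1$. Hence $K^{\u}_{W_j}$ is $C^1$ on each $W_j$ with uniform (but $j$-dependent) bounds. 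The main obstacle I anticipate is handling the angle factor $\sin \bgamma$ cleanly in the disintegration and verifying that distortion estimates for $h_0^{\u}$ and $J^{\u}_{m_{\bar\phi}}F$ are uniform within each $\{\rf = j\}$ despite these partition elements accumulating on $W^{\s}_{loc}(p)$; however, since each application of $F$ moves points out of $P_0$ in a single return, the standard bounded-distortion arguments apply on each partition element separately.
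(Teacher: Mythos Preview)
Your argument for \eqref{eq:Rint} is sound but takes an unnecessarily indirect path. The paper bypasses the bump-function localisation entirely: since $\mu_{\bar\phi} = h_0\, m_{\bar\phi}$, the transfer operator has the pointwise expression $Rv = \frac{h_0 \circ F^{-1}}{h_0}\frac{v}{|DF|}\circ F^{-1}$, and integrating this directly over $W^{\s}$ against $\varphi\, h_0^{\s}\, dm^{\s}$, followed by the change of variables $F:W_j \to W^{\s}$, produces the right-hand side immediately with $K^{\u}_{W_j} = J^{\s}_{\mu_{\bar\phi}}F / J_{\mu_{\bar\phi}}F$. Your route via $\int Rv\cdot w\, d\mu_{\bar\phi} = \int v\cdot w\circ F\, d\mu_{\bar\phi}$ with $w = \varphi\,\chi_\varepsilon$ arrives at the same place, but requires you to control the $\varepsilon \to 0$ limit through the disintegration (continuity of the conditional measures in the transverse parameter), which is extra work you do not need.

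Your derivation of the explicit form \eqref{eq:Ku} matches the paper's, using $dm_{\bar\phi} = \sin\bgamma\, dm^{\s}\, dm^{\u}$ to split $J_{m_{\bar\phi}}F = J_{m^{\s}}F \cdot J_{m^{\u}}F \cdot \frac{\sin\bgamma \circ F}{\sin\bgamma}$.

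There is a genuine gap in your regularity argument: you assert that the stable and unstable foliations are $C^1$, hence $\bgamma$ is $C^1$, without justification. For a general non-uniformly hyperbolic Poincar\'e map this is not automatic. The paper handles this by first invoking the explicit parametrisation of the unstable foliation of the \emph{time-one map} $\hat f = \Phi_1^{hor}$ from \cite[Equation (18)]{BT17}, which shows that foliation is $C^1$, and then arguing that the foliations of $F$ are obtained from those of $\hat f$ by flowing for a time that is bounded and piecewise $C^1$ (this is where Lemma~\ref{lem:supinf} enters). This yields only \emph{piecewise} $C^1$ for $\bgamma$, not global $C^1$ as you claim, and the piecewise statement is all that is asserted and all that is needed downstream.
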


\begin{proof}
 We have the pointwise formulas for $R_0:L^1(m_{\bar\phi})\to L^1(m_{\bar\phi})$ 
 and $R:L^1(\mu_{\bar\phi})\to L^1(\mu_{\bar\phi})$:
$$
R_0v = \Id_Y \frac{v}{|DF|}\circ F^{-1},\qquad Rv = \Id_Y \frac{h_0 \circ F^{-1}}{h_0} \, \frac{v}{|DF|}\circ F^{-1}, 
$$
where $|DF|=J_{m_{\bar\phi}} = |\det(DF)|$.
This also shows that $J_{\mu_{\bar\phi}}F = J_{m_{\bar\phi}}  \frac{h_0}{h_0 \circ F^{-1}}$ and analogous formulas
hold for $J_{\mu^{\s}}F$ and $J_{\mu^{\u}}F$.
Integration over a stable leaf $W \in \cW^{\s}$ with preimage leaves $W_j$ gives
\begin{eqnarray*}
\int_W R_{\mu_{\bar\phi}}v \, \varphi \, d\mu^{\s} &=&
     \int_W \frac{h_0 \circ F^{-1}}{h_0} \, \frac{v}{|DF|}\circ F^{-1}  \, \varphi \, h_0^{\s} \, dm^{\s} \\
&=& \sum_j \int_{W_j} \frac{J_{m^{\s}}F}{J_{m_{\bar\phi}}F} \,  \frac{h_0}{h_0 \circ F}\, v\, \varphi \circ F \, \frac{h_0^{\s} \circ F}{h_0^{\s}} \, h_0^{\s} dm^{\s} \\
&=& \sum_j \int_{W_j} \frac{J_{\mu^{\s}}F}{J_{\mu_{\bar\phi}}F} \, v\, \varphi \circ F \,  d\mu^{\s}
=  \sum_j \int_{W_j} v \, \varphi \circ F \, K^{\u}_{W_j}\, d\mu^{\s}.
\end{eqnarray*}
Since $d\mu^{\u} = h_0^{\u} \, dm^{\u}$ for a $C^1$ density $h_0^{\u}$, and 
$dm_{\bar\phi} = dm^{\s} dm^{\u} \sin \bgamma$, whence
$J_{m_{\bar\phi}}F = J_{m^{\s}}F \cdot J_{m^\u}F \cdot \frac{\sin \bgamma \circ F}{\sin \bgamma}$, the other formula
$K^{\u}_W = \frac{1}{J_{m_{\bar\phi}}^{\u} F} \frac{(h_0^{\u} \sin \bgamma) \circ F}{h_0^{\u} \sin \bgamma}$
follows as well.
Then \cite[Equation (18)]{BT17},
leading to the parametrisation of the unstable foliation of the induced map over $\hat f = \Phi_1^{hor}$,
shows that this foliation is $C^1$. The analogous statement holds for the stable foliation.
To obtain the unstable/stable foliations of $F$, we need to flow a bounded and piecewise $C^1$
amount of time (see Lemma~\ref{lem:supinf}). Therefore $q \mapsto \bgamma(q)$ is piecewise $C^1$.
Hence the latter expression of $K^{\u}_W$ shows that it is piecewise $C^1$ on unstable leaves.
\end{proof}

Now \ref{H1}(v) follows as in \cite[Lemma 3.3]{BT17}, with $J_{\mu_{\bar\phi}}$ and $K^{\u}_{W_j}$ instead of
$|DF|^{-1}$ and $|DF|^{-1} J_{W_j}F$; this can be done because $K^{\u}_{W_j}$ is piecewise $C^1$ on unstable leaves by 
Lemma~\ref{lem:Rint}.

\subsection{Verifying \ref{H2} and \ref{H8}}

Let $\omega:\R \to [0,1]$ be a function with $\supp \omega \subset [-1,1]$ and $\int \omega(x) \, dx = 1$.
To fix a choice that suffices for the present purpose we take $\omega = 1_{[0,1]}$;
then $\gamma_1 = \frac12$ and $\gamma_2 = \frac16$ (see Remark~\ref{rem:r0}) and in particular,
 $\omega (t-\tau)= 1_{\{t\le\tau\le t+1\}}$.
The first result below verifies assumption \ref{H2}(i).

\begin{prop}\label{prop:check1}
 There is $C_\omega$ such that
 $\int_Y \omega(t-\tau) v\, d\mu_{\bar\phi} \leq C_\omega \| v \|_{\cB_w} \int_Y \omega(t-\tau) d\mu_{\bar\phi}$ 
 for all $t \geq 0$ and $v \in \cB_w$.
\end{prop}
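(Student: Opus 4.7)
My plan is to decompose the integral by the Markov partition cells $Y_k=\{\rf=k\}$ and then disintegrate along admissible stable leaves, reducing everything to leaf-wise estimates controlled by Lemma~\ref{lem:supinf}. With $\omega=1_{[0,1]}$ the multiplier is $\omega(t-\tau)=1_{[t,t+1]}(\tau)$. Lemma~\ref{lem:supinf} provides a universal constant $C_\tau$ with $\sup_{\{\rf=k\}}\tau-\inf_{\{\rf=k\}}\tau\le C_\tau$ for every $k$, while the values $\inf_{\{\rf=k\}}\tau$ are spaced by $O(1)$ in $k$. Consequently the set $K_t$ of indices with $\{\rf=k\}\cap\{t\le\tau\le t+1\}\ne\emptyset$ has bounded cardinality $|K_t|\le N$ uniformly in $t$, and $\mu_{\bar\phi}(\omega(t-\tau))$ is comparable, up to a bounded factor, to $\sum_{k\in K_t}\mu_{\bar\phi}(Y_k\cap\{t\le\tau\le t+1\})$.

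For each $k\in K_t$, disintegrate $\mu_{\bar\phi}|_{Y_k}=\int \mu^{\s}_W\,d\nu^{\u}_k(W)$ along the admissible stable leaves $W\in\cW^{\s}\cap Y_k$; since every $W\in\cW^{\s}$ is contained in a single cell of $\cY$, this is a clean disintegration. Fix such a $W$; the restriction $\tau|_W$ is piecewise $C^1$ with oscillation at most $C_\tau$, so one of three cases holds: $1_{[t,t+1]}(\tau)|_W\equiv 1$, $\equiv 0$, or a ``boundary'' case with one or two jumps. When $\equiv 1$, pair $v$ with the constant test function $\varphi\equiv 1\in C^1(W)$ to obtain $\int_W v\,d\mu^{\s}\le \|v\|_{\cB_w}$; the case $\equiv 0$ is trivial. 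In the boundary case I would approximate $1_{[t,t+1]}$ from above by a fixed smooth bump $\psi\in C^1(\R)$ with $0\le\psi\le 1$, $\psi\equiv 1$ on $[t,t+1]$, $\supp\psi\subset[t-1,t+2]$ and $|\psi|_{C^1(\R)}\ll 1$; then $\psi(\tau)|_W$ is $C^1$ with $|\psi(\tau)|_{C^1(W)}\ll 1+|\tau|_{C^1(W)}$, so the weak-norm definition yields
\[
 \int_W 1_{[t,t+1]}(\tau)\,v\,d\mu^{\s}\;\le\;\int_W\psi(\tau)\,v\,d\mu^{\s}\;\ll\;\bigl(1+|\tau|_{C^1(W)}\bigr)\|v\|_{\cB_w}.
\]

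To conclude I would integrate the leaf-wise estimate against $\nu^{\u}_k$ and sum over $k\in K_t$. The transverse measure $\nu^{\u}_k$ of the set of leaves on which $1_{[t,t+1]}(\tau)$ does not vanish is comparable to $\mu_{\bar\phi}(Y_k\cap\{t\le\tau\le t+1\})$, up to a bounded factor depending on $C_\tau$, thanks to the uniform positivity of the conditional densities combined with the oscillation bound. Combined with the cardinality bound on $K_t$, this produces the claimed constant $C_\omega$.

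The hard part will be obtaining a uniform upper bound on $|\tau|_{C^1(W)}$ in the chart parametrization, valid across \emph{all} admissible stable leaves $W$: Lemma~\ref{lem:supinf} controls only the oscillation, not the pointwise derivative, and an \emph{a priori} bound on the slope in chart coordinates requires extra work. I would extract the needed estimate from the interior of the proof of Lemma~\ref{lem:supinf}, where the argument bounds integrals of the form $\int_0^T|\nabla w(q(t))|\,\eps(t)\,dt$ pointwise in the stable coordinate once the unstable coordinate is fixed; taking the supremum over the stable fibre rather than integrating should yield the required bound on the stable-direction derivative of $\tau$ in chart coordinates, using that the charts $\chi_j$ rescale stable leaves to unit length.
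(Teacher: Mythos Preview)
Your approach differs from the paper's in a way that creates unnecessary difficulty. The paper never attempts to make $\omega(t-\tau)$ or any smoothing of it into a $C^1$ test function on stable leaves. Instead, it observes that by Lemma~\ref{lem:supinf} the support $\{t\le\tau\le t+1\}$ is contained in a union $E=\bigcup_{k=k(t)}^{k(t)+N}Y_k$ of a \emph{bounded number of full partition cells}, and since each $Y_k=\{\rf=k\}$ is itself a union of full admissible stable leaves, the indicator $1_E$ is constant on every leaf $W\subset E$. One then splits $v=v^+-v^-$, bounds $\int\omega(t-\tau)v^+\,d\mu_{\bar\phi}\le\int_E v^+\,d\mu_{\bar\phi}$, disintegrates, and pairs with the trivial test function $\varphi\equiv 1$ on each leaf to get $\le\|v^+\|_{\cB_w}\,\nu^{\u}(\cW^{\s}\cap E)=\|v^+\|_{\cB_w}\,\mu_{\bar\phi}(E)$. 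Finally $\mu_{\bar\phi}(E)\ll N\int_Y\omega(t-\tau)\,d\mu_{\bar\phi}$ by regular variation of the tail.

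This enlargement to full cells is the key simplification: it completely sidesteps your ``hard part''. You never need a uniform bound on $|\tau|_{C^1(W)}$, and indeed that bound is not obviously available. Your proposed extraction from the proof of Lemma~\ref{lem:supinf} concerns variation of $\tau$ under perturbation of the \emph{unstable} coordinate, not pointwise derivatives along stable leaves; a naive calculation from $\xi\sim\xi_0(y)T^{-\beta}$ gives $\partial_y\hat\tau$ of order $\hat\tau$ in the $y$-direction, so one would have to exploit carefully how the genuine $F$-stable leaves deviate from vertical lines. Your route might ultimately close, but it trades a one-line geometric observation for a nontrivial derivative estimate that you have only sketched. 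Note also that your inequality $\int_W 1_{[t,t+1]}(\tau)\,v\,d\mu^{\s}\le\int_W\psi(\tau)\,v\,d\mu^{\s}$ silently assumes $v\ge 0$; the paper addresses this via the $v^+,v^-$ splitting, which you would need to invoke as well.
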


\begin{proof} 
 By Lemma~\ref{lem:supinf}, there is $N$ (independent of $t$) and $k(t)$ such that 
 $$
 E :=  \{ q \in f^{-1}(P_0)\setminus P_0 : k(t) \leq \rf(q) \leq k(t)+N \}
 $$
 contains $\supp(\omega(t-\tau))$.
 We split $v = v^+-v^-$ into its positive and negative parts and treat them separately.
 Also we assume without loss of generality that $0 \leq \omega \leq 1$ 
 (otherwise we split $\omega$ in a positive and negative part as well).
 Then $\int \omega(t-\tau) \, v^+ \, d\mu_{\bar\phi} \leq \int_E v^+ \, d\mu_{\bar\phi}$.
 
 Let $\cW^{\s}$ denote the stable foliation of $F$ and decompose the measure $\mu_{\bar\phi}$ 
 as  $\int v^+ \, d\mu_{\bar\phi} = \int_{\cW^{\s}} \int_{W^{\s}} v^+ d\mu_{W^{\s}} \, d\nu^{\u}$.
 Note that $E$ is the union of leaves in $\tilde
\cW^{\s}$, so $\Id_E$ is constant on each $W^{\s} \in \cW^{\s}$. Take $\varphi:E \to \R^+$ arbitrary such that 
$\varphi|_{W^{\s}} \in C^1(W^{\s})$ with $\| \varphi \|_{C^1(W^{\s})} \leq 1$ for each $W^{\s} \in \cW^{\s}\cap E$.
Similar to \cite[Proposition 3.1]{BT17}, we get
 \begin{eqnarray*}
 \int_E v^+ \varphi \, d\mu_{\bar\phi}
 &=& \int_{\cW^{\s} \cap E} \int_{W^{\s}} v^+ \varphi \, d\mu_{W^{\s}} \, d\nu^{\u} 
 \leq \int_{\cW^{\s} \cap E} \| v^+ \|_{\cB_w} \| \varphi \|_{C^1(W^{\u})} \, d\nu^{\u}\\ 
 &\leq& \| v^+\|_{\cB_w} \mu_{\bar\phi}(E) 
 \ll \| v^+\|_{\cB_w} \int_{k(t)}^{k(t)+N} \int_Y \omega(s-\tau)\, d\mu_{\bar\phi} \, ds \\
 &\ll& \| v^+\|_{\cB_w} N \int_Y \omega(t-\tau)\, d\mu_{\bar\phi}.
 \end{eqnarray*}
The same holds for $v^-$, and this ends the proof.
\end{proof}

The next result  verifies assumption \ref{H2}(ii).

\begin{prop}\label{prop:check2}
Let 
$M(t) = R(\omega(t-\tau))$.
 Then $\int_T^\infty \| M(t) \|_{\cB \to \cB_w} \, dt \ll T^{-\beta}$.
\end{prop}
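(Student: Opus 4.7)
The plan is to establish the pointwise operator estimate
$$\|M(t)\|_{\cB \to \cB_w} \ll \mu_{\bar\phi}(\{t \leq \tau \leq t+1\})$$
for all large $t$, and then integrate. The integration step is straightforward: Fubini and Proposition~\ref{prop:tailtau} would give
\begin{align*}
\int_T^\infty \mu_{\bar\phi}(\{t \leq \tau \leq t+1\})\, dt
&= \int_Y \min\bigl(1,\, (\tau(q) - T)_+\bigr)\, d\mu_{\bar\phi}(q) \\
&\leq \mu_{\bar\phi}(\tau > T) \ll T^{-\beta},
\end{align*}
so all the work is concentrated on the pointwise estimate.

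For the operator bound, I would fix $v \in \cB$ with $\|v\|_{\cB} \leq 1$, an admissible stable leaf $W^\s$, and a test function $\varphi \in C^1(W^\s)$ with $\|\varphi\|_{C^1} \leq 1$, and apply Lemma~\ref{lem:Rint} to obtain
$$\int_{W^\s} M(t) v \cdot \varphi\, d\mu^\s
= \sum_j \int_{W_j} 1_{\{t \leq \tau \leq t+1\}} \cdot v \cdot (\varphi \circ F)\, K^\u_{W_j}\, d\mu^\s,$$
where the $W_j$ are preimage stable leaves, each contained in some $\{\rf = k_j\}$. By Lemma~\ref{lem:supinf}, only finitely many $W_j$ contribute: those with $k_j$ in a bounded window $[k(t), k(t)+N]$ around $k(t) \sim t$. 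On each such $W_j$, the factor $(\varphi \circ F)\, K^\u_{W_j}$ is $C^1$-bounded uniformly (from contraction of $F$ on stable leaves together with the piecewise $C^1$ regularity of $K^\u$ supplied by Lemma~\ref{lem:Rint}), and since $\tau|_{W_j}$ is $C^1$, the set $W_j \cap \{t \leq \tau \leq t+1\}$ is either empty, all of $W_j$, or a single sub-interval bounded by $C^1$ level curves of $\tau$. I would then mimic the leaf-level argument of Proposition~\ref{prop:check1} --- refining the admissible partition locally by the level sets $\{\tau = t\}$ and $\{\tau = t+1\}$, and applying the weak-norm estimate on each resulting sub-leaf --- to obtain
$$\Bigl|\int_{W_j} 1_{\{t \leq \tau \leq t+1\}}\, v \cdot (\varphi \circ F)\, K^\u_{W_j}\, d\mu^\s\Bigr|
\ll \|v\|_{\cB_w}\, \mu^\s\bigl(W_j \cap \{t \leq \tau \leq t+1\}\bigr).$$
Summing over the $O(1)$ contributing leaves and integrating in the transverse direction would then yield the operator bound.

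The main obstacle I anticipate is that $1_{\{t \leq \tau \leq t+1\}}$ is discontinuous and therefore cannot be absorbed directly into the $C^1$ test function used in the definition of $\|\cdot\|_{\cB_w}$. The resolution should rest on the $C^1$ regularity of $\tau$ on each partition element $\{\rf = k\}$ (noted right below the definition of $\tau$ in Section~\ref{subsec-suspAn}): the level sets $\{\tau = t\}$ and $\{\tau = t+1\}$ are $C^1$ curves within each element, so the refined partition $\cY \vee \{\{\tau \leq t\},\{t < \tau \leq t+1\},\{\tau > t+1\}\}$ consists of elements bounded by piecewise smooth curves, whose maximal stable leaves inherit the admissible-leaf structure, and the $\cB_w$-norm estimate extends to these sub-leaves. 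An equivalent workaround would be to approximate $1_{\{t \leq \tau \leq t+1\}}$ by smooth cut-offs and absorb their $C^1$ blow-up using the strong contraction of $F$ along stable leaves; either route reduces the difficulty to the clean estimate already encoded in Proposition~\ref{prop:check1}.
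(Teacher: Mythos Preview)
Your overall plan matches the paper's: reduce via Lemma~\ref{lem:Rint} to preimage leaves $W_j$, use Lemma~\ref{lem:supinf} to restrict to $j\in[k(t),k(t)+N]$, bound the leaf integrals, then integrate in $t$. The difference is in the leaf estimate, where your proposal overcomplicates one point and misses another.

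The discontinuity of $1_{\{t\leq\tau\leq t+1\}}$ is not a genuine obstacle. The paper uses exactly the positivity trick already present in Proposition~\ref{prop:check1}: split $v=v^+-v^-$ (for $v\in C^1$ first, then pass to the completion) and bound $\omega(t-\tau)\,v^+\leq v^+$ on the \emph{full} admissible leaf $W_j$. The indicator simply drops out; no partition refinement or smoothing is needed, and one is left with $\int_{W_j} v^+\,(\varphi\circ F)\,K^\u_{W_j}\,d\mu^\s$, which the weak norm handles directly.

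More seriously, your smallness accounting is misplaced. You treat $K^\u_{W_j}$ as merely ``$C^1$-bounded uniformly'' and look for the decay in the factor $\mu^\s(W_j\cap\{t\leq\tau\leq t+1\})$. But the weak norm produces $\|v\|_{\cB_w}\cdot|(\varphi\circ F)\,K^\u_{W_j}|_{C^1(W_j)}$, not $\|v\|_{\cB_w}$ times a leaf-measure factor; your target bound $\ll\|v\|_{\cB_w}\,\mu^\s(W_j\cap\{t\leq\tau\leq t+1\})$ is not obtainable from the definition of $\|\cdot\|_{\cB_w}$, and that conditional measure is $O(1)$ anyway. The actual smallness lives in $K^\u_{W_j}$ itself: by \eqref{eq:Ku} and the unstable expansion estimates (cf.\ \cite[Equation~(42)]{BT17}, also used in Lemma~\ref{lem:Rtau-well-def}), one has $\int_{W_j}K^\u_{W_j}\,d\mu^\s\ll j^{-(1+\beta)}\sim\mu_{\bar\phi}(\{\rf=j\})$. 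Summing over the $N$ contributing $j$ gives $\|M(t)\|_{\cB\to\cB_w}\ll N\,\mu_{\bar\phi}(\{\rf=k(t)\})$, and integrating in $t$ yields $\mu_{\bar\phi}(\tau>T)\ll T^{-\beta}$. Finally, there is no ``transverse integration'' step: the $\cB_w$-norm is a supremum over stable leaves, not an integral over~$Y$.
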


\begin{proof}
 We need to estimate the $\cB_w$-norm of $M(t)v = R(\omega(t-\tau) v)$.
 This means that we need to take some leaf $W^{\s} \in \cW^{\s}$, the collection
 of stable leaves in $Y$ stretching across an element of the Markov partition of $F$,
 and compute (as in Lemma~\ref{lem:Rint}) that
 $$
 \int_{W^{\s}} M(t) v \cdot \varphi \, d\mu^{\s} = \sum_j \int_{W^{\s}_j} \omega(t-\tau) v 
 \cdot \varphi \circ F \cdot K^{\u}_{W_j}\,  d\mu^{\s}.
 $$
 As in the proof of Proposition~\ref{prop:check1}, we can split $v = v^+-v^-$ and use that
 $\{ t-1 \leq \tau \leq t+1\} \subset E :=  \{k(t) \leq \rf \leq k(t)+N\}$.
Thus, following \cite[Proposition 3.2 ({\em weak norm})]{BT17} and \cite[Equation (42)]{BT17}, 
the integral for $v^+$ is bounded by
 \begin{eqnarray*}
  \sum_{j=k(t)}^{k(t)+N} \int_{W_j} v^+ J_{W^j} F |DF|^{-1} \cdot \varphi \circ F \, d\mu^{\s}
  &\leq& \| v^+\|_{\cB_w} |\varphi|_{C^1(W^{\s})} \sum_{j=k(t)}^{k(t)+N} \int_{W_j} K^{\u}_{W_j} \, d\mu^{\s} \\
   &\ll& \| v^+\|_{\cB} \sum_{j=k(t)}^{k(t)+N} \mu_{\bar\phi}(\{\rf = j\})\\
   &\ll& \| v^+\|_{\cB} \, N\, \mu_{\bar\phi}(\{\rf = k(t)\}).
 \end{eqnarray*}
For $v^+$ and $v^-$ together, this gives 
$\int_T^\infty \| M(t) \|_{\cB \to \cB_w} dt  \ll \int_T^\infty  \mu_{\bar\phi}(\{\rf = k(t)\})\, dt
 \ll \mu_{\bar\phi}(\{\tau \geq T \})$
and the proposition follows.
\end{proof}

Assumption \ref{H8} is the same as \ref{H2}, with $\tau$ replaced by $\psi_0$ and $e^{-u\tau}$ by $e^{-s\psi_0}$. 
Its verification is entirely analogous to the above.

\subsection{Verifying that $R\zeta\in\cB$ for a large class of $\zeta$ (including $\bar\psi$ in Theorem~\ref{thm-conclAn} (b)) and completing
the verification of \ref{H4}}
\label{subsec-coboundary}

Assumption \ref{H4}(i) and the tail estimate part of \ref{H4}(ii) (so that $\tau \in L^2(\mu_{\bar\phi})$) is verified in Proposition~\ref{prop:tailtau}. 
To check the remaining part of \ref{H4}(ii) and assumption on $\bar\psi$ in Theorem~\ref{thm-conclAn} (b)
we give a more general result in Lemma~\ref{lem:Rtau-well-def} below. This result ensures that
given $\bar\psi$ as in Theorem~\ref{thm-conclAn} (b) we have $R\bar\psi\in\cB$. This is needed
to verify the abstract assumptions of  Theorem~\ref{thm-concl}(b) for  $\bar\psi$ as in Theorem~\ref{thm-conclAn} (b).

\begin{lemma}\label{lem:Rtau-well-def}
For $0 < \kappa < \beta$ and $\zeta: Y \to \R$ piecewise $C^1$, define 
$$
\| \zeta \|_0 := \sup_{j \geq 1} \frac{1}{j^{\kappa}}\left( \|\zeta \, 1_{Y_j} \|_{\cB_w} + \frac{1}{j^{1+\beta}} \| \zeta\, 1_{Y_j} \|_{\u} \right)
$$
and the Banach space $\cB_0 = \{ \zeta:Y \to \R : \zeta \text{ is piecewise } C^1 \text{ and } \| \zeta \|_0 < \infty\}$.
Then $R(\cB_0) \subset \cB$.
\end{lemma}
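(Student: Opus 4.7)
The plan is to decompose
$$R\zeta \;=\; \sum_{j\ge 1} R(\zeta\,1_{Y_j})$$
and to establish absolute convergence of this series in $(\cB,\|\cdot\|_\cB)$. Each summand $R(\zeta\,1_{Y_j})$ is piecewise smooth on the Markov image of $Y_j$, hence already in $\cB$, so the essential task is the summability bound $\sum_{j\ge 1}\|R(\zeta\,1_{Y_j})\|_\cB\ll\|\zeta\|_0$. The two weights $j^\kappa$ and $j^{\kappa+1+\beta}$ built into $\|\cdot\|_0$ are tailored so that, after pairing against the contraction factors picked up by $R$ on $Y_j$, the resulting sum converges precisely when $\kappa<\beta$.

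For the strong stable norm, I would apply Lemma~\ref{lem:Rint}: for any $W\in\cW^\s$ and any admissible test function $\varphi$, with $W_j:=F^{-1}(W)\cap Y_j$,
$$\int_W R(\zeta\,1_{Y_j})\,\varphi\,d\mu^\s = \int_{W_j}\zeta\,(\varphi\circ F)\,K^\u_{W_j}\,d\mu^\s.$$
Stable contraction of $F$ on $W_j$ controls $|\varphi\circ F|_{C^q(W_j)}$ by $|\varphi|_{C^q(W)}$, while the explicit form of $K^\u_{W_j}$ from Lemma~\ref{lem:Rint}, combined with $1/J_{m_{\bar\phi}}^\u F\sim j^{-\beta}$ on $Y_j$ (from~\eqref{eq:asymp0}) and the tail bound $\mu_{\bar\phi}(Y_j)\sim j^{-(1+\beta)}$ of Proposition~\ref{prop:tailtau}, yields $|K^\u_{W_j}|_{C^1(W_j)}\ll j^{-(1+\beta)}$. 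By definition of $\|\cdot\|_{\cB_w}$,
$$\|R(\zeta\,1_{Y_j})\|_\s \;\ll\; j^{-(1+\beta)}\,\|\zeta\,1_{Y_j}\|_{\cB_w}\;\le\; j^{\kappa-(1+\beta)}\,\|\zeta\|_0,$$
which is summable since $\kappa<\beta$ forces $\kappa-(1+\beta)<-1$.

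For the strong unstable norm, I would adapt the holonomy-pairing argument of \cite[Proposition~3.2, \emph{strong unstable}]{BT17}. Two close leaves $W,\tilde W\in\cW^\s\cap Y_\ell$ with matched test functions $\varphi,\tilde\varphi$ produce paired preimages $W_j,\tilde W_j\subset Y_j$ with $d(W_j,\tilde W_j)\ll d(W,\tilde W)$ by stable contraction. Splitting
$$\int_W R(\zeta\,1_{Y_j})\varphi\,d\mu^\s-\int_{\tilde W}R(\zeta\,1_{Y_j})\tilde\varphi\,d\mu^\s$$
into a term that uses $\|\zeta\,1_{Y_j}\|_\u$ (to handle the variation of $\zeta$ between paired leaves) and one that uses $\|\zeta\,1_{Y_j}\|_{\cB_w}$ (to handle the paired-leaf $C^1$-variation of $(\varphi\circ F)K^\u_{W_j}-(\tilde\varphi\circ F)K^\u_{\tilde W_j}$), each prefactor picks up a further $j^{-(1+\beta)}$ from the size and holonomy regularity of $K^\u_{W_j}$. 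Using the $\|\cdot\|_0$-bounds on the two norms of $\zeta\,1_{Y_j}$, this yields
$$\|R(\zeta\,1_{Y_j})\|_\u \;\ll\; j^{-2(1+\beta)}\cdot j^{\kappa+1+\beta}\,\|\zeta\|_0 + j^{-(1+\beta)}\cdot j^\kappa\,\|\zeta\|_0 \;\ll\; j^{\kappa-(1+\beta)}\,\|\zeta\|_0,$$
again summable.

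The main technical obstacle lies in the $\u$-norm estimate: one must verify that the $C^1$-oscillation of the twisted Jacobian $K^\u_{W_j}$ between paired preimage leaves indeed contributes the second $j^{-(1+\beta)}$-factor required to absorb the $j^{1+\beta}$-weight attached to $\|\zeta\,1_{Y_j}\|_\u$ in the definition of $\|\zeta\|_0$. This is precisely the computation carried out in \cite[Proposition~3.2]{BT17} in the unweighted setting $\zeta\equiv 1$, and the present task amounts to propagating the factor $\zeta$ through each step of that argument using the weighted $\|\cdot\|_0$-norm in place of the $\cB_w$-norm.
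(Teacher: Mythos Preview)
Your overall strategy matches the paper's proof: decompose $R\zeta=\sum_j R(\zeta\,1_{Y_j})$, bound the stable norm via $K^{\u}_{W_j}\ll j^{-(1+\beta)}$ to get the summable $j^{\kappa-1-\beta}$, and for the unstable norm split the paired-leaf difference into a $\zeta$-variation piece (controlled by $\|\zeta\,1_{Y_j}\|_{\u}$) and a test-function-variation piece (controlled by $\|\zeta\,1_{Y_j}\|_{\cB_w}$). The paper does the same, with the test-function piece further split into a $\varphi$-part ($S_1$) and a $K^{\u}$-part ($S_2$), and the $\zeta$-variation piece called $S_3$.

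However, your accounting for the second $j^{-(1+\beta)}$ factor in the $\|\zeta\,1_{Y_j}\|_{\u}$-term is misplaced. You write $d(W_j,\tilde W_j)\ll d(W,\tilde W)$ and then expect the missing factor to come from ``holonomy regularity of $K^{\u}_{W_j}$''. But the oscillation of $K^{\u}$ between paired leaves is what feeds the \emph{other} term (your $\|\zeta\,1_{Y_j}\|_{\cB_w}$-term, the paper's $S_2$); it does not contribute an extra factor once you have fixed the test function and are comparing $\zeta$ across leaves. With only $d(W_j,\tilde W_j)\ll d(W,\tilde W)$ and $|K^{\u}_{W_j}|_{C^1}\ll j^{-(1+\beta)}$, the $\zeta$-variation term is bounded by $j^{-(1+\beta)}\|\zeta\,1_{Y_j}\|_{\u}\,d(W,\tilde W)\le j^{\kappa}\|\zeta\|_0\,d(W,\tilde W)$, which is \emph{not} summable.

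The correct source of the second factor, as in the paper's treatment of $S_3$, is the quantitative unstable contraction of $F^{-1}$: since $F$ maps the thin strip $Y_j$ of unstable width $L_{\u}(Y_j)\ll j^{-(1+\beta)}$ (from \eqref{eq:asymp0}) onto a full partition element of width $O(1)$, the chart-based distance satisfies $d(W_j,\tilde W_j)\ll L_{\u}(Y_j)\,d(W,\tilde W)\ll j^{-(1+\beta)}d(W,\tilde W)$. Inserting this gives the $j^{-2(1+\beta)}$ you need. So your displayed bound for $\|R(\zeta\,1_{Y_j})\|_{\u}$ is correct, but the mechanism producing it must be the strip-width contraction $L_{\u}(Y_j)$, not a further regularity property of $K^{\u}$.
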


\begin{proof}
Recall that, by Proposition~\ref{prop:w-integral} and Lemma~\ref{lem:tau}, $\rf = O(\tau)$ and vice versa.
Abbreviate $Y_j = \{ \rf = j\}$; for large $j$ these are strips close to the stable manifold
$W^{\s}_p$ of the neutral fixed point, and bounded by stable and unstable curves 
and both $F^{-1}$ and $|DF|^{-1}$ are $C^1$ on each $\overline{F(Y_j)}$.

For the stable norm $\| \ \|_{\s}$, choose an arbitrary stable leaf $W$ and $q$-H\"older 
function $\varphi \in C^q(W)$ with $|\varphi|_{C^q(W)} \leq 1$.
Let $W_j = F^{-1}(W) \cap Y_j$.
By Lemma~\ref{lem:Rint}, and the fact that $K^{\u} \ll j^{-(1+\beta)}$ and $|\varphi \circ F|_{C^1} \ll |\varphi|_{C^1}$
on each $W_j$, we have
\begin{eqnarray} \label{eq-useinh7}
 \int_W R\zeta \, \varphi\, d\mu^{\s} &=& \sum_j \int_{W_j} \zeta\, \varphi \circ F \,K^\u \,  d\mu^{\s}
 \ll \sum_j j^{-(1+\beta)} \int_{W_j} \zeta \, \varphi \circ F \, d\mu^{\s} \nonumber \\
 &\ll& \sum_j j^{-(1+\beta)} \| \zeta\, 1_{Y_j} \|_{\cB_w} \ll \sum_j j^{-1+\kappa-\beta} < \infty.
\end{eqnarray}
The stable part $\| \, \|_{\s}$ of $\| \, \|_{\cB}$ is treated in the same way.

Now for the unstable part $\| \, \|_{\u}$, let $\varphi$ such that $|\varphi|_{C^1(W)}\leq 1$. 
Using \cite[Equation (43)]{BT17}, which expresses $\int_{W_j} h \phi\, dm$ in terms of the parametrisation of $W_j$,
for any nearby leaves $W,\tilde W\in Y$, we define a diffeomorphism $v:\tilde W \to W$ as in
\cite[Proof of Proposition 3.2 ({\em unstable norm part})]{BT17}. Let $v_j = F^{-1} \circ v \circ F:\tilde W_j \to W_j$
be the corresponding bijection between the preimage leaves $W_j, \tilde W_j \subset Y_j$.
Then we compute,
\begin{align*}
 \left| \int_{\tilde W}  R\zeta \varphi\, d\mu^{\s} - \int_W R\zeta \varphi\, d\mu^{\s} \right| 
 &\leq \sum_j \left| \int_{\tilde W_j} \zeta \varphi \circ F \, K^{\u}_{\tilde W_j} \, d\mu^{\s} 
 -  \int_{W_j} \zeta \varphi \circ F \, K^{\u}_{W_j}  \, d\mu^{\s}  \right| \\
&\leq  \sum_j \int_{\tilde W_j} \zeta |\varphi \circ v \circ F - \varphi \circ F| \, K^{\u}_{\tilde W_j}  \, d\mu^{\s} \\ 
 &\quad + \sum_j \int_{\tilde W_j} \zeta \, |\varphi \circ F| 
   \, \Big| K^{\u}_{W_j}  \circ v_j - K^{\u}_{\tilde W_j} \Big|  \, d\mu^{\s}  \\
  &\quad + \sum_j \int_{W_j} |\zeta \circ v_j - \zeta| |\varphi \circ F| \, K^{\u}_{W_j}  \, d\mu^{\s} \\
 &= S_1 + S_2 + S_3.
\end{align*}
Next
$$
S_1 \leq | \varphi |_{C^1}\, d(W, \tilde W) 
\sum_j |\zeta|_{\tilde W_j}|_\infty \, |K^{\u}_{\tilde W_j} |_\infty 
\ll  \| \varphi \|_{C^1}\, d(W, \tilde W), 
$$
because as in the first part of this proof, the sum in the above expression is bounded.

For the sum $S_2$, using \eqref{eq:Ku} we split 
\begin{align*}
 \Big|K^{\u}_{\tilde W_j} - K^{\u}_{W_j} \circ v_j \Big| 
 &= \frac{1}{J^{\u}_{m_{\bar\phi}}} \frac{| (h^{\u} \sin \bgamma) \circ F \circ v_j -(h_0^{\u} \sin \bgamma) \circ F |}{h_0^{\u} \sin \bgamma}\\
& \quad + \frac{1}{ J^{\u}_{m_{\bar\phi}} } \left| \frac{J^{\u}_{m_{\bar\phi}} }{J^{\u}_{m_{\bar\phi}} \circ v_j} - 1 \right| 
\frac{(h_0^{\u} \sin \bgamma) \circ F \circ v_j}{h_0^{\u} \sin \bgamma}\\
& \quad + \frac{1}{J^{\u}_{m_{\bar\phi}}} \frac{(h^{\u} \sin \bgamma) \circ F \circ v_j}{(h_0^{\u} \sin \bgamma)\circ v_j}
\left| \frac{(h^{\u} \sin \bgamma) \circ v_j}{h_0^{\u} \sin \bgamma} - 1 \right|.
\end{align*}
By distortion estimate \cite[Equation (40)]{BT17}, this is bounded by 
$C d(W,\tilde W)\frac{1}{J^{\u}_{m_{\bar\phi}}}$ for some uniform distortion constant $C > 0$.
Therefore
$$
S_2 \leq C \, d(W,\tilde W) \sum_j \|\zeta\, 1_{Y_j} \|_{\cB_w} \, \Big| \frac{1}{J^{\u}_{m_{\bar\phi}}} \Big|_\infty
\ll d(W,\tilde W) \sum_j j^\kappa j^{-(1+\beta)} < \infty
$$
as before.

Now for $S_3$, the weighted $\| \, \|_{\u}$ part of the norm $\| \ \|_0$ gives
 $|\zeta \circ v_j - \zeta| \ll j^{\kappa+1+\beta} d(W_j, \tilde W_j) \ll j^{\kappa+1+\beta} d(W, \tilde W) L_{\u}(Y_j)$.
As in the first half of the proof,
$|K^{\u}_{W_j}|_\infty \ll j^{-(1+\beta)}$. We have $L_{\u}(Y_j) \ll \tau^{1+\beta}$
due to the small tail estimates \eqref{eq:asymp0}, so
$$
S_3 \ll |\varphi|_\infty d(W, \tilde W) \sum_j j^{\kappa} L_{\u}(Y_j)
\ll |\varphi|_\infty d(W, \tilde W).
$$
Therefore $\|R\zeta\|_\cB < \infty$ and the proof is complete.
\end{proof}

\begin{cor}\label{cor:Rtau} 
$R(\tau^\kappa h) \in \cB$ for each $0 < \kappa < \beta$ and $h \in \cB$ 
(in particular $R(\tau h) \in \cB$ as required for \ref{H4}(ii)).
\end{cor}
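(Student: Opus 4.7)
The plan is to deduce the corollary directly from Lemma~\ref{lem:Rtau-well-def} by showing that $\zeta := \tau^\kappa h$ belongs to $\cB_0$ whenever $h \in \cB$ and $0 < \kappa < \beta$. Recalling the definition of $\|\cdot\|_0$, this amounts to proving the two estimates
\[
\|\tau^\kappa h\, 1_{Y_j}\|_{\cB_w} \ll j^\kappa \|h\|_\cB, \qquad \|\tau^\kappa h\, 1_{Y_j}\|_\u \ll j^{\kappa+1+\beta}\|h\|_\cB
\]
uniformly in $j \geq 1$, at which point $\|\tau^\kappa h\|_0 \ll \|h\|_\cB < \infty$ and Lemma~\ref{lem:Rtau-well-def} completes the proof.

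The main input is a $C^1$ control of $\tau^\kappa$ on $Y_j$. Lemma~\ref{lem:supinf} gives $\sup_{Y_j}\tau - \inf_{Y_j}\tau = O(1)$, and since $\tau \asymp j$ on $Y_j$, we obtain $|\tau^\kappa|_{L^\infty(Y_j)} \ll j^\kappa$. Any admissible leaf $W \subset Y_j$ has bounded length in its chart parametrization, so the stable-direction derivative of $\tau^\kappa|_W$ is bounded by a constant multiple of $j^{\max(\kappa-1,0)} \ll j^\kappa$; in particular $|\tau^\kappa|_{C^1(W)} \ll j^\kappa$ for every such $W$. The unstable extent of $Y_j$, by contrast, satisfies $L^\u(Y_j) \ll j^{-(1+\beta)}$ (from \eqref{eq:asymp0} and Lemma~\ref{lem:tau}), so that the $O(1)$ variation of $\tau$ in the unstable direction forces a Lipschitz constant $\ll j^{1+\beta}$ for $\tau$, and hence $\ll j^{\kappa-1} j^{1+\beta} = j^{\kappa+\beta}$ for $\tau^\kappa$ (for $\kappa < 1$ one uses the obvious H\"older analogue, which gives an even better power of $j$).

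Granted these bounds, both norm estimates reduce to standard multiplication arguments on the Banach spaces (used already, for instance, in \cite{BT17}). For the weak norm, on any $W \subset Y_j$ and any test $\varphi$ with $|\varphi|_{C^1(W)} \leq 1$, the function $\tau^\kappa \varphi$ is a valid test function of $C^1$-norm $\ll j^\kappa$, so $\int_W \tau^\kappa h\,\varphi\, d\mu^\s \leq \|h\,1_{Y_j}\|_{\cB_w}\,|\tau^\kappa\varphi|_{C^1(W)} \ll j^\kappa \|h\|_\cB$. For the unstable norm, comparing two nearby leaves $W,\tilde W \subset Y_j$, we absorb $\tau^\kappa$ into the pair of test functions: one contribution is bounded by $j^\kappa \|h\|_\u\, d(W,\tilde W)$ (variation of $h$ at fixed $\tau^\kappa$), and the other by $j^{\kappa+\beta}\,d(W,\tilde W)\,\|h\|_{\cB_w}$ (variation of $\tau^\kappa$ between the two leaves), both of which yield a total that is $\ll j^{\kappa+1+\beta}\|h\|_\cB \cdot d(W,\tilde W)$.

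The main obstacle is thus quantifying the Lipschitz behaviour of $\tau$ (and hence $\tau^\kappa$) in the unstable direction inside $Y_j$; this is supplied precisely by the small-tail estimate \eqref{eq:asymp0}, which matches the weight $j^{\kappa+1+\beta}$ appearing in $\|\cdot\|_0$. The remaining steps are purely bookkeeping with the definitions of the norms.
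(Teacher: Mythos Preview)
Your proposal is correct and follows essentially the same route as the paper: show $\tau^\kappa h \in \cB_0$ via the two norm bounds and then invoke Lemma~\ref{lem:Rtau-well-def}. The only cosmetic difference is that the paper makes the unstable-direction Lipschitz estimate for $\tau^\kappa$ explicit as the pointwise formula \eqref{eq:taudiff}, obtained by inverting \eqref{eq:asympt2}, whereas you phrase the same bound via the unstable width $L^{\u}(Y_j)\ll j^{-(1+\beta)}$ from \eqref{eq:asymp0}; either way one arrives at $|\tau^\kappa(x+\eps,y)-\tau^\kappa(x,y)|\ll j^{\kappa+\beta}\eps$ and the splitting of the unstable norm into the $j^\kappa\|h\|_{\u}$ and $j^{\kappa+\beta}\|h\|_{\cB_w}$ contributions that you describe.
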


Since $C' - \bar \psi \sim C \tau^\kappa$, this corollary can also be used to show that $R(\bar\psi h) \in \cB$.

\begin{proof}
Since $\tau$ is the return time of a $C^1$ flow to a $C^1$ Poincar\'e section, it is piecewise $C^1$, 
and we know $\tau|_{Y_j} \ll j$. 
Taking $h \in \cB$ and $\varphi \in C^1(W)$ for an arbitrary stable leaf $W \subset Y_j$, we have
$\int_W R(\tau^ h) \, \varphi \, d\mu^{\s} \leq j^\kappa \int_W h \varphi \, d\mu^{\s} = j^\kappa | h \|_{\cB_w}$,
so $\frac{1}{j^\kappa} \| \tau^\kappa h \|_{\cB_w} \ll \| h \|_{\cB_w} < \infty$.

Using \eqref{eq:asymp0}, we have $\eps \leq \xi(y,T+1) - \xi(y,T) \ll T^{-(1+\beta)}$
for $T = \tau(x,y) + O(1)$ and $\xi(y,T)$ plays the role of $x$.
By \eqref{eq:asympt2}, $x = \xi_0(y) \tau(x,y)^{-\beta}(1+o(1))$ as $x \to 0$, so
$\eps \ll x^{\frac{1+\beta}{\beta}}$. 
Using \eqref{eq:asympt2} again,
we can estimate that as $\tau = \tau(x) \to \infty$ (so as $x \to 0$ and $\eps = o(x)$),
\begin{eqnarray}\label{eq:taudiff}
|\tau^\kappa(x+\eps,y) - \tau^\kappa(x,y)| &=& \xi_0(y)^\kappa |(x+\eps)^{-\frac{\kappa}{\beta}} - x^{-\frac{\kappa}{\beta}}|(1+o(1)) \nonumber \\
&=& \xi_0(y)^\kappa x^{-\frac{\kappa}{\beta}} \frac{\kappa}{\beta} \frac{\eps}{x}\left(1+o(1)+O\left(\frac{\eps}{x}\right)\right) \nonumber \\
&=& \frac{\kappa}{\beta} \xi_0(y)^{-\frac{\kappa}{\beta}}  \tau(x,y)^{\kappa+\beta} \eps \left(1+o(1)+O\left(\frac{\eps}{x}\right)\right).
\end{eqnarray}
Let $W, \tilde W$ nearby stable leaves in $Y_j$ with $\eps = d(W,\tilde W)$ and $v:\tilde W \to W$ a diffeomorphism.
Then, for $\varphi\in C^1(W)$, $\tilde\varphi \in C^1(\tilde W)$ with $d(\varphi, \tilde\varphi) \leq d(W,\tilde W)$,
\begin{eqnarray*}
 \int_{\tilde W} \tau^k \,  h \, \tilde\varphi \, d\mu^{\s} -  \int_{W} \tau^k \,  h \, \varphi \, d\mu^{\s}
 &\ll& \int_{\tilde W} |\tau^\kappa - \tau^\kappa \circ v_j| \,  h \, \varphi \, d\mu^{\s}
 + j^{\kappa} \left(  \int_{\tilde W} h \, \tilde\varphi \, d\mu^{\s} -  \int_{W}  h \, \varphi \, d\mu^{\s} \right) \\
 &\ll& j^{\kappa+\beta} \| h \|_{\cB_w} + j^{\kappa} \| h \|_{\u}.
\end{eqnarray*}
Therefore $\frac{1}{j^{\kappa+\beta}} \| \tau^\kappa h \|_{\u} \ll \| h \|_{\cB_w} < \infty$,
showing that $\tau^\kappa h \in \cB_0$.
Combined with Lemma~\ref{lem:Rtau-well-def}, the result follows.
\end{proof}

As in the statement of Proposition~\ref{prop-limthF} (ii) and Proposition~\ref{prop-clt} we need

\begin{cor}\label{cor:nocoboundary}
$\tilde\tau := \tau-\tau^*$ cannot be written as $h\circ F-h$ for any $h\in\cB$.
\end{cor}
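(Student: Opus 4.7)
The plan is to proceed by contradiction: assume $\tilde\tau = h \circ F - h$ for some $h \in \cB$. Applying the transfer operator $R$ to both sides, and using that $R(h\circ F) = h$ (a consequence of the $F$-invariance of $\mu_{\bar\phi}$ together with the invertibility of $F$, which makes $R$ the $L^2(\mu_{\bar\phi})$-adjoint of the Koopman operator $U\colon v \mapsto v \circ F$), we obtain the equation
\[
(I-R)h = R\tilde\tau \quad \text{in } \cB.
\]
Note that $R\tilde\tau\in\cB$ by Corollary~\ref{cor:Rtau} with $\kappa=1$, and it has zero $\mu_{\bar\phi}$-mean. By the spectral gap of $R$ on the mean-zero subspace of $\cB$ from \ref{H1v}, the solution is unique up to an additive constant; explicitly $h = \aa + c$ with $\aa = (I-R)^{-1}R\tilde\tau$ as in \eqref{eq:a}. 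The coboundary relation thus reduces to $\tilde\tau = \aa \circ F - \aa$ in $\cB$, and the task is to rule this out.

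Next, I would exploit that outside any fixed neighbourhood of the neutral orbit $\Gamma$ the return map $F$ is uniformly hyperbolic with Markov partition $\{P_i\}_{i\geq 1}$ (see Section~\ref{subsec-suspAn}), while $\tilde\tau$ is piecewise $C^1$ with uniformly bounded local H\"older seminorms on this hyperbolic part. By a Liv\v{s}ic-type regularity argument applied to the hyperbolic subsystem, the element $\aa \in \cB$ restricts to a H\"older function on each compact hyperbolic piece (as opposed to a distribution). In particular, the iterated relation $\tau_n(q) - n\tau^* = \aa(F^n q) - \aa(q)$ becomes a pointwise identity on $F$-periodic orbits $q \in Y \setminus P_0$, forcing $\tau_n(q) = n\tau^*$ for every such $q$. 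This is contradicted by choosing two hyperbolic periodic orbits of $F$ with distinct Birkhoff $\tau$-averages; such orbits exist because the hyperbolic part of $F$ has positive topological entropy and the roof $\tau$ is not constant on stable leaves and is not cohomologous to a constant in the H\"older category (a standard consequence of the Anosov structure of the underlying flow).

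The main obstacle in this plan is the Liv\v{s}ic regularity step: the space $\cB$ is not embedded in $L^p$ for any $p>1$ (as remarked before Proposition~\ref{prop-clt}), so promoting $\aa$ from a distribution in $\cB$ to an honest H\"older function on compact hyperbolic subsystems is delicate. Concretely, one would need to pair $\aa$ with H\"older test functions supported in small stable/unstable rectangles away from $\Gamma$, exploit the spectral identity $\aa - R\aa = R\tilde\tau$ (which is a pointwise a.e.\ identity when restricted to the compact hyperbolic part where $\tilde\tau$ is H\"older), and then invoke the classical Liv\v{s}ic theorem for the uniformly hyperbolic restriction of $F$ to produce a H\"older coboundary representative whose values on periodic orbits can be compared. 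Once this regularity is in place, the periodic-orbit obstruction completes the proof.
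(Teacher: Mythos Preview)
Your approach is far more elaborate than needed and leaves a substantial gap at exactly the step you flag. The paper's proof is a two-line unboundedness argument: since $\tau(z)\to\infty$ as $z\to W^{\s}(p)$ inside $F^{-1}(P_0)\setminus P_0$, one has $\sup_{W\in\cW^{\s}}\int_W\tilde\tau\,d\mu^{\s}=\infty$, and therefore $\tilde\tau\notin\cB_w$ by the very definition \eqref{eq-weaknorm} of the weak norm. A coboundary $h\circ F-h$ with $h\in\cB\subset\cB_w$ would have uniformly bounded stable-leaf integrals (using that $F$ contracts stable leaves and the change-of-variables estimates already set up in Lemma~\ref{lem:Rint} and Lemma~\ref{lem:sum}), so the equation is impossible. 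No Liv\v{s}ic theory, no periodic-orbit comparison, no regularity promotion is required.

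Beyond missing this direct route, your proposal has two further issues. First, the reduction step is circular in a subtle way: from $(I-R)\aa=R\tilde\tau$ in $\cB$ you cannot simply ``compose with $F$'' to recover $\tilde\tau=\aa\circ F-\aa$, because the Koopman operator $v\mapsto v\circ F$ is not bounded on the anisotropic spaces $\cB,\cB_w$ (only the transfer operator $R$ is). So the passage from the operator identity back to a pointwise coboundary equation already needs justification at the distributional level. Second, the Liv\v{s}ic regularity step you identify as the obstacle is not a minor technicality here: $\cB$ is genuinely a space of distributions (not embedded in any $L^p$, as noted before Proposition~\ref{prop-clt}), and there is no mechanism in the paper for upgrading $\aa\in\cB$ to a H\"older function on compact pieces. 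Your periodic-orbit obstruction therefore never gets off the ground.
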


\begin{proof}
By Corollary~\ref{cor:Rtau}, $R\tilde\tau\in\cB$.
Because $\tau(z) \to \infty$ as $z \to W^s(p)$, $z \in F^{-1}(P_0) \setminus P_0$, we have
$\sup_{W \in \cW^{\s}} \int_W \tilde\tau \, d\mu^{\s} = \infty$.
Therefore $\tilde \tau \notin \cB_w$ and hence $\tilde \tau$ is not a coboundary.
\end{proof}

\subsection{Verifying \ref{H7}}
Extending the inequality $|1-e^{-x}| \leq x^\gamma$ for all $\gamma \in (0,1]$ and $x \geq 0$,
we can find $C_\gamma$ depending only on $s\sup_a\bar\psi$ such that
$$
|e^{-u\tau + s\bar\psi}-1| 1_a \leq C_\gamma (u\tau + s\psi_0)^\gamma
\leq C_\gamma \left(u^\gamma \sup_a \tau^\gamma + s^\gamma \sup_a \psi_0^\gamma\right).
$$
Therefore, for each $h \in \cB$, and $\varphi \in C^1(W)$,
$W \in \cW^{\s}$,
$$
\int_W \left| (e^{-u\tau + s\bar\psi}-1) 1_a h \, \varphi \right| \, d\mu^{\s}
\leq C_\gamma \left(u^\gamma \sup_a \tau^\gamma + s^\gamma \sup_a \psi_0^\gamma\right) \int_W  h \, \varphi \, 
d\mu^{\s},
$$
so \ref{H7} follows for $C_2 = C_3 = C_\gamma$.

\subsection{Verifying \ref{H6} (and thus, \ref{H3})}

In this section we verify \ref{H6} for $\kappa>1/\beta$.

\begin{prop}\label{prop:LY}
Assume that $\bar\psi$ satisfies \ref{H5} and let $\hat R(u,s)v = R(e^{-u\tau}e^{s\bar\psi} v)$.
Then there exists $\sigma_1 \in (0,1)$ and $\delta, C_0, C_1 > 0$ such that 
for all $h\in\cB$, $n\in\N$, $0 \leq s < \delta$, and $u \geq 0$,
\[
\| \hat R(u,s)^n h\|_{\cB_w} \le C_1 e^{-un} \|h\|_{\cB_w},\qquad 
\| \hat R(u,s)^n h\|_{\cB} \le e^{-un}(C_0 \sigma_1^{-n}\|h\|_{\cB}+C_1 \|h\|_{\cB_w}).
\]
\end{prop}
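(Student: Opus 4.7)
The plan is to follow the scheme used to establish \ref{H3} for the single-twist operator $\hat R(u)$ in \cite[Section 3]{BT17}, with the additional factor $e^{s\bar\psi}$ treated as a bounded-from-above multiplier whose growth is absorbed into the contraction rate by taking $\delta$ small. First I would use the cocycle identity
$$
\hat R(u,s)^n h = R^n\!\bigl( e^{-u\tau_n + s\bar\psi_n}\, h \bigr), \qquad \tau_n := \sum_{j=0}^{n-1}\tau\circ F^j,\quad \bar\psi_n := \sum_{j=0}^{n-1}\bar\psi\circ F^j,
$$
together with the pointwise bounds $\tau \geq 1$ and $\bar\psi \leq C'$ (the latter coming from \ref{H5} and $\psi_0\geq 0$), so that
$e^{-u\tau_n + s\bar\psi_n} \leq e^{-un}\,e^{snC'}$.
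The factor $e^{-un}$ will survive to produce the desired decay in $u$, while the factor $e^{snC'}$ will need to be eaten by an appropriately defined $\sigma_1$.

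For the strong-norm bound, I would mimic the leafwise decomposition used to prove \ref{H3}. On an admissible stable leaf $W$ and test function $\varphi$, Lemma~\ref{lem:Rint} and iteration give
$$
\int_W \hat R(u,s)^n h\cdot\varphi\, d\mu^{\s}
 = \sum_{j} \int_{W_j} e^{-u\tau_n + s\bar\psi_n}\, h \cdot (\varphi\circ F^n)\cdot K^{\u}_{W_j,n}\, d\mu^{\s},
$$
where the sum runs over preimage stable leaves $W_j$ intersected with the cylinders of $\cY^{(n)} = \bigvee_{i=0}^{n-1}F^{-i}\cY$, and $K^{\u}_{W_j,n}$ is the $n$-step version of the cocycle in \eqref{eq:Ku}. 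By Lemma~\ref{lem:supinf} (applied coordinate-wise to the Birkhoff sum) and its direct analogue for $\bar\psi = C' - \psi_0$, both $\tau_n$ and $\bar\psi_n$ have $O(1)$ variation on each cylinder of $\cY^{(n)}$, so the multipliers can be pulled out with bounded distortion. The unstable-norm part of the standard argument contributes a factor $\sigma_0^{-n}$ coming from hyperbolic contraction of $F$, while the weak part contributes a term proportional to $\|h\|_{\cB_w}$; combining with $e^{-u\tau_n+s\bar\psi_n}\leq e^{-un+snC'}$ yields
$$
\|\hat R(u,s)^n h\|_{\cB}\leq e^{-un}\,e^{snC'}\bigl( C_0\,\sigma_0^{-n}\|h\|_{\cB} + C_1\|h\|_{\cB_w}\bigr).
$$
Setting $\sigma_1 := \sigma_0\, e^{-\delta C'}$ and taking $\delta>0$ so small that $\sigma_1>1$, one has $e^{snC'}\sigma_0^{-n}\leq \sigma_1^{-n}$ for every $s\in[0,\delta)$, which delivers the strong-norm inequality.

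For the weak norm, the same decomposition applies but I would exploit the $L^1$-like character of $\|\cdot\|_{\cB_w}$ together with the invariance of $\mu_{\bar\phi}$ under $R$. Summing the absolute contributions leafwise, the geometric sum $\sum_j\int_{W_j} K^{\u}_{W_j,n}\, d\mu^{\s}$ corresponds, after pulling out the (cylinderwise constant, up to bounded distortion) weight $e^{-u\tau_n + s\bar\psi_n}$, to applying $\hat R(0,s)^n$ to the constant $1$; by Lemma~\ref{lemma-asymplambda2} (at $u=0$) the latter equals $\lambda(0,s)^n$ up to a uniformly bounded factor, and for $s<\delta$ sufficiently small $\lambda(0,s)$ stays in a compact neighbourhood of $1$. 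After the $e^{-un}$ factor coming from $e^{-u\tau_n}\leq e^{-un}$ is extracted, the remaining bounded constants combine into $C_1$, giving $\|\hat R(u,s)^nh\|_{\cB_w}\leq C_1 e^{-un}\|h\|_{\cB_w}$.

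The main obstacle is precisely controlling the $e^{snC'}$ growth uniformly in $n$: the naive bound $e^{snC'}$ grows exponentially for fixed $s>0$, so it cannot be absorbed into a single constant. The strong-norm estimate handles this by trading against the spectral contraction $\sigma_0^{-n}$ (which dominates for $\delta$ small), while the weak-norm estimate handles it by exploiting that $\bar\psi = C' - \psi_0$ is very negative on most of $Y$ (since $\psi_0\sim C\tau^\kappa$), so the pointwise bound $\bar\psi\leq C'$ is far from sharp once one integrates with respect to $\mu_{\bar\phi}$; this turns $e^{snC'}$ into a multiple of $\lambda(0,s)^n\approx 1$. Balancing these two mechanisms and verifying the distortion estimates for $K^{\u}_{W_j,n}$ at arbitrary $n$, using the bounded-variation structure of $\tau_n$ and $\bar\psi_n$ over $\cY^{(n)}$, is the technical heart of the argument.
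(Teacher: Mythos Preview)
Your final paragraph correctly identifies the core difficulty, but the proposed fix fails in both norms. For the weak norm: invoking Lemma~\ref{lemma-asymplambda2} is circular (that lemma already assumes \ref{H6}, which Proposition~\ref{prop:LY} is verifying), and in any case $\lambda(0,s)-1\sim s\,\psi^*>0$ under the standing hypothesis $C'>C\int\tau^\kappa\,d\mu_{\bar\phi}$, so $\lambda(0,s)^n\to\infty$; being ``in a compact neighbourhood of $1$'' says nothing about the $n$-th power. For the strong norm: absorbing $e^{snC'}$ into $\sigma_0^{-n}$ rescues the $\|h\|_{\cB}$ term, but the weak remainder $C_1e^{snC'}\|h\|_{\cB_w}$ in your displayed Lasota--Yorke bound still grows with $n$ and is left unaddressed.

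The paper implements the needed cancellation by the direct leafwise estimate Lemma~\ref{lem:sum}: for an $N$ depending only on $F$, a fixed proportion $\eps>0$ of the $F^{-N}$-preimage leaves of any stable leaf $W$ lies where $\bar\psi_N\le -Na\sup\bar\psi$ (since $\psi_0\sim C\tau^\kappa$ is large there), so the weighted preimage sum is bounded by $\bigl(\eps x^{-a}+(1-\eps)x\bigr)\int_W g\,d\mu^{\s}\le\int_W g\,d\mu^{\s}$ for $x=e^{sN\sup\bar\psi}$ slightly above $1$; block-induction then gives a bound uniform in $n$, and this replaces the naive $e^{snC'}$ factor both in the weak-norm estimate and in the weak remainder of the strong Lasota--Yorke inequality. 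The unstable part of $\|\cdot\|_{\cB}$ also produces two extra terms $S_3,S_4$ (see \eqref{eq:S3}--\eqref{eq:S4}) from the variation of $e^{-u\tau_n}$ and $e^{s\bar\psi_n}$ across nearby preimage leaves; controlling these requires the summed distance estimate \eqref{eq:epsi} (hyperbolic contraction beating the growth $\tau^{1+\beta}$), not Lemma~\ref{lem:supinf}, which gives only $O(1)$ oscillation of $\tau$ on a single partition element and hence $O(n)$ on an $n$-cylinder---too coarse for the unstable norm.
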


Before turning to the proof, we need another lemma (which we will apply with $g \equiv 1$, but the general $g$
is needed for the induction in the proof). 

\begin{lemma}\label{lem:sum}
Assume that $\bar\psi$ satisfies \ref{H5} (in particular, $\bar\psi < C'-C^{-1}\tau^\kappa < \infty$
for some $C',C>0$ and $\kappa > 1/\beta$).
There exists $N \in \N$ depending only of $F:Y \to Y$ such that for
all positive integrable functions $g$ that are bounded and bounded away from zero,
the following holds.
There exist $\delta > 0$ such that for all $s \in [0,\delta]$,
all admissible stable leaves $W \subset Y$ and all $n \in N$,
$$
\sum_{\stackrel{W' \text{\tiny\, component}}{\text{\tiny of } F^{-n}(W)}} \int_{W'} 
e^{s\bar\psi_n} \, g \circ F^n \, K^\u_{W'}F^n \, d\mu^{\s} \leq  e^{sN \sup \bar\psi} \int_W g\, d\mu^{\s},
$$
where $K^\u_{W'}F^n$ equals the analogue of $K^\u_{W'}$ from \eqref{eq:Ku} for the iterate $F^n$ on the preimage leaf $W'$. 
\end{lemma}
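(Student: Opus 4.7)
The plan is to recast the preimage sum as an iterated-transfer-operator expression and then to close the estimate by an induction on $n$ with $N$ chosen via the Markov/distortion data of $F$.

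First, applying Lemma~\ref{lem:Rint} $n$ times with $\varphi=g$ and $v=e^{s\bar\psi_n}$, and using the identity $R(v\cdot w\circ F)=w\cdot R(v)$ repeatedly, I would rewrite the left-hand side as
\[
\int_W R^n(e^{s\bar\psi_n})\, g\, d\mu^{\s}=\int_W \hat R(0,s)^n\mathbf{1}\cdot g\, d\mu^{\s},
\]
where $\hat R(0,s)v=R(e^{s\bar\psi}v)$. Since $g>0$ and $R^n\mathbf{1}=\mathbf{1}$ by $F$-invariance of $\mu_{\bar\phi}$, the claim reduces to the pointwise inequality $\hat R(0,s)^n\mathbf{1}\leq e^{sN\sup\bar\psi}$, which after multiplication by $g$ and integration gives the stated bound.

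For the induction, the base case is when $n$ does not exceed the constant $N$, where the trivial estimate $\bar\psi_n\leq n\sup\bar\psi\leq N\sup\bar\psi$ combined with $R^n(\mathbf{1})=\mathbf{1}$ immediately gives $R^n(e^{s\bar\psi_n})\leq e^{sN\sup\bar\psi}$. For $n>N$ I would choose $N$ to be a combined mixing/distortion time for $F$: large enough that $F^N$ covers $Y$ in the Markov sense and that Lemma~\ref{lem:supinf} provides uniform bounded-variation estimates on $\bar\psi_N$ over cylinders of $F^N$. Writing $n=kN+r$, one iterates in blocks: each block of $N$ iterates is estimated by the base case, and the potential accumulation is controlled by combining the $F$-invariance identity $\int_Y\hat R(0,s)^N\mathbf{1}\, d\mu_{\bar\phi}=\int_Y e^{s\bar\psi_N}\, d\mu_{\bar\phi}$ with the bounded distortion on cylinders, which allows a pointwise bound to be recovered from an integral one at the end of each block.

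The main obstacle is precisely this last step: preventing the naive per-block factor $e^{sN\sup\bar\psi}$ from compounding into the useless bound $e^{sn\sup\bar\psi}$. The essential input is the uniform oscillation control of Lemma~\ref{lem:supinf}, which forces $e^{s\bar\psi_N}$ to be nearly constant on each cylinder of $F^N$ with multiplicative error of the form $1+O(s)$; combined with the invariance of $\mu_{\bar\phi}$ this re-averages the mass at the block boundary rather than accumulating exponentially. The constant $\delta$ in the statement is chosen small enough so that all such $O(s)$ distortion corrections are absorbed into the prefactor $e^{sN\sup\bar\psi}$, and the hypothesis that $g$ is bounded both above and away from zero is used to transfer the pointwise bound on $\hat R(0,s)^n\mathbf{1}$ into the weighted integral against $g\, d\mu^{\s}$ uniformly over the leaf $W$.
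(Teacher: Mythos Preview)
Your reduction to $\int_W \hat R(0,s)^n\mathbf{1}\cdot g\,d\mu^{\s}$ is correct, and you have correctly identified the obstacle: stopping the per-block factor from compounding. But the mechanism you invoke to overcome it does not work, and it misses the actual idea.

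First, the pointwise inequality $\hat R(0,s)^n\mathbf{1}\leq e^{sN\sup\bar\psi}$ you aim for cannot hold in general: integrating against $\mu_{\bar\phi}$ gives $\int_Y e^{s\bar\psi_n}\,d\mu_{\bar\phi}$, and by Jensen this exceeds $e^{sn\psi^*}$, which grows without bound in $n$ once $\psi^*=\int\bar\psi\,d\mu_{\bar\phi}>0$. So the bound must be obtained at the level of \emph{leaf integrals}, not pointwise, and the boundedness of $g$ above and below is not what saves you here. Second, Lemma~\ref{lem:supinf} only controls the oscillation of $\tau$ (hence of $\bar\psi$) across a single partition element $\{\rf=k\}$; it says nothing about $\bar\psi_N$ being nearly constant on $N$-cylinders, and ``re-averaging via invariance of $\mu_{\bar\phi}$'' does not produce a factor $\leq 1$ per block for the reason just given.

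The paper's proof uses a feature of $\bar\psi$ you never touch: $\bar\psi\leq C'-C^{-1}\tau^\kappa$ is \emph{unbounded below}. For a fixed mixing time $N$, the preimage leaves of $W$ under $F^{-N}$ are split into $\cW^+$ (those meeting $\{\tau^\kappa\geq 2NaC\sup\bar\psi\}$ for a parameter $a$) and $\cW^-$. On $\cW^+$ one has $\bar\psi_N\leq -Na\sup\bar\psi$, i.e.\ a large \emph{negative} contribution, while on $\cW^-$ the trivial bound $\bar\psi_N\leq N\sup\bar\psi$ holds. With $x=e^{sN\sup\bar\psi}$ and $\eps$ the $\cW^+$-fraction of the leaf mass, the $N$-block sum is at most $(\eps x^{-a}+(1-\eps)x)\int_W g\,d\mu^{\s}$. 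The tail estimate $\mu_{\bar\phi}(\tau>t)\asymp t^{-\beta}$ with $\kappa\beta>1$ forces $\eps\gg a^{-1/(\kappa\beta)}$, and a calculus argument then gives $\eps x^{-a}+(1-\eps)x\leq 1$ for $x$ close to $1$, i.e.\ for $s\in[0,\delta]$. This is the cancellation that makes the induction close; your oscillation argument has no analogue of it.
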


\begin{proof}
Let $P_1$ be the partition element containing the images $F(\{ \rf = k\})$ of the strips
$\{ \rf=k\}_{k \geq 2}$, see Figure~\ref{fig:leaves2}. Let $N \in \N$ be such that $F^{N-1}(P_1)$ intersects each
$P_i$, $i \geq 1$. 
Let $W$ be an arbitrary stable leaf in $P_i$ for some $i \geq 1$,
and let $\cW$ be the collection of preimage leaves under $F^{-N}$.
By a change of coordinates,
$$
\sum_{W' \in \cW} \int_{W'} g \circ F^N \,  K^\u_{W'}F^n \, d\mu^{\s} = \int_W g \, d\mu^{\s} 
$$
for any integrable function $g$.

Given $a \geq 1$ to be chosen later, set 
$\cW^+ = \{ W' \in \cW : \inf_{W'} \tau^\kappa \geq 2NaC \sup \bar \psi \}$ and $\cW^- = \cW \setminus \cW^+$.
Then there is $\eps > 0$ (depending only on the geometry of the Markov map $F$ and $\frac{\sup g}{\inf g}$) such that
$\sum_{W' \in \cW^+} \int_{W'} g \circ F^N\,  K^\u_{W'}F^N \, d\mu^{\s} \geq \eps \int_W g\, d\mu^{\s}$.
Since 
$\int_{W'}\, d\mu^{\s}$ is proportional to the measure of the element in $\bigvee_{i=0}^{N-1} F^{-i}(\cP)$
that $W'$ belongs to,
Proposition~\ref{prop:tailtau} gives $\eps \gg \mu_{\bar\phi}(\{ \tau > (2NaC \sup \bar\psi)^{\frac{1}{\kappa}} \}) 
\gg Ba^{-\frac{1}{\beta'}}$ for some $B > 0$ and $\beta' := \kappa\beta > 1$.
We have $\bar\psi_N \leq (N-1) \sup\bar\psi - C^{-1}\tau^\kappa \leq (N-1-2Na) \sup\bar\psi \leq -Na\sup\bar\psi$ on $\cW^+$,
and $\bar\psi_N \leq N \sup \bar \psi$ on $\cW^-$.
Therefore
\begin{align*}
 \sum_{W' \in \cW}  & \int_{W'} e^{s\bar\psi_N} \, g \circ F^N \, K^{\u}_{W'}F^N\, d\mu^{\s}  \\
&\leq \sum_{W' \in \cW^+} \int_{W'}  e^{- s N a \sup \bar\psi}  \,g \circ F^N \, K^{\u}_{W'}F^N\,   d\mu^{\s} 
+ \sum_{W' \in \cW^-} \int_{W'} e^{s N \sup \bar\psi} \, g \circ F^N \, K^{\u}_{W'}F^N\,  d\mu^{\s} \\
& \leq \eps e^{-s N a \sup \bar\psi} \int_W g \, d\mu^{\s} + (1-\eps) e^{s N \sup \bar\psi} \int_W g \, d\mu^{\s} \\
&= \left(\eps x^{-a} + (1-\eps)x \right) \int_W g \, d\mu^{\s} =: b(x) \int_W g \, d\mu^{\s} 
\end{align*}
for $x = e^{s N \sup \bar\psi}$.
Clearly $b(1) = 1$ and $b'(x) = -a\eps x^{-(a+1)} + (1-\eps) < 0$ whenever $0 < x \leq (a\eps)^{\frac{1}{1+a}}$.
Since $\eps \geq B a^{-\frac{1}{\beta'}}$, we find
$b'(x) < 0$ for all $x \leq a_0 :=  B^{-\frac{1}{1+a}} a^{\frac{\beta'-1}{\beta'(1+a)}}$.
Choose $a > \max\{1,  B^{\frac{\beta'}{\beta'-1}} \}$, so that $a_0 > 1$.
This means that $b(x) \leq 1$ for all $1 \leq x \leq a_0$, i.e., for all 
$0 \leq s \leq \frac{\log a_0}{N \sup \bar\psi}$.

Now for general $n = pN + q$ with $0 \leq q < N$, we use induction on $p$.
Let $\cW_p(W)$ be the collection of preimage leaves of $W$ under $F^{-pN}$.
Assume by induction that
$$
\sum_{W' \in \cW_{p-1}(W_1)} \int_{W'} 
e^{s\bar\psi_{(p-1)N}} \, g \circ F^{(p-1)N} \, K^{\u}_{W'}F^{(p-1)N} \,  d\mu^{\s} \leq \int_W g \, d\mu^{\s}
$$
for all stable leaves $W_1$ and $g$ as above. Then
\begin{align*}
 \sum_{W' \in \cW_p(W)} &\int_{W'} 
e^{s\bar\psi_{pN}} \, g \circ F^{pN} \, K^{\u}_{W'}F^{pN} \,  d\mu^{\s}  \\
&\leq \sum_{W_1 \in \cW_1(W)} \sum_{W' \in \cW_{p-1}(W_1)}
\int_{W'} e^{s\bar\psi_{(p-1)N}} K^{\u}_{W'}F^{(p-1)N} \left( 
e^{s\bar\psi_N} \, g \circ F^N K^{\u}_{W'}F^N \right) \circ F^{(p-1)N}  \, d\mu^{\s} \\
&\leq \sum_{W_1 \in \cW_1(W)} \int_{W_1}
e^{s\bar\psi_N} \, g \circ F^N \, K^{\u}_{W_1}F^N\,  d\mu^{\s}  \leq \int_W g\, d\mu^{\s}.
\end{align*}

This way we proved the lemma for all multiples of $N$.
For $0 < q < N$, we get an extra factor $e^{sq \sup \bar\psi}$. This proves the lemma.
\end{proof}

\begin{pfof}{Proposition~\ref{prop:LY}}
This proof goes as in \cite[Proposition 3.2]{BT17}, but with some changes.
The factor $z^n$ is to be replaced with $e^{-u\tau_n}$ where $\tau_n = \sum_{i=0}^{n-1} \tau \circ F^i$
and the factor $e^{s\bar\psi_n}$ for $\psi_n = \sum_{i=0}^{n-1} \psi \circ F^i$ is dealt 
with using Lemma~\ref{lem:sum}.
Let $W$ and $\tilde W$ be two stable leaves in the same partition element of $\{ P_i \}_{i \geq 1}$.
Since $\tau$ and $\psi$ are not constant on partition elements, we get a third and fourth term
in the {\em strong unstable norm} part of \cite[Proposition 3.2]{BT17}
expressing the difference of $\tau_n$ on nearby preimage leaves $\tilde W_j$ and $W_j = v_j(\tilde W_j)$ of $W$ and $\tilde W$:
\begin{equation}\label{eq:S3}
S_3 = \sum_j \left| \int_{\tilde W_j} h \tilde\varphi \circ F^n
e^{s \bar\psi_n}\,  (e^{-u \tau_n}-e^{-u(\tau_n \circ v_j)} ) \, K^{\u}_{W_j}F^n\, d\mu^{\s} \right|
\end{equation}
and
\begin{equation}\label{eq:S4}
 S_4 = \sum_j \left| \int_{\tilde W_j} h \tilde\varphi \circ F^n 
 e^{-u\tau_n \circ v_j}\, (e^{s\bar\psi_n}-e^{s \bar\psi_n \circ v_j})\,  K^{\u}_{W_j}F^n\, d\mu^{\s}  \right|,
\end{equation}
for some $\tilde\varphi$ with $|\tilde\varphi|_{C^1(\tilde W)} \leq 1$.

For $S_3$, without loss of generality, we can assume that $\tau_n \circ v_j \geq \tau_n$, 
so $|e^{-u \tau_n}-e^{-u(\tau_n \circ v_j)}| \leq ue^{-u\tau_n} (\tau_n \circ v_j - \tau_n)$. 
By \eqref{eq:taudiff}, we have $|\tau(x+\eps,y) - \tau(x,y)| = 
\beta^{-1} \xi_0(y)^{-\frac{1}{\beta}}  \tau(x,y)^{1+\beta} \eps (1+o(1)+O(\frac{\eps}{x}))$.
Applied to $\eps_i := d(F^i(W_j), F^i(\tilde W_j)) \ll \tau^{-(1+\beta)} \circ F^i \eps_{i+1}$ 
this gives
\begin{align}\label{eq:epsi}
\sum_{i=0}^{n-1} | \tau \circ F^i \circ v_j - \tau \circ F^i| 
&\ll \sum_{i=0}^{n-1} \tau^{1+\beta} \circ F^i\ d(F^i(W_j), F^i(\tilde W_j)) \nonumber \\
&\ll \sum_{i=0}^{n-1} \eps_{i+1} \ll \eps_n =  d(W,\tilde W), 
\end{align}
and therefore $|e^{-u\tau_n}-e^{-u \tau_n \circ v_j} | \ll u e^{-u\tau_n} d(W,\tilde W)$.
Combining the above with \eqref{eq:S3}, we obtain 
\begin{align*}
S_3 &\leq  u\ d(W,\tilde W) \sum_j \left| \int_{\tilde W_j} h \tilde\varphi \circ F^n
e^{-u\tau_n  + s \sup \bar\psi_n} \,  K^{\u}_{W_j}F^n\, d\mu^{\s} \right| \\
&\ll  \| h \|_{\cB_w} \, u e^{-un}\, d(W,\tilde W) \sum_j \left| \int_{\tilde W_j} 
e^{s\bar\psi_n} \,  K^{\u}_{W_j}F^n\, d\mu^{\s}  \right| \\
&\leq  \| h \|_{\cB_w} \, u e^{-un}\, d(W,\tilde W) e^{sN \sup \bar\psi},
\end{align*}
where we used that $\tau_n \geq n$, and the last step follows from
Lemma~\ref{lem:sum} with the $N$ (independent of $n,u,s$) taken from that lemma too.

For $S_4$, using \ref{H5} and \eqref{eq:taudiff}, we obtain:
\begin{eqnarray*}
|e^{s\bar\psi(x+\eps)}-e^{s\bar\psi(x))}| 
&\ll& e^{s \sup \bar \psi} e^{-s\tau^\kappa/C} s|\tau^\kappa(x+\eps,y) - \tau^\kappa(x,y)| \\
&=& e^{s \sup \bar \psi} s e^{-s\tau^\kappa/C}\, \frac{\kappa}{\beta} \, \xi_0(y) \tau^{\kappa+\beta} \eps (1+o(1)+O(\eps)) \\
&\ll& e^{s \sup \bar \psi} \underbrace{s e^{-s\tau^\kappa/C} \tau^{\kappa-1}}_K  \, \tau^{1+\beta} \eps,
\end{eqnarray*}
where we compute the supremum over $\tau$ to conclude that 
$K \leq e^{-\kappa} (C\kappa)^{\frac{\kappa-1}{\kappa}} s^{\frac{1}{\kappa}}$.
Apply the above with $\eps = \eps_i := d(F^i(W_j), F^i(\tilde W_j))$ as in \eqref{eq:epsi}.
Then we can estimate $S_4$ in the same way as $S_3$:
\begin{eqnarray*}
S_4 &\leq&  s^{\frac{1}{\kappa}}\ d(W,\tilde W) \sum_j \left| \int_{\tilde W_j} h \tilde\varphi \circ F^n
e^{-u\tau_n + s\bar\psi_n} \,  K^{\u}_{W_j}F^n\, d\mu^{\s}  \right| \\
&\ll&  \| h \|_{\cB_w} e^{-un} e^{s N \sup \bar\psi}\, s^{\frac{1}{\kappa}} \, d(W,\tilde W).
\end{eqnarray*}
\end{pfof}

\end{document}